\numberwithin{equation}{section}
\newtheorem{ass}{Assumption}[section]
\newtheorem{theo}{Theorem}[section]
\newtheorem{cor}{Corollary}[section]
\newtheorem{prop}{Proposition}[section]
\newtheorem{lem}{Lemma}[section]
\newtheorem{de}{Definition}[section]
\newtheorem{rem}{Remark}[section]
\numberwithin{figure}{section}
\date{}
\newcommand{\spp}{{\textbf{\textup{S}}_p}}
\newcommand{\su}{{\textbf{\textup{S}}_1}}
\newcommand{\sd}{{\textbf{\textup{S}}_2}}
\newcommand{\sinf}{{\textbf{\textup{S}}_\infty}}
\newcommand{\lao}{{H_0}}
\newcommand{\la}{{H_V}}
\newcommand{\im}{\operatorname{Im}}
\newcommand{\re}{\operatorname{Re}}
\newenvironment{proof}
	{\textit{Proof.}}
	{\hfill $\square$\vskip 8pt}
\newcommand{\R}{\mathbb{R}}
\newcommand{\C}{\mathbb{C}}
\newcommand{\e}{\mathrm{e}}
\newcommand{\bra}{\langle}
\newcommand{\ket}{\rangle}
\newcommand{\style}{\displaystyle}
\title{On the spectral properties of non-selfadjoint discrete Schr\"odinger operators}
\author{Olivier Bourget, Diomba Sambou and Amal Taarabt}
\begin{document}

\maketitle
\noindent
Facultad de Matem\'aticas, Pontificia Universidad Cat\'olica
 de Chile, Vicu\~na Mackenna 4860, Santiago, Chile 
\\
E-mails: {\it bourget@mat.uc.cl, disambou@mat.uc.cl, amtaarabt@mat.uc.cl}

\begin{abstract}
Let $H_0$ be a purely absolutely continuous selfadjoint operator acting on some separable infinite-dimensional Hilbert space and $V$ be a compact perturbation. We relate the regularity properties of $V$ to various spectral properties of the perturbed operator $H_0+V$. The structures of the discrete spectrum and the embedded eigenvalues are analysed jointly with the existence of limiting absorption principles in a unified framework. Our results are based on a suitable combination of complex scaling techniques, resonance theory and positive commutators methods. Various results scattered throughout the literature are recovered and extended. For illustrative purposes, the case of the one-dimensional discrete Laplacian is emphasized.
\end{abstract}
\bigskip

\noindent
\textbf{Mathematics subject classification 2010}: 47B37,47B47,47A10,47A11,47A55,47A56.

\bigskip

\noindent
\textbf{Keywords: Discrete spectrum, Resonances, Limiting Absorption Principle, Complex scaling.}

\tableofcontents

\section{Introduction}\label{intro}

The spectral theory of non-selfadjoint perturbations of selfadjoint operators has made significant advances in the last decade, partly due to its impact and applications in the physical sciences. For an historical panorama on the relationhips between non-selfadjoint operators and quantum mechanics, we refer the reader to e.g. \cite{bgsz} and references therein. The existence of Limiting Absorption Principles (LAP) and the distributional properties of the discrete spectrum have been two of the main issues considered in this field, among the many results obtained so far.

LAP have been evidenced in various contexts, mostly based on the existence of some positive commutators. \cite{royer1,royer2} and \cite{gb} have developed respectively non-selfadjoint versions of the regular Mourre theory and the weak Mourre theory. The analysis of the discrete spectrum has been carried out from a qualitative point of view in \cite{pav1, pav2, ego1, ego2, bog}, where the main focus is set on the existence and structure of the set of limit points. Quantitative approaches based on Lieb-Thirring inequalities and on the identification of the distribution law for the discrete spectrum around the limit points, have also been developed, see e.g. \cite{bru1, chr, goku, bor, dem, fr, gol, ghk, lap, wan} and references therein. These results have been established mostly for specific models like Schr\"odinger operators and Jacobi matrices. Extensions to non-selfadjoint perturbations of magnetic Hamiltonians, Dirac and fractional Schr\"odinger operators have also been studied in \cite{dio14, dio17} and \cite{cue} respectively. Specific developments concerning non-selfadjoint rank one perturbations are considered in \cite{arltse, dudvdo, kis}. In general, the techniques used depend strongly on the model considered. 

The present paper is an attempt to analyze more systematically the issue of the spectral properties of non-selfadjoint compact perturbations of Hamiltonians exhibiting some absolutely continuous spectrum. In particular, we show various relationships between the regularity of the perturbation and, first the properties of the discrete spectrum, second the existence of some LAP. With a view towards discretized models of non-selfadjoint operators \cite{bgsz}, we have focused our discussion on non-selfadjoint compact perturbations of the discrete Schr\"odinger operator in dimension one. For a study of non-selfadjoint discrete Schr\"odinger operator in random regimes, we refer the reader to \cite{dav, dav02}.

We have articulated this paper on the two following axes.

Firstly, for highly regular compact perturbations, we adapt various complex scaling arguments to our non-selfadjoint setting and show that the limit points of the discrete spectrum are necessarily contained in the set of thresholds of the unperturbed Hamiltonian (Theorem \ref{t:tda}). We also exhibit some LAP on some suitable subintervals of the essential spectrum away from these thresholds (Theorem \ref{t:tda+}). Since for perturbations displaying some exponential decay, the method of characteristic values allows to establish the absence of resonance in some neighborhood of the thresholds (Theorem \ref{t1}), we conclude on the finiteness of the discrete spectrum in this case (Theorem \ref{t2}). Let us mention that a variation of this result was previously obtained within the more restricted framework of Jacobi matrices with slightly weaker decay assumptions (see \cite[Theorem 1]{ego1}). Our approach extends that result in two ways: it applies to perturbations exhibiting a full off-diagonal structure and also proves the existence of a LAP, which seems to be new.

Secondly, for mildly regular compact perturbations, we show that the set of embedded eigenvalues away from the thresholds is finite and exhibits more restricted versions of LAP (Theorems \ref{c11sign} and \ref{c11signweak}). It is actually an application of a non-selfadjoint version of the Mourre theory developed under optimal regularity condition (Theorem \ref{mourrensa} and Corollary \ref{mourrensa1}).

The paper is structured as follows. The main concepts and main results are introduced in Section \ref{results} jointly with the model. The complex scaling arguments for non-selfadjoint operators are developed in Section \ref{dilatation} and culminate with the proof of Theorems \ref{t:tda} and \ref{t:tda+}. Section \ref{resonances} is focused on the method of characteristic values and the proof of Theorem \ref{t1}. The development of a Mourre theory for non-selfadjoint operators under optimal regularity conditions is exposed in Section \ref{mourre}, ending with the proofs of Theorem \ref{mourrensa}, Corollary \ref{mourrensa1}, Theorems \ref{c11sign} and \ref{c11signweak}. It is self-contained and can be read independently. Finally, various results related to the concepts of regularity used throughout the text are summarized and illustrated in Section \ref{regclass}. 

\medskip

\noindent
{\bf Notations:} Throughout this paper, ${\mathbb Z}$, ${\mathbb Z}_+$ and ${\mathbb N}$ denote the sets of integral numbers, non-negative and positive 
integral numbers respectively. For $\delta \geq 0$, we define the weighted Hilbert spaces
$$
\ell_{\pm \delta}^{2}({\mathbb Z}) := \big\lbrace x \in {\mathbb C}^{\mathbb Z} : \sum_{n \in {\mathbb Z}} {\rm e}^{\pm \delta \vert n \vert} |x(n)|^2 < \infty \big\rbrace ,
$$
and observe the following inclusions: $\ell_{\delta}^{2}({\mathbb Z}) \subset \ell^{2}({\mathbb Z}) 
\subset \ell_{- \delta}^{2}({\mathbb Z})$. In particular, $\ell^{2}({\mathbb Z}) = \ell_{0}^{2}({\mathbb Z})$. For $\delta > 0$, we define the multiplication operators $W_\delta: \ell_\delta^2({\mathbb Z}) \rightarrow 
\ell^{2}({\mathbb Z})$ by $\left( W_\delta x \right)(n) := {\rm e}^{(\delta/2) \vert n \vert} x(n)$, and $W_{- \delta}: \ell^2({\mathbb Z}) 
\rightarrow \ell_{- \delta}^2({\mathbb Z}) $ by $\left( W_{- \delta} x \right)(n) := {\rm e}^{- (\delta/2) \vert n \vert} x(n)$. We denote by 
$(e_n)_{n\in {\mathbb Z}}$ the canonical orthonormal basis of $\ell^{2}({\mathbb Z})$.

${\mathscr H}$ will denote a separable Hilbert space, $\mathcal{B}({\mathscr H})$ and ${\rm GL}({\mathscr H})$ the algebras of bounded linear operators 
and boundedly invertible linear operators acting on ${\mathscr H}$. $\sinf({\mathscr H})$ and $\spp({\mathscr H})$, $p\geq 1$, stand for the ideal of 
compact operators and the Schatten classes. In particular, $\sd({\mathscr H})$ is the ideal of Hilbert-Schmidt operators acting on ${\mathscr H}$. For any 
operator $H \in \mathcal{B}({\mathscr H})$, we denote its numerical range by 
${\mathcal N}(H) := \big \lbrace \bra H \psi,\psi \ket ; \psi \in {\mathscr H} , \| \psi \| = 1 \big \rbrace$, its spectrum by $\sigma (H)$, its resolvent set by $\rho (H)$, 
the set of its eigenvalues by ${\mathcal E}_{\rm p} (H)$. We also write
\begin{equation*}
\re(H) = \frac{1}{2}(H+H^*)\quad, \quad \im(H) := \frac{1}{2i} (H-H^*).
\end{equation*}
We define its point spectrum as the closure of the set of its eigenvalues and write it $\sigma_{\rm pp} (H)=\overline{{\mathcal E}_{\rm p} (H)}$. Finally, if $A$ 
is a selfadjoint operator acting on ${\mathscr H}$, we write $\bra A \ket := \sqrt{A^2+1}$.

For two  subsets $\Delta_1$ and $\Delta_2$ of ${\mathbb R}$, we denote as a subset of ${\mathbb C}$, $\Delta_1 + i \Delta_2 := \big\lbrace z \in {\mathbb C} : \re(z) \in \Delta_1, \im(z) \in \Delta_2 \big\rbrace$. For $R > 0$ and ${\zeta_0} \in {\mathbb C}$, we set $D_R({\zeta_0}) := \big\lbrace z \in \mathbb C : |z - {\zeta_0}| < R \big\rbrace$ and $D_R^\ast({\zeta_0}) := D_R({\zeta_0}) \setminus \lbrace {\zeta_0} \rbrace$. In particular, ${\mathbb D}=D_1(0)$ denotes the open unit disk of the complex plane. For $\Omega \subseteq {\mathbb C}$ an open domain and ${\mathbb B}$ a Banach space, ${\rm Hol}(\Omega,\mathbb{B})$ denotes the set of holomorphic functions from $\Omega$ with values in ${\mathbb B}$. We will adopt the following principal determination of the complex square root:
$\sqrt{\cdot} : {\mathbb C} \setminus (-\infty,0] \longrightarrow \big\lbrace z \in {\mathbb C} : \im(z) \ge 0 \big \rbrace$ and we set ${\mathbb C}^+ := \big\lbrace z \in {\mathbb C} : \im(z) > 0 \big \rbrace$. By $0 < \vert k \vert <\!\!<1$, we mean that $k \in {\mathbb C}\setminus \{0\}$ is sufficiently close to $0$.

The discrete Fourier transform ${\mathcal F} : \ell^2({\mathbb Z}) \rightarrow {\rm L}^2({\mathbb T})$, where ${\mathbb T} := {\mathbb R} /2\pi {\mathbb Z}$, 
is defined for any $x \in \ell^2({\mathbb Z})$ and $f \in {\rm L}^2({\mathbb T})$ by
\begin{equation}\label{eq1,3}
({\mathcal F} x)(\alpha) := (2\pi)^{-\frac{1}{2}} \sum_{n \, \in \, {\mathbb Z}} e^{-in\alpha} x(n), \quad
( {\mathcal F}^{-1} f )(n) := (2\pi)^{-\frac{1}{2}} \int_{{\mathbb T}} e^{in\alpha} f(\alpha) d\alpha.
\end{equation}
The operator ${\mathcal F}$ is unitary. For any bounded (resp. selfadjoint) operator $L$ acting on $\ell^2({\mathbb Z})$, we define the bounded (resp. selfadjoint) 
operator $\widehat L$ acting on ${\rm L}^2({\mathbb T})$ by
\begin{equation}\label{eq:mr2}
\widehat L := {\mathcal F} L {\mathcal F}^{-1}.
\end{equation}


\section{Model and Main results}\label{results}

\subsection{The model}

\paragraph{The unperturbed Hamiltonian.}
We denote by $\lao$ the one-dimensional Schr\"odinger operator defined on $\ell^2({\mathbb Z})$ by
\begin{equation}\label{eq1,1}
(\lao x)(n) := 2x(n) - x(n + 1) - x(n - 1).
\end{equation}
$\lao$ is a bounded selfadjoint operator.
We deduce that $\widehat \lao = {\mathcal F}\lao {\mathcal F}^{-1}$ is the multiplication operator on ${\rm L}^2({\mathbb T})$ by the function $f$ where
\begin{equation}\label{eq6,2}
f(\alpha) := 2 - 2\cos \alpha = 4 \sin^2 \frac{\alpha}{2}, \quad \alpha \in [-\pi,\pi].
\end{equation}
It follows that $\sigma (\lao) = \sigma_{\textup{ac}} (\lao) = [0,4]$, where $\lbrace 0,4 \rbrace$ are the thresholds. For $z\in {\mathbb C}\setminus [0,4]$, we write $R_0(z) = (\lao -z)^{-1}$. One deduces that for any $x\in \ell^2({\mathbb Z})$
\begin{equation*}
({\mathcal F} R_0(z) x)(\alpha) = \frac{({\mathcal F} x)(\alpha)}{f(\alpha ) - z}, \qquad z \in {\mathbb C} \setminus [0,4].
\end{equation*}
For $z \in {\mathbb C} \setminus [0,4]$ small enough, we can introduce the change of variables 
\begin{equation*}
z = 4 \sin^2 \frac{\phi}{2}, \qquad \im (\phi) > 0.
\end{equation*}
In this case, the resolvent $R_0(z)$ is represented by the convolution with the function 
\begin{equation}\label{R_0}
R_0(z,n) = \frac{i{\rm e}^{i\phi \vert n \vert}}{2 \sin \phi} = \frac{i{\rm e}^{i \vert n \vert 2\arcsin \frac{\sqrt{z}}{2}}}{\sqrt{z} \sqrt{4 - z}}, \qquad \arcsin \frac{\sqrt{z}}{2} 
\underset{ z = 0}{\sim} \frac{\sqrt{z}}{2}.
\end{equation}

\paragraph{The perturbation.}
For any bounded operator $V$ acting on $\ell^2({\mathbb Z})$, we define the perturbed operator
\begin{equation}\label{eq1,5}
\la := \lao + V.
\end{equation}
If $\big\lbrace V(n,m) \big\rbrace_{(n,m) \in {\mathbb Z}^2}$ denotes the matrix representation of the operator $V$ in the canonical orthonormal basis of $\ell^2({\mathbb Z})$, 
for $x \in \ell^2({\mathbb Z})$ the sequence $Vx \in \ell^2({\mathbb Z})$ is given by
\begin{equation}\label{eq1,6}
(Vx)(n) = \sum_{m \in {\mathbb Z}} V(n,m)x(m) \quad \text{for any } n\in {\mathbb Z}. 
\end{equation}
If $V$ is represented by a diagonal matrix (i.e. $V(n,m)=V(n,m) \delta_{nm}$), we write $V(n) := V(n,n)$ and $V$ is just 
the multiplicative operator defined by $(Vx)(n) = V(n)x(n)$, $x \in \ell^2({\mathbb Z})$, $n\in {\mathbb Z}$.

For further use, let us conclude this paragraph with the following observation (see Subsection \ref{ss:res} for more details). Let 
$J : \ell^2({\mathbb Z}) \rightarrow \ell^2({\mathbb Z})$ be the unitary operator defined by $(J x)(n) := (-1)^n x(n)$, $x\in \ell^2({\mathbb Z})$, and define
\begin{equation}\label{eq1,10}
V_J := J V J^{-1}.
\end{equation}
Then, $J^2 = I$ and the matrix representation of the operator $V_{J}$ in the canonical orthonormal basis of $\ell^2({\mathbb Z})$ satisfies
$$
V_J (n,m) = (-1)^{n+m} V(n,m), \quad (n,m) \in {\mathbb Z}^2.
$$
In particular, if $V$ is a diagonal matrix, then $V_J = V$. Calculations yield (see e.g. \cite[Eq. (A.1)]{ito}): $J \la J^{-1} = -H_{-V_J} + 4$ so that
\begin{equation}\label{eq1,111}
J ( \la - z )^{-1}J^{-1}= -( H_{-V_J} - (4 - z) )^{-1}.
\end{equation}

\paragraph{The conjugate operator.}
Finally, let us define the auxiliary operator $A_0$, conjugate to $\lao$ in the sense of the Mourre theory and acting on $\ell^2({\mathbb Z})$ by
\begin{equation}\label{A0}
A_0 := {\mathcal F}^{-1} \widehat A_0 {\mathcal F},
\end{equation}
where the operator $\widehat A_0$ is (abusing notation) the unique selfadjoint extension of the symmetric operator $$\sin \alpha (-i\partial_\alpha) + (-i\partial_\alpha) \sin \alpha$$ defined on $C^{\infty}({\mathbb T})$.

\begin{rem} Define the position operator $X$ and shift operator $S$ on span $\{e_n : n\in {\mathbb Z}\}$ by $(Xx)(n)=n x(n)$ and $(Sx)(n)=x(n+1)$. $A_0$ is also the unique selfadjoint extension of the symmetric operator $\im (S) X+ X\im (S)$ defined on span $\{e_n : n\in {\mathbb Z}\}$.
\end{rem}

\subsection{Essential and discrete spectra}

Let $L$ be a closed operator acting on a Hilbert space ${\mathscr H}$. If $z$ is an isolated point of $\sigma(L)$, let 
$\gamma$ be a small positively oriented circle centered at $z$ and separating $z$ from the other components of $\sigma(L)$. The point $z$ is said 
to be a discrete eigenvalue of $L$ if its algebraic multiplicity
\begin{equation}\label{eq1,00}
\textup{m}(z) := \text{rank} \left( \frac{1}{2i\pi} \int_{\gamma} (L - \zeta)^{-1} d\zeta \right)
\end{equation}
is finite. Note that $\mathrm{m} \, (z) \geq \mathrm{dim} \, ( \text{Ker} (L - z) )$, the geometric multiplicity of $z$. Equality holds if $L$ is 
normal (see e.g. \cite{kato}). We define the discrete spectrum of $L$ by
\begin{equation}\label{eq1,01}
\sigma_{\text{disc}}(L) := \big\lbrace z \in \sigma(L) : z \hspace*{0.1cm} \textup{is a discrete eigenvalue of $L$} \big\rbrace.
\end{equation}
\noindent
We recall that a closed linear operator is a Fredholm operator if its range is closed, and both its kernel and cokernel are finite-dimensional. We define 
the essential spectrum of $L$ by
\begin{equation}\label{eq1,02}
\sigma_{\text{ess}}(L) := \big\lbrace z \in {\mathbb C} : 
\textup{$L - z$ \textup{is not a Fredholm operator}} \big\rbrace.
\end{equation} 
It is a closed subset of $\sigma(L)$.

\begin{rem}
If $L$ is selfadjoint, $\sigma(L)$ can be decomposed always as a disjoint union:
$\sigma(L) = \sigma_{{\rm ess}}(L) \bigsqcup \sigma_{{\rm disc}}(L)$.
If $L$ is not selfadjoint, this property is not necessarily true. Indeed, consider for instance the shift operator 
$S: \ell^2({\mathbb Z}_+) \rightarrow \ell^2({\mathbb Z}_+)$ defined by $(S x)(n) := x(n + 1)$. We have 
\begin{equation*}
\sigma(S) = \big\lbrace z \in {\mathbb C} : \vert z \vert \leq 1 \big\rbrace, \qquad \sigma_{{\rm ess}}(S) = \big\lbrace z \in {\mathbb C} : \vert z \vert = 1 \big\rbrace,
\qquad \sigma_{{\rm disc}}(S) = \emptyset.
\end{equation*}
\end{rem}

However, in the case of the operator $H_V$, we have the following result:

\begin{theo} 
Let $V$ belongs to $\sinf(\ell^2({\mathbb Z}))$. Then, $\sigma(\la) = \sigma_{{\rm ess}}(\la) \bigsqcup \sigma_{{\rm disc}}(\la)$, where
$\sigma_{{\rm ess}} (\la) = \sigma_{{\rm ess}} (\lao) = [0,4]$.
The possible limit points of $\sigma_{{\rm disc}}(\la)$ are contained in $\sigma_{{\rm ess}} (\la)$.
\end{theo}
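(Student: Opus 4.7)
The plan is to split the statement into three independent claims: the identification of the essential spectrum, the fact that every point of $\sigma(\la) \setminus \sigma_{{\rm ess}}(\la)$ is a discrete eigenvalue, and the location of the accumulation points of $\sigma_{{\rm disc}}(\la)$. The main tool for the first will be Weyl's stability theorem for the Fredholm essential spectrum, and for the last two the analytic Fredholm theorem applied to the compact-operator-valued map $z \mapsto R_0(z) V$ on the open connected set $\Omega := \bc \setminus [0,4]$.

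For the first claim I would invoke the classical fact that the essential spectrum defined through the Fredholm property (as in \eqref{eq1,02}) is invariant under compact perturbations of any closed operator: since $V \in \sinf(\ell^2(\bz))$, the operator $\la - z$ is Fredholm if and only if $\lao - z$ is, hence $\sigma_{{\rm ess}}(\la) = \sigma_{{\rm ess}}(\lao)$. Because $\lao$ is selfadjoint with purely absolutely continuous spectrum $[0,4]$, no point of $[0,4]$ is isolated in $\sigma(\lao)$, so $\sigma_{{\rm ess}}(\lao) = [0,4]$.

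For the second claim I would use, on $\Omega$, the factorisation
\begin{equation*}
\la - z = (\lao - z)\bigl( I + R_0(z) V \bigr),
\end{equation*}
together with the fact that $\Omega$ is open and connected, $z \mapsto R_0(z) V$ is holomorphic from $\Omega$ into $\sinf(\ell^2(\bz))$, and $\| R_0(z) V \| \to 0$ as $|z| \to \infty$, so $I + R_0(z) V$ is invertible for $|z|$ large. The analytic Fredholm theorem then says that either $I + R_0(z) V$ is invertible on all of $\Omega$, which is excluded here, or the set $\Sigma := \{ z \in \Omega : I + R_0(z) V \text{ is not invertible} \}$ is discrete in $\Omega$ and $z \mapsto (I + R_0(z) V)^{-1}$ is meromorphic on $\Omega$ with poles of finite rank. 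Thus $\sigma(\la) \cap \Omega = \Sigma$ consists of isolated points. For each $z_0 \in \Sigma$, expanding $(\la - z)^{-1} = (I + R_0(z) V)^{-1} R_0(z)$ in a Laurent series around $z_0$ and noting that the principal part of $(I + R_0(z) V)^{-1}$ factorises through a finite-dimensional subspace shows that the Riesz projection defined by \eqref{eq1,00} has finite rank; equivalently, $z_0 \in \sigma_{{\rm disc}}(\la)$. Combined with the trivial inclusion $\sigma_{{\rm disc}}(\la) \subset \sigma(\la) \setminus \sigma_{{\rm ess}}(\la)$, this yields the disjoint decomposition.

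The third claim is then immediate: since $\sigma_{{\rm disc}}(\la) \subset \Omega$ is discrete in the open set $\Omega$, all its accumulation points in $\bc$ necessarily lie in $\bc \setminus \Omega = [0,4] = \sigma_{{\rm ess}}(\la)$. The main technical step I expect to require some care is the passage from the finite-rank principal part of $(I + R_0(z) V)^{-1}$ at $z_0$ to the finite rank of the Riesz projection for $\la$, i.e.\ the verification that a pole of the sandwiched resolvent inherits a finite algebraic multiplicity in the sense of \eqref{eq1,00}; this is a standard consequence of the analytic Fredholm theorem but must be made explicit here because $\la$ is non-selfadjoint, so geometric and algebraic multiplicities need not agree.
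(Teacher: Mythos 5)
Your proposal is correct, and it follows the same two-step structure as the paper: Weyl's theorem for the invariance of the Fredholm essential spectrum under compact perturbation, plus an argument that the spectrum in $\mathbb{C}\setminus[0,4]$ consists of discrete eigenvalues accumulating only on $[0,4]$. The only difference is that the paper simply cites \cite[Theorem 2.1, p.~373]{goh} for the second step, whereas you prove the needed instance by hand via the factorisation $\la - z = (\lao - z)\bigl(I + R_0(z)V\bigr)$ and the analytic Fredholm theorem on $\Omega = \mathbb{C}\setminus[0,4]$; this works because $\Omega\subset\rho(\lao)$, and your care about passing from the finite-rank principal part of $(I+R_0(z)V)^{-1}$ to the finite rank of the Riesz projection in \eqref{eq1,00} is exactly the point the abstract citation encapsulates. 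Both routes yield the same conclusion; yours is self-contained but tied to the availability of $R_0(z)$ on all of $\Omega$, while the cited theorem applies directly to the Fredholm-valued analytic family $\la - z$ on any component of the complement of the essential spectrum meeting the resolvent set.
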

\begin{proof}
It follows from Weyl's criterion on the invariance of the essential spectrum under compact perturbations and from \cite[Theorem 2.1, p. 373]{goh}. See also \cite[Corollary 2, p.113]{RS4}.
\end{proof}

The reader will note that if $V$ is compact and selfadjoint, then $\sigma_{{\rm disc}}(\la) \subset (-\infty, 0)\cup (4, \infty)$ and the set of limit points of 
$\sigma_{{\rm disc}}(\la)$ is necessarily contained in $\{0,4\}$. If $V$ is non-selfadjoint, then $\sigma_{{\rm disc}}(\la)$ may contain non-real numbers and the 
set of limit points may be considerably bigger, see e.g. \cite{bog} for the case of Laplace operators. However, we show in Theorem \ref{t:tda} that this cannot 
be the case if $V$ satisfies some additional regularity conditions. 

\begin{de}\label{dcl} 
Let ${\mathscr H}$ be a Hilbert space, $R > 0$ and $A$ be a selfadjoint operator defined on ${\mathscr H}$. An 
operator $B \in \mathcal{B}({\mathscr H})$ belongs to the class ${\mathcal A}_R(A)$ if the map $\theta \mapsto {\rm e}^{i \theta A} B {\rm e}^{-i \theta A}$, 
defined for $\theta \in \R$ extends holomorphically on $D_R(0)$ so that the corresponding extension belongs to  
${\rm Hol} ( D_R(0),\mathcal{B}({\mathscr H}) )$. In this case, we write $B \in {\mathcal A}_R(A)$ and ${\mathcal A}(A) := \cup_{R>0} {\mathcal A}_R(A)$ is the collection
of bounded operators for which a complex scaling w.r.t. $A$ can be performed. 
\end{de}

\begin{rem} 
\begin{itemize}
\item[(a)] If $B \in \sinf({\mathscr H}) \cap {\mathcal A}_R(A)$ for some selfadjoint operator $A$ and some $R>0$, then the holomorphic extension of the map $\theta \mapsto {\rm e}^{i \theta A} B {\rm e}^{-i \theta A}$ belongs actually to ${\rm Hol} ( D_R(0),\sinf({\mathscr H}) )$, see e.g. \cite[Lemma 5, Section XIII.5]{RS4}.
\item[(b)] The main properties of the classes ${\mathcal A}_R(A)$ are recalled in Section \ref{regclass}. In particular, in our case, we have $V\in {\mathcal A}_R(A_0)$ if 
and only if $\widehat V\in {\mathcal A}_R (\widehat A_0)$.
\end{itemize}
\end{rem}

We have:
\begin{theo}\label{t:tda} 
If $V \in \sinf ( \ell^2({\mathbb Z}) ) \cap {\mathcal A}(A_0)$, then the possible limit points of $\sigma_{\textup{disc}} (\la)$ belong to $\lbrace 0,4 \rbrace$.
\end{theo}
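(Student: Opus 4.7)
The plan is to adapt the Aguilar--Balslev--Combes complex scaling method to the non-selfadjoint setting, using $A_0$ as the dilation generator. I would set $U(\theta):=\e^{i\theta A_0}$, unitary for $\theta\in\R$, and define $H_V^\theta:=U(\theta)H_V U(\theta)^{-1}=H_0^\theta+V^\theta$, with $H_0^\theta$ and $V^\theta$ similarly defined. By hypothesis and the remark following Definition~\ref{dcl}, $\theta\mapsto V^\theta$ extends holomorphically on some disk $D_R(0)$ with values in $\sinf(\ell^2(\Z))$. In the Fourier picture, the principal symbol of $\widehat A_0$ is the vector field $2\sin\vartheta\,\partial_\vartheta$ on $\bt$, whose fixed points are $\{0,\pi\}$---precisely the preimages under $f$ of the thresholds. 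Integrating yields the explicit flow $\tan(\phi_\theta(\vartheta)/2)=\e^{2\theta}\tan(\vartheta/2)$, so $\widehat H_0^\theta$ is the multiplication operator by $f_\theta(\vartheta):=f(\phi_\theta(\vartheta))$, and one checks that $f_\theta$ extends jointly holomorphically in $(\theta,\vartheta)$ for $|\im\theta|$ small, yielding a holomorphic family of bounded operators. The essential spectrum $\Gamma_\theta:=\sigma_{\textup{ess}}(H_0^\theta)$ coincides with the image of $\bt$ under $f_\theta$. Writing $w:=\e^{4\theta}\tan^2(\vartheta/2)$, one has $f_\theta=4w/(1+w)$, so that $\Gamma_\theta$ is obtained from $\Gamma_0=[0,4]$ by a M\"obius deformation: for $\theta=i\tau$ with $\tau\in\R^\ast$ small, $\Gamma_\theta$ is a circular arc joining $0$ and $4$, strictly detached from the open interval $(0,4)$. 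By Weyl-type stability under compact perturbation, $\sigma_{\textup{ess}}(H_V^\theta)=\Gamma_\theta$, and analytic Fredholm theory ensures that $\sigma(H_V^\theta)\setminus\Gamma_\theta$ consists only of isolated eigenvalues of finite algebraic multiplicity, which cannot accumulate outside $\Gamma_\theta$.

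The second step will be to establish the persistence of the discrete eigenvalues of $H_V$ as eigenvalues of $H_V^\theta$. Given $\lambda\in\sigma_{\textup{disc}}(H_V)$, I would pick a small circle $\gamma$ around $\lambda$ contained in $\rho(H_V)\cap(\C\setminus[0,4])$; by continuity, $\gamma\subset\rho(H_V^\theta)\setminus\Gamma_\theta$ for $|\theta|$ small enough, so the Riesz projection
\[
P(\theta):=\frac{1}{2i\pi}\oint_\gamma (z-H_V^\theta)^{-1}\,dz
\]
depends holomorphically on $\theta$ in some disk $D_{R'}(0)$. Since $U(\theta)$ is unitary for real $\theta$, $H_V^\theta$ is similar to $H_V$ and $\textup{rank}\,P(\theta)=\textup{m}(\lambda)$ for $\theta\in D_{R'}(0)\cap\R$; holomorphy then forces $P(\theta)$ to have constant rank $\textup{m}(\lambda)$ on the entire disk, so $\lambda$ remains a discrete eigenvalue of $H_V^\theta$ of the same algebraic multiplicity whenever $\lambda\notin\Gamma_\theta$.

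To conclude, I would argue by contradiction: assume $z_0\in(0,4)$ is a limit point of $\sigma_{\textup{disc}}(H_V)$ and pick a sequence $(\lambda_n)\subset\sigma_{\textup{disc}}(H_V)\setminus\{z_0\}$ with $\lambda_n\to z_0$. Choosing $\theta=i\tau$ with $\tau>0$ small enough that an open neighborhood $\mathcal{O}$ of $z_0$ is disjoint from $\Gamma_{i\tau}$, the persistence argument (applied to each $\lambda_n$ with its own contour, all contours uniform in $\tau$) would promote all but finitely many $\lambda_n$ to discrete eigenvalues of $H_V^{i\tau}$ lying in $\mathcal{O}$. This contradicts the fact that isolated eigenvalues of $H_V^{i\tau}$ cannot accumulate in $\mathcal{O}\subset\C\setminus\Gamma_{i\tau}$, and the theorem follows. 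The main obstacle will be the joint holomorphic extension and explicit geometric description of $\Gamma_\theta$---verifying that the deformed essential spectrum meets $[0,4]$ only at the thresholds $\{0,4\}$---together with the careful Riesz-projection argument showing that discrete eigenvalues of $H_V$ truly survive the non-unitary deformation $\theta\in i\R^\ast$, even when they cluster against the original essential spectrum $[0,4]$.
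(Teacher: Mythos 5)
Your overall strategy is exactly the paper's: scale with $A_0$, compute that $\sigma_{\rm ess}(H_0^\theta)$ is a circular arc $\Gamma_\theta$ joining $0$ and $4$ that leaves the real axis for $\im\theta\neq 0$, use compactness of $V^\theta$ together with analytic Fredholm/Weyl stability to get that $\sigma(H_V^\theta)\setminus\Gamma_\theta$ is discrete with accumulation only on $\Gamma_\theta$, and then derive a contradiction from persistence of the discrete eigenvalues of $H_V$ under the deformation. Steps one to three are sound and match Propositions \ref{p6,2}, \ref{p6,3}, \ref{pdr1} and \ref{pdr2}. The problem is the persistence step, which is the heart of the matter and which you leave unproved. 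Your mechanism is: fix a contour $\gamma_n$ around $\lambda_n$ inside $\rho(H_V)$ and invoke norm continuity of $\theta\mapsto H_V^\theta$ to get $\gamma_n\subset\rho(H_V^\theta)$ for $|\theta|$ small. But the admissible range of $\theta$ produced this way is controlled by $\sup_{z\in\gamma_n}\|(z-H_V)^{-1}\|$, hence by ${\rm dist}(\gamma_n,\sigma(H_V))$; since $\lambda_n\to z_0\in(0,4)\subset\sigma_{\rm ess}(H_V)$, the contours must shrink and the admissible $\tau$ shrinks to $0$ with $n$. So the parenthetical ``all contours uniform in $\tau$'' is precisely the assertion that fails with this argument, and you acknowledge as much in your closing sentence; as written the proof does not reach a single fixed $\tau>0$ that promotes infinitely many $\lambda_n$, so the contradiction is not obtained. (A smaller point: constancy of ${\rm rank}\,P(\theta)$ only says the spectrum of $H_V^\theta$ inside $\gamma$ has total multiplicity $\textup{m}(\lambda)$, not that $\lambda$ itself remains an eigenvalue; this would suffice for your contradiction, but not for the persistence statement as you phrase it.)

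The paper closes this gap by a different persistence argument (Proposition \ref{t:discrete:sp}), which avoids any uniformity requirement. For a fixed discrete eigenvalue $\lambda$ of $\widehat H_V(\theta_0)$, Kato's analytic perturbation theory gives finitely many eigenvalue branches $\lambda_j(\theta)$ near $\lambda$ for $\theta$ near $\theta_0$; unitary equivalence under real translations $\theta\mapsto\theta+\varphi$, $\varphi\in\R$, forces $\lambda_j(\theta)=\lambda$ on a real segment, hence identically by analyticity. The eigenvalues therefore do not move at all, locally in $\theta$. Globalizing along the curves $\gamma^{\pm}(t)=t\theta$ via the dichotomy of \cite[Problem 76, Section XIII]{RS4} (either $\lambda$ persists up to $t=1$, or $\lambda\in\sigma_{\rm ess}(\widehat H_V(t\theta))$ for some $t$), and noting that for $\pm\im\theta>0$ the arcs $\Gamma_{t\theta}$ stay inside the half-strips $A_0^{\pm}$ and so never reach a point of $S_0^{\mp}$, one obtains the set identity $\sigma_{\rm disc}(\widehat H_V(\theta))\cap S_0^{\mp}=\sigma_{\rm pp}(\widehat H_V)\cap S_0^{\mp}$ for every $\theta$ in the disk with $\pm\im\theta>0$. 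The accumulation argument is then applied directly to $\sigma_{\rm disc}(\widehat H_V(\theta))$: any limit point of the original discrete spectrum lying in $[0,4]$ must lie in $\sigma_{\rm ess}(\widehat H_V(\theta))\cap[0,4]=\Gamma_\theta\cap[0,4]=\{0,4\}$. If you replace your fixed-contour continuity claim by this local-constancy plus continuation-along-the-path argument, your proof becomes complete; without it, the key step is missing.
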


Our next result, Theorem \ref{t:tda+}, provides additional informations on the essential spectrum $\sigma_{\textup{ess}} (\la) = [0,4]$. We recall that:
\begin{de} Let $A$ be a self-adjoint operator defined on the Hilbert space ${\mathscr H}$. A vector $\varphi \in {\mathscr H}$ is analytic w.r.t. $A$ if $\varphi \in \cap_{k\in {\mathbb N}} {\mathcal D} (A^k)$ and for some $R>0$, the power series 
$$
\sum_{k = 0}^\infty \frac{\vert \theta \vert^k}{k!} \big\Vert A^k \varphi \big\Vert 
$$
converges for any $\theta \in D_R(0)$.
\end{de}

Later on, we will distinguish among the vectors which are analytic w.r.t. $A_0$, the vectors of the linear subspace span $\{e_n; n\in {\mathbb Z}\}$ (see e.g. Lemma \ref{le2}) and more generally, the vectors belonging to $\ell_{\delta}^{2}({\mathbb Z})$ for some $\delta >0$ (see Corollary \ref{analyticv}).

\begin{theo}\label{t:tda+} 
Let $V \in \sinf ( \ell^2({\mathbb Z}) ) \cap {\mathcal A}(A_0)$. Then, there exists a discrete subset ${\mathcal D} \subset (0,4)$ 
whose only possible limits points belong to $\{0,4\}$ and for which the following holds: given any relatively compact interval $\Delta_0$, 
$\overline{\Delta_0} \subset (0,4)\setminus {\mathcal D}$, there exists $\delta_0 >0$ such that for any analytic vectors $\varphi$ and $\psi$ w.r.t. $A_0$,
\begin{gather*}
\sup_{z\in \Delta_0 + i(-\delta_0,0)} |\bra \varphi, (z-H_V)^{-1} \psi \ket | < \infty , \\
\sup_{z\in \Delta_0 + i(0,\delta_0)} |\bra \varphi, (z-H_V)^{-1} \psi \ket | < \infty.
\end{gather*}
\end{theo}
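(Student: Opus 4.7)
The approach is Aguilar--Balslev--Combes complex scaling, adapted to the non-selfadjoint setting. Since $V \in \mathcal{A}_R(A_0)$ for some $R>0$, the family $V(\theta) := e^{i\theta A_0}\,V\,e^{-i\theta A_0}$ extends from the real axis to a holomorphic $\sinf$-valued function on $D_R(0)$ by the preceding remark. Writing $H_0(\theta) := e^{i\theta A_0}\,\lao\,e^{-i\theta A_0}$ and $H(\theta) := H_0(\theta) + V(\theta)$, the plan is to exploit the fact that as $\theta$ leaves the real axis, the essential spectrum of $H_0(\theta)$ rotates off $[0,4]$, opening a region near $(0,4)$ in which the resolvent of $H(\theta)$ is bounded.

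First I would identify $H_0(\theta)$ on the Fourier side. Since $\widehat{A}_0 = 2\sin\vartheta(-i\partial_\vartheta) - i\cos\vartheta$ is a symmetric first-order operator whose classical flow $\phi_t$ solves $\dot\vartheta = 2\sin\vartheta$ (explicitly $\tan(\phi_t(\vartheta)/2) = e^{2t}\tan(\vartheta/2)$, extending to complex times), the conjugate $\widehat{H_0(\theta)}$ is multiplication by $f \circ \phi_\theta$. For $\theta = i\eta$ with $\eta \in (0,R)$ small, the essential spectrum of $H_0(i\eta)$ is the curve
\[
\Gamma_\eta := \{f(\phi_{i\eta}(\vartheta)) : \vartheta \in [-\pi,\pi]\},
\]
which satisfies $f(\phi_{i\eta}(\vartheta)) = f(\vartheta) + 4i\eta\sin^2\vartheta + O(\eta^2)$; hence $\Gamma_\eta$ lies in the closed upper half-plane, meets $\mathbb{R}$ only at the thresholds $\{0,4\}$, and its imaginary part above any relatively compact $\Delta_0 \subset (0,4)$ is bounded below by $c_{\Delta_0}\eta > 0$ (since $\sin^2\vartheta$ is bounded away from $0$ on the relevant preimage). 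By Weyl's theorem applied to the compact perturbation $V(i\eta)$, $\sigma_{\mathrm{ess}}(H(i\eta)) = \Gamma_\eta$, while the remaining spectrum consists of discrete eigenvalues accumulating only on $\Gamma_\eta$.

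Next, fix $\eta_0 \in (0,R)$ small and set $\mathcal{D} := (0,4) \cap \sigma(H(i\eta_0))$. Because the real accumulation points of the eigenvalues of $H(i\eta_0)$ can only lie in $\Gamma_{\eta_0} \cap \mathbb{R} = \{0,4\}$, the set $\mathcal{D}$ is discrete in $(0,4)$ with the required limit-point property (by a standard Balslev--Combes stability argument, $\mathcal{D}$ does not in fact depend on the choice of $\eta_0$ for $\eta_0$ small). Given $\overline{\Delta_0} \subset (0,4) \setminus \mathcal{D}$ compact, isolation of the remaining eigenvalues allows one to choose $\delta_0 > 0$ small enough that $\overline{\Delta_0} + i[-\delta_0,\delta_0] \subset \rho(H(i\eta_0))$, yielding a uniform bound on $(z-H(i\eta_0))^{-1}$ over this rectangle; taking also $\delta_0 < c_{\Delta_0}\eta_0$ ensures that $z$ remains on the correct side of the rotated essential spectrum.

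To conclude, for analytic vectors $\varphi,\psi$ with respect to $A_0$, the map
\[
\theta \longmapsto \bra e^{-i\bar\theta A_0}\varphi,\,(z-H(\theta))^{-1}\, e^{i\theta A_0}\psi\ket
\]
is holomorphic on a complex neighborhood of $0$ (antilinearity of the inner product in the first slot converts the antiholomorphic $\theta$-dependence of $e^{-i\bar\theta A_0}\varphi$ into a holomorphic one), and on the real axis it equals the constant $\bra\varphi,(z-\la)^{-1}\psi\ket$; by analytic continuation the identity
\[
\bra\varphi,(z-\la)^{-1}\psi\ket = \bra e^{\eta_0 A_0}\varphi,\,(z-H(i\eta_0))^{-1}\, e^{-\eta_0 A_0}\psi\ket
\]
holds, and the bound of the previous paragraph transfers directly to the matrix element for all $z \in \Delta_0 + i(-\delta_0,0)$ and all $z \in \Delta_0 + i(0,\delta_0)$. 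The main technical obstacle will be the rigorous identification of $\Gamma_\eta$ (in particular its pinning at the thresholds) together with the justification of the analytic continuation identity in the non-selfadjoint setting, which relies crucially on the regularity hypothesis $V \in \mathcal{A}_R(A_0)$ through the holomorphic extension of $V(\theta)$.
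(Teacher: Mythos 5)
Your strategy is essentially the paper's own (complex scaling of $\la$ with the generator $A_0$, Weyl's theorem for the deformed essential spectrum, stability of the real discrete eigenvalues in the deformation parameter, and analytic continuation in $\theta$ of matrix elements between analytic vectors), and most steps are sound; but there is one genuine gap: you perform a single deformation $\theta=i\eta_0$ with $\eta_0>0$ and claim that the resulting bound transfers to both $z\in\Delta_0+i(-\delta_0,0)$ and $z\in\Delta_0+i(0,\delta_0)$. It does not. As you computed, for $\im(\theta)>0$ the curve $\Gamma_{\im(\theta)}$ moves into the closed \emph{upper} half-plane, so the identity $\bra \varphi,(z-\la)^{-1}\psi\ket=\bra e^{\eta_0 A_0}\varphi,(z-H_V(i\eta_0))^{-1}e^{-\eta_0 A_0}\psi\ket$ can only be established by the constancy-in-$\theta$ argument for $z$ in the lower half-plane (away from finitely many discrete eigenvalues), and by continuation it controls the boundary values of $\bra\varphi,(z-\la)^{-1}\psi\ket$ as $\im(z)\to 0^-$; this yields the first supremum only. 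For $z\in\Delta_0+i(0,\delta_0)$ with $0<\im(z)<c_{\Delta_0}\eta_0$ the argument breaks down: along any path of $\theta$ from $0$ to $i\eta_0$, $\im(\theta)$ takes every intermediate value, and since $\sigma\big(H_V(\theta)\big)$ depends only on $\im(\theta)$ (Proposition \ref{pdr2}), the arcs $\Gamma_s$, $0<s<\eta_0$, sweep the whole lens between $[0,4]$ and $\Gamma_{\eta_0}$ and cross $z$ at some intermediate $s$; hence $\theta\mapsto\bra e^{i\bar\theta A_0}\varphi,(z-H_V(\theta))^{-1}e^{i\theta A_0}\psi\ket$ is not analytic along the path and cannot be matched with its value at $\theta=0$. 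Indeed, the deformed matrix element at $\theta=i\eta_0$ is the continuation of the resolvent matrix element from \emph{below} across $(0,4)$, which differs from the physical value for $\im(z)>0$ by the jump across the essential spectrum — already nonzero for $V=0$.

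The repair is exactly what the paper does: treat the two one-sided limits separately, using $\im(\theta)<0$ (which pushes the essential spectrum into the lower half-plane) to bound the supremum over $\Delta_0+i(0,\delta_0)$, and enlarge the exceptional set to ${\mathcal D}={\mathcal D}_+\cup{\mathcal D}_-$ with ${\mathcal D}_\pm:=\sigma_{\mathrm{disc}}\big(H_V(\theta)\big)\cap(0,4)$ for $\pm\im(\theta)>0$. Since $\la$ is not selfadjoint there is no symmetry $z\mapsto\bar z$ identifying ${\mathcal D}_-$ with ${\mathcal D}_+$, so your set ${\mathcal D}=(0,4)\cap\sigma\big(H_V(i\eta_0)\big)$ is in general too small for the second bound. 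Two minor points: your generic formula for the continued matrix element should have $e^{i\bar\theta A_0}\varphi$, not $e^{-i\bar\theta A_0}\varphi$, in the first slot (your final displayed identity at $\theta=i\eta_0$ is nonetheless the correct one); and $\eta_0$ must be taken smaller than the analyticity radius of the given vectors $\varphi,\psi$, which is harmless because, as you note and as Proposition \ref{t:discrete:sp} makes precise, ${\mathcal D}_\pm$ does not depend on the size of $|\im(\theta)|$.
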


\begin{rem}\begin{itemize}
\item[(a)] For any subset $\Delta$ such that $\overline{\Delta} \subset (0,4)$, ${\mathcal D}\cap \Delta$ is finite.
\item[(b)] If $V$ is selfadjoint, then $\la = \la^\ast$ and ${\mathcal D}$ coincides with the set of eigenvalues of $\la$ embedded in $(0,4)$ i.e. ${\mathcal D} = {\mathcal E}_p(H_V)\cap (0,4)$. In the non-selfadjoint case $\la \neq \la^\ast$, we expect that ${\mathcal E}_p(H_V)\cap (0,4) \subset {\mathcal D}$.
\end{itemize}
\end{rem}

The proofs of Theorems \ref{t:tda} and \ref{t:tda+} are postponed to Section \ref{dilatation}. Among perturbations $V$ which satisfy the hypotheses of Theorem \ref{t:tda} and \ref{t:tda+}, we may consider:
\begin{itemize}
\item those which satisfy Assumption \ref{ade} below,
\item $V=|\psi \ket \bra \varphi |$, where $\varphi$ and $\psi$ are analytic vectors for $A_0$. We refer to Subsection \ref{ssec} for more details and further examples. 
\end{itemize}

For $V \in \sinf ( \ell^2({\mathbb Z}) ) \cap {\mathcal A}(A_0)$, Theorem \ref{t:tda} states that the points of $(0,4)$ cannot be the limit points of any 
sequence of discrete eigenvalues. In the next section, we consider more specifically the case of the thresholds $\{0,4\}$.

\subsection{Resonances}\label{ss:res}

In this section, we show how to control the distribution of resonances, in particular the eigenvalues of $\la$ around the thresholds $\{0,4\}$. We perform this analysis by means of the characteristic values method. First, we formulate a new assumption and recall some basic facts on resonances.

\begin{ass}\label{ade}
There exists $\delta > 0$ such that
\begin{equation*}\label{eq1,7}
\sup_{(n,m) \in {\mathbb Z}^2} {\rm e}^{\delta (\vert n \vert + \vert m \vert)} \big\vert V(n,m) \big\vert < \infty ,
\end{equation*}
where $( V(n,m) )_{(n,m) \, \in \, {\mathbb Z}^2}$ is the matrix representation of the operator $V$ in the canonical orthonormal basis of $\ell^{2}({\mathbb Z})$.
\end{ass}

\begin{rem}\label{rp} If $V$ satisfies Assumption \ref{ade}, then
\begin{itemize}
\item[(a)] $V\in \su ( \ell^2({\mathbb Z}) ) \subset \sinf ( \ell^2({\mathbb Z}) )$,
\item[(b)] $V\in {\mathcal A}(A_0)$ (see Proposition \ref{pe3}).
\end{itemize}
\end{rem}

We also note that Assumption \ref{ade} holds for $V$ if and only if it holds for $V_{J}$ defined by \eqref{eq1,10}.

As a preliminary, we have the following result, whose proof is contained in Section \ref{resonances}.
\begin{prop}\label{p3,1} 
Let Assumption \ref{ade} holds. Set $z(k)=k^2$. Then, there exists $0 < \varepsilon_0 \leq \frac{\delta}{8}$, small enough, such that for ${\bf V} \in \{ V, -V_J\}$, the operator-valued function with values in ${\mathcal B}(\ell_{\delta}^{2}({\mathbb Z}) ; \ell_{-\delta}^{2}({\mathbb Z}))$, 
\begin{equation*}
k \longmapsto ( H_{\textup{\bf V}} - z(k) )^{-1} ,
\end{equation*} 
admits a meromorphic extension from $D_{\varepsilon_0}^\ast(0) \cap {\mathbb C}^+$ to $D_{\varepsilon_0}^\ast(0)$. This extension will be denoted $R_{\textup{\bf V}} ( z )$.
\end{prop}

Now, we define the resonances of the operator $\la$ near $z = 0$ and $z = 4$. We follow essentially the terminology and characterization of the resonances used in \cite[Sect. 2 and 6]{bo} and refer to Section \ref{resonances} for more details. In particular, the quantity $Ind_{\gamma}(\cdot)$, which denotes the index w.r.t. a contour $\gamma$, is defined by \eqref{eqa,2} and the weighted resolvents $\mathcal{T}_{\bf V}(\cdot)$ are defined by \eqref{eq3,aa}. 

\begin{de}\label{d3,1} 
The resonances of the operator $\la$ near $0$ are the points $z$, which are the poles of the meromorphic extension of the resolvent $R_V(z)$, as introduced in Proposition \ref{p3,1}. 
\end{de}

For $k \in D_{\varepsilon_0}^\ast(0)$, set $z_0(k) := k^2$. Given $k_1 \in D_{\varepsilon_0}^\ast(0)$, let $z_1= z_0 (k_1)$. By Proposition \ref{p3,2}, $z_1$ is a resonance of $H_V$ near $0$ if and only if $k_1$ is a characteristic value of $I + \mathcal{T}_V (z_0 (\cdot ))$. We define the multiplicity of such a resonance as follows:
\begin{de}\label{d3,1+} 
The multiplicity of a resonance $z_1 = z_0(k_1)$ is defined as the multiplicity of the characteristic value $k_1$, namely:
\begin{equation}\label{eq3,14}
\textup{mult}(z_{1}) := Ind_{\gamma} \, ( I + \mathcal{T}_V ( z_0(\cdot) ) ),
\end{equation}
where $\gamma$ is a positively oriented circle centered at $k_1$, chosen sufficiently small so that $k_1$ is the only characteristic value enclosed in $\gamma$.
\end{de}

There exists a simple way to define the resonances of $\la$ near the threshold $z = 4$. Indeed, relation \eqref{eq1,111} implies that
\begin{equation}\label{eq1,11}
J W_{-\delta} (\la - z)^{-1}W_{-\delta} J^{-1}= -W_{-\delta} ( H_{-V_J} - (4 - z) )^{-1} W_{-\delta}.
\end{equation}
Therefore, the analysis of the resonances of $\la$ near the second threshold $4$ is reduced to that of the first one $0$ (up to a sign and 
a change of variable). Combining \eqref{eq1,11} and Definition \ref{d3,1}, we have:
\begin{de}\label{d3,2} 
The resonances of the operator $\la$ near $4$ are the points $z =  4 - u$ where $u$ are the poles of the meromorphic extension of the resolvent $R_{-V _J}(u)$, as introduced in Proposition \ref{p3,1}. 
\end{de}

For $k \in D_{\varepsilon_0}^\ast(0)$, set $z_4(k) := 4 - k^2$. Given $k_1 \in D_{\varepsilon_0}^\ast(0)$, let $z_1= z_4 (k_1)$. By Proposition \ref{p3,3}, $z_1$ is a resonance of $H_V$ near $4$ if and only if $k_1$ is a characteristic value of  $I + \mathcal{T}_{-V_J} ( 4 - z_4(\cdot) )$. As in Definition \ref{d3,1+}, one has:
\begin{de}\label{d3,2+} 
The multiplicity of a resonance $z_{1} = z_4(k_1)$ is defined as the multiplicity of the characteristic value $k_1$, namely:
\begin{equation}\label{eq3,15}
\textup{mult}(z_{1}) := Ind_{\gamma} \, ( I + \mathcal{T}_{-V_J} ( 4 - z_4(\cdot) ) ),
\end{equation}
where $\gamma$ is a positively oriented circle centered at $k_1$, chosen sufficiently small so that $k_1$ is the only characteristic value enclosed in $\gamma$.
\end{de}

We denote by ${\rm Res}_\mu \, (\la)$ the resonances set of $\la$ near the threshold $\mu \in \lbrace 0,4 \rbrace$. The discrete eigenvalues of the operator $\la$ near $0$ (resp. near $4$) are resonances. Moreover, the algebraic multiplicity \eqref{eq1,00} of a discrete eigenvalue coincides with its multiplicity as a resonance near $0$ (resp. near $4$), defined by \eqref{eq3,14} (resp. \eqref{eq3,15}). Let us justify it briefly in the case of the discrete eigenvalues near $0$ (the case concerning those near $4$ can be treated similarly). 
Let $z_{1} = z_0(k_1) \in {\mathbb C} \setminus [0,4]$ be a discrete eigenvalue of $\la$ near $0$. According to \cite[Chap. 9]{si} and since $V = W_{-\delta} \mathscr{V} W_{-\delta}$ is of trace class, $\mathscr{V}$ being defined by \eqref{eq3,100}, this is equivalent to the property $f(z_{1}) = 0$, where for $z \in {\mathbb C} \setminus [0,4]$, $f$ is the holomorphic function defined by
$$
f(z) := \det ( I + V(\lao - z)^{-1} ) = \det ( I + \mathscr{V} W_{-\delta} (\lao - z)^{-1} W_{-\delta} ).
$$
Moreover, the algebraic multiplicity \eqref{eq1,00} of $z_{1}$ is equal to its order as zero of the function $f$. Residue Theorem yields:
$$
\textup{m}(z_{1}) = ind_{\gamma'} f := \frac{1}{2i\pi} \int_{\gamma'} \frac{f'(z)}{f(z)} dz,
$$
$\gamma'$ being a small circle positively oriented containing $z_{1}$ as the only zero of $f$. The claim follows immediately from the equality 
$$
ind_{\gamma'} f = Ind_{\gamma} \, ( I + \mathcal{T}_{V} ( z_0(\cdot) ) ),
$$
see for instance \cite[Eq. (6)]{bo} for more details. 

\begin{rem} \begin{itemize}
\item[(a)] The resonances $z_\mu(k)$ are defined in some two-sheets Riemann surfaces $\mathcal{M}_\mu$, 
$\mu \in \lbrace 0,4 \rbrace$, respectively.
\item[(b)] The discrete spectrum and the embedded eigenvalues of $\la$ near $\mu$ belong to the set of resonances
\begin{equation*}
z_\mu(k) \in \mathcal{M}_\mu, \quad \im(k) \geq 0.
\end{equation*}
\end{itemize}
\end{rem}

The above considerations lead us to the following result:
\begin{theo}\label{t1} Let Assumption \ref{ade} hold. Let $\varepsilon_0$ be as defined in Proposition \ref{p3,1}. Then, we can choose $\varepsilon_0' \in (0, \varepsilon_0]$ in such a way that $\la$ has no resonance $z_\mu(k)$, for $\mu \in \lbrace 0,4 \rbrace$ and $k \in D_{\varepsilon_0'}^\ast (0)$.
\end{theo}

According to Theorem \ref{t1}, $\la$ has no resonance in a punctured neighborhood of $\mu$, in the two-sheets Riemann surface 
$\mathcal{M}_\mu$ (where the resonances are defined). Figure \ref{fig 1} illustrates this fact.

\begin{figure}[h]\label{fig 1}
\begin{center}
\tikzstyle{+grisEncadre}=[fill=gray!60]
\tikzstyle{blancEncadre}=[fill=white!100]
\tikzstyle{grisEncadre}=[fill=gray!20]
\begin{tikzpicture}[scale=1.2]

\draw [grisEncadre] (0,0) -- (180:1.58) arc (180:360:1.58) -- cycle;
\draw [blancEncadre] (0,0) -- (0:1.58) arc (0:180:1.58) -- cycle;
\draw [->] [thick] (-2.5,0) -- (2.3,0);
\draw (2.3,0) node[right] {\small{$\re(k)$}};
\draw [->] [thick] (0,-2.5) -- (0,2.7);
\draw (0,2.7) node[right] {\small{$\im(k)$}};
\draw (0,0) -- (1.31,0.9);
\draw (0.6,0.5) node[above] {\small{$\varepsilon_{0}'$}};
\node at (-1.4,-1.2) {\tiny{$\times$}};
\node at (0.9,-1.4) {\tiny{$\times$}};
\node at (1.8,-0.2) {\tiny{$\times$}};
\node at (1.4,-1.2) {\tiny{$\times$}};
\node at (0.9,1.4) {\tiny{$\times$}};
\node at (1.4,1.2) {\tiny{$\times$}};
\node at (-1.7,0.4) {\tiny{$\times$}};
\node at (-1.4,1.2) {\tiny{$\times$}};
\node at (-0.5,1.7) {\tiny{$\times$}};
\node at (-0.2,1.7) {\tiny{$\times$}};
\node at (2,2.3) {\small{$\textup{\textbf{Absence of resonances}}$}};
\draw [->] [dotted] (1.8,2.15) -- (-0.5,1);
\draw [->] [dotted] (1.8,2.15) -- (1,-0.5);
\node at (-4.5,0.5) {\small{$\textup{The \textbf{physical plane}}$}};
\draw [->] [dotted] (-2.1,0.5) -- (-1,0.5);
\node at (-4.5,-0.5) {\small{$\textup{The \textbf{non physical plane}}$}};
\node at (-4.5,-0.8) {\small{$\textup{(the second sheet of the Riemann surface $\mathcal{M}_{\mu}$)}$}};
\draw [->] [dotted] (-1.8,-0.5) -- (-1,-0.5);
\end{tikzpicture}
\caption{\textup{Resonances near the threshold $\mu \in \lbrace 0,4 \rbrace$ in variable $k$.}}\label{fig 1}
\end{center}
\end{figure}

Since the discrete eigenvalues of $\la$ near $\mu$ are part of the set of resonances $z_\mu(k) \in \mathcal{M}_\mu$, $\im(k) \geq 0$, we conclude from Theorems \ref{t:tda}, \ref{t1} and Proposition \ref{pe3} that:
\begin{theo}\label{t2}
Let Assumption \ref{ade} holds. Then, $\sigma_{\textup{disc}} (\la)$ has no limit points in $[0,4]$, hence is finite. There exists also a finite subset ${\mathcal D}' \subset (0,4)$ for which the following holds: given any relatively compact interval $\Delta_0$, $\overline{\Delta_0} \subset (0,4)\setminus {\mathcal D}'$, there exists $\delta_0 >0$ such that for any vectors $\varphi$ and $\psi$ in $\ell_{\delta}^{2}({\mathbb Z}),$
\begin{gather*}
\sup_{z\in \Delta_0 + i(-\delta_0,0)} |\bra \varphi, (z-H_V)^{-1} \psi \ket | < \infty, \\
\sup_{z\in \Delta_0 + i(0,\delta_0)} |\bra \varphi, (z-H_V)^{-1} \psi \ket | < \infty .
\end{gather*}
\end{theo}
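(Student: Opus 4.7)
The plan is simply to combine the three preceding theorems, whose hypotheses are jointly implied by Assumption \ref{ade}. First I would invoke Remark \ref{rp}, in particular Proposition \ref{pe3}, to note that Assumption \ref{ade} yields both $V\in\su(\ell^2(\mathbb{Z}))\subset\sinf(\ell^2(\mathbb{Z}))$ and $V\in\mathcal{A}(A_0)$. Consequently Theorems \ref{t:tda} and \ref{t:tda+} are applicable to $H_V$.

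Next I would deal with the finiteness of $\sigma_{\text{disc}}(H_V)$. By Theorem \ref{t:tda}, every accumulation point of $\sigma_{\text{disc}}(H_V)$ must belong to $\{0,4\}$. On the other hand, as emphasized in the paragraph following Definition \ref{d3,2}, each discrete eigenvalue of $H_V$ lying in a small punctured neighborhood of $\mu\in\{0,4\}$ corresponds to a resonance $z_\mu(k)\in\mathcal{M}_\mu$ with $\operatorname{Im}(k)\geq 0$, and its algebraic multiplicity coincides with its multiplicity as a resonance. Theorem \ref{t1} asserts that for sufficiently small $r>0$ no resonance $z_\mu(k)$ exists with $k\in D_r^\ast(0)$, so in particular there is no sequence of discrete eigenvalues of $H_V$ converging to $0$ or $4$. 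Combining these two facts, $\sigma_{\text{disc}}(H_V)$ has no limit point in $[0,4]$; since $H_V$ is bounded and $\sigma_{\text{ess}}(H_V)=[0,4]$, it also has no accumulation point outside $[0,4]$, and is therefore finite.

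For the second part, I would start from the discrete set $\mathcal{D}\subset(0,4)$ provided by Theorem \ref{t:tda+}, whose only possible limit points are $\{0,4\}$, together with the threshold $\delta_0>0$ and the two LAP bounds. As the proof of Theorem \ref{t:tda+} (via complex scaling) identifies the points of $\mathcal{D}$ as real values in $(0,4)$ at which the meromorphic continuations of the resolvent through the upper (resp. lower) half plane develop singularities, each point of $\mathcal{D}$ arises as a resonance $z_\mu(k)$ in one of the two Riemann surfaces $\mathcal{M}_\mu$, $\mu\in\{0,4\}$. Theorem \ref{t1} then rules out any accumulation of such resonances at $0$ or $4$, so $\mathcal{D}$ admits no limit point at all. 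Being a discrete subset of the bounded interval $(0,4)$ with no accumulation point, $\mathcal{D}$ is finite. The two supremum bounds for analytic vectors then transfer verbatim from Theorem \ref{t:tda+} to the new finite set $\mathcal{D}$, completing the proof.

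The main obstacle I anticipate is the identification of the points of $\mathcal{D}$ in Theorem \ref{t:tda+} as genuine resonances of $H_V$ in the sense of Definitions \ref{d3,1}--\ref{d3,2}, so that Theorem \ref{t1} can be applied to exclude their accumulation at the thresholds. This identification relies on matching the singularities produced by the complex dilation procedure with the poles of the meromorphic continuation of the resolvent between weighted spaces; provided one has set up both constructions so that they detect the same set of singular points near each threshold, the combination of the three theorems closes the argument immediately.
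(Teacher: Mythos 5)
Your argument is essentially the paper's own proof: Assumption \ref{ade} gives $V\in\sinf(\ell^2({\mathbb Z}))\cap{\mathcal A}(A_0)$ via Remark \ref{rp} and Proposition \ref{pe3}, so Theorems \ref{t:tda} and \ref{t:tda+} apply, and Theorem \ref{t1} combined with the identification of discrete eigenvalues (and of the singular points ${\mathcal D}$) near $\mu\in\{0,4\}$ with resonances in ${\mathcal M}_\mu$ rules out accumulation at the thresholds, yielding finiteness of $\sigma_{\textup{disc}}(\la)$ and of ${\mathcal D}$ together with the stated LAP. The step you single out as delicate --- matching the points of ${\mathcal D}$ near the thresholds produced by complex scaling with the poles of the weighted-resolvent continuation --- is left equally implicit in the paper, which deduces the theorem directly from Theorems \ref{t:tda}, \ref{t1} and Proposition \ref{pe3}.
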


As mentioned in the introduction, the first conclusion of Theorem \ref{t2} can be produced under weaker decay conditions along the diagonal provided that the perturbed operator $H_V$ is still a Jacobi operator, see e.g.  \cite[Theorem 1]{ego1}. By contrast, Theorem \ref{t2} allows to handle perturbations displaying a full off-diagonal structure and exhibits some LAP.

If $V$ is selfadjoint and satisfies Assumption \ref{ade}, it follows from Proposition \ref{ade}, Theorem \ref{t2} and the usual complex scaling arguments (see e.g. \cite{sig}) that:
\begin{cor}\label{c1}
Let Assumption \ref{ade} hold. If the perturbation $V$ is selfadjoint, then:
\begin{itemize}
\item $\sigma_{\rm {ess}} (H_V)= [0,4]$ and $\sigma_{{\rm disc}} (H_V)$ is finite.
\item There is at most a finite number of eigenvalues embedded in $[0,4]$. Each eigenvalue embedded in $(0,4)$ has finite multiplicity.
\item The singular continuous spectrum $\sigma_{{\rm sc}} (H_V) =\emptyset$ and the following LAP holds: given any relatively compact interval $\Delta_0$, $\overline{\Delta_0} \subset (0,4)\setminus {\mathcal E}_{\rm p}(H_V)$, there exists $\delta_0 >0$ such that for any vectors $\varphi$ and $\psi$ in $\ell_{\delta}^{2}({\mathbb Z}),$
\begin{gather*}
\sup_{z\in \Delta_0 + i(0,\delta_0)} |\bra \varphi, (z-H_V)^{-1} \psi \ket | < \infty, \\
\sup_{z\in \Delta_0 + i(-\delta_0,0)} |\bra \varphi, (z-H_V)^{-1} \psi \ket | < \infty \, .
\end{gather*}
\end{itemize}
\end{cor}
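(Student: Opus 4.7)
The plan is to read off Corollary \ref{c1} from the results already established. Weyl's theorem on the invariance of essential spectrum under compact perturbations gives $\sigma_{\rm ess}(H_V)=[0,4]$, and the finiteness of $\sigma_{\rm disc}(H_V)$ follows directly from Theorem \ref{t2}, whose hypotheses are satisfied by Remark \ref{rp} and Proposition \ref{pe3}. Throughout I exploit that self-adjointness of $V$ makes $H_V$ self-adjoint, so Stone's formula and the spectral theorem are available and all spectrum lies on the real line.

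The heart of the argument is the finiteness of the embedded point spectrum. First I would make precise the claim in the remark following Theorem \ref{t:tda+}, namely that in the self-adjoint case the set $\mathcal{D}$ appearing in Theorems \ref{t:tda+} and \ref{t2} coincides with $\mathcal{E}_{\rm p}(H_V)\cap(0,4)$. The inclusion $\mathcal{E}_{\rm p}(H_V)\cap(0,4)\subset \mathcal{D}$ is forced by Stone's formula: at a genuine eigenvalue the resolvent matrix elements cannot remain bounded from both half-planes simultaneously, contradicting the LAP on any neighbourhood of the eigenvalue. The reverse inclusion is obtained by running Theorem \ref{t:tda+} on intervals that avoid only $\mathcal{E}_{\rm p}(H_V)$; any non-eigenvalue point of $\mathcal{D}$ would then be a removable exception. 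Theorem \ref{t2} says that $\mathcal{D}$ is finite, which yields the finiteness of embedded eigenvalues in $(0,4)$. Accumulation at the thresholds $\{0,4\}$ is excluded by Theorem \ref{t1}: a self-adjoint eigenvalue of $H_V$ near $0$ or $4$ produces a pole of the meromorphic extension of Proposition \ref{p3,1} at some $k$ with $\im(k)\geq 0$, i.e.\ a resonance in the sense of Definitions \ref{d3,1} or \ref{d3,2}, which Theorem \ref{t1} forbids in a punctured neighbourhood of the thresholds. The finite multiplicity of each embedded eigenvalue in $(0,4)$ is controlled by the characteristic value index in \eqref{eq3,14}--\eqref{eq3,15}, which is finite by construction.

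The LAP statement is then precisely the conclusion of Theorem \ref{t2} rewritten with $\mathcal{D}=\mathcal{E}_{\rm p}(H_V)\cap(0,4)$. To deduce $\sigma_{\rm sc}(H_V)=\emptyset$, fix a compact interval $\Delta_0\subset(0,4)\setminus \mathcal{E}_{\rm p}(H_V)$ and apply Stone's formula
\begin{equation*}
\langle \varphi, E_{H_V}(I)\psi\rangle = \lim_{\varepsilon\to 0^+}\frac{1}{2\pi i}\int_I \bigl( \langle \varphi,(x+i\varepsilon-H_V)^{-1}\psi\rangle - \langle \varphi,(x-i\varepsilon-H_V)^{-1}\psi\rangle \bigr)\, dx
\end{equation*}
for subintervals $I\subset \Delta_0$. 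The uniform bounds provided by Theorem \ref{t2} combined with dominated convergence show that the measure $\langle \varphi, E_{H_V}(\cdot)\psi\rangle$ is absolutely continuous on $\Delta_0$ for every pair of analytic vectors $\varphi,\psi$ w.r.t.\ $A_0$. Since $\mathrm{span}\{e_n:n\in\mathbb{Z}\}$ consists of such analytic vectors by Lemma \ref{le2} and is dense in $\ell^2(\mathbb{Z})$, a polarization and density argument rules out singular continuous spectrum on $\Delta_0$. Exhausting $(0,4)\setminus \mathcal{E}_{\rm p}(H_V)$ by such intervals, together with the finiteness of $\sigma_{\rm disc}(H_V)\cup\{0,4\}\cup(\mathcal{E}_{\rm p}(H_V)\cap(0,4))$, yields $\sigma_{\rm sc}(H_V)=\emptyset$.

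The main obstacle I anticipate is making the equality $\mathcal{D}=\mathcal{E}_{\rm p}(H_V)\cap(0,4)$ watertight without circularity, since $\mathcal{D}$ is only characterized implicitly by Theorems \ref{t:tda+} and \ref{t2}. The other mildly technical point is justifying the passage from bounds on sesquilinear forms between a dense set of analytic vectors to an actual absolute continuity statement for the spectral measure of $H_V$; this is standard for self-adjoint operators but must be invoked carefully, as the analytic vectors, though dense, do not form an operator-norm LAP domain.
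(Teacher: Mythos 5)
Your skeleton is the same as the paper's (Theorem \ref{t1} near the thresholds plus the standard selfadjoint complex-scaling arguments of \cite{sig} in the interior), but two steps are genuinely not closed. The first is the inclusion $\mathcal{D}\subset \mathcal{E}_{\rm p}(H_V)\cap (0,4)$, which you need in order to get the LAP on intervals avoiding only $\mathcal{E}_{\rm p}(H_V)$ and which you yourself flag as the main obstacle. Your proposed fix --- ``running Theorem \ref{t:tda+} on intervals that avoid only $\mathcal{E}_{\rm p}(H_V)$'' --- is circular: Theorem \ref{t:tda+} produces one specific exceptional set $\mathcal{D}$ (the real points of $\sigma_{\rm disc}(\widehat{H}_V(\theta))$) and gives the LAP only away from it, so it cannot be re-invoked on an interval that meets $\mathcal{D}$. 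The missing ingredient is the classical selfadjoint dilation-analyticity argument: for selfadjoint $H_V$ the map $z\mapsto \bra \varphi, (z-H_V)^{-1}\varphi\ket$ is Herglotz on $\C^+$, so its meromorphic continuation $F(\cdot,\theta_0)$ across $(0,4)$ can have at a real point at most a simple pole whose residue corresponds to a nonnegative atom of the spectral measure $\bra\varphi, E_{H_V}(\cdot)\varphi\ket$; a nontrivial real pole therefore forces $\lambda\in\mathcal{E}_{\rm p}(H_V)$. Without this (or an equivalent argument) the third bullet is not proved as stated, although your forward inclusion $\mathcal{E}_{\rm p}(H_V)\cap(0,4)\subset\mathcal{D}$ (choose an analytic vector with nonzero component in the eigenspace, possible by Lemma \ref{le2} and density) and your Stone-formula argument for $\sigma_{\rm sc}(H_V)=\emptyset$ are sound --- note the latter only needs $\mathcal{D}$ finite, not $\mathcal{D}=\mathcal{E}_{\rm p}(H_V)\cap(0,4)$.

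The second gap is the finite multiplicity of embedded eigenvalues. You attribute it to the characteristic-value index of \eqref{eq3,14}--\eqref{eq3,15}, but that machinery is constructed only for $k\in D_{\varepsilon_0}^\ast(0)$, i.e.\ for $z$ in a punctured neighbourhood of $0$ or $4$, precisely where Theorem \ref{t1} says there are no eigenvalues at all; it says nothing about an eigenvalue sitting at, say, $z=2$. For eigenvalues in the interior of $(0,4)$ the multiplicity bound must come either from the complex-scaling identification (such an eigenvalue is a discrete, hence finite-multiplicity, eigenvalue of $\widehat{H}_V(\theta)$, and the Herglotz argument above transfers the bound back to $H_V$), or from Theorem \ref{c11sign}(1), whose hypotheses hold here because a selfadjoint $V$ satisfying Assumption \ref{ade} lies in $\sinf\big(\ell^2({\mathbb Z})\big)\cap{\mathcal A}(A_0)$, so that $\mathrm{ad}_{A_0}(V)$ is compact. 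With these two repairs your argument coincides with the paper's intended proof.
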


\subsection{Embedded eigenvalues and Limiting Absorption Principles}\label{sslap}

For less regular perturbation $V$, we can still take advantage of the existence of some positive commutation relations to control some spectral properties of $H_V$. Let us define the regularity conditions involved in the statement of Theorem \ref{c11sign} below.

\begin{de}\label{ck}
Let ${\mathscr H}$ be a Hilbert space and $A$ be a selfadjoint operator defined on ${\mathscr H}$. Let $k\in {\mathbb N}$. An operator 
$B \in \mathcal{B}({\mathscr H})$ belongs to the class $C^k (A)$, if the map ${\mathcal W}_A: \theta \mapsto {\rm e}^{i \theta A} B {\rm e}^{-i \theta A}$ 
is $k$-times strongly continuously differentiable on $\R$. We also denote $C^{\infty} (A) = \cap_{k\in {\mathbb N}}C^k (A)$.
\end{de}

\begin{rem}
A bounded operator $B$ belongs to $C^1(A)$ if and only if the sesquilinear form defined on ${\cal D}(A)\times {\cal D}(A)$ by 
$(\varphi,\psi) \mapsto \langle A \varphi,B \psi \rangle - \langle \varphi,BA \psi \rangle$, extends continuously to a bounded form on 
${\mathscr H} \times {\mathscr H}$. The (unique) bounded linear operator associated to the extension is denoted by $\mathrm{ad}_A B = \mathrm{ad}_A (B) =[A,B]$ and 
we have $(\partial_{\theta} {\mathcal W}_A)(0)= i\mathrm{ad}_A (B)$.
\end{rem}

Following \cite{abmg}, we also consider fractional order regularities:

\begin{de}\label{c11}
Let ${\mathscr H}$ be a Hilbert space and $A$ be a selfadjoint operator defined on ${\mathscr H}$. An operator $B \in \mathcal{B}({\mathscr H})$ 
belongs to ${\cal C}^{1,1}(A)$ if
\begin{equation*}
\int_0^1 \big\| e^{iA\theta}B {\rm e}^{-iA\theta}+e^{-iA\theta}B {\rm e}^{iA\theta} -2B \big\| \, \frac{d\theta}{\theta^2} < \infty \, .
\end{equation*}
\end{de}
Note that $B\in {\cal C}^{1,1}(A)$ if and only if $B$ can be suitably approximated by a family of operators in $C^2(A)$ ; we refer to Section \ref{linktoc11} for more details. Actually, ${\cal C}^{1,1}(A)$ is a linear subspace of ${\cal B}({\mathscr H})$, stable under adjunction $*$. It is also known that $C^2(A)\subset {\mathcal C}^{1,1}(A)\subset C^1(A)$, see e.g. inclusions (5.2.19) in \cite{abmg}.

\begin{rem} Examples of operators belonging to the class ${\cal C}^{1,1}(A_0)$ are given in Section \ref{ecsmoo} below.
\end{rem}

We recall that the operator $A_0$ is defined by (\ref{A0}). If we assume that $\im(V)$ has a sign, we obtain Theorem \ref{c11sign} below. Mind that a statement involving the symbol $\pm$ has to be understood as two independent statements.

\begin{theo}\label{c11sign} 
Let $V\in \sinf ( \ell^2({\mathbb Z}) )$ and assume $\pm \im(V) \geq 0$. Fix any open interval $\Delta$ such that $\overline{\Delta} \subset (0,4)$.
\begin{enumerate}
\item Assume $V\in C^1(A_0)$ and $\mathrm{ad}_{A_0} ( \re(V) )$ also belongs to $\sinf ( \ell^2({\mathbb Z}) )$. Then, $${\mathcal E}_{\rm p} (H_V) \cap \overline{\Delta} \subset \sigma_{\rm pp} ( \re (H_V) ) \cap \overline{\Delta}\, .$$ The set ${\mathcal E}_{\rm p} (H_V) \cap \overline{\Delta}$ is finite and its eigenvalues have finite geometric multiplicity.
\item Assume that $V \in {\mathcal C}^{1,1}(A_0)$. Given any open interval $\Delta_0$ such that 
$\overline{\Delta_0} \subset \Delta \setminus \sigma_{\rm pp} ( \re(H) )$ and any $s>1/2$, the following LAP holds:
\begin{equation*}
\sup_{ \mp \im(z) > 0, \re(z) \in \Delta_0} \|\bra A_0 \ket^{-s}(z-H_V)^{-1}\bra A_0 \ket^{-s} \| < \infty.
\end{equation*}
\end{enumerate}
\end{theo}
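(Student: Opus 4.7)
The plan is to invoke the non-selfadjoint Mourre theory (Theorem \ref{mourrensa} and Corollary \ref{mourrensa1}) with conjugate operator $A_0$. The key input is the strict Mourre estimate for $\lao$: in the Fourier representation, $\widehat \lao$ is multiplication by $f(\vartheta) = 2-2\cos\vartheta$ and a direct computation of the commutator with $\widehat A_0$ gives
\begin{equation*}
{\mathcal F}\, i[A_0, \lao]\, {\mathcal F}^{-1} = 4\sin^2 \vartheta,
\end{equation*}
so that $E_\Delta(\lao)\, i[A_0, \lao]\, E_\Delta(\lao) \geq c_0\, E_\Delta(\lao)$ for some $c_0 > 0$, since $\overline{\Delta} \subset (0,4)$ stays away from the thresholds $\{0,4\}$ at which $\sin\vartheta$ vanishes. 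The same computation shows $\lao \in C^\infty(A_0) \subset {\mathcal C}^{1,1}(A_0)$.

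For Part (1), I focus on the case $\im(V) \geq 0$; the other sign is symmetric (e.g.\ after replacing $A_0$ by $-A_0$ and $\la$ by $\la^\ast$). If $\lambda \in \overline{\Delta} \subset \R$ is an eigenvalue of $\la$ with eigenvector $\psi \neq 0$, then taking the imaginary part of $\bra \la \psi, \psi \ket = \lambda \|\psi\|^2$ gives $\bra \im(V)\psi, \psi \ket = 0$; positivity of $\im(V)$ then forces $\im(V)\psi = 0$, so $\re(\la)\psi = \lambda \psi$ and $\lambda \in \sigma_{\rm pp}(\re(\la))$. For the finiteness, note that $\re(\la) = \lao + \re(V)$ is selfadjoint and, by the assumed compactness of $\mathrm{ad}_{A_0}(\re(V))$, inherits a Mourre estimate with a compact remainder on $\Delta$. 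The classical selfadjoint Mourre theorem then guarantees that the eigenvalues of $\re(\la)$ in any compact subset of $\Delta$ form a finite set with finite geometric multiplicity, and the inclusion just established transfers this to ${\mathcal E}_{\rm p}(\la) \cap \overline{\Delta}$.

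For Part (2), I apply Corollary \ref{mourrensa1} to $\la$ on $\Delta_0$. The regularity $\la \in {\mathcal C}^{1,1}(A_0)$ follows from $V \in {\mathcal C}^{1,1}(A_0)$ together with $\lao \in C^\infty(A_0)$. A Mourre estimate for $\re(\la)$ on $\Delta$ (with a compact remainder) is obtained as in Part (1), and the assumption $\overline{\Delta_0} \subset \Delta \setminus \sigma_{\rm pp}(\re(\la))$ lets one promote it to a strict estimate on $\Delta_0$ via the virial theorem and the absence of embedded eigenvalues of $\re(\la)$ there. The sign condition $\pm \im(V) \geq 0$ places $\la$ in the dissipative (respectively antidissipative) framework of the non-selfadjoint Mourre theorem, which then yields the LAP with weights $\bra A_0 \ket^{-s}$, $s > 1/2$, in the corresponding half-plane $\mp \im(z) > 0$, uniformly for $\re(z) \in \Delta_0$.

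The main obstacle is the careful matching of our hypotheses to those of Corollary \ref{mourrensa1}, in particular the non-trivial passage from a positive commutator estimate involving only the selfadjoint part $\re(\la)$ to a LAP for the resolvent of the full non-selfadjoint operator $\la$. This is where the sign condition on $\im(V)$ enters crucially: it makes $\la$ (or its adjoint) dissipative, providing a priori control of $(z-\la)^{-1}$ in the appropriate half-plane so that the positive commutator machinery can be propagated uniformly up to the real axis on $\Delta_0$.
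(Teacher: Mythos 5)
Your proposal is correct and follows essentially the paper's own route: the commutator identity $i\mathrm{ad}_{A_0}(\lao)=4\lao-\lao^2$ (your $4\sin^2\vartheta$ in Fourier variables), the transfer of $C^1$ resp.\ ${\mathcal C}^{1,1}$ regularity from $V$ to $H_V$, a Mourre estimate with compact remainder for $\re(H_V)$ on intervals with closure in $(0,4)$, and then the abstract non-selfadjoint results of Section \ref{mourre} (your direct eigenvector argument in Part 1 is precisely Lemma \ref{evfrontier} combined with the selfadjoint virial theorem, and Part 2 is an application of Corollary \ref{mourrensa1}, the sign of $\im(V)$ entering through $\sigma_\mp=0$ so that the LAP region reaches the real axis from the correct side). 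The only point you leave tacit is that in Part 2 the compactness of $\mathrm{ad}_{A_0}\big(\re(V)\big)$ is not assumed but follows from $V\in{\mathcal C}^{1,1}(A_0)\cap\sinf\big(\ell^2({\mathbb Z})\big)$, as recorded in the remark following the theorem.
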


\begin{rem} 
\begin{itemize}
\item[(a)] If $V\in \sinf({\mathscr H})$, then $V^*$, $\re(V)$ and $\im(V)$ also belong to $\sinf({\mathscr H})$.
\item[(b)] If $V$ belongs to $C^1(A)$ (resp. ${\mathcal C}^{1,1}(A)$), then, $\re(V)$ and $\im(V)$ also belong to $C^1(A)$ 
(resp. ${\mathcal C}^{1,1}(A)$). In particular, if $V$ belongs to ${\mathcal C}^{1,1}(A)$ and $\re(V) \in \sinf({\mathscr H})$, then 
$\mathrm{ad}_A ( \re(V) ) \in \sinf({\mathscr H})$, see Remark (ii) in the proof of Theorem 7.2.9 in \cite{abmg}.
\end{itemize}
\end{rem}

Finally, we also have:
\begin{theo}\label{c11signweak} 
Let $V\in \sinf ( \ell^2({\mathbb Z}) )$ and assume that $V \in {\mathcal C}^{1,1}(A_0)$.
\begin{enumerate}
\item If $\im (V) >0$ and $i\mathrm{ad}_{A_0} (\re (V)) +\beta_- \im (V) \geq 0$ for some $\beta_- \geq 0$, then for any open interval $\Delta$ such that 
$\overline{\Delta} \subset (0,4)$ and any $s>1/2$, the following LAP holds:
\begin{equation*}
\sup_{- \im(z) > 0, \re(z) \in \Delta} \|\bra A_0 \ket^{-s}(z-H_V)^{-1}\bra A_0 \ket^{-s} \| < \infty.
\end{equation*}
\item If $\im (V) <0$ and $i\mathrm{ad}_{A_0} (\re (V)) -\beta_+ \im (V) \geq 0$ for some $\beta_+ \geq 0$, then for any open interval $\Delta$ such that 
$\overline{\Delta} \subset (0,4)$ and any $s>1/2$, the following LAP holds:
\begin{equation*}
\sup_{+ \im(z) > 0, \re(z) \in \Delta} \|\bra A_0 \ket^{-s}(z-H_V)^{-1}\bra A_0 \ket^{-s} \| < \infty.
\end{equation*}
\end{enumerate}
\end{theo}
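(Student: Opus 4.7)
The plan is to derive both statements from the abstract non-selfadjoint Mourre theorem (Theorem \ref{mourrensa} and Corollary \ref{mourrensa1}) of Section \ref{mourre}, applied to $H_V$ with conjugate operator $\pm A_0$. The two parts are dual to each other. Indeed, replacing $V$ by $V^*=\re V - i\im V$ exchanges the sign of $\im V$, leaves $\mathrm{ad}_{A_0}(\re V)$ invariant, preserves membership in $\sinf(\ell^2(\mathbb{Z}))\cap \mathcal{C}^{1,1}(A_0)$, and under the identity $\bigl[(H_V-z)^{-1}\bigr]^{\ast}=(H_{V^\ast}-\overline z)^{-1}$ swaps the upper and lower half planes. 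In particular, the hypothesis of Part 1 is mapped to that of Part 2 with $\beta_{+}=\beta_{-}$. It thus suffices to prove Part 1.

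To apply the abstract theorem, I first verify regularity. Using the Fourier representation $\widehat{H_0}$ equal to multiplication by $f(\vartheta)=2-2\cos\vartheta$ and $\widehat{A_0}=-2i\sin\vartheta\,\partial_\vartheta-i\cos\vartheta$, one computes $\widehat{i[H_0,A_0]}=-4\sin^{2}\vartheta$, and all higher iterated commutators $\mathrm{ad}_{A_0}^{(k)}(H_0)$ are bounded multiplication operators by trigonometric polynomials. Hence $H_0\in C^\infty(A_0)\subset \mathcal{C}^{1,1}(A_0)$, and so $H_V\in \mathcal{C}^{1,1}(A_0)$ since this class is a linear subspace. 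Second, the condition $\overline{\Delta}\subset (0,4)$ confines the preimage $f^{-1}(\overline{\Delta})$ to a compact subset of $(-\pi,0)\cup(0,\pi)$ on which $\sin^{2}\vartheta\geq c_\Delta>0$; this yields a strict Mourre estimate for $H_0$ on $\Delta$, which after orienting the sign of the conjugate operator appropriately (replacing $A_0$ by $-A_0$ so that the bound points into the lower half plane relevant to Part 1) takes the standard positive form $E_\Delta(H_0)\,i[H_0,-A_0]\,E_\Delta(H_0)\geq 4c_\Delta E_\Delta(H_0)$.

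I then combine this with the perturbation bound. The decomposition $\re\bigl(i[H_V,-A_0]\bigr)=i[H_0,-A_0]-i\mathrm{ad}_{A_0}(\re V)$ together with the combined positivity $i\mathrm{ad}_{A_0}(\re V)+\beta_-\im V\geq 0$ provides, on the spectral subspace $E_\Delta(\re H_V)$, a lower bound of $\re(i[H_V,-A_0])+\beta_-\im V$ by a positive multiple of the spectral projector, modulo a compact remainder. The compact remainder is absorbed via a Virial-type argument; the strict sign $\im V>0$ is used precisely here to rule out real embedded eigenvalues of $H_V$ on $\Delta$, and it is this feature that allows the LAP to be asserted on the full open interval $\Delta$ rather than on $\Delta$ minus an exceptional point spectrum (contrast with Theorem \ref{c11sign}). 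Corollary \ref{mourrensa1} then delivers the boundary value of the resolvent from $\{\im z<0\}$ and concludes Part 1.

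The main obstacle is to ensure that the abstract non-selfadjoint Mourre statement accepts the hypothesis in the \emph{combined} form $i\mathrm{ad}_{A_0}(\re V)+\beta_-\im V\geq 0$, rather than requiring separate positivity of each term. Neither the commutator of $\re V$ nor $\im V$ needs to be non-negative individually; the parameter $\beta_-\geq 0$ encodes the permitted trade-off between the commutator contribution and the dissipative one, and the abstract theorem must be formulated to handle this flexibility. Reconciling the bookkeeping of signs (choice of conjugate operator $\pm A_0$ versus the half plane $\pm\im z>0$ in each part) is the subtle point, but once alignment is done the conclusion is immediate from Corollary \ref{mourrensa1}.
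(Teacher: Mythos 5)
Your high-level framing (reduce Part 2 to Part 1 by adjunction, check $H_V\in{\mathcal C}^{1,1}(A_0)$, feed the commutator positivity into Section \ref{mourre}) is the right one, but the way you close the argument has a genuine gap. You only produce a Mourre estimate \emph{with a compact remainder} relative to $E_{\re(H_V)}(\Delta)$ and then invoke Corollary \ref{mourrensa1}, claiming the remainder is "absorbed by a Virial argument" because $\im(V)>0$ excludes real embedded eigenvalues of $H_V$. This cannot yield the stated conclusion: the exceptional set in Corollary \ref{mourrensa1} is $\sigma_{\rm pp}\big(\re(H_V)\big)$, i.e.\ eigenvalues of the \emph{real part} $H_0+\re(V)$, and these are excluded neither by $\im(V)>0$ nor by the combined positivity (for an embedded eigenvector $\varphi$ of $\re(H_V)$ the Virial identity only forces $\beta_-\bra \varphi,\im(V)\varphi\ket\geq a_-\|\varphi\|^2$, which is perfectly possible). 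Moreover a Virial argument removes eigenvalues; it does not upgrade an estimate with compact error to a strict one uniformly on the whole interval. So your route proves at best the LAP on $\Delta\setminus\sigma_{\rm pp}(\re(H_V))$, which is essentially Theorem \ref{c11sign}(2), not Theorem \ref{c11signweak}. The paper instead goes through Theorem \ref{mourrensa} itself, by verifying the \emph{strict} assumption (M$-$) (resp.\ (M$+$)) with the given $\beta_-$ (resp.\ $\beta_+$): since $\im(V)>0$ is compact, $\sigma_-=\min\sigma(\im(H_V))=0$, and the hypothesis gives, globally and with no compact error, $i\mathrm{ad}_{A_0}(\re(H_V))+\beta_-\big(\im(H_V)-\sigma_-\big)\geq i\mathrm{ad}_{A_0}(H_0)=4H_0-H_0^2\geq 0$; one then checks $4H_0-H_0^2\geq a\,E_{\re(H_V)}(\Delta')-b\,E^{\perp}_{\re(H_V)}(\Delta')$ for any $\overline{\Delta'}\subset(0,4)$ (a short compactness/functional-calculus argument, using that $\mu(4-\mu)$ vanishes only at the thresholds, which carry no point spectrum of $H_0$, and that $\re(V)$ is compact). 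Because no compact remainder survives, no exceptional set appears and the LAP holds on all of $\Delta$; the identification $\sigma_-=0$ is also what makes the half-plane in the conclusion exactly $\{-\im(z)>0\}$.

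A secondary but real problem is the sign bookkeeping. In the paper's convention $\mathrm{ad}_{A_0}(B)=[A_0,B]$, so $i\mathrm{ad}_{A_0}(H_0)=4H_0-H_0^2$ is already positive (your $-4\sin^2\vartheta$ corresponds to the opposite ordering $i[H_0,A_0]$), and no replacement $A_0\mapsto -A_0$ is needed: the same conjugate operator serves both parts, the half-plane being selected by which of (M$+$)/(M$-$) holds, i.e.\ by the sign of $\im(V)$ and the $\beta_\pm$-term, not by reorienting $A_0$. Worse, if you do switch to $-A_0$ you must simultaneously flip the sign of the perturbation term, whereas your decomposition $\re\big(i[H_V,-A_0]\big)=i[H_0,-A_0]-i\mathrm{ad}_{A_0}(\re V)$ combined with the hypothesis $i\mathrm{ad}_{A_0}(\re V)+\beta_-\im(V)\geq 0$ (stated in the paper's convention) feeds the combined positivity in with the wrong sign, so the key inequality would not follow as written.
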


The proofs of Theorems \ref{c11sign} and \ref{c11signweak} are direct applications of the abstract Mourre theory developed in Section \ref{mourre}. See Section \ref{proofthmc11sign} for the details.


\section{Complex scaling}\label{dilatation}

In this section, we use a complex scaling approach to study $\sigma (\la)$ for compact perturbations $V\in {\mathcal A}(A_0)$. Since the spectral properties of $\la$ and $\widehat \la$ coincide, we reduce our analysis to those of $\widehat \la$. 

\subsection{Before perturbation}

Following the general principles exposed in \cite{sig}, we first describe the scaling process for the unperturbed operator $\widehat \lao$, which is the multiplication operator by the function $f$ (see \eqref{eq6,2}). In what follows, we have summarized the main results. When no confusion can arise, the operator $\widehat \lao$ is identified with the function $f$.

We consider the unitary group $( \e^{i\theta\widehat A_0} )_{\theta\in\R}$, so that for $\psi\in\mathrm{L}^2(\mathbb{T})$, one has
\begin{equation}\label{eq6,4}
 ( \e^{i\theta\widehat A_0}\psi ) (\alpha) = \psi ( \varphi_\theta(\alpha) ) \sqrt{J(\varphi_\theta)(\alpha)},
\end{equation}
where
\begin{itemize}
\item $(\varphi_\theta)_{\theta\in\R}$ is the flow solution of the equation
$$
\begin{cases} 
\partial_{\theta} \varphi_\theta (\alpha) = 2\sin ( \varphi_\theta(\alpha) ), \\ 
\varphi_0(\alpha) = \mathrm{id}_\mathbb{T}(\alpha) = \alpha \: \: \textup{for each} \: \: \alpha \in \mathbb{T},
\end{cases}
$$
\item $J(\varphi_\theta)(\alpha)$ denotes the Jacobian of the transformation $\alpha \mapsto \varphi_\theta (\alpha)$.
\end{itemize}
Existence and uniqueness of the solution follow from standard ODE results. Explicitly,
\begin{equation*}
\varphi_\theta(\alpha) = \pm \arccos \left( \frac{-{\rm th}(2\theta) + \cos \alpha}{1 - {\rm th}(2\theta) \cos \alpha} \right) \: \: {\rm for} \: \: \pm \alpha \in \mathbb{T}.
\end{equation*}
Using \eqref{eq6,4} and the fact that $\varphi_{\theta_1} \circ \varphi_{\theta_2} = \varphi_{\theta_1 + \theta_2}$ for all $(\theta_1, \theta_2) \in \R^2$, one has for all 
$\theta \in \R$
\begin{equation*}
 \left( \e^{i\theta\widehat A_0} \widehat \lao \e^{-i\theta\widehat A_0}\psi\right)(\alpha)
 = f ( \varphi_\theta(\alpha) ) \psi(\alpha).
\end{equation*}
Let $T:\C \rightarrow \C$, $T(z) := 2(1 - z)$. Note that the map $T$ is bijective with
$$
T^{-1}(z) = 1- \frac{z}{2},
$$
and maps $[-1,1]$ onto $[0,4]$. The points $T(-1)=4$ and $T(1)=0$ are the thresholds of $\lao$ and $\widehat{\lao}$. Note also that $f= T\circ \cos$. Consider for $\theta \in \R$, the function $G_\theta$ defined on $[0,4]$ by $G_\theta := T \circ F_{\theta} \circ T^{-1}$ with
\begin{equation}\label{eq6,7}
F_\theta(\lambda) := \frac{\lambda - {\rm th}(2\theta)}{1 - \lambda {\rm th}(2\theta)}, \qquad \lambda \in [-1,1].
\end{equation}
Then, for all $\theta \in \R,$
\begin{equation}\label{eq6,6}
 \e^{i\theta\widehat A_0} \widehat \lao \e^{-i\theta\widehat A_0}\psi = G_{\theta} (\widehat \lao ) \psi .
\end{equation}
In other words, for any $\theta \in \R$, $\e^{i\theta\widehat A_0} \widehat \lao \e^{-i\theta\widehat A_0}$ is the multiplication operator by the function $(G_{\theta} \circ f)$, where $G_{\theta} \circ f = (T \circ F_{\theta} \circ \cos) = (G_{\theta} \circ T \circ \cos)$.

\begin{rem} In \eqref{eq6,7}, the denominator does not vanish since $\big\vert {\rm th}(2\theta) \lambda \big\vert < 1$ for $\lambda \in [-1,1]$.
\end{rem}

We summarise the following properties, whose verification is left to the reader.
\begin{prop}\label{p6,1} Let $\mathbb{D} := D_1(0)$ denote the open unit disk of the complex plane $\C$. Then,
\begin{enumerate}
\item[(a)] For any $\lambda \in [-1,1]$, the map $\theta \mapsto F_\theta(\lambda)$ is holomorphic in $D_\frac{\pi}{4}(0)$.
\item[(b)] For $\theta \in {\mathbb C}$ such that $\vert \theta \vert < \frac{\pi}{8}$, the map $\lambda \mapsto F_\theta(\lambda)$ is a homographic transformation with 
$F_\theta^{-1} = F_{-\theta}$. In particular, for $\theta \in \R$, $F_\theta(\mathbb{D}) = \mathbb{D}$ and $F_\theta ( [-1,1] ) = [-1,1]$.
\item[(c)] For $\theta \in {\mathbb C}$ such that $0 < \vert \theta \vert < \frac{\pi}{8}$, the unique fixed points of $F_\theta$ are $\pm 1$.
\item[(d)] For $\theta_1$, $\theta_2 \in {\mathbb C}$ with $\vert \theta_1 \vert$, $\vert \theta_2 \vert < \frac{\pi}{8}$, we have that: $F_{\theta_1} \circ F_{\theta_2} 
= F_{\theta_1 + \theta_2}$.
\end{enumerate}
\end{prop}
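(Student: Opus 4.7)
My plan is to prove all four items by direct manipulation of the explicit formula $F_\theta(\lambda) = (\lambda - t_\theta)/(1 - \lambda t_\theta)$, with $t_\theta := \text{th}(2\theta)$. Everything reduces to elementary properties of the hyperbolic tangent: localization of its poles at $i\pi/2 + ik\pi$, localization of its zeros at $ik\pi$, and the classical addition formula. Throughout the argument the natural object is the Möbius transformation of $\hat{\mathbb{C}}$ associated to $F_\theta$, and we will use standard matrix calculus for such transformations.

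For part (a) I would first observe that $\theta \mapsto t_\theta = \text{th}(2\theta)$ is holomorphic on $D_{\pi/4}(0)$ because the nearest singularity of $\text{th}(2\theta)$ is at $\theta = \pm i\pi/4$. The only nontrivial point is the nonvanishing of the denominator $1 - \lambda t_\theta$ for $\lambda \in [-1,1]$ and $\theta \in D_{\pi/4}(0)$. For $\lambda = 0$ it is trivial; for $\lambda = \pm 1$ it follows from the fact that $\text{th}(z)$ never attains the values $\pm 1$; for $\lambda \in (-1,1)\setminus\{0\}$, I would solve $\text{th}(z) = 1/\lambda$ via $e^{2z} = (\lambda+1)/(\lambda-1)$, whose right-hand side is a \emph{negative} real, forcing $2z = \tfrac{1}{2}\log\bigl|(\lambda+1)/(\lambda-1)\bigr| + i\pi(2k+1)$ for some $k \in \mathbb{Z}$, and therefore $|z| \ge \pi/2$. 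Setting $z = 2\theta$ yields $|\theta| \ge \pi/4$, which excludes $D_{\pi/4}(0)$.

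For (b), $F_\theta$ has matrix representation $\begin{pmatrix} 1 & -t_\theta \\ -t_\theta & 1 \end{pmatrix}$ of nonzero determinant $1 - t_\theta^2$ (again because $t_\theta \neq \pm 1$ for $|\theta| < \pi/4$), so it is a homographic transformation. The relation $F_\theta \circ F_{-\theta} = \text{id}$ is checked by plain substitution using that $t_{-\theta} = -t_\theta$. When $\theta \in \mathbb{R}$, $t_\theta \in (-1,1)$ and $F_\theta$ is a Blaschke-type automorphism of $\mathbb{D}$; since its coefficients are real, it also maps $\mathbb{R} \cup \{\infty\}$ to itself, and hence $[-1,1]$ to itself by invertibility. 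For (c), the equation $F_\theta(\lambda) = \lambda$ rearranges to $t_\theta(\lambda^2 - 1) = 0$; for $0 < |\theta| < \pi/8$ the value $t_\theta$ cannot vanish since the nonzero zeros of $\text{th}$ lie at $ik\pi$, $k \neq 0$, giving $|\theta| \ge \pi/2$. Hence $\lambda = \pm 1$ are the only fixed points.

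Finally for (d), I would multiply the matrices of $F_{\theta_1}$ and $F_{\theta_2}$: the product reads
\begin{equation*}
\begin{pmatrix} 1 & -t_{\theta_1} \\ -t_{\theta_1} & 1 \end{pmatrix} \begin{pmatrix} 1 & -t_{\theta_2} \\ -t_{\theta_2} & 1 \end{pmatrix} = (1 + t_{\theta_1} t_{\theta_2}) \begin{pmatrix} 1 & -\frac{t_{\theta_1} + t_{\theta_2}}{1 + t_{\theta_1} t_{\theta_2}} \\ -\frac{t_{\theta_1} + t_{\theta_2}}{1 + t_{\theta_1} t_{\theta_2}} & 1 \end{pmatrix},
\end{equation*}
and the addition formula $\text{th}(a+b) = (\text{th}\,a + \text{th}\,b)/(1 + \text{th}\,a\,\text{th}\,b)$ identifies the right-hand side with the matrix of $F_{\theta_1+\theta_2}$ (the factor $1 + t_{\theta_1}t_{\theta_2}$ drops because it represents the same projective transformation). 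The restriction $|\theta_j| < \pi/8$ ensures that $|\theta_1 + \theta_2| < \pi/4$, so that $F_{\theta_1+\theta_2}$ is itself well defined by (b). The only genuine technical obstacle in the whole proof is the denominator estimate in (a); the rest is formal Möbius calculus.
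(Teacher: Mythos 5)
Your proof is correct; the paper offers no argument of its own here (the verification is explicitly left to the reader), and your direct computation with the M\"obius matrix of $F_\theta$ and the elementary mapping properties of $\mathrm{th}$ is exactly the intended kind of verification. Two small points to tidy: in (a) the exponentiation gives $2z=\log\bigl|(\lambda+1)/(\lambda-1)\bigr|+i\pi(2k+1)$ (no factor $\tfrac12$), which does not affect the bound $|\mathrm{Im}(2z)|\ge\pi$; and in (d) the factorization silently assumes $1+t_{\theta_1}t_{\theta_2}\neq 0$, which you should note follows from $1+\mathrm{th}(2\theta_1)\mathrm{th}(2\theta_2)=\cosh\bigl(2(\theta_1+\theta_2)\bigr)/\bigl(\cosh(2\theta_1)\cosh(2\theta_2)\bigr)$ together with $|2(\theta_1+\theta_2)|<\pi/2$, so $\cosh\bigl(2(\theta_1+\theta_2)\bigr)\neq 0$.
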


From Proposition \ref{p6,1}, statements (a) and (b), we deduce:
\begin{prop}\label{p6,2}
The bounded operator valued-function
$$
\theta \mapsto \e^{i\theta\widehat A_0} \widehat \lao \e^{-i\theta\widehat A_0} \in \mathcal{B} ( {\rm L}^2({\mathbb T}) ),
$$
admits a holomorphic extension from $( -\frac{\pi}{8},\frac{\pi}{8} )$ to $D_\frac{\pi}{8}(0)$, with extension operator given for $\theta \in D_\frac{\pi}{8}(0)$ by $G_{\theta} (\widehat \lao)$, which is the multiplication operator by the function $G_{\theta} \circ f = (T \circ F_{\theta} \circ \cos) = (G_{\theta} \circ T \circ \cos)$. In the sequel, this extension will be denoted $\widehat \lao(\theta)$.
\end{prop}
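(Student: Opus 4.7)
The plan is to define the extension by the explicit formula
\[
\widehat \lao(\theta) \,:=\, M_{G_\theta \circ f}, \qquad \theta \in D_{\pi/8}(0),
\]
where $M_g$ denotes the operator of multiplication by $g$ on ${\rm L}^2({\mathbb T})$, and to verify three points: (i) $\widehat \lao(\theta)$ coincides with $\e^{i\theta \widehat A_0}\widehat \lao\e^{-i\theta \widehat A_0}$ on the real interval $(-\pi/8, \pi/8)$; (ii) $\widehat \lao(\theta) \in \mathcal{B}({\rm L}^2({\mathbb T}))$ for every $\theta \in D_{\pi/8}(0)$, with norm uniformly bounded on compact subsets; (iii) the map $\theta \mapsto \widehat \lao(\theta)$ is holomorphic in operator norm. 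Point (i) is precisely \eqref{eq6,6}, so no additional work is required there.

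For (ii), the only potential obstruction to the essential boundedness of the symbol $(G_\theta \circ f)(\vartheta) = 2\bigl(1 - F_\theta(\cos\vartheta)\bigr)$ is the vanishing of the denominator $1 - \cos(\vartheta)\tanh(2\theta)$ in the expression \eqref{eq6,7} for $F_\theta$. Since $\cos\vartheta \in [-1,1]$, this reduces to showing $\tanh(2\theta) \notin (-\infty,-1]\cup[1,\infty)$ for $\theta \in D_{\pi/8}(0)$. Writing $2\theta = a+ib$ with $|a|, |b| < \pi/4$, the identity
\[
\tanh(a+ib) = \frac{\sinh(2a) + i \sin(2b)}{\cosh(2a) + \cos(2b)}
\]
shows that the imaginary part of $\tanh(2\theta)$ vanishes only for $b=0$, and on the real axis $|\tanh(2\theta)| \leq \tanh(\pi/4) < 1$. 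By continuity, $(\theta,\vartheta) \mapsto |1 - \cos(\vartheta)\tanh(2\theta)|$ admits a uniform positive lower bound on $K \times {\mathbb T}$ for any compact $K \subset D_{\pi/8}(0)$, which yields the required uniform ${\rm L}^\infty$-bound on the symbol.

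For (iii), for every fixed $\vartheta \in {\mathbb T}$ the map $\theta \mapsto (G_\theta \circ f)(\vartheta)$ is holomorphic on $D_{\pi/8}(0)$, as a composition of elementary holomorphic functions whose denominator does not vanish. Combining this pointwise holomorphy with the uniform bound from (ii), a direct application of the Cauchy integral formula on a small circle around any $\theta_0 \in D_{\pi/8}(0)$ gives that the difference quotient $\bigl((G_\theta \circ f) - (G_{\theta_0}\circ f)\bigr)/(\theta - \theta_0)$ converges in ${\rm L}^\infty({\mathbb T})$ as $\theta \to \theta_0$. Since $g \mapsto M_g$ is an isometry from ${\rm L}^\infty({\mathbb T})$ into $\mathcal{B}({\rm L}^2({\mathbb T}))$, the corresponding operator-valued difference quotient converges in operator norm, proving holomorphy.

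The only mildly delicate step is the verification of the uniform positive lower bound on the denominator, which is also the constraint that pins down the radius $\pi/8$; everything else is a mechanical consequence of the explicit formulas and Proposition \ref{p6,1}.
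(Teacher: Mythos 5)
Your proof is correct and follows essentially the same route as the paper, which deduces the statement from Proposition \ref{p6,1}(a),(b) together with the (implicit) facts you spell out: non-vanishing of $1-\cos\vartheta\,\tanh(2\theta)$ on $D_{\frac{\pi}{8}}(0)\times{\mathbb T}$, the resulting locally uniform $L^\infty$ bound on the symbol, and norm-holomorphy of the multiplication-operator family via the Cauchy formula and the isometric embedding of ${\rm L}^\infty({\mathbb T})$ into $\mathcal{B}({\rm L}^2({\mathbb T}))$. Only your closing remark is slightly off: the denominator is in fact non-vanishing on the larger disk $D_{\frac{\pi}{4}}(0)$, so the radius $\frac{\pi}{8}$ is dictated not by this bound but by the composition property $F_{\theta_1}\circ F_{\theta_2}=F_{\theta_1+\theta_2}$ exploited later.
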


Combining the continuous functional calculus, Proposition \ref{p6,2} and unitary equivalence properties, we get for $\theta \in D_\frac{\pi}{8}(0),$
\begin{equation*}
\sigma ( \widehat \lao(\theta) ) = \sigma ( G_\theta (\widehat \lao) ) = G_\theta ( \sigma (\widehat \lao) ) 
= G_\theta ( \sigma (\lao) ) = G_\theta ( [0,4] ).
\end{equation*}
Thus, for $\theta \in D_\frac{\pi}{8}(0)$, $\sigma ( \widehat \lao(\theta) )$ is a smooth parametrized curve given by
\begin{equation}\label{eq6,11}
\sigma ( \widehat \lao(\theta) ) = \Big\lbrace T\circ F_{\theta}(\lambda) = G_{\theta}\circ T (\lambda) : \lambda \in [-1,1] \Big\rbrace.
\end{equation}
More precisely, we have:
\begin{prop}\label{p6,3} Consider the family of bounded operators $(\widehat \lao(\theta))_{\theta \in D_\frac{\pi}{8}(0)}$ defined in Proposition \ref{p6,2}. Then, it holds:
\begin{itemize}
\item[(a)] For $\theta_1$, $\theta_2 \in D_\frac{\pi}{8}(0)$ such that $\im(\theta_1) = \im(\theta_2)$, we have 
\begin{equation}\label{eq6,12}
\sigma ( \widehat \lao(\theta_1) ) = \sigma ( \widehat \lao(\theta_2) ).
\end{equation}
The curve $\sigma ( \widehat \lao(\theta) )$ does not depend on the choice of $\re(\theta)$.
\item[(b)] For $\pm \im(\theta) > 0$, $\theta \in D_\frac{\pi}{8}(0)$, the curve $\sigma ( \widehat \lao(\theta) )$ lies in ${\mathbb C}_{\pm}$.
\item[(c)] Let $\theta \in D_\frac{\pi}{8}(0)$. If $\im(\theta) \neq 0$, the curve $\sigma ( \widehat \lao(\theta) )$ is an arc of a circle, which contains the points $0$ and $4$. If $\im(\theta) = 0$, $\sigma ( \widehat \lao(\theta) ) = [0,4]$.
\end{itemize}
\end{prop}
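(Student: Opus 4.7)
The plan is to exploit the identity $\sigma(\widehat{H_0}(\theta))=T(F_\theta([-1,1]))$ coming from \eqref{eq6,11}, so that all three claims reduce to understanding how the Möbius transformation $F_\theta$ acts on the segment $[-1,1]$.

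For (a), fix $\theta\in D_{\pi/8}(0)$ and write $\theta=x+iy$ with $x,y\in\R$. Since $|x|\leq|\theta|$ and $|y|\leq|\theta|<\pi/8$, the composition law of Proposition~\ref{p6,1}(d) applies at each step to give
\begin{equation*}
F_\theta = F_x\circ F_{iy}=F_{iy}\circ F_x.
\end{equation*}
By Proposition~\ref{p6,1}(b), $F_x([-1,1])=[-1,1]$ for $x\in\R$, hence
\begin{equation*}
F_\theta([-1,1])=F_{iy}\bigl(F_x([-1,1])\bigr)=F_{iy}([-1,1]).
\end{equation*}
The right-hand side depends only on $y=\im(\theta)$, and applying the (fixed) affine map $T$ yields \eqref{eq6,12}.

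For (b), by (a) it suffices to consider $\theta=iy$ with $0<|y|<\pi/8$. Setting $t:=\tan(2y)$, one has ${\rm th}(2\theta)=it$, and rationalizing the denominator gives
\begin{equation*}
F_{iy}(\lambda)=\frac{\lambda-it}{1-i\lambda t}=\frac{\lambda(1+t^2)}{1+\lambda^2 t^2}+i\,\frac{t(\lambda^2-1)}{1+\lambda^2 t^2},\qquad \lambda\in[-1,1].
\end{equation*}
For $\lambda\in(-1,1)$ the imaginary part has the opposite sign to $y$, and since $\im(T(z))=-2\im(z)$, the imaginary part of $T(F_{iy}(\lambda))$ has the same sign as $y$. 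Hence $\sigma(\widehat{H_0}(\theta))\setminus\{0,4\}\subset\C_\pm$ whenever $\pm\im(\theta)>0$, the real points $0$ and $4$ arising from $T(F_\theta(\pm 1))=T(\pm 1)$.

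For (c), the case $\im(\theta)=0$ is immediate from Proposition~\ref{p6,1}(b) and $T([-1,1])=[0,4]$. Assume $\im(\theta)\neq 0$. As a Möbius transformation, $F_\theta$ sends generalized circles to generalized circles, and by Proposition~\ref{p6,1}(c) it fixes $\pm 1$; thus $F_\theta(\R)$ is a generalized circle through $\pm 1$. Part (b) furnishes non-real points in $F_\theta(\R)$, and the unique line through $\pm 1$ is $\R$ itself, so $F_\theta(\R)$ must be a genuine circle $\mathcal C$ through $\pm 1$. The continuous injective image $F_\theta([-1,1])$ is then a closed subarc $\Gamma\subset\mathcal C$ containing $\pm 1$. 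Since $T(z)=2(1-z)$ is affine, $T(\mathcal C)$ is a circle through $T(\pm 1)=\{0,4\}$, and $\sigma(\widehat{H_0}(\theta))=T(\Gamma)$ is the claimed arc.

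The substantive point is the commutation trick in (a); once $F_\theta([-1,1])$ is known to depend only on $\im(\theta)$, (b) is a one-line calculation and (c) follows from the standard Möbius-geometry fact that a homographic map fixing two points preserves the pencil of generalized circles through them.
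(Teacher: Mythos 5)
Your proposal is correct. For part (a) you use exactly the paper's ingredients -- the group law of Proposition \ref{p6,1}(d) together with the invariance $F_x([-1,1])=[-1,1]$ for real $x$ from Proposition \ref{p6,1}(b) -- but you organize them as the single decomposition $F_\theta=F_{iy}\circ F_x$, whereas the paper proves the two inclusions separately by writing $F_{-\theta_2}\circ F_{\theta_1}=F_{\re(\theta_1-\theta_2)}$; the content is the same, your packaging is a bit cleaner. For part (b) the paper only says ``direct calculations''; you carry them out correctly (with $t=\tan(2y)$ one indeed gets $\im F_{iy}(\lambda)=t(\lambda^2-1)/(1+\lambda^2t^2)$, of sign opposite to $y$ on $(-1,1)$, and $T$ flips the sign), and you rightly note that the endpoints $\lambda=\pm1$ give the real points $0,4$, so the inclusion in ${\mathbb C}_\pm$ holds off $\{0,4\}$.

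Part (c) is where you genuinely diverge. The paper reduces to $\theta$ purely imaginary via (a) and then derives the explicit circle equation \eqref{circleequ} for the real and imaginary parts of $F_{\theta}(\lambda)$, which has the side benefit of producing the center $c_\theta$ and radius $R_\theta$ of the supporting circle; these explicit data are recorded in the remark following Proposition \ref{p6,3} and reused to define the domains $A^{\pm}_\theta$, $S^{\pm}_\theta$ in the proof of Theorem \ref{t:tda} (Proposition \ref{t:discrete:sp}). You instead argue conceptually: $F_\theta$ is a homography fixing $\pm1$ (Proposition \ref{p6,1}(c)), so it maps $\R\cup\{\infty\}$ onto a generalized circle through $\pm1$; since by (b) this set contains non-real points and the only line through $\pm1$ is $\R$, it must be a genuine circle, and $F_\theta([-1,1])$, being a compact connected injective image inside it, is a closed arc containing $\pm1$; applying the affine map $T$ finishes the proof via \eqref{eq6,11}. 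This is correct and arguably more transparent (it also makes clear why $0$ and $4$ lie on the arc), at the price of not exhibiting $c_\theta$ and $R_\theta$ explicitly -- if one later needs them, as the paper does, they must be recovered separately (e.g. from three points of the arc or by the paper's computation).
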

We refer to Figure \ref{figd} below for a graphic illustration.

\medskip

\noindent
\begin{proof} Statement (a) can be derived from Proposition \ref{oconnor-0} (with $\widehat \lao(\cdot)$ in the role of the map $B ( \cdot )$). We give a direct proof here. Let $\theta_1$, $\theta_2 \in D_\frac{\pi}{8}(0)$ with $\im(\theta_1) = \im(\theta_2)$. Thanks to \eqref{eq6,11}, it is enough to show that
\begin{equation}\label{eq6,13}
\Big\lbrace F_{\theta_1}(\lambda) : \lambda \in [-1,1] \Big\rbrace = 
\Big\lbrace F_{\theta_2}(\lambda) : \lambda \in [-1,1] \Big\rbrace,
\end{equation}
to prove \eqref{eq6,12}. So, let $z = F_{\theta_1}(\lambda)$, for some $\lambda \in [-1,1]$. Let us show that there exists $\lambda' \in [-1,1]$ such that $z = F_{\theta_2}(\lambda')$. 
By Proposition \ref{p6,1}, statements (b) and (d), we can write
\begin{align*}
z = F_{\theta_2} ( F_{\theta_2}^{-1} ( F_{\theta_1}(\lambda) ) ) 
= F_{\theta_2} ( F_{-\theta_2} ( F_{\theta_1}(\lambda) ) ) 
= F_{\theta_2} ( F_{\theta_1 - \theta_2}(\lambda) ) .
\end{align*}
If $\im(\theta_1) = \im(\theta_2)$, then $\theta_1 - \theta_2 = \re(\theta_1 - \theta_2)$ and it follows that:
$$
z= F_{\theta_2} ( F_{\re(\theta_1 - \theta_2)}(\lambda) ).
$$
Since ${\rm th} ( 2 \re(\theta_1 - \theta_2) ) < 1$, then one has $\big\vert F_{\re(\theta_1 - \theta_2)}(\lambda) \big\vert \le 1$ for $\lambda \in [-1,1]$. 
By setting $\lambda' = F_{\re(\theta_1 - \theta_2)}(\lambda)$, one gets $z = F_{\theta_2}(\lambda')$ with $\lambda' \in [-1,1]$. This proves the inclusion
\begin{equation*}
\Big\lbrace F_{\theta_1}(\lambda) : \lambda \in [-1,1] \Big\rbrace \subset \Big\lbrace F_{\theta_2}(\lambda) : \lambda \in [-1,1] \Big\rbrace.
\end{equation*}
The opposite inclusion can be justified similarly once permuted the roles of $\theta_1$ and $\theta_2$. This proves the first claim. Statement (b) follows by direct calculations. Let us prove the last part. According to statement (a), $\theta$ can be chosen with $\re(\theta) = 0$, say $\theta = iy$ with $y \in {\mathbb R}$. The case $\im(\theta) = 0$ (i.e. $y=0$) is immediate since $\sigma ( \widehat \lao(\theta) ) = \sigma ( \widehat \lao(0) ) = \sigma ( \widehat \lao ) = \sigma ( \lao ) = [0,4]$. Now, suppose that $y \neq 0$. Observe that the map $T$ is a composition of a translation and a homothety. To prove that $\sigma ( \widehat \lao(\theta) )$ is an arc of a circle, it is enough to observe that $\{F_{iy}(\lambda): \lambda \in [-1,1]\}$ is a continuous parametrised curve and that for $\lambda \in [-1,1]$, the real and imaginary parts of $F_{iy}(\lambda)$ 
satisfy a circle equation. Indeed, denoting ${\rm th}(2\theta) = i \tan (2y) =: it$, one has
$$
F_{iy}(\lambda) = \frac{\lambda (1 + t^2)}{1 + \lambda^2 t^2} + i \frac{(\lambda^2 -1)t}{1 + \lambda^2 t^2} =: X + iY,
$$
and
\begin{equation}\label{circleequ}
X^2 + \Bigg( Y - \frac{1 - t^2}{2t} \Bigg)^2 = \Bigg( \frac{1 + t^2}{2t} \Bigg)^2.
\end{equation}
\end{proof}

\begin{rem} For $\theta \in D_{\frac{\pi}{8}}(0)$, $\im(\theta) \neq 0$, Equation \eqref{circleequ} allows to recover the center $c_{\theta} \in {\mathbb C}$ and the radius $R_{\theta} >0$ of the circle supporting $\sigma ( \widehat \lao(\theta) )$.
\end{rem}

\subsection{After perturbation}

Now, we focus on the complex scaling of the perturbation $\widehat V$ together with the perturbed operator $\widehat \la = \widehat \lao + \widehat V$. For $\widehat V \in {\mathcal A}_R (\widehat A_0)$, $R > 0$, and for all $\theta \in D_R(0)$, we set
\begin{equation*}
\widehat V(\theta) := \e^{i\theta\widehat A_0} \widehat V \ \e^{-i\theta\widehat A_0}.
\end{equation*}
The following lemma holds:

\begin{lem}\label{ld1}
Let $\widehat V \in \sinf ( {\rm L}^2({\mathbb T}) ) \cap {\mathcal A}_R (\widehat A_0)$, $R > 0$. Then, for all $\theta\in D_R(0)$, $\widehat V(\theta)$ 
is compact.
\end{lem}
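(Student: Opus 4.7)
The plan is to combine the given holomorphy of $\theta \mapsto \widehat V(\theta)$ on $D_R(0)$ with the fact that $\sinf \big( {\rm L}^2({\mathbb T}) \big)$ is a norm-closed two-sided ideal of $\mathcal{B} \big( {\rm L}^2({\mathbb T}) \big)$. In other words, I would essentially repackage the argument of \cite[Lemma 5, Section XIII.5]{RS4}, already invoked in Remark~(a) after Definition \ref{dcl}: pass to the Calkin algebra and apply the identity principle for Banach-space-valued holomorphic functions.

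First I would observe that for \emph{real} $\theta \in (-R,R)$, the operator $\widehat V(\theta) = \e^{i\theta \widehat A_0} \widehat V \e^{-i\theta \widehat A_0}$ is the unitary conjugate of the compact operator $\widehat V$, and so it lies in $\sinf \big( {\rm L}^2({\mathbb T}) \big)$. This is the input on which the analytic continuation argument will feed.

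Next, I would introduce the quotient Banach space $\mathcal{Q} := \mathcal{B}\big({\rm L}^2({\mathbb T})\big) / \sinf\big({\rm L}^2({\mathbb T})\big)$ (well-defined since the second factor is norm-closed) together with the canonical projection $\pi : \mathcal{B}\big({\rm L}^2({\mathbb T})\big) \to \mathcal{Q}$, which is bounded and linear. By composing $\pi$ with the holomorphic extension $\widehat V(\cdot) \in {\rm Hol} \big( D_R(0),\mathcal{B}({\rm L}^2({\mathbb T})) \big)$ one obtains a holomorphic map $\pi \circ \widehat V(\cdot) : D_R(0) \to \mathcal{Q}$ which, by the previous step, vanishes on the real segment $(-R,R)$.

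Finally I would apply the identity theorem for Banach-valued holomorphic functions, which reduces to the scalar identity theorem by pairing with each $\ell \in \mathcal{Q}^*$ and invoking Hahn--Banach: since $(-R,R)$ has accumulation points in $D_R(0)$, we conclude that $\pi \circ \widehat V \equiv 0$, i.e. $\widehat V(\theta) \in \ker \pi = \sinf\big({\rm L}^2({\mathbb T})\big)$ for every $\theta \in D_R(0)$. There is no serious obstacle in this scheme; the only point worth formulating carefully is that one works with a genuine holomorphic map into a Banach space (the Calkin algebra) rather than with a mere family of scalar identities, so that the vector-valued identity theorem applies.
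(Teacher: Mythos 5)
Your proposal is correct and takes essentially the same route as the paper: the paper's proof consists of citing \cite[Lemma 5, Section XIII.5]{RS4}, and your argument (unitary conjugation gives compactness on $(-R,R)$, then pass to the Calkin algebra $\mathcal{B}/\sinf$ and apply the identity theorem for Banach-space-valued holomorphic functions) is precisely the standard proof of that cited lemma. No gaps; the only point to note is the one you already make, namely that $(-R,R)$ has accumulation points in $D_R(0)$ so the vector-valued identity theorem applies.
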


\noindent
\begin{proof}
This follows from \cite[Lemma 5, Section 5]{RS4}, since $\widehat V(\cdot)$ is the analytic continuation of a bounded operator-valued function with 
compact values on the real axis.
\end{proof}

Now, for all $\theta \in D_{2R'}(0)$ with $2R' := \min (R,\frac{\pi}{8} )$, we consider
\begin{equation}
\widehat \la(\theta) := \widehat \lao(\theta) + \widehat V(\theta), \quad \widehat V \in {\mathcal A}_R (\widehat A_0).
\end{equation}
We obtain:

\begin{prop}\label{pdr1}
Let $\widehat V \in {\mathcal A}_R (\widehat A_0)$, $R > 0$. Then,
\begin{enumerate}
\item[(a)] $\widehat \la(\theta)$ is a holomorphic family of bounded operators on $D_{2R'}(0)$.
\item[(b)] For any $\theta' \in \R$ such that $\vert \theta' \vert < R'$, we have
\begin{equation*}
\widehat \la(\theta+ \theta')=\e^{i \theta' \widehat A_0} \widehat \la(\theta)\ \e^{-i \theta'\widehat A_0},
\end{equation*}
for all $\theta\in D_{R'}(0)$.
 \end{enumerate}
\end{prop}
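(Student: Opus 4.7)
For part (a), the plan is to combine Proposition \ref{p6,2} with the definition of the class ${\mathcal A}_R(\widehat A_0)$. By Proposition \ref{p6,2}, $\theta \mapsto \widehat \lao(\theta)$ is holomorphic on $D_{\pi/8}(0)$ with values in $\mathcal{B}({\rm L}^2({\mathbb T}))$; by Definition \ref{dcl}, the hypothesis $\widehat V \in {\mathcal A}_R(\widehat A_0)$ provides a holomorphic extension of $\theta \mapsto \e^{i\theta \widehat A_0} \widehat V \e^{-i\theta \widehat A_0}$ to $D_R(0)$, still denoted $\widehat V(\theta)$. Summing on the common domain $D_{\min(R,\pi/8)}(0) = D_{2R'}(0)$ yields the holomorphy of $\widehat \la(\theta) = \widehat \lao(\theta) + \widehat V(\theta)$.

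For part (b), I would proceed by analytic continuation from the real axis. First, for real $\theta, \theta'$ with $|\theta|, |\theta'| < R'$, the one-parameter group property $\e^{i(\theta+\theta')\widehat A_0} = \e^{i\theta' \widehat A_0} \e^{i\theta \widehat A_0}$ yields directly
\begin{equation*}
\widehat V(\theta+\theta') = \e^{i\theta' \widehat A_0} \widehat V(\theta) \e^{-i\theta' \widehat A_0},
\end{equation*}
and the same identity for $\widehat \lao(\theta)$ in place of $\widehat V(\theta)$, hence also for the sum $\widehat \la(\theta)$. So the desired equality holds for $\theta \in (-R', R')$.

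Next, I would argue that both sides of the claimed identity are holomorphic in $\theta \in D_{R'}(0)$. The left-hand side is holomorphic because, by part (a), $\widehat \la$ is holomorphic on $D_{2R'}(0)$ and the shift $\theta \mapsto \theta + \theta'$ sends $D_{R'}(0)$ into $D_{2R'}(0)$ when $|\theta'| < R'$. The right-hand side is holomorphic because $\e^{\pm i \theta' \widehat A_0}$ are fixed bounded operators and $\widehat \la(\theta)$ is holomorphic on $D_{R'}(0) \subset D_{2R'}(0)$. Since the two holomorphic maps coincide on the nonempty open real segment $(-R', R')$, the identity principle for Banach-space-valued holomorphic functions forces them to coincide on the connected domain $D_{R'}(0)$.

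No step is really an obstacle here: the proof reduces to checking domains of holomorphy and invoking the identity principle, once the definition of ${\mathcal A}_R(\widehat A_0)$ has been combined with Proposition \ref{p6,2}. The only mild subtlety is ensuring that $\theta + \theta' \in D_{2R'}(0)$ so that $\widehat \la(\theta + \theta')$ is defined, which is precisely why the radius $R'$, rather than $2R'$, appears in the statement.
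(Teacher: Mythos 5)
Your proof is correct and follows essentially the same route as the paper: part (a) by summing the two holomorphic families on the common disk, and part (b) by checking the identity for real parameters via the group law and then extending to $D_{R'}(0)$ by the identity principle for bounded holomorphic operator-valued maps, which is exactly the paper's argument.
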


\noindent
\begin{proof}
Statement (a) follows from Proposition \ref{p6,2}. Now, we prove (b). We fix $\theta' \in (-R' , R')$ and observe that $D_{R'} (0) \subset D_R (0) \cap D_R (-\theta')$. The maps
\begin{align*}
\theta \longmapsto \e^{i\theta'\widehat A_0} \widehat \la(\theta)\ \e^{-i\theta'\widehat A_0} \quad {\rm and} \quad \theta \longmapsto \widehat \la(\theta+\theta'),
\end{align*}
are bounded operator-valued and holomorphic on $D_{R'}(0)$. Moreover, they coincide on $\R\, \cap D_{R'}(0) = (-R',R')$. Hence, they also coincide on $D_{R'}(0)$.
\end{proof}

The next proposition gives the key to the proofs of Theorems \ref{t:tda} and \ref{t:tda+}.

\begin{prop}\label{pdr2}
Let $R > 0$ and $\widehat V \in \sinf ( {\rm L}^2({\mathbb T}) ) \cap {\mathcal A}_R (\widehat A_0)$, and let $R' > 0$ such that $2R ' = \min(R,\frac\pi8)$. Then, for any $\theta\in D_{R'}(0)$, we have
\begin{enumerate}
\item[(a)] $\sigma ( \widehat \la(\theta) )$ depends only on $\im (\theta)$.
\item[(b)] It holds: $\sigma_\mathrm{ess} ( \widehat \la(\theta) ) = \sigma_\mathrm{ess} ( \widehat \lao(\theta) ) = \sigma ( \widehat \lao(\theta) )$ and 
\begin{equation*}
\sigma ( \widehat \la(\theta) ) = \sigma_\mathrm{disc} ( \widehat \la(\theta) ) \bigsqcup \sigma_\mathrm{ess} ( \widehat \lao(\theta) ),
\end{equation*}
where the possible limit points of $\sigma_\mathrm{disc} ( \widehat \la(\theta) )$ lie in $\sigma_\mathrm{ess} ( \widehat \lao(\theta) )$.
\end{enumerate}
\end{prop}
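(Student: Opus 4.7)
Plan. For statement (a), the reduction is direct via Proposition \ref{pdr1}(b). Given $\theta_1,\theta_2 \in D_{R'}(0)$ with common imaginary part $y$, I set $\theta_0:=iy\in D_{R'}(0)$. Since $|\re(\theta_j)|<R'$, Proposition \ref{pdr1}(b) applied with base point $\theta_0$ and real translate $\theta'=\re(\theta_j)$ gives
\begin{equation*}
\widehat \la(\theta_j)=\e^{i\re(\theta_j)\widehat A_0}\,\widehat \la(iy)\,\e^{-i\re(\theta_j)\widehat A_0},\qquad j=1,2,
\end{equation*}
so $\widehat \la(\theta_1)$ and $\widehat \la(\theta_2)$ are unitarily equivalent to $\widehat \la(iy)$ and share the same spectrum, which consequently depends only on $y=\im(\theta)$.

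For statement (b), I first identify $\sigma_\mathrm{ess}\big(\widehat \lao(\theta)\big)$. By Proposition \ref{p6,2}, $\widehat \lao(\theta)$ is the multiplication operator by the continuous function $G_\theta\circ f$ on ${\rm L}^2({\mathbb T})$, and its spectrum $G_\theta([0,4])$ is a connected arc with infinitely many points (Proposition \ref{p6,3}(c)); in particular it has no isolated points. Since at any spectral point $z=(G_\theta\circ f)(\vartheta_0)$ of a multiplication operator on a compact manifold one can build an orthonormal Weyl sequence by concentrating normalized functions on shrinking neighborhoods of $\vartheta_0$, $\widehat \lao(\theta)-z$ fails to be Fredholm on the whole of its spectrum; hence $\sigma_\mathrm{ess}(\widehat \lao(\theta))=\sigma(\widehat \lao(\theta))$. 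Combining this with the compactness of $\widehat V(\theta)$ (Lemma \ref{ld1}) and Weyl's theorem on the invariance of essential spectra under compact perturbations yields $\sigma_\mathrm{ess}(\widehat \la(\theta))=\sigma_\mathrm{ess}(\widehat \lao(\theta))=\sigma(\widehat \lao(\theta))$.

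It remains to decompose $\sigma(\widehat \la(\theta))$ and to localize the limit points of $\sigma_\mathrm{disc}(\widehat \la(\theta))$. For $z\notin \sigma(\widehat \lao(\theta))$, the factorization
\begin{equation*}
\widehat \la(\theta)-z=\bigl(I+\widehat V(\theta)(\widehat \lao(\theta)-z)^{-1}\bigr)\bigl(\widehat \lao(\theta)-z\bigr)
\end{equation*}
reduces the spectral analysis to the invertibility of $I+\widehat V(\theta)(\widehat \lao(\theta)-z)^{-1}$, a holomorphic family of compact-plus-identity operators on $\mathbb{C}\setminus\sigma(\widehat \lao(\theta))$ which tends to the identity at infinity. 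The analytic Fredholm alternative then implies that the set of non-invertibility is discrete in $\mathbb{C}\setminus\sigma(\widehat \lao(\theta))$ with meromorphic inverse having finite-rank poles, so each such point is a discrete eigenvalue of $\widehat \la(\theta)$ of finite algebraic multiplicity, and these points can only accumulate on the closed set $\sigma_\mathrm{ess}(\widehat \la(\theta))$. Alternatively, this step can be deduced from \cite[Theorem 2.1, p.~373]{goh}, already invoked in the excerpt. The main (mild) obstacle is the non-selfadjointness of $\widehat \lao(\theta)$ for complex $\theta$ when identifying $\sigma_\mathrm{ess}(\widehat \lao(\theta))=\sigma(\widehat \lao(\theta))$; this reduces to the purely topological observation that the spectral arc $G_\theta([0,4])$ contains no isolated points.
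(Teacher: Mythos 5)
Your proof is correct and follows essentially the same route as the paper: part (a) via the unitary equivalence of Proposition \ref{pdr1}(b) applied with the purely imaginary base point $iy$, and part (b) via the compactness of $\widehat V(\theta)$ (Lemma \ref{ld1}), Weyl's theorem on the invariance of the essential spectrum under compact perturbations, and the analytic Fredholm alternative, equivalently \cite[Theorem 2.1, p.~373]{goh}, which is exactly the reference the paper invokes. The only added content is your explicit Weyl-sequence verification that $\sigma_\mathrm{ess}\big(\widehat \lao(\theta)\big) = \sigma\big(\widehat \lao(\theta)\big)$ for the multiplication operator $\widehat \lao(\theta)$, a step the paper leaves implicit.
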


\noindent
\begin{proof}
Statement (a) is a consequence of the unitary equivalence established in Proposition \ref{pdr1} (b). Statement (b) follows from Lemma \ref{ld1}, the Weyl criterion on the invariance of the essential spectrum and \cite[Theorem 2.1, p. 373]{goh} (see also \cite[Corollary 2, p.113]{RS4}).
\end{proof}

\subsection{Proof of Theorem \ref{t:tda}}

The proof of Theorem \ref{t:tda} follows from Proposition \ref{t:discrete:sp} below as an adaptation of the usual complex scaling arguments to our non-selfadjoint setting (see e.g. \cite[Theorem XIII.36]{RS4}, \cite{hissig}).

For any $\theta \in D_{\frac{\pi}{8}}(0)$, $c_{\theta} \in {\mathbb C}$ and $R_{\theta} >0$ stand respectively for the center and the radius of the circle supporting $\sigma ( \widehat \lao(\theta) )$. For $\pm \im(\theta) \geq 0$, we define the open domains $S^\mp_{\theta} := {\mathbb C}\setminus A^\pm_{\theta}$ where
$$
A^\pm_{\theta} := \big\lbrace z\in {\mathbb C} : \re(z) \in [0,4], \pm \im(z) \geq 0, |z-c_{\theta} |\geq R_{\theta} \big\rbrace, \quad \pm \im(\theta) > 0,
$$
and $A_0^{\pm} = \big\lbrace z\in {\mathbb C} : \re(z) \in [0,4], \pm \im(z) \geq 0 \big\rbrace$. According to Proposition \ref{p6,3}, 
the domains $S^{\pm}_{\theta}$ depend only on $\im(\theta)$. In addition, if $0\leq \im(\theta')  < \im(\theta)$, $S^-_{\theta'} \subsetneq S^-_{\theta}$ and if 
$\im(\theta)< \im(\theta') \leq 0$, $S^+_{\theta'} \subsetneq S^+_{\theta}$.

\begin{prop}\label{t:discrete:sp}
Let $R > 0$, $\widehat V \in{\mathcal A}_R (\widehat A_0)$ and $R' > 0$ such that $2R '= \min(R,\frac\pi8)$. Let $(\theta, \theta') \in D_{R'}(0)\times D_{R'}(0)$. 
Then:
\begin{enumerate}
\item[(a)] If $0\leq \im(\theta')  < \im(\theta)$, we have $\sigma_\mathrm{disc} ( \widehat \la(\theta') )\cap S^-_{\theta'} = \sigma_\mathrm{disc} ( \widehat \la(\theta) )\cap S^-_{\theta'} \subset \sigma_\mathrm{disc} ( \widehat \la(\theta) )\cap S^-_{\theta}$. 
In particular,
$$
\sigma_\mathrm{disc} ( \widehat \la(\theta) )\cap S_0^-=\sigma_\mathrm{pp} ( \widehat \la ) \cap S_0^-.
$$ 
\item[(b)] If $\im (\theta) < \im(\theta') \leq 0$, we have $\sigma_\mathrm{disc} ( \widehat \la(\theta') )\cap S^+_{\theta'} = 
\sigma_\mathrm{disc} ( \widehat \la(\theta) )\cap S^+_{\theta'} \subset \sigma_\mathrm{disc} ( \widehat \la(\theta) )\cap S^+_{\theta}$. 
In particular,
$$
\sigma_\mathrm{disc} ( \widehat \la(\theta) )\cap S_0^+ = \sigma_\mathrm{pp} ( \widehat \la ) \cap S_0^+.
$$ 
\end{enumerate}
As a consequence, the discrete spectrum of $\widehat \la$ (and $\la$) can only accumulate at $0$ and $4$. 
\end{prop}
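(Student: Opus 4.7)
The plan is to combine two ingredients from the earlier propositions of this section: the unitary equivalence $\widehat \la(\theta+\tau) = \e^{i\tau \widehat A_0}\,\widehat\la(\theta)\,\e^{-i\tau\widehat A_0}$ for real $\tau$ (Proposition \ref{pdr1}(b)), which forces $\sigma(\widehat\la(s))$ to depend only on $\im(s)$; and the fact that $\widehat V(s)$ is a holomorphic family of \emph{compact} operators on $D_R(0)$ (Lemma \ref{ld1}), which opens the door to the Fredholm analytic (Gohberg--Sigal) theorem. By symmetry it is enough to prove statement (a); statement (b) is obtained by replacing $S^-_\theta$, $A^+_\theta$ with $S^+_\theta$, $A^-_\theta$ throughout.

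Fix $z_0 \in \sigma_\mathrm{disc}(\widehat\la(\theta')) \cap S^-_{\theta'}$. Using the monotonicity $S^-_{\theta'}\subset S^-_s$ for $\im(s)\geq \im(\theta')$ recalled just before the proposition, together with the identification $\sigma_\mathrm{ess}(\widehat\la(s)) = \sigma(\widehat\lao(s)) \subset \mathbb{C} \setminus S^-_s$ from Proposition \ref{pdr2}(b), the operator $\widehat\lao(s) - z_0$ is boundedly invertible for every $s$ on the segment $[\theta',\theta]$ and, by continuity, on a connected open tubular neighbourhood $\Omega$ of that segment in $D_{R'}(0)$. Writing
\begin{equation*}
\widehat\la(s) - z_0 \;=\; \bigl(\widehat\lao(s) - z_0\bigr)\bigl(I + K(s)\bigr), \qquad K(s) := \bigl(\widehat\lao(s) - z_0\bigr)^{-1} \widehat V(s),
\end{equation*}
we obtain on $\Omega$ a holomorphic family $I+K(s)$ of compact perturbations of the identity. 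The Fredholm analytic theorem then asserts that the set $Z := \{s\in\Omega : z_0 \in \sigma(\widehat\la(s))\}$ is either discrete in $\Omega$ or equal to $\Omega$. Choosing $\Omega$ narrow enough to stay inside $D_{R'}(0)$ but wide enough to contain an open horizontal segment through $\theta'$, the unitary equivalence of Proposition \ref{pdr1}(b) places that whole horizontal segment inside $Z$; hence $Z$ cannot be discrete, and therefore $Z = \Omega$. In particular $z_0\in \sigma(\widehat\la(\theta))$; as $z_0\in S^-_\theta$ is disjoint from $\sigma_\mathrm{ess}(\widehat\la(\theta))$, this forces $z_0 \in \sigma_\mathrm{disc}(\widehat\la(\theta))$. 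The reverse inclusion follows from the same Fredholm analytic and horizontal-translation argument run starting from $\widehat\la(\theta)$, and the inclusion into $\sigma_\mathrm{disc}(\widehat\la(\theta))\cap S^-_\theta$ is immediate from $S^-_{\theta'}\subset S^-_\theta$.

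The identity $\sigma_\mathrm{disc}(\widehat\la(\theta)) \cap S^-_0 = \sigma_\mathrm{pp}(\widehat\la)\cap S^-_0$ then follows by specialising the first equality in (a) to $\theta'=0$ (so $\widehat\la(0)=\widehat\la$): the only possible accumulation points of $\sigma_\mathrm{disc}(\widehat\la(\theta))$ lie on the arc $\sigma_\mathrm{ess}(\widehat\la(\theta))$, which, by Proposition \ref{p6,3}(b)--(c), is contained in $\mathbb{C}^+\cup \{0,4\}$ and meets $\overline{S^-_0}$ only at $\{0,4\}\subset\partial S^-_0$; hence the set is discrete in the open set $S^-_0$ and coincides with its own closure there, which identifies it with $\mathcal{E}_p(\widehat\la)\cap S^-_0 = \sigma_\mathrm{pp}(\widehat\la)\cap S^-_0$. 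Finally, since $S^-_0\cup S^+_0 = \mathbb{C}\setminus [0,4]$ already contains all of $\sigma_\mathrm{disc}(\widehat\la)$, combining (a) and (b) shows that each piece accumulates only at $\{0,4\}$, and the unitary equivalence $\la = \mathcal{F}^{-1}\widehat\la\,\mathcal{F}$ transfers the statement to $\la$. The principal technical obstacle I anticipate is the delicate choice of the domain $\Omega$: it must be connected, contained in $D_{R'}(0)$, contain the segment joining $\theta'$ and $\theta$, and simultaneously contain an open horizontal segment on which Proposition \ref{pdr1}(b) freezes the spectrum; without this coupling, the Fredholm analytic theorem alone would merely yield stability of the total multiplicity inside a small contour around $z_0$ rather than the actual persistence of $z_0$ as an eigenvalue.
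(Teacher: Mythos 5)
Your proof is correct, but it propagates the eigenvalue along the scaling parameter by a different mechanism than the paper. The paper fixes $\theta_0$ with $\lambda\in\sigma_\mathrm{disc}(\widehat\la(\theta_0))$ and invokes Kato's analytic perturbation theory (finitely many eigenvalue branches, holomorphic up to algebraic branch points), uses the real-translation unitary equivalence of Proposition \ref{pdr1}(b) to freeze the branches on real segments, hence everywhere by analyticity, and then quotes the continuation argument of \cite[Problem 76, Section XIII]{RS4} along the radial curves $t\mapsto t\theta$ and $t\mapsto(1-t)\theta$: the eigenvalue either persists as a discrete eigenvalue or is absorbed by the moving essential spectrum, which cannot happen for $z_0\in S_0^-$. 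You instead fix the spectral point $z_0$, factor $\widehat\la(s)-z_0=(\widehat\lao(s)-z_0)(I+K(s))$ on a tube $\Omega$ around the segment $[\theta',\theta]$ (invertibility of $\widehat\lao(s)-z_0$ there being guaranteed by $z_0\in S^-_{\theta'}\subset S^-_s$ on the segment plus norm continuity), and apply the analytic Fredholm alternative to $Z=\{s\in\Omega: z_0\in\sigma(\widehat\la(s))\}$; the same real-translation invariance puts a horizontal segment inside $Z$, killing the discrete branch of the alternative, so $Z=\Omega$ and the dichotomy $\sigma(\widehat\la(\theta))=\sigma_\mathrm{disc}\sqcup\sigma_\mathrm{ess}$ of Proposition \ref{pdr2}(b) upgrades membership in the spectrum to membership in the discrete spectrum. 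Your route replaces Kato's eigenvalue-branch machinery and the Reed--Simon continuation lemma by the analytic Fredholm theorem (in the spirit of the characteristic-value analysis the paper uses in Section \ref{resonances}), which is arguably more self-contained; what it gives up is any control of multiplicities along the deformation, which the paper's branch-tracking provides but which is not needed for the statement. Two minor points: like the paper's own proof, your argument needs $\widehat V$ compact (Lemma \ref{ld1} and Proposition \ref{pdr2} are used), a hypothesis the proposition's statement leaves implicit; and your concluding identification $\sigma_\mathrm{disc}(\widehat\la)\cap S_0^-=\sigma_\mathrm{pp}(\widehat\la)\cap S_0^-$ is stated somewhat compressedly, but the underlying observation — a point of $\overline{{\mathcal E}_{\rm p}(\widehat\la)}\cap S_0^-$ lies in $\sigma(\widehat\la)\setminus[0,4]=\sigma_\mathrm{disc}(\widehat\la)$, hence is isolated in the spectrum and therefore itself an eigenvalue — is sound.
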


\begin{figure}[!h]
\begin{center}
\includegraphics[scale=0.75]{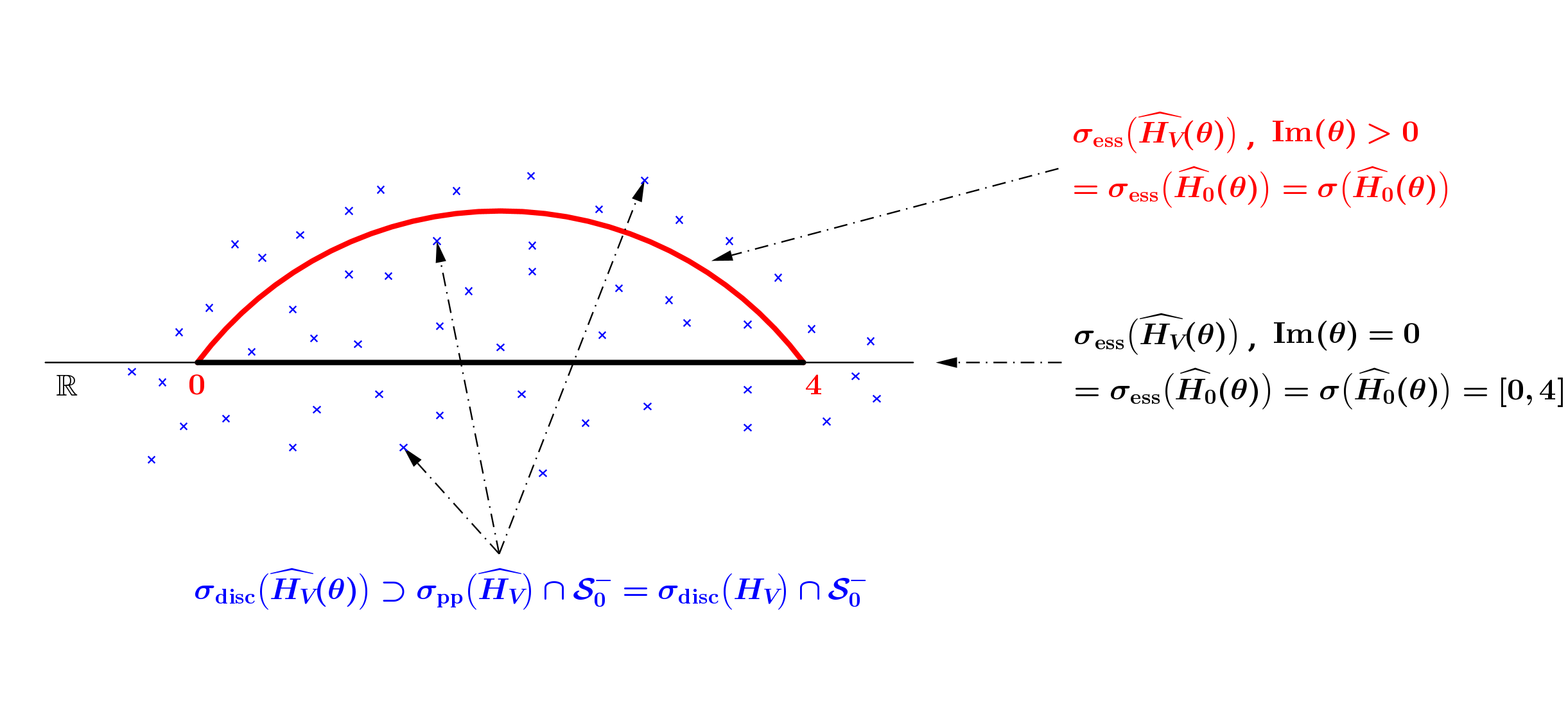}
\vspace*{-0.7cm}
\caption{Spectral structure of the operator $\widehat \la(\theta)$ for $\theta \in D_\frac{\pi}{8}(0)$ and $\im(\theta) \le 0$.} \label{figd}
\end{center}
\end{figure}

\noindent
\begin{proof} 
We focus our attention on case (a). For a moment, fix $\theta_0\in D_{R'}(0)$ such that $\im(\theta_0) >0$ and suppose that $\lambda\in \sigma_\mathrm{disc} ( \widehat \la(\theta_0))$. Since the map $\theta\longmapsto \widehat \la(\theta)$ is analytic, there exist open neighborhoods $\mathcal{V}_{\theta_0}$ and $\mathcal{W}_\lambda$ of $\theta_0$ and $\lambda$ respectively such that \cite{kato,RS4}:
\begin{itemize}
\item For all $\theta\in\mathcal{V}_{\theta_0}$, the operator  $\widehat \la(\theta) $ has a finite number of eigenvalues in $\mathcal{W}_\lambda$ denoted by $\lambda_j(\theta)\in\{1,\dots,n\}$, all of finite multiplicity.
\item These nearby eigenvalues are given by the branches of a finite number of holomorphic functions in $\mathcal{V}_{\theta_0}$ with at worst algebraic branch point near $\theta_0$.
\end{itemize}

\noindent
If $\varphi=\theta-\theta_0\in\R$ for  $\theta\in\mathcal{V}_{\theta_0}$, then $\widehat \la(\theta_0+\varphi)$ and $\widehat \la(\theta_0) $ are unitarily equivalent according to Proposition \ref{pdr1}. So, the only eigenvalue of $\widehat \la(\theta_0+\varphi)$ near $\lambda$ in $\mathcal{W}_\lambda$ is $\lambda$. Therefore, 
$\lambda_j(\theta)=\lambda$ for any $j\in\{1,\dots,n\}$ and for $\theta\in\mathcal{V}_{\theta_0}$ with $\theta-\theta_0\in\R$. By analyticity, we deduce that for all $\theta\in\mathcal{V}_{\theta_0}$ and all $j\in\{1,\dots,n\}$ one has $\lambda_j(\theta)=\lambda$. Finally, we have proved that given $\theta_0\in D_{R'}(0)$ and 
$\lambda\in \sigma_\mathrm{disc}( \widehat \la(\theta_0))$, there exists a neighborhood $\mathcal{V}_{\theta_0}$ of $\theta_0$, such that for all $\theta\in\mathcal{V}_{\theta_0}$, $\lambda\in\sigma_\mathrm{disc} ( \widehat \la(\theta) )$. 
Now, following \cite[Problem 76, Section XIII]{RS4}, if $\gamma\in\mathcal{C}^0 ( [0,1],D_{R'}(0) )$ is a continuous curve and $\lambda\in\sigma_\mathrm{disc} 
( \widehat \la(\gamma(0)))$, then either $\lambda\in\sigma_\mathrm{disc} ( \widehat \la(\gamma(1)) )$ or $\lambda\in\sigma_\mathrm{ess} 
( \widehat \la(\gamma(t)) )$ for some $t\in(0,1]$. We apply this observation twice keeping in mind Proposition \ref{pdr2}.

Fix $\theta\in D_{R'}(0)$ with $\im(\theta)>0$. Starting with $\lambda\in\sigma_\mathrm{disc} ( \widehat \la)\cap S_0^- = \sigma_\mathrm{pp}( \widehat \la)\cap S_0^-$, considering the continuous curve $\gamma^+: [0,1]\to D_{R'}(0)$
$$
\gamma^+ (t) = t\theta,
$$
and applying the above observation yields: $\sigma_\mathrm{disc} ( \widehat \la)\cap S_0^- \subset \sigma_\mathrm{disc} ( \widehat \la(\theta) )\cap S_0^-$. Now starting with $\lambda\in\sigma_\mathrm{disc} ( \widehat \la(\theta) )\cap S_0^-$, considering the opposite continuous curve $\gamma^-: [0,1]\to D_{R'}(0)$
$$
\gamma^- (t)= (1-t) \theta,
$$
and applying the above observation yields the opposite inclusion. So, we have proven that given $\theta\in D_{R'}(0)$ with $\im(\theta)>0$, $\lambda \in S_0^-$, then $\lambda\in\sigma_\mathrm{disc} ( \widehat \la(\theta) )\cap S_0^-$ if and only if $\lambda\in\sigma_\mathrm{disc} ( \widehat \la)\cap S_0^- = \sigma_\mathrm{pp}( \widehat \la)\cap S_0^-$.

The proof of the identity $\sigma_\mathrm{disc} ( \widehat \la(\theta') ) \cap S^-_{\theta'} = \sigma_\mathrm{disc} ( \widehat \la(\theta) )\cap 
S^-_{\theta'}$ for any $0\leq \im(\theta')  < \im(\theta)$ is similar. The inclusion $\sigma_\mathrm{disc} ( \widehat \la(\theta) ) \cap S^-_{\theta'} \subset \sigma_\mathrm{disc} ( \widehat \la(\theta) ) \cap S^-_{\theta}$ follows from the inclusion $S^-_{\theta'} \subsetneq S^-_{\theta}$, which allows us to conclude on case (a).

The proof of case (b) is analog.

Once proven Statements (a) and (b), we conclude as follows. From Proposition \ref{pdr2}, we know that the limit points of $\sigma_\mathrm{disc} (\widehat \la)$ belong 
necessarily to $\sigma_\mathrm{ess} (\widehat \la) = [0,4]$. Pick one of these points, say $\lambda_0$. It is necessarily the limit point of a subsequence of 
either $\sigma_\mathrm{disc} (\widehat \la) \cap S_0^-$ or $\sigma_\mathrm{disc} (\widehat \la) \cap S_0^+$. Without any loss of generality, assume 
there exists a subsequence of $\sigma_\mathrm{disc} (\widehat \la) \cap S_0^-$ which converges to $\lambda_0 \in [0,4]$. Since 
$\sigma_\mathrm{disc} (\widehat \la) \cap S_0^-= \sigma_\mathrm{disc} ( \widehat \la (\theta) ) \cap S_0^-$ for any $\im(\theta) > 0$, 
$\theta \in D_{R'} (0)$, $\lambda_0$ also belongs to $\sigma_\mathrm{ess} ( \widehat \la (\theta) )$. Since $\sigma_\mathrm{ess} ( \widehat \la 
(\theta) ) \cap \sigma_\mathrm{ess} (\widehat \la) =\{0,4\}$ for any $\im(\theta) > 0$, $\lambda_0\in \{0,4\}$ and the last part follows.
\end{proof}

\subsection{Proof of Theorem \ref{t:tda+}}

Let $R > 0$, $\widehat V \in{\mathcal A}_R (\widehat A_0)$ and $R' > 0$ such that $2R '= \min(R,\frac\pi8)$. Observe first that for any $\theta \in D_{R'}(0)$ with $\im (\theta) >0$, ${\mathcal D}_+:= \sigma_\mathrm{disc} ( \widehat \la(\theta) ) \cap (0,4)$ is discrete and its possible accumulation points belong to $\{0,4\}$. Let $\varphi$ and $\psi$ be two analytic vectors for $\widehat{A}_0$ with convergence radius $R_0 >0$ and denote by $\varphi: \theta \mapsto \varphi (\theta)$ and $\psi: \theta \mapsto \psi (\theta)$ their analytic extension on $D_{R_0}(0)$. Consider the function $F(z,\theta) = \bra \psi (\bar{\theta}), (H_V(\theta) -z)^{-1} \varphi (\theta) \ket$ whenever it exists. For $\theta \in D_{\min(R_0,R')}(0)$, $F(\cdot, \theta)$ is analytic in ${\mathbb C}\setminus \sigma ( \widehat \la(\theta) )$ and meromorphic in ${\mathbb C}\setminus \sigma_\mathrm{ess} ( \widehat \la(\theta) )$. Now, let us fix $z\in S_0^-\setminus \sigma_\mathrm{disc} (\widehat \la)$. Then $F(z,\cdot)$ is analytic on some region $D_{\min(R_0,R')}(0)\cap \{\theta \in {\mathbb C} : -\epsilon_z < \im(\theta) \}$, for some $\epsilon_z >0$. Since for any $\eta \in D_{\min(R_0,R')}(0) \cap {\mathbb R}$, direct calculations yield $F(z,\eta)=F(z,0)$, we conclude by analyticity that $F(z,\cdot)$ is constant in $D_{\min(R_0,R')}(0)\cap \{\theta \in {\mathbb C} :-\epsilon_z < \im (\theta) \}$. In particular, given $\theta_0 \in D_{\min(R_0,R')}(0)$ with $\im (\theta_0) >0$, $F(\cdot,\theta_0)$ provides an analytic continuation to the function $F(\cdot,0)$ from $S_0^-\setminus \sigma_\mathrm{disc} (\widehat \la)$ to $S^-_{\theta_0}\setminus \sigma_\mathrm{disc} (\widehat \la ( \theta_0) )$. In particular,  for any relatively compact interval $\Delta_0$, $\overline{\Delta_0} \subset (0,4)\setminus {\mathcal D}_+$, the map $\bra \psi, (H_V -z)^{-1} \varphi \ket$ extends continuously from some region $\Delta_0 - i(0,\delta_0^-]$, for some $\delta_0^- >0$, to $\Delta_0 - i[0,\delta_0^-]$. Note that $\delta_0^-$ can be chosen as:
$$
\delta_0^- =\text{dist} (\overline{\Delta_0}, \min \{\im (z); z\in \sigma_\mathrm{disc} (\widehat \la) \cap S_0^-\}) >0.
$$
This proves the first statement.

The proof of the other case is similar (with $\theta \in D_{R'}(0)$ and $\im (\theta) <0$). Once proven both cases, we can set ${\mathcal D}={\mathcal D}_+ \cup {\mathcal D}_-$ and the proposition follows.


\section{Resonances for exponentially decaying perturbations}\label{resonances}

Throughout this section, the perturbation $V$ is assumed to satisfy Assumption \ref{ade}. We aim at defining and characterizing the resonances of the 
operator $\la$ near the thresholds $0$ and $4$.

\subsection{Preliminaries}

The purpose of this first paragraph is to prove the following result:
\begin{lem}\label{l3,1}
Set $z(k) := k^2$. Then, there exists $0 < \varepsilon_0 \le \frac{\delta}{8}$ small enough such that the operator-valued function with values in $\sinf ( \ell^2({\mathbb Z}) )$
\begin{equation*}
k \mapsto W_{-\delta} ( \lao - z(k) )^{-1} W_{-\delta},
\end{equation*}
admits a holomorphic extension from $D_{\varepsilon_0}^\ast(0) \cap {\mathbb C}^+$ to $D_{\varepsilon_0}^\ast(0)$.
\end{lem}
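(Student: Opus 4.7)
The plan is to exploit the explicit convolution formula \eqref{R_0} for the free resolvent kernel and show that, after conjugation by the exponential weights $W_{-\delta}$, the resulting matrix representation extends holomorphically through the branch cut to the punctured disk $D^{\ast}_{\varepsilon_0}(0)$, provided $\varepsilon_0$ is chosen small enough to absorb the exponential growth arising on the non-physical sheet.

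First, I would write the matrix element of $W_{-\delta}\big(\lao - z(k)\big)^{-1} W_{-\delta}$ in the canonical basis of $\ell^2({\mathbb Z})$, namely
\begin{equation*}
K_k(n,m) := e^{-\delta|n|/2}\,\frac{i\, e^{i|n-m|\cdot 2\arcsin(k/2)}}{k\sqrt{4-k^2}}\, e^{-\delta|m|/2}, \qquad (n,m)\in {\mathbb Z}^2,
\end{equation*}
which is valid for $k \in D^{\ast}_{\varepsilon_0}(0)\cap {\mathbb C}^+$ by \eqref{R_0}. Since $k\mapsto 2\arcsin(k/2)$ is holomorphic on $D_2(0)$ and $k\mapsto (k\sqrt{4-k^2})^{-1}$ is holomorphic on $D_{2}^{\ast}(0)$, the expression $K_k(n,m)$ is holomorphic in $k$ on $D^{\ast}_{\varepsilon_0}(0)$ for each fixed pair $(n,m)$ as soon as $\varepsilon_0\le 2$. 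This furnishes a natural candidate for the pointwise (matrix-element) extension.

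Second, I would fix $\varepsilon_0 \in (0, \delta/8]$ small enough that
\begin{equation*}
\mu_0 := \sup_{|k|\le \varepsilon_0}\big|\im(2\arcsin(k/2))\big| \le \delta/4,
\end{equation*}
which is possible since $2\arcsin(k/2) = k + O(k^3)$ near the origin. Using $|n-m|\le |n|+|m|$ together with this bound, the Hilbert-Schmidt estimate
\begin{equation*}
\sum_{(n,m)\in {\mathbb Z}^2} |K_k(n,m)|^2 \le \frac{1}{|k\sqrt{4-k^2}|^2}\left(\sum_{n\in {\mathbb Z}} e^{(-\delta + 2\mu_0)|n|}\right)^2
\end{equation*}
yields a finite value, locally uniform on compact subsets of $D^{\ast}_{\varepsilon_0}(0)$. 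Hence the operator $T_k$ associated to the kernel $K_k$ belongs to $\sd\big(\ell^2({\mathbb Z})\big) \subset \sinf\big(\ell^2({\mathbb Z})\big)$ for each such $k$, and coincides with $W_{-\delta}(\lao - z(k))^{-1} W_{-\delta}$ on $D^{\ast}_{\varepsilon_0}(0)\cap {\mathbb C}^+$ by construction.

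Third, I would promote the pointwise holomorphy to an operator-valued one. The cleanest route is to test against finite linear combinations of the basis vectors $e_n$: for such $\varphi,\psi$, $k\mapsto \langle \varphi, T_k \psi\rangle$ is a finite sum of holomorphic scalar functions. Combined with the local uniform bound on $\normsch{T_k}$ (and hence on the operator norm), a standard Vitali-Montel argument upgrades this weak holomorphy to norm holomorphy in $\mathcal{B}\big(\ell^2({\mathbb Z})\big)$, while compactness is preserved pointwise. Alternatively, one can establish holomorphy directly in the Hilbert-Schmidt topology by controlling the difference quotients of $K_k$ through the very same summability estimate applied to $\partial_k K_k(n,m)$. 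The main obstacle is precisely the quantitative choice of $\varepsilon_0$ enforcing $\mu_0 < \delta/2$: on the physical sheet $\im k > 0$ the factor $e^{i|n-m|\cdot 2\arcsin(k/2)}$ decays and no weights are needed, but on the non-physical sheet $\im k < 0$ it grows exponentially in $|n-m|$, and only the combined decay supplied by $e^{-\delta|n|/2}\,e^{-\delta|m|/2}$ can absorb that growth, exactly under the condition $2\mu_0 < \delta$.
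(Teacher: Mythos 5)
Your proposal is correct, and up to the choice of estimate it follows the paper's route for the extension itself: both write the kernel $e^{-\frac{\delta}{2}|n|}R_0(z(k),n-m)e^{-\frac{\delta}{2}|m|}$ via \eqref{R_0}, pick $\varepsilon_0\le\delta/8$ so that the oscillatory factor grows at most like $e^{\frac{\delta}{4}|n-m|}$, and conclude Hilbert--Schmidt (hence compact) values on the punctured disk; your variant, which spends the bound through $|n-m|\le|n|+|m|$ and factorizes the double sum, is a mild repackaging of the paper's inequality \eqref{eq3,7} and is equally valid, with the needed local uniformity in $k$ coming from the boundedness of $|k\sqrt{4-k^2}|^{-1}$ on compacts away from $0$. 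Where you genuinely diverge is the holomorphy step: the paper proves holomorphy \emph{in the Hilbert--Schmidt norm} by exhibiting the candidate derivative kernel $\partial_k R_0$, expanding $R_0$ to second order by Taylor--Lagrange, and showing the difference quotients converge in $\sd$; you instead observe that the matrix elements against finite linear combinations of the $e_n$ are manifestly holomorphic, and upgrade this to norm holomorphy using the locally uniform bound on $\|T_k\|$ together with a Vitali/weak-to-strong holomorphy argument (weak holomorphy of a Banach-space-valued function already implies norm holomorphy), compactness being inherited pointwise since $\sinf$ is norm closed. Your soft argument buys brevity and avoids the paper's ``easy but fastidious'' explicit computation of $\partial_k^{(2)}R_0$; the paper's direct computation buys an explicit formula for the derivative $\mathscr{D}(k)$ and holomorphy in the stronger $\sd$ topology, neither of which is needed for the statement of the lemma. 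Both arguments, like the paper's, implicitly use that for $\im(k)>0$ small one has $z(k)=k^2\in{\mathbb C}\setminus[0,4]$, so the kernel formula indeed represents the genuine resolvent there and the extension agrees with the original function on $D_{\varepsilon_0}^\ast(0)\cap{\mathbb C}^+$.
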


\noindent
\begin{proof}
For $k \in D_{\varepsilon}^\ast(0) \cap {\mathbb C}^+$, 
$0 < \varepsilon < \frac{\delta}{4}$ small enough, and $x \in \ell^2({\mathbb Z})$, 
the operator $W_{-\delta} ( \lao - z(k) )^{-1} W_{-\delta}$ satisfies
\begin{equation}\label{eq3,60}
\left(W_{-\delta} ( \lao - z(k) )^{-1}W_{-\delta} x \right) (n) = 
\sum_{m \, \in \, {\mathbb Z}} {\rm e}^{-\frac{\delta}{2} \vert n \vert} 
R_0 ( z(k),n - m ) {\rm e}^{-\frac{\delta}{2} \vert m \vert}  x (m), 
\end{equation}
$R_0(z,\cdot)$ being the function defined by \eqref{R_0}. Thus, for any $(n,m) \in {\mathbb Z}^2$, we have
\begin{equation}\label{eq3,6}
{\rm e}^{-\frac{\delta}{2} \vert n \vert} R_0 ( z(k),n - m ) {\rm e}^{-\frac{\delta}{2} \vert m \vert}  = {\rm e}^{-\frac{\delta}{2} \vert n \vert} 
\frac{i {\rm e}^{ 2i \vert n - m \vert \arcsin \frac{k}{2}}}{k \sqrt{4 - k^2}}{\rm e}^{-\frac{\delta}{2} \vert m \vert}  =: f(k).
\end{equation}
Since for $0 < \vert k \vert \ll 1$ we have $2\arcsin \frac{k}{2} = k + o(\vert k \vert)$, then, there exists $0 < \varepsilon_0 \le \frac{\delta}{8}$ small enough such that 
for any $0 < \vert k \vert \le 
\varepsilon_0$, 
\begin{equation}\label{eq3,7}
\Big\vert {\rm e}^{-\frac{\delta}{2} \vert n \vert} R_0 ( z(k),n - m ) {\rm e}^{-\frac{\delta}{2} \vert m \vert}  \Big\vert \le {\rm e}^{-\frac{\delta}{2} \vert n \vert}
\frac{{\rm e}^{-\big( \im(k) - \frac{\delta}{4} \big) \vert n - m \vert}}{ \big\vert k \sqrt{4 - k^2} \big\vert} {\rm e}^{-\frac{\delta}{2} \vert m \vert} .
\end{equation}
It can be checked that the r.h.s. of \eqref{eq3,7} lies in 
$\ell^{2}({\mathbb Z}^{2},{\mathbb C})$ whenever
$0 \le \vert \im(k) \vert < \frac{\delta}{8}$.
In this case, the operator $W_{-\delta} ( \lao - z(k) )^{-1}W_{-\delta}$ belongs to $\sd ( \ell^2({\mathbb Z}) )$. Consequently, the operator-valued function 
$$
k \mapsto W_{-\delta} ( \lao - z(k) )^{-1} W_{-\delta} 
$$ 
can be extended via the kernel \eqref{eq3,6} from $D_{\varepsilon_0}^\ast(0) \cap {\mathbb C}^+$ to $D_{\varepsilon_0}^\ast(0)$, with values in 
$\sinf ( \ell^2({\mathbb Z}) )$. We shall denote this extension $A(k)$, $k \in D_{\varepsilon_0}^\ast(0)$. It remains to prove that $k \mapsto A(k)$ is 
holomorphic in $D_{\varepsilon_0}^\ast(0)$. 

\medskip

For $k \in D_{\varepsilon_0}^\ast(0)$, introduce $\mathscr D(k)$ the operator with kernel given by 
\begin{align*}
{\rm e}^{-\frac{\delta}{2} \vert n \vert} \partial_k R_0 ( z(k),n - m ) 
{\rm e}^{-\frac{\delta}{2} \vert m \vert}   
 = {\rm e}^{-\frac{\delta}{2} \vert n \vert} 
\frac{i{\rm e}^{ 2i \vert n - m \vert \arcsin \frac{k}{2}}}{k^2(4 - k^2)}
{\rm e}^{-\frac{\delta}{2} \vert m \vert}  \bigg( 2ik \vert n - m \vert - \frac{4-2k^2}{\sqrt{4-k^2}} \bigg).
\end{align*}
As above, it can be shown that $\mathscr D(k) \in \sd ( \ell^2({\mathbb Z}) )$. Then, for $k_0 \in D_{\varepsilon_0}^\ast(0)$, the kernel of the Hilbert-Schmidt operator 
$$
\frac{A(k) - A(k_0)}{k - k_0} - \mathscr D(k_0)$$
is:
\begin{equation}\label{eq3,71}
N(k,k_0,n,m) := {\rm e}^{-\frac{\delta}{2} \vert n \vert} \bigg( \frac{ R_0 ( z(k),n - m ) -  R_0 ( z(k_0),n - m )}{k - k_0} - \partial_k R_0 
( z(k_0),n - m ) \bigg) {\rm e}^{-\frac{\delta}{2} \vert m \vert} .
\end{equation}
To conclude the proof, it remains to prove that for any fixed $k_0 \in D_{\varepsilon_0}^\ast(0)$,
$$
\Big\Vert \frac{A(k) - A(k_0)}{k - k_0} - \mathscr D(k_0) \Big\Vert_{\sd(\ell^2({\mathbb Z}))}
$$
vanishes as $k$ tends to $k_0$. Since we have
\begin{equation}\label{eq3,70}
\bigg\Vert \frac{A(k) - A(k_0)}{k - k_0} - \mathscr D(k_0) \bigg\Vert_{\sd(\ell^2({\mathbb Z}))}^2 
\le \sum_{n,m} \big\vert N(k,k_0,n,m) \big\vert^2,
\end{equation}
it is actually enough to prove that the r.h.s. of \eqref{eq3,70} tends to zero
as $k$ tends to $k_0$. By applying the Taylor formula with integral remainder to the 
function 
$$
[0,1] \ni t \mapsto g(t) := f ( tk + (1-t)k_0 ) = R_0 ( z(tk + (1-t)k_0),n - m ),
$$
one gets
\begin{equation}\label{eq3,72}
\begin{split}
R_0 ( z(k),n - m ) & = R_0 ( z(k_0),n - m ) + (k - k_0) \partial_k R_0 ( z(k_0),n - m ) \\
& + (k - k_0)^2 \int_0^1 (1-s) \partial_k^{(2)} R_0 ( z(s k + (1-s)k_0),n - m ) \, ds.
\end{split}
\end{equation}
Then, it follows from \eqref{eq3,71} and \eqref{eq3,72} that the kernel $N(k,k_0,n,m)$ 
has the representation
\begin{equation}\label{eq3,73}
N(k,k_0,n,m) = (k - k_0) {\rm e}^{-\frac{\delta}{2} (\vert n \vert +\vert m \vert)} \int_0^1 (1-s) \partial_k^{(2)} R_0 ( z(s k + (1-s)k_0),n - m ) \, ds .
\end{equation}
By standard calculations, we can prove that for any $q \in {\mathbb N}$, there exists a family of functions $G_{j,q}$, 
$0 \le j \le q$, holomorphic on $D_{\varepsilon_0}^\ast(0)$ such that
$$
\partial_k^{(q)} R_0 ( z(k),n - m ) = i{\rm e}^{ 2i \vert n - m \vert \arcsin \frac{k}{2}}
\sum_{j=0}^q G_{j,q}(k) \vert n - m \vert^{q-j}.
$$
In particular, for $q = 2$, one has
$$
\partial_k^{(2)} R_0 ( z(k),n - m ) = i{\rm e}^{ 2i \vert n - m \vert \arcsin \frac{k}{2}}
( G_{0,2}(k) \vert n - m \vert^2 + G_{1,2}(k) \vert n - m \vert + G_{2,2}(k) ).
$$
Together with \eqref{eq3,73}, this implies that: $\big\vert N(k,k_0,n,m) \big\vert \le \sum_{j=0}^2 N_j(k,k_0,n,m)$, where
\begin{equation*}
\begin{split}
N_j(k,k_0,n,m):= \vert k - k_0 \vert \vert n - m \vert^{2-j} & {\rm e}^{-\frac{\delta}{2} (\vert n \vert+\vert m \vert)} \\
& \times \int_0^1 (1-s) \big\vert G_{j,2} ( s k + (1-s)k_0 ) \big\vert \Big\vert {\rm e}^{ 2i \vert n - m \vert \arcsin \frac{s k + (1-s)k_0}{2}} \Big\vert \, ds ,
\end{split}
\end{equation*}
for $j\in \{0,1,2\}$. Therefore,
\begin{equation}\label{eq3,74}
\sum_{n,m} \vert N(k,k_0,n,m) \vert^2 \le 3 \sum_{j=0}^2 \Bigg( \sum_{n,m} N_j (k,k_0,n,m)^2 \Bigg) .
\end{equation}
In the sequel, one shows that for each $j \in \lbrace 0,1,2 \rbrace$, $\sum_{n,m} N_j(k,k_0,n,m)^2$ vanishes as $k$ tends to $k_0$. Similarly to \eqref{eq3,7}, one
has for $k$, $k_0 \in D_{\varepsilon_0}^\ast(0)$
\begin{align*}
\Big\vert {\rm e}^{-\frac{\delta}{2} \vert n \vert} {\rm e}^{ 2i \vert n - m \vert \arcsin \frac{s k + (1-s)k_0}{2}} 
{\rm e}^{-\frac{\delta}{2} \vert m \vert}  \Big\vert & \le {\rm e}^{-\frac{\delta}{2} \vert n \vert} 
{\rm e}^{-( \im(s k + (1-s)k_0) - \frac{\delta}{4} ) \vert n - m \vert} {\rm e}^{-\frac{\delta}{2} \vert m \vert}  \\
& \le {\rm e}^{-\frac{\delta}{2} (\vert n \vert +\vert m \vert)} {\rm e}^{\frac{3\delta}{8} \vert n - m \vert} .
\end{align*}
Thus, for $j \in \lbrace 0,1,2 \rbrace$, it follows that
\begin{gather*}
N_j(k,k_0,n,m)^2 \le \vert k - k_0 \vert^2 \Big\vert \int_0^1 (1-s) G_{j,2}(s k + (1-s)k_0) \, ds \Big\vert^2 {\rm e}^{-\delta (\vert n \vert +\vert m \vert)} {\rm e}^{\frac{3\delta}{4} \vert n - m \vert} \vert n - m \vert^{2(2-j)}.
\end{gather*}
Since the sequence $( {\rm e}^{-\delta (\vert n \vert +\vert m \vert)} {\rm e}^{\frac{3\delta}{4} \vert n - m \vert} \vert n - m \vert^{2(2-j)} )_{n,m}$ 
belongs to $\ell^{2}({\mathbb Z}^{2},{\mathbb C})$, we deduce
$$
\sum_{n,m} N_j(k,k_0,n,m)^2 \le C \vert k - k_0 \vert^2 \Big\vert  \int_0^1 (1-s) G_{j,2}(s k + (1-s)k_0) \, ds \Big\vert^2 \underset{k \rightarrow k_0}{\longrightarrow} 0 ,
$$
where $C$ is a positive constant. Back to \eqref{eq3,70} and \eqref{eq3,74}, we have thus proved that the operator-valued extension $D_{\varepsilon_0}^\ast(0) \ni k \mapsto A(k)$ is holomorphic at $k_0$ with derivative $\partial_k A(k_0) = \mathscr D(k_0)$. Since $k_0$ has been chosen arbitrarily in $D_{\varepsilon_0}^\ast(0)$, the conclusion follows. \end{proof}

\subsection{Resonances as poles: proof of Proposition \ref{p3,1}}

Starting with the identity
$$
(H_{\textup{\bf V}} - z)^{-1} \left( I + {\bf V}(\lao - z)^{-1} \right) = (\lao - z)^{-1},
$$
we get: $W_{-\delta} (H_{\textup{\bf V}} - z)^{-1}W_{-\delta} = W_{-\delta} (\lao - z)^{-1}W_{-\delta} \left( I + W_{\delta} {\bf V}(\lao - z)^{-1} 
W_{-\delta} \right)^{-1}$.
Assumption \ref{ade} on $V$ implies that there exists a bounded operator $\mathscr{V}$ on $\ell^2({\mathbb Z})$ such that: $V = W_{-\delta} \mathscr{V} W_{-\delta}$ and $V_J = W_{-\delta} \mathscr{V}_J W_{-\delta}$ with $\mathscr{V}_J = J \mathscr{V} J$. We sum it up by writing:
\begin{equation}\label{eq3,100}
 {\bf V} = W_{-\delta} \mathfrak{V} W_{-\delta} \: \: \textup {or equivalently} \: \: W_\delta {\bf V} = \mathfrak{V} W_{-\delta} ,
\end{equation}
with $\mathfrak{V} = \mathscr{V}$ (resp. $-\mathscr{V}_J$) if ${\bf V} = V$ (resp. $-V_J$). Putting this together with Lemma \ref{l3,1}, it follows that the operator-valued 
function
\begin{equation}\label{eq3,aa}
k \longmapsto \mathcal{T}_{\textup{\bf V}} ( z(k) ) := W_\delta {\bf V} ( \lao - z(k) )^{-1}W_{-\delta} = \mathfrak{V} W_{-\delta} ( \lao - z(k) )^{-1}W_{-\delta}
\end{equation}
is holomorphic in $D_{\varepsilon_0}^\ast(0)$, with values in $\sinf ( \ell^2({\mathbb Z}) )$. Then, by the analytic Fredholm extension theorem, 
$$
k \longmapsto \left( I + W_\delta {\bf V} ( \lao - z(k) )^{-1}W_{-\delta} \right)^{-1}
$$
admits a meromorphic extension from $D_{\varepsilon_0}^\ast(0) \cap {\mathbb C}^+$ to $D_{\varepsilon_0}^\ast(0)$. So, we have proved Proposition \ref{p3,1}.
\newline

The compact operator $\mathcal{T}_{\textup{\bf V}} ( z(k) )$ defined by \eqref{eq3,aa} will play a fundamental role in the rest of our analysis.

\begin{rem}
Of course, the factorization ${\bf V} = W_{-\delta} \mathfrak{V} W_{-\delta}$ in \eqref{eq3,100} is not unique and other 
decompositions are sometimes more convenient than \eqref{eq3,100}. For instance, with respect to the polar 
decomposition of an operator, one has ${\bf V} = {\bf J} \vert {\bf V} \vert = {\bf J} \vert {\bf V} \vert^{1/2} \vert {\bf V} \vert^{1/2}$. 
\end{rem}

\subsection{Resonances as characteristic values}

Our next task is to provide a simple and useful characterization of the resonances of $\la$ near the thresholds $0$ and $4$. We recall some basic facts concerning the concept of characteristic values of a holomorphic operator-valued function. For more details on this subject, we refer for instance to \cite{go} and \cite[Section 4]{goh}. The content of the first part of this section follows \cite[Section 4]{goh}. 

\begin{de}
Let $\mathcal{U}$ be a neighborhood of a fixed point $w \in {\mathbb C}$, and $F : \mathcal{U} \setminus \lbrace w \rbrace \longrightarrow 
\mathcal{B}({\mathscr H})$ be a holomorphic operator-valued function. The function $F$ is said to be finite meromorphic at $w$ if its Laurent expansion 
at $w$ has the form
\begin{equation*}
F(z) = \sum_{n = m}^{+\infty} (z - w)^n F_n, \quad m > - \infty,
\end{equation*}
where (if $m < 0$) the operators $F_m, \ldots, F_{-1}$ are of finite rank. Moreover, if $F_0$ is a Fredholm operator, then, the function $F$ is said to be Fredholm 
at $w$. In this case, the Fredholm index of $F_0$ is called the Fredholm index of $F$ at $w$.
\end{de}

We have the following proposition:

\begin{prop}{\cite[Proposition 4.1.4]{goh}}\label{p,a1}
Let $\mathcal{D} \subseteq \mathbb{C}$ be a connected open set, $Z \subseteq \mathcal{D}$ be a closed and discrete subset of $\mathcal{D}$, and 
$F : \mathcal{D} \longrightarrow \mathcal{B}({\mathscr H})$ be a holomorphic operator-valued function in $\mathcal{D} \backslash Z$. Assume that:
\begin{itemize}
\item $F$ is finite meromorphic on $\mathcal{D}$ (i.e. it is finite meromorphic near each point of $Z$),
\item $F$ is Fredholm at each point of $\mathcal{D}$,
\item there exists $w_0 \in \mathcal{D} \backslash Z$ such that $F(w_0)$ is invertible. 
\end{itemize}
Then, there exists a closed and discrete subset $Z'$ of $\mathcal{D}$ such that:
\begin{enumerate}
\item $Z \subseteq Z'$,
\item $F(z)$ is invertible for each $z \in \mathcal{D} \backslash Z'$,
\item $F^{-1} : \mathcal{D} \backslash Z' \longrightarrow {\rm GL}({\mathscr H})$ is finite meromorphic and Fredholm at each point of $\mathcal{D}$.
\end{enumerate}
\end{prop}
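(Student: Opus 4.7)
The plan is to adapt the classical Gohberg--Sigal theory of finite meromorphic operator functions. The result is essentially the ``analytic Fredholm theorem with constant index zero'' for Fredholm-valued (not merely compact perturbations of the identity) functions, and the natural approach is to localize, use a factorization into a product of two holomorphic invertible parts and a finite-dimensional matrix-valued factor, and then read off discreteness and meromorphy from the finite-dimensional piece.

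First I would define $Z' := \{ z\in \mathcal{D} : F(z) \text{ is not invertible in } \mathcal{B}(\mathscr{H}) \}$, understanding $Z\subseteq Z'$ automatically since at points of $Z$ the function $F$ has a genuine pole. The Fredholm index is locally constant on the set of Fredholm operators, and $\mathcal{D}\setminus Z$ is connected (as $Z$ is closed and discrete in a connected open domain); combined with the fact that $F(w_0)$ is invertible, this forces the Fredholm index of $F(z)$ to vanish for every $z\in \mathcal{D}\setminus Z$. For the points of $Z$ themselves, one extends this constancy through the Laurent expansion by noting that the principal part consists of finite-rank operators, which do not alter the index of the constant term.

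Next, I would carry out the key local step: around each $z_0\in\mathcal{D}$, invoke a Gohberg--Sigal / Keldysh-type factorization of a finite meromorphic Fredholm function, which provides a neighborhood $\mathcal{U}$ of $z_0$, holomorphic pointwise invertible operator functions $E,G : \mathcal{U} \to \mathrm{GL}(\mathscr{H})$, and a finite-dimensional meromorphic matrix-valued factor $D(z)$ of the form $D(z) = I\oplus \mathrm{diag}\bigl((z-z_0)^{k_1},\dots,(z-z_0)^{k_N}\bigr)$ on a complementary finite-dimensional subspace, such that
\begin{equation*}
F(z) = E(z)\, D(z)\, G(z), \qquad z\in \mathcal{U}\setminus\{z_0\}.
\end{equation*}
Since $E(z)$ and $G(z)$ are invertible throughout $\mathcal{U}$, the invertibility of $F(z)$ is equivalent to that of $D(z)$, and because the index is zero the block $D(z)$ is a square matrix with $\sum k_j = 0$ counted with signs. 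Its determinant is a scalar meromorphic function, and it is not identically zero in any connected neighborhood intersecting $\mathcal{D}\setminus Z'$ (otherwise analytic continuation and connectedness of $\mathcal{D}$ would contradict the invertibility of $F(w_0)$). Hence the zero set of $\det D$ is discrete in $\mathcal{U}$, and the same holds for $Z'\cap \mathcal{U}$. Gluing these local statements yields that $Z'$ is closed and discrete in $\mathcal{D}$.

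Finally, on $\mathcal{D}\setminus Z'$ one gets an explicit inverse $F(z)^{-1} = G(z)^{-1} D(z)^{-1} E(z)^{-1}$, and $D(z)^{-1}$ has finite-rank principal part near any point of $Z'\cap \mathcal{U}$ (being a meromorphic inverse of a finite matrix), so $F^{-1}$ is finite meromorphic on $\mathcal{D}$; the Fredholm property at each point follows because $E,G$ preserve it and $D(z)^{-1}$ differs from the identity by a finite-rank operator. The main obstacle in a self-contained treatment would be establishing the local factorization $F = E\, D\, G$: one cannot directly apply the usual analytic Fredholm theorem since $F$ is not presented as $I - K$ with $K$ compact-valued, so one has to replace $F(z_0)$ by a finite-rank perturbation to make it invertible at $z_0$, then bootstrap to the full factorization by successive elementary row/column operations on the finite-dimensional kernel/cokernel data. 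Once that algebraic step is in place, the rest is a routine connectedness-and-analytic-continuation argument.
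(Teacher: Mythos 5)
You should first note that the paper does not prove this proposition at all: it is quoted verbatim, with proof delegated to the cited reference (Gohberg et al.), so the only question is the internal soundness of your sketch. Your overall route is the standard Gohberg--Sigal one, and several steps are fine as stated: the index of $F(z)$ is locally constant, $\mathcal{D}\setminus Z$ is connected, so invertibility at $w_0$ forces index zero everywhere (including at points of $Z$, since the principal part is finite rank); and once a local reduction to a finite-dimensional meromorphic determinant is available, the discreteness of $Z'$, the formula for $F^{-1}$, and its finite meromorphy and Fredholmness all follow as you describe.

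The genuine gap is in the key local step. A factorization $F(z)=E(z)D(z)G(z)$ with $E,G$ holomorphic and invertible and $D(z)=I\oplus\mathrm{diag}\bigl((z-z_0)^{k_1},\dots,(z-z_0)^{k_N}\bigr)$ cannot be invoked at an arbitrary point $z_0\in\mathcal{D}$ under the hypotheses alone: its existence at $z_0$ already forces $F$ to be invertible on a punctured neighborhood of $z_0$ (e.g.\ a constant, non-invertible, index-zero Fredholm function is finite meromorphic and Fredholm everywhere but admits no such factorization). So using the Smith form pointwise presupposes exactly the propagation of invertibility that the proposition asserts; this also shows your fallback remark is inconsistent — with the diagonal form you wrote, $\det D(z)$ is a monomial and can never vanish identically — and the side claim $\sum_j k_j=0$ is false in general (that sum is the local multiplicity/winding number; take $F(z)=z$ on a one-dimensional space). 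The correct arrangement is: at every point use only index zero to write $F(z)=W(z)\bigl(I+R(z)\bigr)$ with $W$ holomorphic and invertible near $z_0$ and $R$ finite-rank-valued and finite meromorphic (absorb the holomorphic part of $F$ minus a finite-rank correction of $F_0$ into $W$); since $\mathrm{Ran}\,R(z)$ lies in a fixed finite-dimensional subspace $M$, invertibility of $F(z)$ is equivalent to the non-vanishing of the scalar meromorphic function $d(z)=\det\bigl(I_M+R(z)W(z)^{-1}|_M\bigr)$. The local alternative ($d\equiv 0$ near $z_0$, or isolated zeros) makes the set of points admitting a punctured neighborhood of invertibility both open and closed in $\mathcal{D}$; it contains $w_0$, hence equals $\mathcal{D}$, which gives the discreteness of $Z'$, and then $(I+R)^{-1}-I$ is finite-rank-valued and meromorphic by Cramer's rule, yielding statement 3. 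The Smith factorization you invoked is then a consequence of this argument, not its starting point.
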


In the setting of Proposition \ref{p,a1}, we define the characteristic values of $F$ and their multiplicities as follows:

\begin{de}\label{d,a1}
The points of $Z'$ where the function $F$ or $F^{-1}$ is not holomorphic are called the characteristic values of $F$. The multiplicity of a characteristic 
value $w_0$ is defined by
\begin{equation*}
{\rm mult}(w_0) := \frac{1}{2i\pi} \textup{Tr} \int_{\vert w - w_0 \vert = \rho} 
F'(z)F(z)^{-1} dz,
\end{equation*}
where $\rho > 0$ is chosen small enough so that $\big\lbrace w \in {\mathbb C} : \vert w - 
w_0 \vert \leq \rho \big\rbrace \cap Z' = \lbrace w_0 \rbrace$.
\end{de}

According to Definition \ref{d,a1}, if the function $F$ is holomorphic in $\mathcal{D}$, then, the characteristic values of $F$ are just the complex numbers 
$w$ where the operator $F(w)$ is not invertible. Then, results of \cite{go} and \cite[Section 4]{goh} imply that ${\rm mult}(w)$ is an integer.

Let $\Omega \subseteq \mathcal{D}$ be a connected domain with boundary $\partial \Omega$ 
not intersecting $Z'$. The sum of the multiplicities of the characteristic values of
the function $F$ lying in $\Omega$ is called {\it the index of $F$ with respect to the 
contour $\partial \Omega$} and is defined by 
\begin{equation}\label{eqa,2}
Ind_{\partial \Omega} \hspace{0.5mm} F := \frac{1}{2i\pi} \textup{Tr} 
\int_{\partial \Omega} F'(z)F(z)^{-1} dz = \frac{1}{2i\pi} \textup{Tr} 
\int_{\partial \Omega} F(z)^{-1} F'(z) dz.
\end{equation} 

Let us contextualize the previous discussion for our model. We reformulate our characterization of the resonances near $z = 0$ as follows:
\begin{prop}\label{p3,2}
For $k _1\in D_{\varepsilon_0}^\ast(0)$, the following assertions are 
equivalent:
\begin{itemize}
\item[(a)] $z_0(k_1) = k_1^{2} \in \mathcal{M}_0$ is a resonance of $\la$, 
\item[(b)] $z_{1} = z_0(k_1)$ is a pole of $R_V(z)$, 
\item[(c)] $-1$ is an eigenvalue of $\mathcal{T}_{V}(z_0(k_1)) := \mathscr{V} W_{-\delta} R_{0} ( z_0(k_1) )  W_{-\delta}$,
\item[(d)] $k_1$ is a characteristic value of  
$I + \mathcal{T}_{V} ( z_0(\cdot) )$. 

\noindent
Moreover, thanks to \eqref{eq3,14}, the multiplicity of the resonance $z_0(k_1)$ coincides with that of the characteristic
value $k_1$.
\end{itemize}
\end{prop}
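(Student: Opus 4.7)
The statement is a four-way equivalence together with an identification of multiplicities, and the plan is to organize it around the factorization of the sandwiched resolvent already used in Subsection 3.2 and the analytic Fredholm / characteristic value machinery recalled just before the proposition.

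First, I would note that $(a)\Leftrightarrow(b)$ is essentially a tautology: by Definition \ref{d3,1}, the resonances of $\la$ near $0$ are by construction the poles on $\mathcal{M}_0$ of the meromorphic continuation $R_V(z(\cdot))$ of the sandwiched resolvent produced in Proposition \ref{p3,1}. So the substance lies in $(b)\Leftrightarrow(c)\Leftrightarrow(d)$ and the multiplicity claim.

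For $(b)\Leftrightarrow(c)$ I would start from the identity derived in Subsection 3.2,
\begin{equation*}
W_{-\delta}(\la-z)^{-1}W_{-\delta}=W_{-\delta}(\lao-z)^{-1}W_{-\delta}\bigl(I+\mathcal{T}_V(z)\bigr)^{-1},
\end{equation*}
valid for $z\in {\mathbb C}\setminus[0,4]$ with $|z|$ small. The factor $W_{-\delta}R_0(z(k))W_{-\delta}$ is holomorphic on $D_{\varepsilon_0}^\ast(0)$ by Lemma \ref{l3,1}, while $k\mapsto \mathcal{T}_V(z(k))$ is holomorphic with values in $\sinf(\ell^2(\mathbb{Z}))$ by \eqref{eq3,aa}. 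Hence the poles on $\mathcal{M}_0$ of the meromorphic continuation of $R_V$ coincide with the points where $I+\mathcal{T}_V(z(k))$ fails to be invertible. Since $\mathcal{T}_V(z(k))$ is compact, the Fredholm alternative gives that $I+\mathcal{T}_V(z(k))$ fails to be invertible exactly when $-1\in\sigma(\mathcal{T}_V(z(k)))$, i.e. when $-1$ is an eigenvalue of $\mathcal{T}_V(z_0(k_1))$, yielding $(b)\Leftrightarrow(c)$.

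For $(c)\Leftrightarrow(d)$ I would apply Proposition \ref{p,a1} to $F(k)=I+\mathcal{T}_V(z_0(k))$ on $\mathcal{D}=D_{\varepsilon_0}(0)$. The three hypotheses are easy to check: $F$ is holomorphic on $D_{\varepsilon_0}^\ast(0)$ (and in fact extends holomorphically at $0$ since we only work on $D_{\varepsilon_0}^\ast(0)$, so we can take $Z=\{0\}$ or just work on $D_{\varepsilon_0}^\ast(0)$); $F(k)$ is Fredholm of index zero at each point because $\mathcal{T}_V(z(k))\in\sinf$; and for $k\in D_{\varepsilon_0}^\ast(0)\cap\mathbb{C}^+$ with $|\im(k)|$ close to $\varepsilon_0$, one checks from the Hilbert–Schmidt bound in the proof of Lemma \ref{l3,1} that $\|\mathcal{T}_V(z(k))\|<1$, so $F(k)$ is invertible there. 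Definition \ref{d,a1} then identifies the characteristic values of $F$ with the points where $F$ is not invertible, i.e. where $-1$ is an eigenvalue of $\mathcal{T}_V(z_0(k))$.

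For the multiplicities, the argument is basically tautological once $(b)\Leftrightarrow(d)$ is established: Definition \ref{d3,1} sets $\mathrm{mult}(z_0)=Ind_{\gamma}(I+\mathcal{T}_V(z_0(\cdot)))$, which by \eqref{eqa,2} is exactly the sum of multiplicities of the characteristic values of $I+\mathcal{T}_V(z_0(\cdot))$ lying inside $\gamma$; since $\gamma$ is chosen small enough to enclose only the characteristic value $k_1$, this sum reduces to $\mathrm{mult}(k_1)$ in the sense of Definition \ref{d,a1}. The main (minor) obstacle is picking a base point where $I+\mathcal{T}_V$ is invertible so as to legitimately invoke Proposition \ref{p,a1}; this is handled by a small-norm estimate derived from the kernel bound \eqref{eq3,7} in the regime $|\im k|$ not too small.
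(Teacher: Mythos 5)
Your overall route is the same as the paper's: (a)$\Leftrightarrow$(b) is just Definition \ref{d3,1}, (c)$\Leftrightarrow$(d) is Definition \ref{d,a1} within the framework of Proposition \ref{p,a1}, the multiplicity statement is definitional, and (b)$\Leftrightarrow$(c) rests on the resolvent identities behind Proposition \ref{p3,1}. Two steps, however, need repair. First, for (c)$\Rightarrow$(b) the one-sided factorization $W_{-\delta}R_V W_{-\delta}=W_{-\delta}R_0 W_{-\delta}\,(I+\mathcal{T}_V)^{-1}$ only shows that poles of the continued $R_V$ occur \emph{among} the points where $I+\mathcal{T}_V(z(k))$ fails to be invertible; it does not by itself exclude that the singular Laurent coefficients of $(I+\mathcal{T}_V(z(\cdot)))^{-1}$ are annihilated by the holomorphic left factor, leaving $R_V$ regular at $k_1$. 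The paper closes exactly this direction with the two-sided identity $\big(I+\mathscr{V}W_{-\delta}R_0(z)W_{-\delta}\big)\big(I-\mathscr{V}W_{-\delta}R_V(z)W_{-\delta}\big)=I$: if $R_V$ had no pole at $k_1$, then $I+\mathcal{T}_V(z(k_1))$ would admit a bounded right inverse, hence be invertible (compact perturbation of $I$, Fredholm of index zero), contradicting (c). Alternatively, writing $(I+\mathcal{T}_V(z(k)))^{-1}=\sum_{n\ge -m}(k-k_1)^n B_n$ and $W_{-\delta}R_0(z(k))W_{-\delta}=\sum_{n\ge 0}(k-k_1)^n A_n$, the leading coefficient relation $(I+\mathscr{V}A_0)B_{-m}=0$ shows that $A_0B_{-m}=0$ would force $B_{-m}=0$, so no cancellation can occur; one of these arguments must be added.

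Second, your verification of the base-point hypothesis of Proposition \ref{p,a1} does not work: the Hilbert--Schmidt bound coming from \eqref{eq3,7} gives $\Vert\mathcal{T}_V(z(k))\Vert\le C_\delta/\vert k\sqrt{4-k^2}\vert$, and inside $D_{\varepsilon_0}(0)$ one has $1/\vert k\vert\ge 1/\varepsilon_0\ge 8/\delta$, so the weighted free resolvent is \emph{large} near the threshold and the norm cannot be made $<1$ by taking $\vert\im(k)\vert$ close to $\varepsilon_0$. The existence of a point $w_0$ where $I+\mathcal{T}_V(z(\cdot))$ is invertible should instead be obtained by choosing $k$ with $z(k)\in\mathbb{C}^+\setminus\sigma(\la)$ (possible since the spectrum off $[0,4]$ is discrete) and using the displayed identity, which exhibits $I-\mathscr{V}W_{-\delta}R_V(z(k))W_{-\delta}$ as an inverse of $I+\mathcal{T}_V(z(k))$; this is also what implicitly underlies the analytic Fredholm argument in the proof of Proposition \ref{p3,1}. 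With these two repairs your proof coincides with the paper's.
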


\noindent
\begin{proof}
The equivalence between (a) and (b) is just Definition \ref{d3,1}. The equivalence between (b) and (c) is an immediate consequence of the identity
\begin{equation*}
( I +  \mathscr{V} W_{-\delta} R_{0}(z) W_{-\delta} ) ( I - \mathscr{V} W_{-\delta} R_V(z) W_{-\delta} ) = I,
\end{equation*}
which follows from the resolvent equation. Finally, the equivalence between (c) and (d) follows from Definition \ref{d,a1}.
\end{proof}

Similarly, we reformulate our characterization of the resonances near $z = 4$ as follows:
\begin{prop}\label{p3,3}
For $k_1 \in D_{\varepsilon_0}^\ast(0)$, the following assertions are equivalent:

\begin{itemize}
\item[(a)] $z_4(k_1) = 4 - k_1^{2} \in \mathcal{M}_4$ is a resonance of $\la$, 
\item[(b)] $4 - z_4(k_1) = k_1^2$ is a pole of $R_{-V_J}(u)$, 
\item[(c)] $-1$ is an eigenvalue of $\mathcal{T}_{-V_J}(4 - z_4(k_1)) := -\mathscr{V}_J W_{-\delta} R_{0} ( 4 - z_4(k_1) ) W_{-\delta}$,
\item[(d)] $k_1$ is a characteristic value of $I + \mathcal{T}_{-V_J}(4 - z_4( \cdot) )$.

\noindent
Moreover, thanks to \eqref{eq3,15}, the multiplicity of the resonance $z_4(k_1)$ coincides with that of the characteristic
value $k_1$.

\end{itemize}
\end{prop}

\subsection{Proof of Theorem \ref{t1}}\label{proofs}

Our first goal is to decompose the weighted resolvent $\mathcal{T}_{\bf V} ( z(k) )$, for $z(k) = k^2$ and ${\bf V} \in \{ V, -V_J\}$ near the spectral threshold $z = 0$. More precisely, we split it into a sum of a singular part at $k = 0$ and a holomorphic part in the whole open disk $D_{\varepsilon_0}(0)$, with values in $\sinf ( \ell^2({\mathbb Z}) )$. 

From \eqref{eq3,60}, we know that the kernel of $W_{-\delta} ( \lao - z(k) )^{-1} W_{-\delta}$ is given by
$$
{\rm e}^{-\frac{\delta}{2} \vert n \vert} R_0 ( z(k),n - m ) {\rm e}^{-\frac{\delta}{2} \vert m \vert},
$$
with
\begin{equation}\label{eq3,17}
\begin{split}
R_0 ( z(k),n - m ) & = \frac{i{\rm e}^{i \vert n - m \vert 2\arcsin \frac{k}{2}}}{k \sqrt{4 - k^2}} = \frac{i}{k \sqrt{4 - k^2}} +
\frac{i ( {\rm e}^{i \vert n - m \vert 2\arcsin \frac{k}{2}} - 1 )}{k \sqrt{4 - k^2}} \\
& = \frac{i}{2k} + \alpha(k) + \beta(k),
\end{split}
\end{equation}
where the functions $\alpha$ and $\beta$ are defined by
\begin{equation*}
\alpha(k) := i \left( \frac{1}{k \sqrt{4 - k^2}} - \frac{1}{2k} \right) \quad {\rm and} \quad \beta(k) := \frac{i ( e^{i \vert n - m \vert 
2\arcsin \frac{k}{2}} - 1 )} {k \sqrt{4 - k^2}}.
\end{equation*}
Moreover, it is not difficult to show that the functions $\alpha$ and $\beta$ can be extended to holomorphic functions in
$D_{\varepsilon_0}(0)$. By combining \eqref{eq3,60} and \eqref{eq3,17}, we get for $k \in D_{\varepsilon_0}^\ast(0)$
\begin{equation}\label{eq3,19}
( W_{-\delta} ( \lao - z(k) )^{-1} W_{-\delta} x ) (n) = \sum_{m \, \in \, {\mathbb Z}} \frac{i{\rm e}^{-\frac{\delta}{2} \vert n \vert} {\rm e}^{-\frac{\delta}{2} \vert m \vert}  
x(m)}{2k} + ( \mathcal{A} (k) x )(n),
\end{equation}
where $\mathcal{A}(k)$ is the operator defined by  
\begin{equation}\label{eq3,20}
( \mathcal{A} (k) x )(n) := \sum_{m \, \in \, {\mathbb Z}} {\rm e}^{-\frac{\delta}{2} \vert n \vert} \alpha(k) {\rm e}^{-\frac{\delta}{2} \vert m \vert}  
x(m) + \sum_{m \, \in \, {\mathbb Z}} {\rm e}^{-\frac{\delta}{2} \vert n \vert} \beta(k) {\rm e}^{-\frac{\delta}{2} \vert m \vert}  x(m).
\end{equation}
Let $\Theta : \ell^2({\mathbb Z}) \longrightarrow {\mathbb C}$ be the operator defined by $\Theta(x) := \big\langle x,{\rm e}^{-\frac{\delta}{2} \vert \cdot \vert} 
\big\rangle_{\ell^2({\mathbb Z})}$so that its adjoint $\Theta^\ast : {\mathbb C} \longrightarrow \ell^2({\mathbb Z})$ be given by $( \Theta^\ast(\lambda) )(n) := \lambda 
{\rm e}^{-\frac{\delta}{2} \vert n \vert} $. Then, this together with \eqref{eq3,19} yields
\begin{equation}\label{eq3,200}
( W_{-\delta} ( \lao - z(k) )^{-1} W_{-\delta} x ) (n) =  \frac{i ( \Theta^\ast \Theta x )(n)}{2k} + ( \mathcal{A} (k) x )(n).
\end{equation}
By combining \eqref{eq3,aa} and \eqref{eq3,200}, we finally obtain
\begin{equation*}
\mathcal{T}_{\bf V} ( z(k) ) = \mathfrak{V} \frac{i \Theta^\ast \Theta}{2k} + \mathfrak{V} \mathcal{A}(k).
\end{equation*}
We therefore have proved the following result:

\begin{prop}\label{p4,1} 
For ${\bf V} \in \{V, -V_J\}$ and $k \in D_{\varepsilon_0}^\ast(0)$, the operator $\mathcal{T}_{\bf V} ( z(k) )$ admits the decomposition 
\begin{equation*}
\mathcal{T}_{\bf V} ( z(k) ) = \frac{i \mathfrak{V}}{k} \mathscr{M}^\ast \mathscr{M} + \mathfrak{V} \mathcal{A}(k), \quad \mathscr{M} := \frac{1}{\sqrt{2}} \Theta,
\end{equation*}
with the operator $\mathcal{A}(k)$ given by \eqref{eq3,20}. Moreover, $k \mapsto \mathfrak{V} \mathcal{A}(k)$ is holomorphic in the open disk
$D_{\varepsilon_0}(0)$, with values in $\sinf ( \ell^2({\mathbb Z}) )$. 
\end{prop}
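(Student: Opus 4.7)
The plan is to obtain the decomposition directly at the level of the integral kernel of the weighted free resolvent, splitting off by hand the singular contribution responsible for the pole at $k=0$, and then to verify that the remainder is compact and holomorphic across $k=0$. Since $\mathcal{T}_{\bf V}\bigl(z(k)\bigr) = \mathscr{V}\, W_{-\delta}\bigl(\lao - z(k)\bigr)^{-1} W_{-\delta}$ and $\mathscr{V}$ is bounded on $\ell^2(\mathbb{Z})$, multiplication on the left by $\mathscr{V}$ preserves both compactness and holomorphy; so the whole analysis reduces to the free piece.

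First, I would perform the Laurent-type split \eqref{eq3,17} of $R_0(z(k),n-m)$ at $k=0$. The leading singular term is $i/(2k)$; the correction $\alpha(k)$ extends holomorphically through $k=0$ because $\tfrac{1}{\sqrt{4-k^2}} - \tfrac{1}{2} = O(k^2)$, and the $(n,m)$-dependent term $\beta(k)$ extends holomorphically because $2\arcsin(k/2) = k + O(k^3)$, so that $e^{i|n-m|\,2\arcsin(k/2)}-1$ vanishes to first order in $k$ for each fixed $(n,m)$. Injecting the split into \eqref{eq3,60}, the contribution of $i/(2k)$ factors as the rank-one operator
\begin{equation*}
x \;\longmapsto\; \frac{i}{2k}\,\bigl\langle x,\,e^{-\delta|\cdot|/2}\bigr\rangle\; e^{-\delta|\cdot|/2} \;=\; \frac{i}{2k}\,\Theta^*\Theta\, x \;=\; \frac{i}{k}\,\mathscr{M}^*\mathscr{M}\, x,
\end{equation*}
with $\mathscr{M}=\Theta/\sqrt{2}$, while the contribution of $\alpha(k)+\beta(k)$ is precisely the operator $\mathcal{A}(k)$ of \eqref{eq3,20}. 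Left-multiplying by $\mathscr{V}$ yields the announced formula.

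The remaining point is that $\mathcal{A}(k)$, viewed as an $\sinf\bigl(\ell^2(\mathbb{Z})\bigr)$-valued map, is holomorphic on the full disk $D_{\varepsilon_0}(0)$. On the punctured disk $D_{\varepsilon_0}^\ast(0)$ this is immediate from Lemma \ref{l3,1} since $\mathcal{A}(k)$ equals $W_{-\delta}(\lao-z(k))^{-1}W_{-\delta}$ minus the explicitly holomorphic (rank-one) meromorphic piece $\tfrac{i}{2k}\Theta^*\Theta$. To carry this across $k=0$, I would repeat the Hilbert-Schmidt Taylor-Lagrange argument of Lemma \ref{l3,1}, applied now to the kernel of $\mathcal{A}(k)$: the exponential weights $e^{-\delta(|n|+|m|)/2}$ absorb both the at-most-polynomial growth in $|n-m|$ coming from the derivatives of $\beta(k)$ and the residual exponential growth $e^{\frac{3\delta}{8}|n-m|}$ due to $\im(k)$, so the second-derivative bound in $k$ is $\ell^2$-summable in $(n,m)$ uniformly near $k=0$.

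The main obstacle is precisely this last step — lifting the pointwise holomorphy of $\alpha$ and $\beta$ to holomorphy in $\sd(\ell^2(\mathbb{Z}))$ at the origin. The crucial simplification is that the $1/k$ singularity has already been subtracted in closed form, so the remainder kernel is $O(1)$ in $k$ near $0$ (with controlled Taylor remainders), which allows the dominated convergence argument to go through in the Hilbert-Schmidt norm. Compactness follows from the containment $\sd \subset \sinf$, and holomorphy in $\sinf$ then follows.
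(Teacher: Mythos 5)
Your proposal is correct and follows essentially the same route as the paper: the same kernel-level splitting \eqref{eq3,17} of $R_0(z(k),n-m)$ into the singular term $i/(2k)$ plus the holomorphic corrections $\alpha(k)+\beta(k)$, the same identification of the singular contribution with the rank-one operator $\tfrac{i}{k}\mathscr{M}^\ast\mathscr{M}$ via $\Theta$, and left-multiplication by $\mathscr{V}$. Your extra care in lifting the pointwise holomorphy of $\alpha,\beta$ at $k=0$ to $\sd$-valued holomorphy of $\mathcal{A}(k)$ by repeating the Taylor--Lagrange estimate of Lemma \ref{l3,1} is exactly the detail the paper compresses into ``it is not difficult to show,'' so there is no genuine divergence of method.
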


\begin{rem}\label{r4,1}
Notice that $\mathscr{M} \mathscr{M}^\ast : {\mathbb C} \longrightarrow {\mathbb C}$ is the rank-one operator given by
\begin{equation*}
\mathscr{M} \mathscr{M}^\ast (\lambda) = \frac{\lambda}{2} \big\langle {\rm e}^{-\frac{\delta}{2} \vert \cdot \vert},{\rm e}^{-\frac{\delta}{2} \vert \cdot \vert} 
\big\rangle_{\ell^2({\mathbb Z})} = \frac{\lambda}{2} {\rm Tr} ( {\rm e}^{-\delta \vert \cdot \vert} \,),
\end{equation*}
${\rm Tr} ( {\rm e}^{-\delta \vert \cdot \vert} \,)$ standing for the trace of the multiplication operator by the function ${\mathbb Z} \ni n \mapsto {\rm e}^{-\delta \vert n \vert}$.
Then, so is the operator $\mathscr{M}^\ast \mathscr{M} : \ell^2({\mathbb Z}) \longrightarrow \ell^2({\mathbb Z})$.
\end{rem}

The decomposition obtained in Proposition \ref{p4,1} allows us to finish the proof of Theorem \ref{t1}. 

Let $\mu = 0$. From Propositions \ref{p3,2} and \ref{p4,1}, it follows that $z_0 (k)$ is a resonance of $\la$ near $0$ if and only if $k$ is a characteristic value of the operator
\begin{equation*}
I + \mathcal{T}_V ( z(k) ) = I + \frac{i \mathscr{V}}{k} \mathscr{M}^\ast \mathscr{M} + \mathscr{V} \mathcal{A}(k).
\end{equation*}
The operator $\mathscr{V} \mathcal{A}(k)$ is holomorphic in the open disk $D_{\varepsilon_0}(0)$ with values in $\sinf ( \ell^2({\mathbb Z}) )$, while $i \mathscr{V} \mathscr{M}^\ast \mathscr{M}$ is a finite-rank operator.

Let $\mu =4$. From Propositions \ref{p3,3} and \ref{p4,1}, it follows that $z_4 (k)$ is a resonance of $\la$ near $4$ if and only if $k$ is a characteristic value of the operator
\begin{equation*}
I + \mathcal{T}_{- V_J} ( z(k) ) = I - \frac{i \mathscr{V}_J }{k} \mathscr{M}^\ast \mathscr{M} - \mathscr{V}_J \mathcal{A}(k).
\end{equation*}
The operator $\mathscr{V}_J \mathcal{A}(k)$ is holomorphic in the open disk $D_{\varepsilon_0}(0)$ with values in $\sinf ( \ell^2({\mathbb Z}) )$, while $i \mathscr{V}_J \mathscr{M}^\ast \mathscr{M}$ is a finite-rank operator.
\newline

So, Theorem \ref{t1} is obtained after applying Proposition \ref{p,a1} as follows:
\begin{itemize}
\item in the case $\mu=0$: take $\mathcal{D} = D_{\varepsilon_0}(0)$, $Z = \lbrace 0 \rbrace$ and $F = I + \mathcal{T}_V ( z(\cdot) )$,
\item in the case $\mu=4$: take $\mathcal{D} = D_{\varepsilon_0}(0)$, $Z = \lbrace 0 \rbrace$ and $F = I + \mathcal{T}_{-V_J} ( z(\cdot) )$.
\end{itemize}
This concludes the proof.

\subsection{Proof of Theorem \ref{t2}}

Since Assumption \ref{ade} holds, $V\in {\mathcal A}(A_0)$ (see Proposition \ref{pe3}). According to Theorem \ref{t:tda}, the only possible accumulation points of $\sigma_{\textup{disc}} (H_V)$ are $0$ and $4$. But, according to Theorem \ref{t1}, $\sigma_{\textup{disc}} (H_V) \cap (D_{r_0}(0) \cup D_{r_0}(4)) =\emptyset$ for some $r_0 >0$. As a subset of the compact rectangle $[-\|V \|, 4+\|V \|]+i[-\|V \|, \|V\|]$ with no accumulation point, $\sigma_{\textup{disc}} (H_V)$ is necessarily finite.

Combining Proposition \ref{p3,1} with Theorem \ref{t1}, the map $z\mapsto (z - H_V)^{-1}$ can be extended continuously in the operator norm topology of ${\mathcal B}(\ell_{\delta}^{2}({\mathbb Z}) ; \ell_{-\delta}^{2}({\mathbb Z}))$:
\begin{itemize}
\item from $\C^+ \setminus \sigma_{\textup{disc}} (H_V)$ to $( \C^+ \setminus \sigma_{\textup{disc}} (H_V) )\cup (0,r_0) \cup (4-r_0,4)$ and
\item from $\C^- \setminus \sigma_{\textup{disc}} (H_V)$ to $( \C^- \setminus \sigma_{\textup{disc}} (H_V) )\cup (0,r_0) \cup (4-r_0,4)$.
\end{itemize}
In particular, for any vectors $\varphi$, $\psi$ in $\ell_{\delta}^{2}({\mathbb Z})$, any interval $\Delta_0$ with $\overline{\Delta_0} \subset (0,r_0) \cup (4-r_0,4)$, it holds:
\begin{gather}\label{lap-thresholds}
\sup_{z\in \Delta_0 + i(-\delta_0,0)} |\bra \varphi, (z-H_V)^{-1} \psi \ket | < \infty  \quad \text{and} \quad \sup_{z\in \Delta_0 + i(0,\delta_0)} |\bra \varphi, (z-H_V)^{-1} \psi \ket | < \infty
\end{gather}
for some $\delta_0 >0$. Now, fix $0< r < r_0$ small enough. Due to Theorem \ref{t:tda+}, the set ${\mathcal D}' := {\mathcal D} \cap [r,4-r]$ is finite. Applying Theorem \ref{t:tda+} to vectors $\varphi$, $\psi$ in $\ell_{\delta}^{2}({\mathbb Z})$ on the interval $[r,4-r]$ and combining it with \eqref{lap-thresholds} allows to conclude.


\section{On positive commutators}\label{mourre}

In this section, we extend the validity of the Mourre theory in a non-selfadjoint setting under optimal regularity assumptions. In what follows, ${\mathscr H}$ is a fixed Hilbert space and $A$ is a selfadjoint operator densely defined on ${\mathscr H}$. We recall that the classes $C^k(A)$ and ${\mathcal C}^{1,1}(A)$, $k\in {\mathbb N}$, have been introduced in Definitions \ref{ck} and \ref{c11}. Throughout this section, $H$ belongs to $\cal B({\mathscr H} )$ and we denote $\sigma_+ : =\max \sigma ( \im(H) )$ and 
$\sigma_- :=\min \sigma ( \im(H) )$.

As a general rule, a statement involving the symbol $\pm$ has to be understood as two independent statements.

\subsection{Abstract results}

The abstract results we are about to introduce relate the existence of positive commutation relations with the control of some spectral properties of the operator $H$. We start by describing these positivity conditions. The spectral measure of the self-adjoint operator $\re (H)$ is denoted by $E_{\re (H)}$. Let $\Delta \subset {\mathbb R}$ be a Borel subset: \\

\noindent{\bf Assumptions}
\begin{itemize}
\item[{\bf (M$_+$)}]
$\re (H)\in C^1(A)$ and there exist $a_+ >0$, $b_+ >0$ and $\beta_+ \geq 0$ such that
\begin{equation*}
i\mathrm{ad}_A ( \re H) + \beta_+ (\sigma_+ - \im H)\geq a_+ E_{\re(H)}(\Delta) -b_+ E_{\re(H)}^{\perp}(\Delta).
\end{equation*}
\item[{\bf (M$_-$)}] 
$\re (H)\in C^1(A)$ and there exist $a_- >0$, $b_- >0$ and $\beta_- \geq 0$ such that
\begin{equation*}
i\mathrm{ad}_A ( \re H) + \beta_- (\im H - \sigma_-)\geq a_- E_{\re(H)}(\Delta) -b_- E_{\re(H)}^{\perp}(\Delta).
\end{equation*}
\item[{\bf (M)}\,\,\,\,] 
$\re (H)\in C^1(A)$ and there exist $c_{\Delta} >0$, $K\in \sinf(\mathscr H)$, such that
\begin{equation}\label{mourreK}
E_{\re(H)}(\Delta ) i\left( \mathrm{ad}_A ( \re H)\right) E_{\re(H)}(\Delta ) \geq c_{\Delta} E_{\re(H)}(\Delta ) + K .
\end{equation}
\end{itemize}

\begin{rem} Assumptions (M$_{\pm}$) stated with $\beta_{\pm}=0$ are equivalent to Assumption (M) stated with $K=0$. Indeed, if $B\in\mathcal{B}({\mathscr H})$ is symmetric and $E$ is an orthogonal projection acting on ${\mathscr H}$, then the following statements are equivalent:
\begin{enumerate}
\item[(a)] There exists $c>0$ such that $EBE\geq c E$.
\item[(b)] There exist $a>0$, $b>0$, such that $B\geq a E- b E^\bot$, with $E^\bot=1-E$.
\item[(c)] There exist $a>0$, $b>0$, such that $B\geq a - (a+b) E^\bot$, with $E^\bot=1-E$,
\end{enumerate}
where inequalities are understood in the sense of quadratic forms.
\end{rem}

We start by a reformulation of \cite[Proposition 2.4]{royer2} and refer to Section \ref{viriallike} for its proof.
\begin{lem}\label{virialnsa} Let $H \in \cal B (\mathscr H)$ and $\Delta \subset {\mathbb R}$ be a Borel subset.
\begin{itemize}
\item Assume (M) holds on $\Delta$. Then, ${\mathcal E}_{\rm p} (H) \cap ( \Delta +i\{\sigma_{\pm}\} ) \subset ( \sigma_{\rm pp} ( \re H) \cap \Delta ) +i\{\sigma_{\pm}\}$. In particular, the set ${\mathcal E}_{\rm p} (H) \cap ( \Delta +i\{\sigma_{\pm}\} )$ is finite and each of these eigenvalues has finite geometric multiplicity. 
If (M) holds with $K=0$, then ${\mathcal E}_{\rm p} (H) \cap ( \Delta +i\{\sigma_{\pm}\} )=\emptyset$.
\item Assume that $\re(H) \in C^1(A)$ and that $i\mathrm{ad}_A ( \re H) >0$ (i.e. positive and injective). Then, ${\mathcal E}_{\rm p} (H) \cap ( \R +i\{\sigma_{\pm}\} )=\emptyset$.
\end{itemize}
\end{lem}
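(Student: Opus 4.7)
The plan is to reduce the non-selfadjoint eigenvalue problem on the extremal horizontal lines $\R+i\{\sigma_\pm\}$ to a virial argument for the selfadjoint operator $\re(H)$, and then to combine this reduction with Assumption (M) and the compactness of $K$. The key observation is: if $\|\varphi\|=1$ and $H\varphi=(\lambda+i\sigma_\pm)\varphi$, then $\im(H)\varphi=\sigma_\pm\varphi$ and $\re(H)\varphi=\lambda\varphi$. Indeed, taking the imaginary part of $\bra\varphi,H\varphi\ket=\lambda+i\sigma_\pm$ gives $\bra\varphi,\im(H)\varphi\ket=\sigma_\pm$; by definition of the extremal values, $\pm(\sigma_\pm-\im(H))\geq 0$ as selfadjoint operators, so the vanishing of this non-negative quadratic form combined with a square-root argument yields $(\sigma_\pm-\im(H))\varphi=0$, and the decomposition $H=\re(H)+i\im(H)$ then gives $\re(H)\varphi=\lambda\varphi$. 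This already establishes the inclusion ${\mathcal E}_{\rm p}(H)\cap(\Delta+i\{\sigma_\pm\})\subset({\mathcal E}_{\rm p}(\re H)\cap\Delta)+i\{\sigma_\pm\}$, which is stronger than the stated one with $\sigma_{\rm pp}$.

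Next, I would invoke the virial identity. Since $\re(H)\in C^1(A)$, the classical virial theorem (see for instance \cite[Proposition 7.2.10]{abmg}) asserts that $\bra\varphi,i\mathrm{ad}_A(\re H)\varphi\ket=0$ whenever $\re(H)\varphi=\lambda\varphi$. Because $\lambda\in\Delta$ forces $E_{\re(H)}(\Delta)\varphi=\varphi$, testing the spectral inequality (M) against such a $\varphi$ yields
\begin{equation*}
0=\bra\varphi,\, E_{\re(H)}(\Delta)\, i\mathrm{ad}_A(\re H)\, E_{\re(H)}(\Delta)\varphi\ket \,\geq\, c_\Delta + \bra\varphi,K\varphi\ket.
\end{equation*}
If $K=0$, this directly contradicts $c_\Delta>0$, proving that ${\mathcal E}_{\rm p}(H)\cap(\Delta+i\{\sigma_\pm\})=\emptyset$. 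For the second bullet, the hypothesis that $i\mathrm{ad}_A(\re H)$ is positive and injective forces the vanishing of the quadratic form to imply $\varphi=0$, again a contradiction.

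Finally, finiteness and finite geometric multiplicity follow from a Weyl-sequence argument: if either a single eigenvalue had infinite geometric multiplicity or there existed infinitely many distinct eigenvalues in ${\mathcal E}_{\rm p}(H)\cap(\Delta+i\{\sigma_\pm\})$, the orthogonality of eigenspaces of the selfadjoint operator $\re(H)$ associated to distinct eigenvalues would produce an orthonormal sequence $\{\varphi_n\}$ of eigenvectors of $\re(H)$ with eigenvalues in $\Delta$. Each would satisfy $0\geq c_\Delta+\bra\varphi_n,K\varphi_n\ket$; but $\varphi_n\rightharpoonup 0$ weakly and $K\in\sinf({\mathscr H})$ forces $\bra\varphi_n,K\varphi_n\ket\to 0$, yielding $0\geq c_\Delta>0$. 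I anticipate no serious obstacle: the only mildly delicate point is applying the virial identity without assuming eigenvectors lie in ${\cal D}(A)$, but this is precisely what the $C^1(A)$ regularity is tailored for, and the entire argument is already encoded in \cite[Proposition 2.4]{royer2} of which the lemma is explicitly a reformulation.
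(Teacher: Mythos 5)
Your proof is correct and follows essentially the same route as the paper: the reduction to $\re(H)\varphi=\lambda\varphi$ on the extremal lines via the non-negative quadratic form of $\pm(\sigma_\pm-\im(H))$ is exactly the paper's Lemma on eigenvalues at the frontier, and the rest is the $C^1(A)$ virial theorem combined with Assumption (M). The only difference is cosmetic: where the paper cites \cite[Corollary 7.2.11]{abmg} for the finiteness and finite multiplicity of $\sigma_{\rm pp}(\re(H))\cap\Delta$, you reprove that corollary directly by the standard orthonormal-sequence/compactness argument, which is perfectly valid.
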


\begin{rem}\label{c11ReIm} If $H\in C^1(A)$, then, $\re(H)$ and $\im(H)$ belong to $C^1(A)$ and,
\begin{equation*}
\begin{split}
\mathrm{ad}_A ( \re H) &= i \im ( \mathrm{ad}_A H),\\
\mathrm{ad}_A ( \im H) &= -i \re ( \mathrm{ad}_A H).
\end{split}
\end{equation*}
Indeed, $H^*\in C^1(A)$ and $\mathrm{ad}_A (H^*)=- ( \mathrm{ad}_A H)^*$, see e.g. \cite[Proposition 5.1.7]{abmg}.
\end{rem}
We recall that if $H\in {\mathcal C}^{1,1}(A)$, then $H\in C^1(A)$. 

\begin{theo}\label{mourrensa} 
Let $H\in {\mathcal C}^{1,1}(A)$, $\Delta \subset \R$ be an open interval and $s>1/2$.
\begin{itemize}
\item Assume (M$_+$) holds on $\Delta$. For any relatively compact interval $\Delta_0$, $\overline{\Delta_0} \subset \Delta$, we have
$$
\sup_{z\in \Delta_0 +i(\sigma_+ ,\infty)}\| \bra A\ket^{-s} (z-H)^{-1} \bra A\ket^{-s} \| < \infty.
$$
\item Assume (M$_-$) holds on $\Delta$. For any relatively compact interval $\Delta_0$, $\overline{\Delta_0} \subset \Delta$, we have
$$
\sup_{z\in \Delta_0 +i(-\infty, \sigma_-)}\| \bra A\ket^{-s} (z-H)^{-1} \bra A\ket^{-s} \| < \infty.
$$
\end{itemize}
\end{theo}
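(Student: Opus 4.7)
The plan is to extend the classical Mourre LAP argument (in the spirit of \cite{abmg}, Chapter 7) to the present dissipative non-selfadjoint framework. By symmetry, it suffices to establish the statement under (M+); the assertion under (M-) then follows by applying the first result to $H^*$ and $-A$, after checking that (M-) for $(H,A)$ translates into (M+) for $(H^*,-A)$.

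As a first reduction, I translate $H \mapsto H - i\sigma_+$ and $z \mapsto z - i\sigma_+$; this preserves the ${\mathcal C}^{1,1}(A)$ class and turns (M+) into the same estimate with $\sigma_+ = 0$, so that $\im(H) \le 0$. For $\im(z) > 0$ the resolvent $G(z) := (z-H)^{-1}$ is then well-defined, holomorphic, and satisfies the a priori dissipative bound $\|G(z)\| \le (\im(z))^{-1}$. The Mourre estimate now reads
\begin{equation*}
i\mathrm{ad}_A\bigl(\re(H)\bigr) - \beta_+ \im(H) \geq a_+ E_{\re(H)}(\Delta) - b_+ E_{\re(H)}^{\perp}(\Delta),
\end{equation*}
both summands on the left being non-negative when tested against vectors localized in $\sigma\bigl(\re(H)\bigr) \cap \Delta$.

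The heart of the proof is a Putnam--Kato type differential inequality for the sandwiched regularized resolvent $F_\epsilon(z) := \bra A \ket^{-s} G_\epsilon(z) \bra A \ket^{-s}$, where $G_\epsilon(z) := (z - H - i\epsilon M)^{-1}$ and $M$ is a bounded positive operator built from (M+) (typically $M = \chi(\re(H))\bigl(i\mathrm{ad}_A(\re(H)) + \beta_+(\sigma_+ - \im(H))\bigr)\chi(\re(H))$ for a smooth cutoff $\chi$ supported in $\Delta$ and equal to $1$ on $\overline{\Delta_0}$). Differentiation in $\epsilon$ yields $\partial_\epsilon G_\epsilon = -i G_\epsilon M G_\epsilon$, and combining the quadratic-form commutator identity $[A, G_\epsilon] = -G_\epsilon\, \mathrm{ad}_A(H + i\epsilon M)\, G_\epsilon$ (valid by $C^1(A)$ regularity) with (M+) and standard resolvent manipulations should produce an inequality of the type
\begin{equation*}
\bigl\| F_\epsilon(z) \bigr\| \leq C \bigl( 1 + \bigl\| F_\epsilon(z) \bigr\|^{1/2} \bigr),
\end{equation*}
uniformly in $z = x + iy$ with $x \in \Delta_0$, $y > 0$, and $\epsilon \in (0,1]$. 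The ${\mathcal C}^{1,1}(A)$ hypothesis enters exactly here to control the remaining second-commutator contributions via the integral bound of Definition \ref{c11}, this being the optimal regularity as in the selfadjoint case. Passing to the limit $\epsilon \to 0^+$ by a weak-$*$ compactness argument then delivers the stated uniform bound.

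The principal obstacle is the interplay between $i\mathrm{ad}_A(\re(H))$ and the dissipative term $-\beta_+ \im(H)$ appearing in (M+). In the selfadjoint case a positive commutator bound on $i\mathrm{ad}_A(H)$ alone suffices; here the decomposition $\mathrm{ad}_A(H) = \mathrm{ad}_A(\re(H)) + i\mathrm{ad}_A(\im(H))$ forces one to absorb the antiselfadjoint piece $-\mathrm{ad}_A(\im(H))$, and it is precisely the augmented term $\beta_+(\sigma_+ - \im(H)) \geq 0$, combined with boundary contributions of the form $\im(z) - \im(H) \geq \im(z) > 0$, that makes this absorption possible. Tracking this cancellation carefully through the differential inequality is the technical crux of the argument and explains why the assumptions (M$\pm$) are stated in augmented form relative to the usual selfadjoint Mourre estimate.
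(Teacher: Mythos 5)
Your overall strategy (deform the resolvent, derive a differential inequality in $\epsilon$, integrate, let $\epsilon\to 0$) is the right family of ideas, but as written the proposal has a genuine gap at its core: the way you invoke the ${\mathcal C}^{1,1}(A)$ hypothesis does not work. You keep $H$ fixed, deform by a fixed localized operator $M$ built from (M$+$), sandwich with the fixed weights $\bra A\ket^{-s}$, and then assert that "the remaining second-commutator contributions" are controlled "via the integral bound of Definition \ref{c11}". But ${\mathcal C}^{1,1}(A)$ does \emph{not} give a bounded second commutator (nor a bounded $\mathrm{ad}_A(M)$), and these are exactly the objects your differential inequality produces when you commute $A$ through $G_\epsilon(z)=(z-H-i\epsilon M)^{-1}$; your scheme would essentially require $H\in C^2(A)$. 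The paper's proof avoids this by regularizing $H$ itself: using Lemma \ref{c11approx} (ABG, Lemma 7.3.6) one builds a smooth family $S(\epsilon)\in C^\infty(A)$ with $\|S(\epsilon)-H\|\le C\epsilon$, sets $B(\epsilon)=i\,\mathrm{ad}_A(S(\epsilon))$, works with $T_\epsilon^{\pm}(z)=z-S(\epsilon)\pm i\epsilon B(\epsilon)$, and replaces $\bra A\ket^{-s}$ by the $\epsilon$-dependent weights $W_s(\epsilon)=(|A|+1)^{-s}(\epsilon|A|+1)^{s-1}$; the ${\mathcal C}^{1,1}$ condition is used precisely to make the error term $q^{\pm}(\epsilon)=\epsilon^{-1}\|{\mathcal Q}^{\pm}(\epsilon)\|$ integrable on $(0,\epsilon_0)$, after which Gronwall's lemma (Propositions \ref{diffins<1}--\ref{unifbounds}) and a limiting argument (Proposition \ref{mourrensa0}) close the proof. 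Relatedly, your claimed intermediate bound $\|F_\epsilon(z)\|\le C(1+\|F_\epsilon(z)\|^{1/2})$ uniformly in $\epsilon$ is not what differentiation in $\epsilon$ yields; what one gets is a bound on $\partial_\epsilon F_\epsilon$ in terms of $\|F_\epsilon\|$ and $\sqrt{\|F_\epsilon\|/\epsilon}$ with coefficients that must be integrable in $\epsilon$, and the uniform bound only comes out after integrating this inequality. On the other hand, you correctly identified where the non-selfadjointness enters (the absorption of $\mathrm{ad}_A(\im H)$ and of $\im(z)-\im H\ge \im(z)-\sigma_+$ together with the augmented term $\beta_+(\sigma_+-\im H)$), which is indeed how the paper exploits (M$\pm$) in the lower bounds for $\im(T_\epsilon^{\pm}(z))$.

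A smaller but real slip: the reduction of the (M$-$) case to the (M$+$) case should use the pair $(H^*,A)$, not $(H^*,-A)$. Replacing $A$ by $-A$ flips the sign of $i\,\mathrm{ad}_A(\re H)$, whereas (M$+$) and (M$-$) both require this commutator with the \emph{same} sign; what changes between them is only the dissipative correction. Since $\re(H^*)=\re(H)$, $\im(H^*)=-\im(H)$ and $\sigma_+(H^*)=-\sigma_-(H)$, assumption (M$-$) for $(H,A)$ is exactly (M$+$) for $(H^*,A)$, and taking adjoints of the resolvent bound for $H^*$ in the upper region gives the bound for $H$ in $\Delta_0+i(-\infty,\sigma_-)$. (The paper simply runs the two cases in parallel throughout.)
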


The proof of Theorem \ref{mourrensa} is carried out through Sections \ref{diffineq}--\ref{linktoc11}.

\begin{rem}\label{farfromnr} Once observed that for any $s>0$ and for any $z\in \Delta_0 +i ((-\infty ,\sigma_- -1] \cup [\sigma_+ +1, \infty))$, we have 
$$
\| (z-H)^{-1} \| \leq 1/{\rm dist} ( z, {\mathcal N}(H) ) \leq 1,
$$
the proof of Theorem \ref{mourrensa} reduces to bound the weighted resolvent uniformly for $z \in \Delta_0 + i(\sigma_+ ,\sigma_+ +1]$ and $z \in \Delta_0 + i[\sigma_- -1, \sigma_-)$ respectively.
\end{rem}

\begin{rem}\label{weights} \begin{itemize}
\item[(a)] Theorem \ref{mourrensa} can be formulated equivalently with weights of the form $h(A)$, $\bar{h}(A)$ instead of $\bra A\ket^{-s}$, where $h:{\mathbb R}\rightarrow {\mathbb C}$ is such that: $0< c_1\leq h(x) \bra x\ket^{s} \leq c_2$ for some $c_1 >0$, $c_2 >0$ and $s>1/2$. For technical reasons, the proof of Theorem \ref{mourrensa} is actually developed with weights of the form $h(A)$ where $h(x) = (|x| +1)^{-s}$.
\item[(b)] Interpolation spaces can also be introduced within this framework in order to obtain an optimal version on the Besov scales.
\end{itemize}
\end{rem}

\begin{rem} We expect that Theorem \ref{mourrensa} can be adapted to include the case of some unbounded non-selfadjoint operators, in the spirit of e.g. \cite[Section 7.4]{abmg}.
\end{rem}

Theorem \ref{mourrensa} can be reformulated as follows:
\begin{cor}\label{Hsmooth} 
Let $H\in {\mathcal C}^{1,1}(A)$, $\Delta \subset \R$ be an open interval and $s>1/2$. Let $\Delta_1$ be an open bounded interval such that $\overline{\Delta_0}\subset \Delta_1\subset \overline{\Delta_1} \subset \Delta$. Let $\chi \in C_0^{\infty}({\mathbb R})$ be supported on $\Delta_1$ and taking value $1$ on $\overline{\Delta_0}$.
\begin{itemize}
\item Assume (M$_+$) holds on $\Delta$. Then, we have
\begin{equation*}
\sup_{z\in {\mathbb R} +i(\sigma_+ ,\infty)} \big \|\bra A\ket^{-s} \chi ( \re H) (z-H)^{-1} \chi ( \re H) \bra A\ket^{-s} \big \| < \infty.
\end{equation*}
\item Assume (M$_-$) holds on $\Delta$. Then, we have
\begin{equation*}
\sup_{z\in {\mathbb R} +i(-\infty,\sigma_-)} \big \| \bra A\ket^{-s} \chi ( \re H) (z-H)^{-1} \chi ( \re H) \bra A\ket^{-s} \big \| < \infty.         
\end{equation*}
\end{itemize}
\end{cor}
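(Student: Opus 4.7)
The goal is to promote the localized resolvent estimate of Theorem~\ref{mourrensa}, valid only for $\re z$ restricted to a relatively compact subinterval of $\Delta$, into a uniform estimate for $\re z \in \R$ via the spectral cutoffs $\chi(\re H)$. I treat case~(M$+$); case~(M$-$) is symmetric. I first pick an auxiliary relatively compact open interval $\Delta'$ with $\overline{\Delta_1}\subset \Delta' \subset \overline{\Delta'}\subset \Delta$. Applying Theorem~\ref{mourrensa} on $\Delta'$ provides
\begin{equation*}
M := \sup_{z\in \Delta'+i(\sigma_+,\infty)} \bigl\| \bra A\ket^{-s} (z-H)^{-1} \bra A\ket^{-s}\bigr\| < \infty,
\end{equation*}
and the conclusion follows by splitting the supremum over $\R+i(\sigma_+,\infty)$ into the two cases $\re z \in \Delta'$ and $\re z \notin \Delta'$.

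In the first case, I would rewrite
\begin{equation*}
\bra A\ket^{-s}\chi(\re H)(z-H)^{-1}\chi(\re H)\bra A\ket^{-s} = S_s^*\,\bigl[\bra A\ket^{-s}(z-H)^{-1}\bra A\ket^{-s}\bigr]\,S_s,
\end{equation*}
where $S_s := \bra A\ket^{s}\chi(\re H)\bra A\ket^{-s}$. The operator $S_s$ is bounded because $\chi(\re H) \in \mathcal{C}^{1,1}(A)$ --- a regularity transferred from $\re H \in \mathcal{C}^{1,1}(A)$ (inherited from $H \in \mathcal{C}^{1,1}(A)$) and $\chi \in C_0^\infty(\R)$ through the Helffer--Sj\"ostrand functional calculus, as recalled in Section~\ref{regclass}. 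Combined with the bound $M$, this yields the desired uniform estimate on this region.

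For the second case, set $d_0 := \mathrm{dist}(\overline{\Delta_1}, \partial \Delta') > 0$, so that $d := \mathrm{dist}(\re z, \mathrm{supp}\,\chi)\geq d_0$. Iterating the resolvent identity $(z-H)^{-1}=(z-\re H)^{-1}+(z-\re H)^{-1}(i\im H)(z-H)^{-1}$ on the left and its right analogue on the right yields
\begin{equation*}
\chi(\re H)(z-H)^{-1}\chi(\re H) = f_z(\re H) + g_z(\re H)(i\im H)g_z(\re H) + g_z(\re H)(i\im H)(z-H)^{-1}(i\im H) g_z(\re H),
\end{equation*}
with $f_z(t) := \chi^2(t)/(z-t)$ and $g_z(t) := \chi(t)/(z-t)$. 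Since $|z-t|\geq d_0$ on $\mathrm{supp}\,\chi$, the functions $f_z, g_z$ and the relevant derivatives are uniformly bounded in $z$, so the first two terms are uniformly controlled in weighted norm by functional calculus of $\re H$. For the third term I would dispatch the non-critical regimes by the elementary bounds $\|(z-H)^{-1}\|\leq 1/(\im z -\sigma_+)$ (valid for $\im z \geq \sigma_+ +\varepsilon$) and $\|(z-H)^{-1}\|\leq C/|z|$ (valid for $|z|\gg\|H\|$).

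\textbf{Main obstacle.} The remaining delicate regime is $\re z$ bounded outside $\Delta'$ with $\im z\to \sigma_+^+$, where $(z-H)^{-1}$ is not uniformly bounded and one must rely on the $\re H$-localization embodied in $g_z(\re H)$. The strategy I would pursue is to exploit that the sandwich $g_z(\re H)(i\im H)\,(z-H)^{-1}\,(i\im H)g_z(\re H)$ effectively projects onto $\mathrm{supp}\,\chi\subset \Delta'$ on both sides, which after commutator manipulations allows one to reinsert the estimate $M$ coming from Theorem~\ref{mourrensa}. Making this reduction rigorous, while controlling the $\bra A\ket^{-s}$-weights through the factors $g_z(\re H)\,i\im H$, uses precisely the $\mathcal{C}^{1,1}(A)$-stability of the Helffer--Sj\"ostrand calculus and is the main technical step of the proof.
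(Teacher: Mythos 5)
Your treatment of the easy regions is sound and matches the paper: for $\re z$ in a relatively compact subinterval of $\Delta$ you sandwich the weighted resolvent between the bounded operators $\bra A\ket^{s}\chi\big(\re(H)\big)\bra A\ket^{-s}$ (this is exactly Lemma \ref{achia}, which only needs $\re(H)\in C^1(A)$ and $s\le 1$, to which one reduces first) and invoke Theorem \ref{mourrensa}; the regimes $\im z\ge \sigma_+ +\varepsilon$ and $|z|$ large are trivial. The problem is the regime you yourself flag as the ``main obstacle'': $\re z$ outside $\Delta_1$ (or $\Delta'$) and $\im z\downarrow \sigma_+$. There your plan --- ``commutator manipulations allow one to reinsert the estimate $M$'' --- cannot work as stated, because $M$ from Theorem \ref{mourrensa} is an estimate on the resolvent at spectral parameters $z$ with $\re z\in\Delta'$ (where the Mourre inequality (M$+$) is exploited); it says nothing about $(z-H)^{-1}$ at points $z$ with $\re z\notin\Delta'$, no matter how the vectors are spectrally localized by $g_z\big(\re(H)\big)$. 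Trying to bound $\chi_1\big(\re(H)\big)(z-H)^{-1}\chi_1\big(\re(H)\big)$ uniformly for such $z$ is precisely the statement to be proved, so the sketch is essentially circular, and it is left explicitly non-rigorous.

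The missing ingredient is the dissipative structure, which is how the paper closes this region without any further use of Theorem \ref{mourrensa} or of Helffer--Sj\"ostrand calculus. For $\im z\in(\sigma_+,\sigma_+ +1]$ one has $\im\big(T_0(z)\big)=\im(z)-\im(H)\ge 0$, hence the square-root bound $\big\| \im\big(T_0(z)\big)\phi\big\|^2\le \sigma_0\,\big|\bra \phi, T_0(z)\phi\ket\big|$ (estimate \eqref{signedqf}). Writing $(z-H)^{-1}$ through $\big(\re(z)-\re(H)\big)^{-1}\chi\big(\re(H)\big)$, which is bounded by $C_\chi$ uniformly for $\re z\notin\Delta_1$ because ${\rm dist}\big(\re z,{\rm supp}\,\chi\big)>0$, one obtains for $N(z):=\big\|\bra A\ket^{-s}\chi\big(\re(H)\big)(z-H)^{-1}\chi\big(\re(H)\big)\bra A\ket^{-s}\big\|$ an inequality of the form $N(z)\le C_\chi\|\chi\|_\infty + C_\chi\sqrt{\sigma_0}\sqrt{N(z)}$, i.e.\ the quantity to be estimated reappears under a square root on the right, which bootstraps to a uniform bound. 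Your decomposition with $f_z,g_z$ could probably be repaired along the same lines (replace $\im(H)$ by $\im\big(T_0(z)\big)$ and use its sign), but as written the proposal does not contain this argument, so the key step of the corollary is missing.
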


Synthesizing Lemma \ref{virialnsa} and Theorem \ref{mourrensa}, we obtain:
\begin{cor}\label{mourrensa1}
Let $H\in {\mathcal C}^{1,1}(A)$. Assume (M) holds on some open interval $\Delta \subset \R$. Then,
\begin{itemize}
\item ${\mathcal E}_{\rm p} (H) \cap ( \Delta +i\{\sigma_{\pm}\} ) \subset ( \sigma_{\rm pp} ( \re H) \cap \Delta ) +i\{\sigma_{\pm}\}$. In particular, the set ${\mathcal E}_{\rm p} (H) \cap ( \Delta +i\{\sigma_{\pm}\} )$ is finite and each of these eigenvalues has finite geometric multiplicity. 
\item Let $s>1/2$. For any relatively compact interval $\Delta_0 \subset \Delta$ such that $\overline{\Delta_0} \subset \Delta \setminus {\mathcal E}_{\rm p} (\re (H)),$
\begin{gather*}
\sup_{z\in \Delta_0 +i(\sigma_+ ,\infty)}\| \bra A\ket^{-s} (z-H)^{-1} \bra A\ket^{-s} \| < \infty ,\\
\sup_{z\in \Delta_0 +i(-\infty, \sigma_-)}\| \bra A\ket^{-s} (z-H)^{-1} \bra A\ket^{-s} \| < \infty.
\end{gather*}
\end{itemize}
\end{cor}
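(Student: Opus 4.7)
My goal is to derive Corollary \ref{mourrensa1} by synthesizing Lemma \ref{virialnsa} and Theorem \ref{mourrensa}. Note that $H \in \mathcal{C}^{1,1}(A) \subset C^1(A)$, so the regularity hypotheses of both results are automatically satisfied. The first bullet point is then an immediate application of Lemma \ref{virialnsa} to the operator $H$ under the assumption (M), which delivers both the inclusion ${\mathcal E}_{\rm p}(H) \cap (\Delta+i\{\sigma_\pm\}) \subset (\sigma_{\rm pp}(\re(H)) \cap \Delta) + i\{\sigma_\pm\}$, the finiteness of this set, and the finiteness of its geometric multiplicities.

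For the two limiting absorption bounds, the strategy is to verify assumptions (M$+$) and (M$-$) on an open interval $\Delta_1$ satisfying $\overline{\Delta_0} \subset \Delta_1 \subset \overline{\Delta_1} \subset \Delta$ and $\overline{\Delta_1} \cap \mathcal{E}_{\rm p}(\re(H)) = \emptyset$, and then to invoke Theorem \ref{mourrensa} on $\Delta_1$. To construct such a $\Delta_1$, I first apply Lemma \ref{virialnsa} to the self-adjoint operator $\re(H)$ (for which $\sigma_\pm = 0$ and hypothesis (M) is itself the required Mourre estimate), yielding that $\mathcal{E}_{\rm p}(\re(H)) \cap \Delta$ is finite, hence relatively closed in $\Delta$. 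The hypothesis $\overline{\Delta_0} \subset \Delta \setminus \mathcal{E}_{\rm p}(\re(H))$ then places the compact $\overline{\Delta_0}$ inside the open set $\Delta \setminus \mathcal{E}_{\rm p}(\re(H))$, which allows me to select $\Delta_1$ as above.

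The core analytical step is to upgrade (M) on $\Delta$ to a strict Mourre estimate on $\Delta_1$, namely
\begin{equation*}
E_{\re(H)}(\Delta_1)\, i\mathrm{ad}_A(\re(H))\, E_{\re(H)}(\Delta_1) \geq c' E_{\re(H)}(\Delta_1)
\end{equation*}
for some $c' > 0$. By the remark following (M$\pm$), this estimate is equivalent to (M$+$) and (M$-$) on $\Delta_1$ with $\beta_\pm = 0$. The elimination of the compact term $K$ in (M) relies on the fact that $\overline{\Delta_1}$ contains no eigenvalue of $\re(H)$: for every $\lambda \in \overline{\Delta_1}$, the spectral projection $E_{\re(H)}(J)$ converges strongly to $E_{\re(H)}(\{\lambda\}) = 0$ as an open interval $J \ni \lambda$ shrinks to $\{\lambda\}$, and composition with the compact $K$ upgrades this to norm convergence $\| E_{\re(H)}(J) K \| \to 0$. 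A finite subcover of $\overline{\Delta_1}$ by such small intervals, combined with a subordinate partition-of-unity/IMS-type localization of the inequality provided by (M), then yields the strict estimate above, possibly with a reduced constant.

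Once (M$+$) and (M$-$) are available on $\Delta_1 \supset \overline{\Delta_0}$ and in view of $H \in \mathcal{C}^{1,1}(A)$, Theorem \ref{mourrensa} applies and produces the two LAP inequalities on $\Delta_0$ simultaneously. The main obstacle is the upgrade from Mourre with compact remainder to a strict Mourre estimate: although conceptually classical, the argument requires careful bookkeeping of the localization commutators generated by the partition of unity, which are controlled through the $C^1(A)$ regularity of $\re(H)$ guaranteed by hypothesis (M).
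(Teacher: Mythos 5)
Your treatment of the first bullet (apply Lemma \ref{virialnsa} directly) and your overall plan for the second (produce (M$\pm$) with $\beta_\pm=0$ near $\overline{\Delta_0}$ and then invoke Theorem \ref{mourrensa}) coincide with the paper's proof, which disposes of the first statement by Lemma \ref{virialnsa} and of the second by exactly this reduction, citing \cite[Paragraph 7.2.2]{abmg} for the standard compactness argument.

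However, the specific mechanism you propose for the key step contains a genuine gap. You claim that the local strict estimates on small intervals $J_\lambda$ (obtained correctly from $\|E_{\re(H)}(J_\lambda)K\|\to 0$, since $\lambda\notin{\mathcal E}_{\rm p}(\re(H))$) can be patched by a finite subcover and an ``IMS-type'' partition of unity into a single estimate $E_{\re(H)}(\Delta_1)\,i\mathrm{ad}_A(\re(H))\,E_{\re(H)}(\Delta_1)\geq c'E_{\re(H)}(\Delta_1)$ on the whole of $\Delta_1\supset\overline{\Delta_0}$, with the localization errors ``controlled through the $C^1(A)$ regularity of $\re(H)$''. This does not work: the cross terms $E(I_k)\,\mathrm{ad}_A(\re(H))\,E(I_l)$, equivalently the commutators $[\chi_j(\re(H)),\mathrm{ad}_A(\re(H))]$, are merely bounded of size $O(\|\mathrm{ad}_A(\re(H))\|)$; $C^1(A)$ regularity of $\re(H)$ gives no smallness or compactness for them, and strict Mourre estimates on subintervals simply do not combine over unions. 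Worse, the implication you are trying to prove fails as a soft statement: with $E(\Delta)=I$, $B=I-2|\phi\ket\bra\phi|$ and $K=-2|\phi\ket\bra\phi|$ one has $B\geq E(\Delta)+K$ with $K$ compact and no eigenvalues of the underlying selfadjoint operator, yet $E(\Delta_0)BE(\Delta_0)$ has the eigenvalue $-1$ whenever $\phi\in\mathrm{Ran}\,E(\Delta_0)$; so no argument using only (M), compactness of $K$ and absence of eigenvalues can deliver a global strict estimate on a prescribed long interval. The correct (and standard) repair is to localize the \emph{conclusion} rather than the commutator estimate: for each $\lambda\in\overline{\Delta_0}$ choose $J_\lambda\subset\Delta$ with $\|E_{\re(H)}(J_\lambda)KE_{\re(H)}(J_\lambda)\|\leq c_\Delta/2$, so that (M) holds on $J_\lambda$ with $K=0$, hence (M$\pm$) hold on $J_\lambda$ with $\beta_\pm=0$ by the remark following the assumptions; apply Theorem \ref{mourrensa} on each $J_\lambda$ to a slightly smaller compact subinterval, cover $\overline{\Delta_0}$ by finitely many of these, and take the maximum of the finitely many bounds, since the supremum over $z$ splits according to the position of $\re(z)$. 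With this replacement your argument becomes the paper's proof; as written, the partition-of-unity step is unjustified and cannot be justified at this level of generality.
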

The proofs of Corollaries \ref{Hsmooth} and \ref{mourrensa1} are postponed to Sections \ref{proofHsmooth} and \ref{proofmourrensa1} respectively. \\

We conclude this section by the following remark. If $H$ is selfadjoint in Theorem \ref{mourrensa}, that is $H=\re(H)$, $\im(H) = 0$ and $\sigma_{\pm}=0$, we recover \cite[Theorem 7.3.1]{abmg}. Namely,
\begin{theo}\label{mourresa} Let $H\in {\mathcal C}^{1,1}(A)$ be selfadjoint, $\Delta \subset \R$ be an open interval and $s>1/2$. Assume (M$_{\pm}$) hold on $\Delta$ with $\beta_{\pm}=0$, then for any relatively compact interval $\Delta_0$, $\overline{\Delta_0} \subset \Delta$, we have
\begin{equation}
\sup_{z\in \Delta_0 + i{\mathbb R}^*} \| \bra A\ket^{-s} (z-H)^{-1} \bra A\ket^{-s} \| < \infty.
\end{equation}
\end{theo}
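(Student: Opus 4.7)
The plan is to obtain Theorem \ref{mourresa} as a direct specialization of Theorem \ref{mourrensa} to the selfadjoint case, with essentially no new work required once the non-selfadjoint version is established.

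First, I would observe that when $H$ is selfadjoint one has $\re(H) = H$ and $\im(H) = 0$, so $\sigma_+ = \sigma_- = 0$. The hypothesis $H \in \mathcal{C}^{1,1}(A)$ then coincides with $\re(H) \in \mathcal{C}^{1,1}(A) \subset C^1(A)$, and the assumptions (M$+$), (M$-$) with $\beta_\pm = 0$ both reduce to the familiar strict Mourre estimate
$$i \mathrm{ad}_A(H) \geq a_\pm E_H(\Delta) - b_\pm E_H^\perp(\Delta)$$
on $\Delta$, an instance of condition (a) in the remark following the three Mourre assumptions.

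Next, I would invoke Theorem \ref{mourrensa} twice. Its first bullet, applied under (M$+$), yields for any relatively compact $\Delta_0$ with $\overline{\Delta_0} \subset \Delta$,
$$\sup_{z \in \Delta_0 + i(0,\infty)} \| \bra A \ket^{-s}(z-H)^{-1} \bra A \ket^{-s}\| < \infty,$$
while its second bullet, applied under (M$-$), gives the symmetric estimate on the lower half-plane. Since $\Delta_0 + i \mathbb{R}^* = \bigl( \Delta_0 + i(0,\infty) \bigr) \cup \bigl( \Delta_0 + i(-\infty, 0) \bigr)$, combining the two bounds immediately produces the uniform estimate on $\Delta_0 + i\mathbb{R}^*$ claimed in the statement.

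No genuinely new difficulty arises in this derivation. The substantive work, namely the $\mathcal{C}^{1,1}(A)$ differential inequality for the sandwiched resolvent, the correct choice of weights (of the form $(|A|+1)^{-s}$ as noted in Remark \ref{weights}), and the handling of commutator remainders have all been carried out in the proof of Theorem \ref{mourrensa}. The present statement simply records that the classical formulation of \cite[Theorem 7.3.1]{abmg} is contained in the non-selfadjoint framework upon specialization to $\sigma_\pm = 0$.
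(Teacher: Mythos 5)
Your proposal is correct and coincides with the paper's own treatment: Theorem \ref{mourresa} is stated there precisely as the specialization of Theorem \ref{mourrensa} to $H=\re(H)$, $\im(H)=0$, $\sigma_\pm=0$, with the two bullets of Theorem \ref{mourrensa} covering the upper and lower half-planes respectively. Nothing further is needed.
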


\subsection{Proof of Lemma \ref{virialnsa}}\label{viriallike}

The next result takes its source in \cite[Lemma 1]{dav01}:
\begin{lem}\label{evfrontier} 
Let $H \in \cal B (\mathscr H)$. 
Assume that there exist $\varphi \in {\mathscr H}$ and $\lambda \in {\mathbb R}$ such that 
$H\varphi = (\lambda +i\sigma_{\pm}) \varphi$. Then, $\re(H)\varphi = \lambda \varphi$. In particular, for any subset $\Delta \subset {\mathbb R},$
\begin{equation}
{\mathcal E}_{\rm p} (H) \cap ( \Delta +i\{\sigma_{\pm}\} ) \subset \left( {\mathcal E}_{\rm p} (\re H) \cap \Delta \right) 
+i\{\sigma_{\pm}\}.
\end{equation}
\end{lem}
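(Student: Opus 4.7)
The plan is to reduce the statement to an equality case in a positivity inequality for the bounded selfadjoint operator $\im(H)$, which will force $\varphi$ to be an eigenvector of $\im(H)$ associated with the extremal spectral value $\sigma_\pm$.

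First, I would test the eigenvalue equation against $\varphi$ to extract scalar information on the imaginary part. Assume $\|\varphi\|=1$ without loss of generality. Since $\im(H) = \frac{1}{2i}(H - H^*)$ is bounded and selfadjoint, one has
\begin{equation*}
\bra \im(H)\varphi, \varphi \ket = \im \bra H\varphi, \varphi \ket = \im \bigl( (\lambda + i\sigma_{\pm}) \bigr) = \sigma_{\pm}.
\end{equation*}
The sign hypothesis then plays its role: the operator $T_{\pm} := \pm (\sigma_{\pm} I - \im(H))$ is bounded, selfadjoint, and non-negative because $\sigma_+ = \max \sigma(\im(H))$ and $\sigma_- = \min \sigma(\im(H))$ (apply the functional calculus, or equivalently the inequality $\bra \im(H)\psi,\psi\ket \leq \sigma_+ \|\psi\|^2$ and its counterpart for $\sigma_-$ valid for all $\psi \in \mathscr{H}$).

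Next, combine the two facts: $\bra T_{\pm}\varphi, \varphi \ket = 0$ together with $T_{\pm} \geq 0$ forces $T_\pm^{1/2}\varphi = 0$ by the standard argument $\|T_\pm^{1/2}\varphi\|^2 = \bra T_\pm \varphi, \varphi \ket = 0$, hence $T_{\pm}\varphi = 0$, i.e.\ $\im(H)\varphi = \sigma_{\pm}\varphi$. Using $H = \re(H) + i\im(H)$ and the assumption $H\varphi = (\lambda + i\sigma_\pm)\varphi$, one deduces immediately $\re(H)\varphi = \lambda \varphi$, which in particular exhibits $\varphi$ as an eigenvector of the selfadjoint operator $\re(H)$ for the real eigenvalue $\lambda$. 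The announced inclusion $\mathcal{E}_{\rm p}(H) \cap ({\mathbb R} + i\{\sigma_{\pm}\}) \subset (\mathcal{E}_{\rm p}(\re(H)) \cap {\mathbb R}) + i\{\sigma_{\pm}\}$ is then a direct translation.

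The argument has no real obstacle: the only non-trivial point is recognizing that the extremum of the numerical range of a bounded selfadjoint operator coincides with the extremum of its spectrum, and that equality in the associated inequality at a particular unit vector $\varphi$ forces $\varphi$ to lie in the kernel of the corresponding shifted positive operator. Both are classical consequences of the spectral theorem, so the entire argument should fit in a few lines.
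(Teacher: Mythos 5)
Your proof is correct and follows essentially the same route as the paper: compute $\im\bra H\varphi,\varphi\ket=\sigma_{\pm}$, use that $\pm(\sigma_{\pm}-\im(H))\geq 0$, and conclude via the square-root (vanishing quadratic form) argument that $\im(H)\varphi=\sigma_{\pm}\varphi$, hence $\re(H)\varphi=\lambda\varphi$. Nothing to add.
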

\begin{proof} Mind that the operators $\sigma_+ - \im(H)$ and $\im(H) - \sigma_-$ are bounded and non-negative. To fix ideas, let us consider the $+$ case. By hypothesis, we have 
\begin{equation}
\bra \varphi, \im(H) \varphi\ket = \im \bra \varphi, H \varphi\ket = \sigma_+ \|\varphi \|^2
\end{equation}
We deduce that $\big\| \sqrt{\sigma_+ - \im(H)} \varphi \big\|^2 = \bra \varphi, ( \sigma_+ - \im(H) ) \varphi \ket =0$, so 
$\sqrt{\sigma_+ - \im(H)} \varphi=0$. Hence, $\im(H)\varphi = \sigma_+ \varphi$ and $\re(H)\varphi = \lambda \varphi$.
\end{proof}

Let us prove now Lemma \ref{virialnsa}. Assume (M) holds. As a consequence of the Virial Theorem, see e.g. \cite[Corollary 7.2.11]{abmg}, $\sigma_{\rm pp} ( \re(H) ) \cap \Delta$ is necessarily finite with finite multiplicity. Moreover, it is empty if $K=0$. In view of Lemma \ref{evfrontier}, this proves the first statement of Lemma \ref{virialnsa}.

If there exists $\varphi \in {\mathscr H}$ such that $H\varphi = (\lambda +i\sigma_+) \varphi$ for some $\lambda \in \R$, then $\re (H) \varphi =\lambda \varphi$, due to Lemma \ref{evfrontier}. Assuming that $\re(H) \in C^1(A)$, we can apply the Virial Theorem (see e.g. \cite[Proposition 7.2.10]{abmg}) to deduce that $\bra \varphi, i\mathrm{ad}_A ( \re H) \varphi \ket =0$. If in addition $i\mathrm{ad}_A ( \re H) >0$, then necessarily $\varphi=0$. This means that ${\mathcal E}_{\rm p} (H) \cap ( \R +i\{\sigma_+\} )=\emptyset$. Similarly, we have that ${\mathcal E}_{\rm p} (H) \cap ( \R +i\{\sigma_-\} )=\emptyset$, which proves the second statement of Lemma \ref{virialnsa}.

\subsection{Deformed resolvents and first estimates}\label{diffineq}

The proof of Theorem \ref{mourrensa} is based on Mourre's differential inequality strategy. Our presentation interpolates between \cite{abmg} and \cite{royer1}. Throughout this section, $\Delta \subset {\mathbb R}$ denotes an open interval and $H\in C^1(A)$. In particular $\re (H) \in C^1(A)$ (see Remark \ref{c11ReIm}). For further use, we formulate various additional hypotheses, that will be related to the condition $H\in {\mathcal C}^{1,1}(A)$ in Section \ref{linktoc11}.

We assume there exist $0< \epsilon_0 \leq 1$ and two maps
\begin{eqnarray*}
S: (0,\epsilon_0) &\rightarrow & {\mathcal B}({\mathscr H}), \\
B: (0,\epsilon_0) &\rightarrow & {\mathcal B}({\mathscr H}),
\end{eqnarray*}
such that

\medskip

\noindent{\bf Assumptions}
\begin{itemize}
\item[{\bf (A1)}] there exists $C>0$ so that for any $\epsilon \in (0,\epsilon_0)$, $\| S(\epsilon) - H\| \leq C\epsilon$,
\item[{\bf (A2)}] $\style\lim_{\epsilon \rightarrow 0} \| B(\epsilon) - i\mathrm{ad}_A(H) \|=0$,
\item[{\bf (A3)}] $\style\lim_{\epsilon \rightarrow 0} \epsilon^{-1}\| S(\epsilon) - H\| =0$,
\item[{\bf (A4)}] the maps $S$ and $B$ are continuously differentiable on $(0,\epsilon_0)$  w.r.t. the operator norm topology,
\item[{\bf (A5)}] $\style\sup_{\epsilon \in (0,\epsilon_0)} \| B(\epsilon) \| <\infty$,
\item[{\bf (A6)}] for any $\epsilon \in (0,\epsilon_0)$, $S(\epsilon)$ and $B(\epsilon)$ belong to $C^1(A)$.
\end{itemize}

\begin{rem} \begin{itemize}
\item[(a)] Of course, Hypothesis (A3) implies (A1), while (A2) implies (A5). We have separated them in order to identify properly the ingredients required at each step of the proof.
\item[(b)] Note that Assumptions (A1)--(A4) allow to extend continuously the functions $S$, $B$ and $\partial_{\epsilon} S$ on $[0,\epsilon_0)$, 
by setting $S(0)=H$, $B(0)=i\mathrm{ad}_A(H)$, so that $(\partial_{\epsilon} S)(0) = 0$.
\end{itemize}
\end{rem}

For $\epsilon \in (0,\epsilon_0) \times \C$, we denote
\begin{equation*}
\begin{split}
Q_{\epsilon}^{\pm} & := \frac{S(\epsilon ) -H}{\epsilon} \mp i ( B(\epsilon ) -i\mathrm{ad}_A(H) ),\\
R_{\epsilon}^{\pm} & := \frac{S(\epsilon ) -H}{\epsilon} \mp i B(\epsilon ) = Q_{\epsilon}^{\pm} \pm \mathrm{ad}_A(H),
\end{split}
\end{equation*}
and set $Q_0^{\pm}=0$, $R_0^{\pm} := \pm \mathrm{ad}_A(H)$. We also define for $(\epsilon , z)\in [0,\epsilon_0) \times \C$,
\begin{equation}
\begin{split}
T_{\epsilon}^{\pm} (z) & = z- S(\epsilon) \pm i\epsilon B(\epsilon ) = T_0 (z) \pm i\epsilon ( i\mathrm{ad}_A(H) ) - \epsilon Q_{\epsilon}^{\pm} \\
&= T_0 (z) - \epsilon R_{\epsilon}^{\pm},
\end{split}
\end{equation}
where $T_0(z) = T_0^{\pm} (z) = z-H$.

Due to Assumptions (A1)--(A4), the maps $\epsilon \mapsto Q_{\epsilon}^{\pm}$ and $\epsilon \mapsto R_{\epsilon}^{\pm}$ are continuous on $[0,\epsilon_0)$ 
w.r.t. the operator norm topology. Using (A1), (A3) and (A5), we also have
\begin{equation}
C_R := \sup_{\epsilon \in [0,\epsilon_0)} \| R_{\epsilon}^{\pm} \| < \infty .
\end{equation}
Thus, it follows that:
\begin{lem}\label{Tepsilon-T0} 
Assume that (A1)--(A5) hold. Then, for any $z\in \C$, any $\epsilon \in [0,\epsilon_0)$, we have
\begin{equation}
\big\| T_{\epsilon}^{\pm} (z) - T_0 (z) \big\| \leq C_R \epsilon \quad \mbox{and} \quad \big\| T_{\epsilon}^{\pm} (z)^* - T_0 (z)^* \big\| \leq C_R \epsilon .
\end{equation}
\end{lem}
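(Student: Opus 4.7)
The statement is essentially a direct computation once the definitions have been unpacked, so my plan is simply to observe that the required inequality follows from the second displayed identity preceding the lemma, together with the definition of the constant $C_R$.

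More precisely, I would start by recalling from the definitions that
\begin{equation*}
T_{\epsilon}^{\pm}(z) - T_0(z) = \bigl(z - S(\epsilon) \pm i\epsilon B(\epsilon)\bigr) - (z - H) = -\epsilon R_{\epsilon}^{\pm},
\end{equation*}
where I have used the rewriting $T_{\epsilon}^{\pm}(z) = T_0(z) - \epsilon R_{\epsilon}^{\pm}$ given just above the lemma statement. Taking operator norms on both sides yields
\begin{equation*}
\bigl\| T_{\epsilon}^{\pm}(z) - T_0(z)\bigr\| = \epsilon \, \| R_{\epsilon}^{\pm} \| \leq \epsilon \, C_R,
\end{equation*}
which is the claimed estimate, valid for every $z \in \C$ and every $\epsilon \in [0,\epsilon_0)$. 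The case $\epsilon = 0$ is trivial since both sides vanish.

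The only point that needs to be justified is that the supremum $C_R$ is actually finite, i.e. that $\epsilon \mapsto R_{\epsilon}^{\pm}$ is bounded on $[0,\epsilon_0)$, but this is explicitly asserted in the displayed equation immediately above the lemma and is itself a direct consequence of Assumptions (A1), (A3), (A5): (A1) and (A3) ensure that $\epsilon^{-1}(S(\epsilon)-H)$ is bounded (and in fact tends to $0$) on $[0,\epsilon_0)$, while (A5) controls the term $\mp iB(\epsilon)$. Hence no genuine obstacle arises: the whole argument is a one-line unpacking of the definitions, and I would not expect any technical difficulty in carrying it out.
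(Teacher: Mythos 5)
Your proof is correct and is exactly the argument the paper intends: the lemma is a one-line consequence of the identity $T_{\epsilon}^{\pm}(z)=T_0(z)-\epsilon R_{\epsilon}^{\pm}$ together with the finiteness of $C_R=\sup_{\epsilon\in[0,\epsilon_0)}\|R_{\epsilon}^{\pm}\|$, which the paper derives from (A1), (A3) and (A5) in the display immediately preceding the statement. Nothing is missing.
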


\begin{lem}\label{ImT0} 
Assume that (A1)--(A4) hold. Then, for any $z\in \C$ such that $\im(z) \in [\sigma_+, \sigma_+ +1]\cup [\sigma_- -1, \sigma_-]$, any 
$\epsilon \in (0,\epsilon_0]$, any $p>0$ and any $\psi \in {\mathscr H}$, we have
\begin{equation}
\begin{split}
\big| \bra \psi, \im ( T_0(z) ) \psi \big| & \leq \dfrac{1}{2p} \|T_{\epsilon}^{\pm}(z) \psi \|^2 + ( C_R \epsilon + \frac{p}{2} ) \|\psi \|^2, \\
\big| \bra \psi, \im ( T_0(z) ) \psi \big| & \leq \dfrac{1}{2p} \big\| T_{\epsilon}^{\pm}(z)^* \psi \big\|^2 +( C_R \epsilon + \frac{p}{2} ) \|\psi \|^2.
\end{split}
\end{equation}
In particular, for any $z\in \C$ such that $\im(z) \in [\sigma_+, \sigma_+ +1]\cup [\sigma_- -1, \sigma_-]$, any 
$\epsilon \in (0,\epsilon_0]$, any $p>0$ and any $\psi \in {\mathscr H}$, we also have
\begin{equation}
\begin{split}
\big\| \im ( T_0(z) ) \psi \big\|^2 & \leq \dfrac{\sigma_0}{2p} \|T_{\epsilon}^{\pm}(z) \psi \|^2 + 
\sigma_0 ( C_R \epsilon + \frac{p}{2} ) \|\psi \|^2, \\
\big\| \im ( T_0(z) ) \psi \big\|^2 & \leq \dfrac{\sigma_0}{2p} \big\| T_{\epsilon}^{\pm}(z)^* \psi \big\|^2 + 
\sigma_0 ( C_R \epsilon + \frac{p}{2} ) \|\psi \|^2,
\end{split}
\end{equation}
where $\sigma_0 := \sigma_+ - \sigma_- +1$. 
\end{lem}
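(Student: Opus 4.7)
The plan is to exploit that, for $\im(z)$ in either of the two prescribed intervals, the bounded selfadjoint operator $\im \big( T_0(z) \big) = \im(z) I - \im(H)$ has a definite sign, inherited from the inequalities $\im(H) \leq \sigma_+ I$ and $\im(H) \geq \sigma_- I$. Specifically, $\im \big( T_0(z) \big) \geq 0$ when $\im(z) \in [\sigma_+, \sigma_+ + 1]$ and $\im \big( T_0(z) \big) \leq 0$ when $\im(z) \in [\sigma_- - 1, \sigma_-]$. Both pairs of inequalities will hinge on this observation together with the elementary identity $\bra \psi, \im \big( T_0(z) \big) \psi \ket = \im \bra \psi, T_0(z) \psi \ket$.

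For the first pair, I would first reduce to a numerical-range estimate by writing
$|\bra \psi, \im \big( T_0(z) \big) \psi \ket| = |\im \bra \psi, T_0(z) \psi \ket| \leq |\bra \psi, T_0(z) \psi \ket|$.
Decomposing $T_0(z) = T_{\epsilon}^{\pm}(z) + \epsilon R_{\epsilon}^{\pm}$ and using the uniform bound $\|R_{\epsilon}^{\pm}\| \leq C_R$ already established right before the lemma from (A1), (A3) and (A5), a triangle inequality combined with Cauchy--Schwarz and Young's inequality $|ab| \leq \frac{1}{2p} a^2 + \frac{p}{2} b^2$ applied to $a = \|T_{\epsilon}^{\pm}(z) \psi\|$ and $b = \|\psi\|$ yields the first inequality. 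For the adjoint variant, the identity $\im \big( T_0(z)^* \big) = - \im \big( T_0(z) \big)$ allows me to trade $T_0(z)$ for $T_0(z)^*$; the same Young's argument then applies with $T_0(z)^* = T_{\epsilon}^{\pm}(z)^* + \epsilon (R_{\epsilon}^{\pm})^*$.

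For the second pair, I would upgrade each form estimate to a norm estimate via the general inequality $B^2 \leq \|B\| \, |B|$ valid for any bounded selfadjoint operator $B$, applied to $B := \im \big( T_0(z) \big)$. Since $\|B\| \leq \sigma_0$ on the prescribed range of $\im(z)$ and since $B$ has a definite sign, the identity $\bra \psi, |B| \psi \ket = |\bra \psi, B \psi \ket|$ holds, and taking expectation against $\psi$ gives
$\|\im \big( T_0(z) \big) \psi\|^2 \leq \sigma_0 \, |\bra \psi, \im \big( T_0(z) \big) \psi \ket|$.
Substitution of the bounds from the first pair closes the proof.

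The argument is essentially elementary; no single step is an obstacle. The only minor technical care lies in the definite-sign bookkeeping that justifies $\bra \psi, |B| \psi \ket = |\bra \psi, B \psi \ket|$, and in confirming the value $\sigma_0 = \sigma_+ - \sigma_- + 1$ of the operator-norm bound, which follows directly from $\sigma \big( \im(H) \big) \subset [\sigma_-, \sigma_+]$ together with the two allowed ranges for $\im(z)$.
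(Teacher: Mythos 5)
Your proof is correct and follows essentially the same route as the paper: reduce $|\bra \psi, \im(T_0(z))\psi\ket|$ to $|\bra \psi, T_0(z)\psi\ket|$, pass to $T_{\epsilon}^{\pm}(z)$ via the bound $\|T_{\epsilon}^{\pm}(z)-T_0(z)\|\leq C_R\epsilon$, apply Cauchy--Schwarz and Young, handle the adjoint with $\im(T_0(z)^*)=-\im(T_0(z))$, and upgrade to the norm estimates using the definite sign of $\im(T_0(z))$ on the prescribed strips together with $\|\im(T_0(z))\|\leq \sigma_0$. Your explicit justification of $\|\im(T_0(z))\psi\|^2\leq \sigma_0|\bra\psi,\im(T_0(z))\psi\ket|$ via $B^2\leq \|B\|\,|B|$ and $|B|=\pm B$ is exactly the bookkeeping the paper leaves implicit.
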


\noindent
\begin{proof} Due to Lemma \ref{Tepsilon-T0}, for any $z\in \C$, any $\psi \in {\mathscr H}$ and any $\epsilon\in [0,\epsilon_0),$
\begin{equation*}
|\bra \psi, \im (T_0 (z)) \psi \ket | \leq |\bra \psi, T_0 (z) \psi \ket | \leq |\bra \psi, T_{\epsilon}^{\pm}(z) \psi \ket | + C_R \epsilon \|\psi \|^2 \leq \| \psi \| \| T_{\epsilon}^{\pm}(z) \psi \| 
+ C_R \epsilon \|\psi \|^2,
\end{equation*}
while for any $p>0,$
\begin{equation*}
\| \psi \| \| T_{\epsilon}^{\pm}(z) \psi \| \leq \frac{1}{2p} \| T_{\epsilon}^{\pm}(z) \psi \|^2 + \frac{p}{2} \|\psi \|^2.
\end{equation*}
Once observed that $\im (T_0 (z)^*) =- \im (T_0 (z))$, some analog inequalities hold with $T_0 (z)^*$ and $T_{\epsilon}^{\pm}(z)^*$ instead of $T_0 (z)$ and $T_{\epsilon}^{\pm}(z)$. This allows us to conclude on the first estimates. Now, note that $\im ( T_0(z) ) = (\im z - \sigma_{\pm}) +(\sigma_{\pm} -\im (H))$. In particular, $\im ( T_0(z) ) \geq 0$ if $\im(z) \in [\sigma_+, \sigma_+ +1]$ and $\im ( T_0(z) ) \leq 0$ if $\im(z) \in [\sigma_- -1, \sigma_-]$. So, for any $\psi \in {\mathscr H}$ and any $z\in \C$ such that $\im z \in [\sigma_+, \sigma_+ +1]\cup [\sigma_- -1, \sigma_-],$
\begin{equation}\label{signedqf}
\big\| \im ( T_0(z) ) \psi \big\|^2 \leq \big\| \im ( T_0(z) ) \big\| \big| \bra \psi, \im ( T_0(z) ) \psi \ket \big| \leq \sigma_0 \big| \bra \psi, \im ( T_0(z) ) \psi \ket \big|.
\end{equation}
The last estimates follow immediately from the first ones.
\end{proof}

In order to shorten some notations, we denote for any set $S \subset \R,$
\begin{equation*}
\begin{split}
\bar{\Omega}^+ (S) & := S +i[\sigma_+,\sigma_+ +1],\\
\bar{\Omega}^- (S) & := S +i[\sigma_- -1,\sigma_-],\\
\Omega^+ (S) & := S +i(\sigma_+,\sigma_+ +1],\\
\Omega^- (S) & := S +i[\sigma_- -1,\sigma_-) .
\end{split}
\end{equation*}
Until the end of the section, we fix $\Delta_0$ as a relatively compact subset of $\Delta$, such that $\overline{\Delta_0} \subset \Delta$. In particular, $\delta_0 :=$ dist$(\Delta_0, \Delta^c) >0$.

\begin{lem}\label{perp} 
Assume that (A1)--(A5) hold. For any $z\in \bar{\Omega}^{\pm} (\Delta_0)$, any $\epsilon \in [0,\epsilon_0)$, any $p>0$ and any $\varphi \in {\mathscr H}$, we have
\begin{equation}
\begin{split}
\| E_{\re(H)}^{\perp}(\Delta) \varphi \|^2 &\leq 3\delta_0^{-2} ( (1 + \dfrac{\sigma_0}{2p}) \|T_{\epsilon}^{\pm}(z) \varphi \|^2 + ( C_R^2 \epsilon^2 +\sigma_0 C_R \epsilon + \sigma_0 \frac{p}{2}) \|\varphi \|^2 ),\\
\| E_{\re(H)}^{\perp}(\Delta) \varphi \|^2 &\leq 3\delta_0^{-2} ( (1 + \dfrac{\sigma_0}{2p}) \|T_{\epsilon}^{\pm}(z)^* \varphi \|^2 + ( C_R^2 \epsilon^2 +\sigma_0 C_R \epsilon + \sigma_0 \frac{p}{2}) \|\varphi \|^2 ).
\end{split}
\end{equation}
\end{lem}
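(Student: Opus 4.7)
The plan is to exploit the spectral separation between $\re(z)$ and the complement of $\Delta$: since $\re(z) \in \Delta_0$ and $\delta_0 = \mathrm{dist}(\Delta_0,\Delta^c) >0$, the spectral theorem for the selfadjoint operator $\re(H)$ gives, via functional calculus applied to the multiplier $s \mapsto (s - \re(z))\one_{\Delta^c}(s)$,
\[
\delta_0^2 \| E_{\re(H)}^\perp (\Delta) \varphi \|^2 \leq \| E_{\re(H)}^\perp(\Delta) (\re(H) - \re(z)) \varphi \|^2 \leq \| (\re(H) - \re(z)) \varphi \|^2 = \| \re (T_0(z)) \varphi \|^2,
\]
where we used that $E_{\re(H)}^\perp(\Delta)$ commutes with $\re(H)$ and that $\re(T_0(z)) = \re(z) - \re(H)$. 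So everything reduces to bounding $\| \re(T_0(z)) \varphi \|^2$ by $\| T_\epsilon^\pm (z) \varphi \|^2$ and $\| \varphi \|^2$.

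For the first inequality, I would start from the operator identity $\re(T_0(z)) = T_0(z) - i \im(T_0(z))$ and substitute $T_0(z) = T_\epsilon^\pm (z) + \epsilon R_\epsilon^\pm$ to get the three-term decomposition
\[
\re(T_0(z)) \varphi = T_\epsilon^\pm (z) \varphi + \epsilon R_\epsilon^\pm \varphi - i \im(T_0(z)) \varphi.
\]
The elementary inequality $\| a + b + c \|^2 \leq 3(\| a \|^2 + \| b \|^2 + \| c \|^2)$ (which is precisely the origin of the $3$ in the statement), together with $\| R_\epsilon^\pm \| \leq C_R$, yields
\[
\| \re(T_0(z)) \varphi \|^2 \leq 3 \| T_\epsilon^\pm (z) \varphi \|^2 + 3 C_R^2 \epsilon^2 \| \varphi \|^2 + 3 \| \im(T_0(z)) \varphi \|^2.
\]
Since $z \in \Delta_0^\pm$ means $\im(z) \in [\sigma_+, \sigma_+ + 1] \cup [\sigma_- - 1, \sigma_-]$, the second pair of estimates in Lemma \ref{ImT0} bounds $\| \im(T_0(z)) \varphi \|^2$ by $\tfrac{\sigma_0}{2p} \| T_\epsilon^\pm(z) \varphi \|^2 + \sigma_0(C_R \epsilon + p/2) \| \varphi \|^2$. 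Collecting terms produces exactly the coefficients $(1 + \sigma_0/(2p))$ in front of $\| T_\epsilon^\pm (z) \varphi \|^2$ and $C_R^2 \epsilon^2 + \sigma_0 C_R \epsilon + \sigma_0 p/2$ in front of $\| \varphi \|^2$.

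For the second inequality, I would use instead the dual decomposition obtained from $T_0(z) - T_0(z)^* = 2i \im(T_0(z))$, namely $\re(T_0(z)) = T_0(z)^* + i \im(T_0(z))$, and substitute $T_0(z)^* = T_\epsilon^\pm(z)^* + \epsilon (R_\epsilon^\pm)^*$. The same three-term estimate, the bound $\|(R_\epsilon^\pm)^*\| = \|R_\epsilon^\pm\| \leq C_R$, and the adjoint version of the inequality from Lemma \ref{ImT0}, give the required bound with $T_\epsilon^\pm (z)^*$ in place of $T_\epsilon^\pm (z)$. No substantial obstacle is anticipated: this lemma is essentially a packaging step quantifying the standard ``localisation outside the spectral interval is controlled by the operator itself'' principle, in a form tailored to the non-selfadjoint deformation $T_\epsilon^\pm(z)$.
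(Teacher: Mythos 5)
Your proposal is correct and follows essentially the same route as the paper: the paper also isolates the spectral gap bound $\delta_0^{-1}$ (written there as $\delta_0\|(\re T_0(z))^{-1}E_{\re(H)}^{\perp}(\Delta)\|\leq 1$), uses the same decomposition $\re(T_0(z)) = T_{\epsilon}^{\pm}(z) - i\im(T_0(z)) + \epsilon R_{\epsilon}^{\pm}$ (and its adjoint counterpart), the three-term inequality giving the factor $3$, and then Lemma \ref{ImT0}. Your use of the spectral theorem to write $\delta_0\|E_{\re(H)}^{\perp}(\Delta)\varphi\|\leq\|\re(T_0(z))\varphi\|$ is just a cosmetic rephrasing of the paper's left-multiplication by $(\re T_0(z))^{-1}E_{\re(H)}^{\perp}(\Delta)$.
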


\noindent
\begin{proof} 
For $\re(z) \in \Delta_0$ we have $\delta_0 \big\| ( \re T_0(z) )^{-1} E_{\re(H)}^{\perp}(\Delta) \big\| \leq 1$. So, if $z \in \bar{\Omega}^{\pm} (\Delta_0)$, we can write 
\begin{equation}
\begin{split}
E_{\re(H)}^{\perp}(\Delta) &= ( \re T_0(z) )^{-1} E_{\re(H)}^{\perp}(\Delta) \left( T_{\epsilon}^{\pm}(z) - i\im ( T_0 (z) ) + \epsilon R_{\epsilon}^{\pm} \right), \\
E_{\re(H)}^{\perp}(\Delta) &= ( \re T_0(z) )^{-1} E_{\re(H)}^{\perp}(\Delta) \left( T_{\epsilon}^{\pm}(z)^* +  i\im ( T_0 (z) )+ \epsilon R_{\epsilon}^{\pm *} \right),
\end{split}
\end{equation}
and deduce from the triangular inequality that
\begin{equation}
\begin{split}
\| E_{\re(H)}^{\perp}(\Delta) \varphi \|^2 &\leq 3\delta_0^{-2} ( \| T_{\epsilon}^{\pm}(z) \varphi \|^2 + \big\| \im ( T_0 (z) ) \varphi \big\|^2 + 
C_R^2 \epsilon^2 \| \varphi \|^2 ), \\
\| E_{\re(H)}^{\perp}(\Delta) \varphi \|^2 &\leq 3\delta_0^{-2} ( \big\| T_{\epsilon}^{\pm}(z)^* \varphi \|^2 + \big\| \im ( T_0 (z) ) \varphi \big\|^2 + 
C_R^2 \epsilon^2 \| \varphi \|^2 ).
\end{split}
\end{equation}
The conclusion follows now from Lemma \ref{ImT0}.
\end{proof}

From now on, we fix
\begin{equation}\label{pchoice}
p_{\pm} := \frac{a_{\pm} \delta_0^2}{6(a_{\pm}+b_{\pm})\sigma_0} .
\end{equation}
In view of Hypotheses (A2), (A3) and (M), we can also assume without any restriction that $\epsilon_0 >0$ is chosen such that:

\medskip
\noindent {\bf Assumption (A7)} 
\begin{equation*}
\beta_{\pm} \epsilon_0 \leq 1 \quad \text{and} \quad \frac{3(a_{\pm}+b_{\pm})}{\delta_0^2} \left( (C_R \epsilon_0)^2 +\sigma_0 C_R \epsilon_0 \right) +
\sup_{\epsilon \in [0,\epsilon_0)} \big\| \im(Q_{\epsilon}^{\pm}) \big\|\leq \frac{a_{\pm}}{4}.
\end{equation*}
Assumption (A7) and (\ref{pchoice}) ensure that
\begin{equation*}
\frac{3(a_{\pm}+b_{\pm})}{\delta_0^2}\sup_{\epsilon \in [0,\epsilon_0)} \left( (C_R \epsilon_0)^2 +\sigma_0 C_R \epsilon_0 + \frac{\sigma_0 p_{\pm}}{2}\right) + \sup_{\epsilon \in [0,\epsilon_0)} \big\| \im(Q_{\epsilon}^{\pm}) \big\|\leq \frac{a_{\pm}}{2}.
\end{equation*}
For $z\in {\mathbb C}$, we have 
\begin{equation}
\im ( T_{\epsilon}^{\pm} (z) ) = \im(z) - \im ( S(\epsilon) ) \pm \epsilon \re ( B(\epsilon) ) = \im( T_0(z) ) \pm \epsilon i\mathrm{ad}_A ( \re(H) ) -\epsilon \im(Q_{\epsilon}^{\pm}) ,
\end{equation}
with $\im ( T_0(z) ) = (\im z - \sigma_{\pm}) +(\sigma_{\pm} -\im (H))$. Assuming (M$_+$), we get for $z\in \Delta + i[\sigma_+,\infty),$
\begin{align}\label{mou+}
\im ( T_{\epsilon}^+ (z) ) &= -\im ( T_{\epsilon}^+ (z)^* ) \\
&\geq ( \im z - \sigma_+ ) + a_+ \epsilon - (a_+ +b_+) \epsilon E_{\re(H)}^\perp (\Delta) +(1-\epsilon \beta_+) (\sigma_+ - \im (H)) - \epsilon \im(Q_{\epsilon}^+) \nonumber
\end{align}
Assuming (M$_-$), we get for $z\in \Delta + i(-\infty, \sigma_-],$
\begin{align}\label{mou-}
- \im ( T_{\epsilon}^-(z) ) &= \im ( T_{\epsilon}^- (z)^* ) \\
& \geq ( \sigma_- - \im z ) + a_- \epsilon - (a_- +b_-) \epsilon E_{\re(H)}^\bot(\Delta) +(1- \epsilon \beta_-) (\im (H) -\sigma_-) + \epsilon \im(Q_{\epsilon}^-) \nonumber
\end{align}
This leads us to:

\begin{prop}\label{+} 
Assume that (A1)--(A5) and (A7) hold.
\begin{itemize}
\item If (M$_+$) holds, then there exists $C_1^+ >0$ such that for any $\epsilon \in [0,\epsilon_0)$, any $\varphi \in {\mathscr H}$ and any $z\in \bar{\Omega}^+ (\Delta_0),$
\begin{gather}
\im \bra \varphi, T_{\epsilon}^+ (z) \varphi \ket + C_1^+ \epsilon \| T_{\epsilon}^+ (z) \varphi \|^2 \geq d_+ ( \im(z),\epsilon ) \|\varphi \|^2 \label{mou+1},\\
- \im \bra \varphi, T_{\epsilon}^{+} (z)^* \varphi \ket + C_1^+ \epsilon \| T_{\epsilon}^+ (z)^* \varphi \|^2 \geq d_+ ( \im(z),\epsilon ) \|\varphi \|^2 \label{mou+2},
\end{gather}
with $d_+ ( \im(z),\epsilon) := ( \im(z) - \sigma_+) + \epsilon \alpha_+$ and $\alpha_+ =a_+/2$.
\item If (M$_-$) holds, then there exists $C_1^- >0$ such that for any $\epsilon \in [0,\epsilon_0)$, any $\varphi \in {\mathscr H}$ and any $z\in \bar{\Omega}^- (\Delta_0)$,
\begin{gather}
- \im \bra \varphi,T_{\epsilon}^- (z) \varphi \ket + C_1^- \epsilon \| T_{\epsilon}^- (z) \varphi \|^2 \geq d_- ( \im(z),\epsilon ) \|\varphi \|^2 \label{mou-1},\\
\im \bra \varphi,T_{\epsilon}^- (z)^* \varphi \ket + C_1^- \epsilon \| T_{\epsilon}^- (z)^* \varphi \|^2 \geq d_- ( \im(z),\epsilon ) \|\varphi \|^2 \label{mou-2},
\end{gather}
with $d_- ( \im(z),\epsilon) := -( \im(z) - \sigma_-) + \epsilon \alpha_-$ and $\alpha_- =a_-/2$.
\end{itemize}
In particular, $T_{\epsilon}^{\pm} (z)$ is boundedly invertible as soon as $\epsilon \in [0,\epsilon_0)$, $z\in \bar{\Omega}^{\pm} (\Delta_0)$ and $d_{\pm} ( \im(z),\epsilon ) >0$.
\end{prop}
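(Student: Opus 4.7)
The plan is to establish \eqref{mou+1} in detail; the companion inequalities follow by the same method. Specifically, \eqref{mou+2} will be obtained from the identity $\im(T_\epsilon^+(z)^*) = -\im(T_\epsilon^+(z))$ together with the adjoint bounds in Lemmas \ref{perp} and \ref{ImT0}, while the (M$-$) case is a sign-mirror image on the strip $\Delta_0^-$, using \eqref{mou-} in place of \eqref{mou+}. The natural starting point is to take the quadratic-form expectation against $\varphi \in {\mathscr H}$ in the second lower bound of \eqref{mou+}: for $z \in \Delta_0^+$, one obtains
\begin{align*}
\im \bra \varphi, T_\epsilon^+(z) \varphi \ket
&\ge \bigl((\im(z)-\sigma_+) + a_+\epsilon\bigr) \|\varphi\|^2 - (a_+ + b_+)\epsilon\,\|E_{\re(H)}^\perp(\Delta)\varphi\|^2 \\
&\quad - \epsilon \beta_+ \bra \varphi, \im(T_0(z)) \varphi \ket - \epsilon \bra \varphi, \im(Q_\epsilon^+)\varphi \ket.
\end{align*}
The ``good'' contribution $\bigl((\im(z)-\sigma_+) + a_+\epsilon\bigr)\|\varphi\|^2$ is essentially what one wants (up to a factor $1/2$ on $a_+$); the three negative corrections must be rerouted into a multiple of $\|T_\epsilon^+(z)\varphi\|^2$, which will be absorbed on the left-hand side, plus a small penalty proportional to $\|\varphi\|^2$.

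The next step is to apply Lemma \ref{perp} with $p = p_+$ to estimate the $\|E_{\re(H)}^\perp(\Delta)\varphi\|^2$ correction, and Lemma \ref{ImT0} with $p = p_+$ to estimate $\bra \varphi, \im(T_0(z))\varphi \ket$. A useful preliminary observation is that on $\Delta_0^+$ one has $\im(T_0(z)) = \im(z) - \im(H) \ge \im(z) - \sigma_+ \ge 0$, so $\bra \varphi, \im(T_0(z)) \varphi \ket$ is non-negative and coincides with its modulus, allowing Lemma \ref{ImT0} to be applied without a sign issue. The $Q$-term is controlled trivially by $\|\im(Q_\epsilon^+)\|\,\|\varphi\|^2$. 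Collecting the $\|T_\epsilon^+(z)\varphi\|^2$ contributions across the two lemmas determines the constant
\begin{equation*}
C_1^+ := \frac{3(a_+ + b_+)}{\delta_0^2}\Bigl(1 + \frac{\sigma_0}{2 p_+}\Bigr) + \frac{\beta_+}{2 p_+},
\end{equation*}
while the remaining $\|\varphi\|^2$ prefactor equals precisely $\epsilon$ times the bracketed expression on the left-hand side of \eqref{eqv}.

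The key absorption step then uses the calibration \eqref{pchoice} and Assumption (A7) jointly: together they yield \eqref{eqv}, which bounds this prefactor by $(a_+/2)\,\epsilon\,\|\varphi\|^2 = \tilde a_+ \epsilon\|\varphi\|^2$. Subtracting it from the good term $a_+\epsilon\|\varphi\|^2$ leaves $\tilde a_+ \epsilon\|\varphi\|^2$, which combines with $(\im(z)-\sigma_+)\|\varphi\|^2$ to produce $d_+(\im(z),\epsilon)\|\varphi\|^2$, establishing \eqref{mou+1}. The adjoint inequality \eqref{mou+2} follows verbatim from the adjoint versions of Lemmas \ref{perp} and \ref{ImT0} together with the sign flip $\im(T_\epsilon^+(z)^*) = -\im(T_\epsilon^+(z))$; the (M$-$) assertions are obtained identically on $\Delta_0^-$, where now $\im(T_0(z)) \le 0$ so that the sign conventions align consistently with \eqref{mou-}.

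Finally, for the bounded invertibility statement, when $d_\pm(\im(z),\epsilon) > 0$, inequality \eqref{mou+1} combined with Cauchy--Schwarz $|\im \bra \varphi, T_\epsilon^+(z)\varphi \ket| \le \|\varphi\|\,\|T_\epsilon^+(z)\varphi\|$ produces the quadratic inequality $d_+\|\varphi\|^2 \le \|\varphi\|\,\|T_\epsilon^+(z)\varphi\| + C_1^+ \epsilon\|T_\epsilon^+(z)\varphi\|^2$, from which a uniform coercivity estimate $\|T_\epsilon^+(z)\varphi\| \ge c\|\varphi\|$ with $c > 0$ is extracted. This forces $T_\epsilon^+(z)$ to be injective with closed range, and the dual estimate \eqref{mou+2} applied to $T_\epsilon^+(z)^*$ rules out the cokernel, so bounded invertibility follows. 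The main obstacle in the proof is not conceptual but combinatorial: one must verify that the penalty produced by the two Lemma substitutions fits exactly within $a_+\epsilon/2$, which is precisely what the calibration \eqref{pchoice} of $p_+$ and the smallness Assumption (A7) are designed to ensure.
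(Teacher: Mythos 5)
Your proof is correct and follows essentially the same route as the paper: take quadratic forms in \eqref{mou+} and \eqref{mou-}, absorb the $E_{\re(H)}^{\perp}(\Delta)$, $\im\big(T_0(z)\big)$ and $Q_{\epsilon}^{\pm}$ corrections via Lemmata \ref{perp} and \ref{ImT0} with $p=p_{\pm}$, use the calibration \eqref{pchoice} and (A7) through \eqref{eqv} to fit the penalty inside $a_{\pm}\epsilon/2$, and deduce bounded invertibility from injectivity and closed range of both $T_{\epsilon}^{\pm}(z)$ and its adjoint together with $\overline{\mathrm{Ran}\, T_{\epsilon}^{\pm}(z)}=\big(\mathrm{Ker}\, T_{\epsilon}^{\pm}(z)^{*}\big)^{\perp}$. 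The only cosmetic difference is your explicit constant $C_1^{+}$ (the factor $1+\sigma_0/(2p_+)$ appears unsquared, versus the squared choice in \eqref{C1}), which is immaterial since the statement only asserts the existence of some $C_1^{\pm}>0$.
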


\begin{rem}\label{++} The constants $C_1^{\pm}$ can be explicited:
\begin{equation}\label{C1}
C_1^{\pm} := \frac{3(a_{\pm}+b_{\pm})}{\delta_0^2} \Bigg( 1+\frac{\sigma_0}{2p_{\pm}}\Bigg) .
\end{equation}
\end{rem}

\noindent
\begin{proof}
Due to Assumption (A7), $\pm (1-\epsilon \beta_{\pm}) (\sigma_{\pm} - \im (H)) \geq 0$, for any $\epsilon \in (0,\epsilon_0]$. Inequalities (\ref{mou+1}) and (\ref{mou+2}) follow from (\ref{mou+}), Lemmata \ref{ImT0} and \ref{perp}, while inequalities (\ref{mou-1}) and (\ref{mou-2}) follows from (\ref{mou-}), Lemmata \ref{ImT0} and \ref{perp}. 

Fix $\epsilon \in [0,\epsilon_0)$, $z\in \bar{\Omega}^+ (\Delta_0)$ with $d_+ ( \im(z),\epsilon ) >0$. Inequalities (\ref{mou+1}) and (\ref{mou+2}) show that
$T_{\epsilon}^+ (z)$ and $T_{\epsilon}^+ (z)^*$ are injective from ${\mathscr H}$ into itself and have closed ranges. Since
\begin{equation*}
\overline{\mathrm{Ran} \, T_{\epsilon}^+ (z) } = (\, \mathrm{Ker} \, T_{\epsilon}^+ (z)^* )^{\perp},
\end{equation*}
we deduce that $T_{\epsilon}^+ (z)$ and $(T_{\epsilon}^+ (z))^*$ are actually bijective and boundedly invertible. Similarly, fix $\epsilon \in [0,\epsilon_0)$, 
$z\in \bar{\Omega}^- (\Delta_0)$ with $d_- ( \im(z),\epsilon ) >0$. Inequalities (\ref{mou-1}) and (\ref{mou-2}) show $T_{\epsilon}^- (z)$ and $T_{\epsilon}^- (z)^*$ are 
injective from ${\mathscr H}$ into itself and have closed ranges. Since
\begin{equation*}
\overline{\mathrm{Ran} \, T_{\epsilon}^- (z) } = (\, \mathrm{Ker} \, T_{\epsilon}^- (z)^* )^{\perp},
\end{equation*}
we deduce that $T_{\epsilon}^- (z)$ and $(T_{\epsilon}^- (z))^*$ are actually bijective and boundedly invertible.
\end{proof}

In view of Proposition \ref{+}, we define for any $\epsilon \in [0,\epsilon_0)$, $z\in \bar{\Omega}^{\pm} (\Delta_0)$ with $d_{\pm} ( \im(z),\epsilon ) >0$,
\begin{equation}\label{G}
G_{\epsilon}^{\pm}(z) := ( T_{\epsilon}^{\pm} (z) )^{-1}.
\end{equation}
As a direct consequence of Proposition \ref{+}, we obtain

\begin{prop}\label{firstbounds} 
Assume that (A1)--(A5), (M$_{\pm}$) and (A7) hold. First, we have
\begin{equation}
C_0^{\pm}:= \sup_{(\epsilon,z) \in [0,\epsilon_0) \times \bar{\Omega}^{\pm} (\Delta_0), d_{\pm}(\im(z), \epsilon) >0} d_{\pm} ( \im(z), \epsilon ) 
\| G_{\epsilon}^{\pm}(z) \| < \infty.
\end{equation}
In addition, for $C_1^{\pm}$ defined in Proposition \ref{+} and Remark \ref{++}, we have:
\begin{itemize}
\item for any $\epsilon \in (0,\epsilon_0)$, $z\in \bar{\Omega}^+ (\Delta_0)$,
\begin{align}
\|G_{\epsilon}^+(z)\varphi \| &\leq \sqrt{\dfrac{2C_1^+}{a_+}} \| \varphi \|+ \sqrt{\frac{2 \big| \im \bra \varphi,G_{\epsilon}^{+}(z) \varphi \ket  \big|}{\epsilon a_+}},
\label{G-IMG-1}\\
\|G_{\epsilon}^{+} (z)^* \varphi \| &\leq \sqrt{\frac{2 C_1^+}{a_+}} \| \varphi \|+ \sqrt{\frac{2 \big| \im \bra \varphi, G_{\epsilon}^+(z)\varphi \ket \big|}{\epsilon a_+}},
\label{G-IMG-2}
\end{align}
\item for any $\epsilon \in (0,\epsilon_0)$, $z\in \bar{\Omega}^- (\Delta_0)$,
\begin{align}
\|G_{\epsilon}^-(z)\varphi \| &\leq \sqrt{\frac{2 C_1^-}{a_-}} \| \varphi \|+ \sqrt{\frac{2 \big| \im \bra \varphi,G_{\epsilon}^-(z) \varphi \ket \big|}{\epsilon a_-}}, 
\label{G-IMG-3}\\
\|G_{\epsilon}^{-} (z)^* \varphi \| &\leq \sqrt{\frac{2 C_1^-}{a_-}} \| \varphi \|+ \sqrt{\frac{2 \big| \im \bra \varphi, G_{\epsilon}^-(z)\varphi \ket \big|}{\epsilon a_-}}. 
\label{G-IMG-4}
\end{align}
\end{itemize}
\end{prop}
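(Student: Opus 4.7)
The plan is to derive all stated bounds as direct algebraic consequences of the four differential inequalities \eqref{mou+1}--\eqref{mou-2} established in Proposition \ref{+}. First I would substitute $\varphi = G_\epsilon^\pm(z)\psi$ into \eqref{mou+1} and \eqref{mou-1}, and $\varphi = \bigl(G_\epsilon^\pm(z)\bigr)^*\psi$ into \eqref{mou+2} and \eqref{mou-2}, for an arbitrary $\psi \in {\mathscr H}$. In each case, $T_\epsilon^\pm(z)\varphi$ (respectively $T_\epsilon^\pm(z)^*\varphi$) collapses to $\psi$, and using $\im \bra (G_\epsilon^\pm(z))^*\psi,\psi\ket = -\im \bra \psi, G_\epsilon^\pm(z)\psi\ket$, all four Mourre-type inequalities can be rewritten uniformly as
\begin{equation*}
d_\pm\bigl(\im(z),\epsilon\bigr)\,\|G_\epsilon^\pm(z)\psi\|^2 \;\le\; \bigl|\im \bra G_\epsilon^\pm(z)\psi,\psi\ket\bigr| + C_1^\pm \epsilon \|\psi\|^2,
\end{equation*}
together with the same estimate with $G_\epsilon^\pm(z)^*\psi$ on the left-hand side.

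For the first claim $C_0^\pm < \infty$, I would bound the sesquilinear form via Cauchy--Schwarz, $\bigl|\im\bra G_\epsilon^\pm(z)\psi,\psi\ket\bigr| \le \|G_\epsilon^\pm(z)\psi\|\,\|\psi\|$, and set $\lambda := \|G_\epsilon^\pm(z)\psi\|/\|\psi\|$ for $\psi \neq 0$. This yields the quadratic inequality $d_\pm \lambda^2 - \lambda - C_1^\pm \epsilon \le 0$, whose positive root gives $d_\pm \lambda \le \tfrac{1}{2}\bigl(1+\sqrt{1+4 C_1^\pm \epsilon\, d_\pm}\bigr)$. Since for $z\in\Delta_0^\pm$ one has $d_\pm(\im(z),\epsilon) \le 1 + \tilde a_\pm \epsilon_0$ and $\epsilon \le \epsilon_0$, the right-hand side is uniformly bounded in $(\epsilon,z)$. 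Taking the supremum over $\psi$ then controls $d_\pm(\im(z),\epsilon)\|G_\epsilon^\pm(z)\|$ uniformly, proving $C_0^\pm < \infty$.

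For the four weighted estimates \eqref{G-IMG-1}--\eqref{G-IMG-4}, the key observation is that on $\Delta_0^\pm$ the inequality $d_\pm(\im(z),\epsilon) \ge \epsilon\tilde a_\pm = \epsilon a_\pm/2$ holds directly from the definition of $d_\pm$ combined with the sign of $\im(z)-\sigma_\pm$. Dividing the displayed inequality (and its $(G_\epsilon^\pm)^*$-variant) through by $\epsilon a_\pm/2$ and applying the subadditivity $\sqrt{u+v} \le \sqrt u +\sqrt v$ for $u,v\ge 0$ yields precisely \eqref{G-IMG-1}--\eqref{G-IMG-4}. No real obstacle is expected; the main thing to keep track of carefully is the sign bookkeeping, so that the two adjoint-related inequalities \eqref{mou+2} and \eqref{mou-2}, which carry the opposite overall sign relative to \eqref{mou+1} and \eqref{mou-1}, contribute the same absolute value $\bigl|\im \bra G_\epsilon^\pm(z)\psi,\psi\ket\bigr|$ in the final inequality.
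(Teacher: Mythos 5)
Your proof is correct and follows essentially the same route as the paper's: substitute $G_{\epsilon}^{\pm}(z)\psi$ (resp. $G_{\epsilon}^{\pm}(z)^{*}\psi$) into the inequalities of Proposition \ref{+}, use Cauchy--Schwarz together with $d_{\pm}\big(\im(z),\epsilon\big)\leq 1+\tilde{a}_{\pm}\epsilon_0$ to get $C_0^{\pm}<\infty$, and use $d_{\pm}\big(\im(z),\epsilon\big)\geq \epsilon a_{\pm}/2$ on $\Delta_0^{\pm}$ plus $\sqrt{u+v}\leq\sqrt{u}+\sqrt{v}$ for the four weighted estimates. The only differences are cosmetic: the paper obtains the uniform bound by multiplying the Cauchy--Schwarz estimate by $d_{\pm}$ instead of solving the quadratic explicitly, and your sign identity should read $\im \bra G_{\epsilon}^{\pm}(z)^{*}\psi,\psi \ket = \im \bra \psi, G_{\epsilon}^{\pm}(z)\psi \ket$ (equivalently $\im\bra\psi,G^{*}\psi\ket=-\im\bra\psi,G\psi\ket$), a harmless slip since only absolute values enter.
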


\noindent
\begin{proof} 
Observe first that for any $\epsilon \in [0,\epsilon_0)$, $z\in \bar{\Omega}^{\pm} (\Delta_0)$ with $d_{\pm} ( \im(z),\epsilon ) >0$,
\begin{equation}
G_{\epsilon}^{\pm} (z)^\ast \im ( T_{\epsilon}^{\pm} (z) ) G_{\epsilon}^{\pm} (z) =- \im ( G_{\epsilon}^{\pm} (z) ).
\end{equation}
Once replaced $\varphi$ by $G_{\epsilon}^+(z) \varphi$ in (\ref{mou+1}), resp. by $G_{\epsilon}^-(z) \varphi$ in (\ref{mou-1}), Proposition \ref{+} reads:
for any $\epsilon \in [0,\epsilon_0)$, $z\in \bar{\Omega}^{\pm} (\Delta_0)$, $d_{\pm} ( \im(z),\epsilon ) >0$,
\begin{equation}
\big| \im \bra G_{\epsilon}^{\pm}(z) \varphi, \varphi \ket \big| + C_1^{\pm} \epsilon \| \varphi \|^2 \geq d_{\pm} ( \im(z),\epsilon ) \| G_{\epsilon}^{\pm}(z) \varphi \|^2 \label{mou+10}.
\end{equation}
But, for all $\epsilon \in [0,\epsilon_0)$, $z\in \bar{\Omega}^{\pm} (\Delta_0)$ with $d_{\pm} ( \im(z),\epsilon ) >0$, we have
\begin{equation}\label{CS}
\big| \im \bra G_{\epsilon}^{\pm}(z) \varphi, \varphi \ket \big| \leq | \bra G_{\epsilon}^{\pm}(z) \varphi, \varphi \ket | \leq \|\varphi \| \| G_{\epsilon}^{\pm}(z) \varphi \|.
\end{equation}
Multiplying both sides by $d_{\pm} ( \im(z),\epsilon )$ and using the fact that $d_{\pm} ( \im(z),\epsilon ) \leq 1 + \epsilon_0 \alpha_{\pm}$ if $z\in \bar{\Omega}^{\pm} (\Delta_0)$, 
we get
\begin{equation*}
d_{\pm} ( \im(z),\epsilon ) \| G_{\epsilon}^{\pm}(z) \varphi \| \|\varphi \| + C_1^{\pm} (1 + \epsilon_0 \alpha_{\pm})\epsilon_0 \| \varphi \|^2 \geq d_{\pm} ( \im(z),\epsilon )^2 
\| G_{\epsilon}^{\pm}(z) \varphi \|^2.
\end{equation*}
Taking supremum over the vectors $\varphi$ of norm 1, we deduce that necessarily $C_0^{\pm} < \infty$.

For any $\epsilon \in (0,\epsilon_0)$, $z\in \bar{\Omega}^{\pm} (\Delta_0)$, inequality (\ref{mou+10}) implies that
\begin{equation*}
\big| \im \bra G_{\epsilon}^{\pm}(z) \varphi, \varphi \ket \big| + C_1^{\pm} \epsilon \| \varphi \|^2 \geq \epsilon \alpha_{\pm} \| G_{\epsilon}^{\pm}(z) \varphi \|^2,
\end{equation*}
which implies estimates (\ref{G-IMG-1}) and (\ref{G-IMG-3}).

Note that $\| G_{\epsilon}^{\pm}(z)^* \| = \| G_{\epsilon}^{\pm}(z) \|$ whenever it exists. Once replaced $\varphi$ by $G_{\epsilon}^+(z)^* \varphi$ in (\ref{mou+2}), 
resp. by $G_{\epsilon}^-(z)^* \varphi$ in (\ref{mou-2}), Proposition \ref{+} reads: for any $\epsilon \in [0,\epsilon_0)$, $z\in \bar{\Omega}^{\pm} (\Delta_0)$, $d_{\pm} ( \im(z),\epsilon ) >0$,
\begin{equation*}
\big| \im \bra G_{\epsilon}^{\pm}(z)^* \varphi, \varphi \ket \big| + C_1^{\pm} \epsilon \| \varphi \|^2 \geq \epsilon \alpha_{\pm} \| G_{\epsilon}^{\pm}(z)^* \varphi \|^2.
\end{equation*}
Estimates (\ref{G-IMG-2}) and (\ref{G-IMG-4}) follow at once. 
\end{proof}

We recall that for $z\in {\mathbb C}\setminus {\mathcal N}(H)$, we have $\| (z-H)^{-1} \| \leq 1/\text{dist} (z, {\mathcal N}(H))$. In particular, if 
$z\in \Omega^{\pm} (\Delta_0)$, $\| (z-H)^{-1} \| \leq | \im(z) -\sigma_{\pm} |^{-1}$. The following 
estimates follows directly from Proposition \ref{firstbounds} and Lemma \ref{Tepsilon-T0}:

\begin{cor}\label{cvto0} 
Assume that (A1)--(A5), (M$_{\pm}$) and (A7) hold. For any $\epsilon \in [0,\epsilon_0)$ and any $z\in \Omega^{\pm} (\Delta_0)$, we have 
\begin{equation*}
\| G_{\epsilon}^{\pm}(z) - (z-H)^{-1} \| \leq \min \Bigg( \frac{C_R C_0^{\pm} \epsilon}{\big| \im(z) -\sigma_{\pm} \big|^2},\frac{2 C_R C_0^{\pm}}{a \big| \im(z) -\sigma_{\pm} \big|} \Bigg).
\end{equation*}
\end{cor}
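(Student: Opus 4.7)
The plan is to apply the second resolvent identity to the pair $T_\epsilon^\pm(z)$ and $T_0(z) = z-H$, whose difference is controlled uniformly by $C_R \epsilon$ thanks to Lemma \ref{Tepsilon-T0}, and then to bound each factor separately using the already available estimates on $(z-H)^{-1}$ (via the numerical range) and on $G_\epsilon^\pm(z)$ (via Proposition \ref{firstbounds}).

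First, since $T_\epsilon^\pm(z) - T_0(z) = -\epsilon R_\epsilon^\pm$ by the very definition of $T_\epsilon^\pm(z)$, applying the identity $A^{-1} - B^{-1} = A^{-1}(B-A)B^{-1} = B^{-1}(B-A)A^{-1}$ with $A = T_\epsilon^\pm(z)$ and $B = z-H$ yields the two symmetric representations
$$
G_\epsilon^\pm(z) - (z-H)^{-1} \;=\; \epsilon\, G_\epsilon^\pm(z)\, R_\epsilon^\pm\, (z-H)^{-1} \;=\; \epsilon\, (z-H)^{-1}\, R_\epsilon^\pm\, G_\epsilon^\pm(z).
$$
For $z \in \tensor[_o]{\Delta}{_0}^\pm$, the imaginary part of $z$ lies strictly outside the strip $\mathbb{R} + i[\sigma_-,\sigma_+]$ containing $\mathcal{N}(H)$, so the standard bound on resolvents away from the numerical range gives $\|(z-H)^{-1}\| \leq |\im(z) - \sigma_\pm|^{-1}$. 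On the other hand, $\|R_\epsilon^\pm\| \leq C_R$ by the very definition of $C_R$, and Proposition \ref{firstbounds} furnishes $\|G_\epsilon^\pm(z)\| \leq C_0^\pm / d_\pm(\im(z), \epsilon)$ where $d_\pm(\im(z), \epsilon) = |\im(z) - \sigma_\pm| + \epsilon \tilde{a}_\pm$.

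Multiplying these three bounds yields the first estimate: using only $d_\pm(\im(z),\epsilon) \geq |\im(z) - \sigma_\pm|$, one obtains
$$
\|G_\epsilon^\pm(z) - (z-H)^{-1}\| \;\leq\; \frac{\epsilon\, C_R\, C_0^\pm}{|\im(z) - \sigma_\pm|^2}.
$$
For the second estimate, I would exploit instead the alternative inequality $\epsilon/(|\im(z) - \sigma_\pm| + \epsilon \tilde{a}_\pm) \leq 1/\tilde{a}_\pm = 2/a_\pm$, which yields
$$
\|G_\epsilon^\pm(z) - (z-H)^{-1}\| \;\leq\; \frac{2\, C_R\, C_0^\pm}{a_\pm\, |\im(z) - \sigma_\pm|}.
$$
Taking the minimum of the two bounds concludes the proof. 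There is no real obstacle: all the ingredients have been assembled in Lemma \ref{Tepsilon-T0}, the numerical range bound on $\|(z-H)^{-1}\|$, and Proposition \ref{firstbounds}; only the resolvent identity and the two elementary choices of estimates for $d_\pm(\im(z),\epsilon)$ are needed to combine them.
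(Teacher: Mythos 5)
Your proof is correct and follows essentially the same route as the paper: the resolvent identity $G_{\epsilon}^{\pm}(z)-(z-H)^{-1}=G_{\epsilon}^{\pm}(z)\big(T_0(z)-T_{\epsilon}^{\pm}(z)\big)(z-H)^{-1}$, the bound $\|T_{\epsilon}^{\pm}(z)-T_0(z)\|\leq C_R\epsilon$ from Lemma \ref{Tepsilon-T0}, the numerical-range estimate $\|(z-H)^{-1}\|\leq |\im(z)-\sigma_{\pm}|^{-1}$, and $\|G_{\epsilon}^{\pm}(z)\|\leq C_0^{\pm}/d_{\pm}(\im(z),\epsilon)$ from Proposition \ref{firstbounds}, with the two lower bounds $d_{\pm}\geq |\im(z)-\sigma_{\pm}|$ and $d_{\pm}\geq \epsilon\tilde{a}_{\pm}$ yielding the two terms in the minimum. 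Your explicit handling of the two choices for $d_{\pm}$ simply spells out what the paper leaves implicit ("the proof of the second estimate is analogous").
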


\noindent
\begin{proof} 
For any $\epsilon \in [0,\epsilon_0)$ and any $z\in \Omega^+ (\Delta_0)$, 
$G_{\epsilon}^+(z) - (z-H)^{-1} = G_{\epsilon}^+(z) ( T_0 (z) - T_{\epsilon}^+(z) ) (z-H)^{-1}$. We obtain the first estimate by using Lemma \ref{Tepsilon-T0} 
and Proposition \ref{firstbounds}. 

The proof of the second estimate is analogous.
\end{proof}

Let $1/2 < s \leq 1$. For any $\epsilon \in (0,\epsilon_0)$, $z\in \bar{\Omega}^{\pm} (\Delta_0)$, we define
\begin{equation*}
F_{s, \epsilon}^{\pm} (z) := W_s (\epsilon ) G_{\epsilon}^{\pm} (z) W_s (\epsilon ),
\end{equation*}
where $W_s (\epsilon ) := (|A|+1)^{-s} (\epsilon |A|+1)^{s -1}$. Note that for any $s\in (1/2,1]$, $(W_s (\epsilon ))_{\epsilon \in [0,\epsilon_0)}$ is a family of bounded selfadjoint operators.  In particular, 
$\sup_{\epsilon \in [0,1]} \|W_s (\epsilon ) \| \leq 1$. Proposition \ref{firstbounds} entails immediately:

\begin{cor}\label{secondbounds} 
Assume that (A1)--(A5), (M$_{\pm}$) and (A7) hold. We have
$$
\sup_{\epsilon \in (0,\epsilon_0), z\in \bar{\Omega}^{\pm} (\Delta_0)} \epsilon \| F_{s, \epsilon}^{\pm} (z) \| \leq \frac{2 C_0^{\pm}}{a_{\pm}} < \infty.
$$
In addition, for any $\epsilon \in (0,\epsilon_0)$, $z\in \bar{\Omega}^{\pm} (\Delta_0)$, we have
\begin{eqnarray*}
\|G_{\epsilon}^{\pm}(z)W_s (\epsilon ) \| &\leq \sqrt{\dfrac{2 C_1^{\pm}}{a_{\pm}}} + \sqrt{\dfrac{2 \| F_{s, \epsilon}^{\pm}(z)\|}{\epsilon a_{\pm}}}, \\
\|W_s (\epsilon) G_{\epsilon}^{\pm} (z) \|=\|(G_{\epsilon}^{\pm} (z))^* W_s (\epsilon ) \| &\leq \sqrt{\dfrac{2 C_1^{\pm}}{a_{\pm}}} +  \sqrt{\dfrac{2 \| F_{s, \epsilon}^{\pm}(z)\|}{\epsilon a_{\pm}}}.
\end{eqnarray*}
\end{cor}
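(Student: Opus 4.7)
The plan is to obtain all three estimates by specializing the inequalities of Proposition \ref{firstbounds} to test vectors of the form $\varphi = W_s(\epsilon)\psi$. The essential preliminary observation is that $W_s(\epsilon) = (|A|+1)^{-s}(\epsilon|A|+1)^{s-1}$ is selfadjoint and satisfies $\|W_s(\epsilon)\| \leq 1$ uniformly in $\epsilon \in (0,\epsilon_0)$: this follows immediately from the spectral theorem since for $s \in (1/2,1]$ both factors $(|A|+1)^{-s}$ and $(\epsilon |A|+1)^{s-1}$ have spectrum contained in $(0,1]$ whenever $\epsilon \geq 0$ (both bases being $\geq 1$ while the exponents $-s$ and $s-1$ are $\leq 0$).

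For the first statement, I would simply write $\|F_{s,\epsilon}^{\pm}(z)\| \leq \|W_s(\epsilon)\|^2 \|G_{\epsilon}^{\pm}(z)\| \leq \|G_{\epsilon}^{\pm}(z)\|$ and invoke the uniform bound $\|G_{\epsilon}^{\pm}(z)\| \leq C_0^{\pm}/d_{\pm}(\im(z),\epsilon)$ from Proposition \ref{firstbounds}. Since $z \in \Delta_0^{\pm}$ forces $\pm(\im(z)-\sigma_{\pm}) \geq 0$, one has $d_{\pm}(\im(z),\epsilon) \geq \epsilon \tilde{a}_{\pm} = \epsilon a_{\pm}/2$, and therefore $\epsilon \|F_{s,\epsilon}^{\pm}(z)\| \leq 2 C_0^{\pm}/a_{\pm}$ uniformly on $(0,\epsilon_0)\times \Delta_0^{\pm}$, which is the claim.

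For the other two estimates, I would substitute $\varphi = W_s(\epsilon)\psi$ into \eqref{G-IMG-1}/\eqref{G-IMG-3} for $G_{\epsilon}^{\pm}(z)$ and into \eqref{G-IMG-2}/\eqref{G-IMG-4} for $G_{\epsilon}^{\pm}(z)^{*}$. Selfadjointness of $W_s(\epsilon)$ rewrites the inner product inside the imaginary-part term as $\bra \psi, F_{s,\epsilon}^{\pm}(z)\psi\ket$, which Cauchy--Schwarz bounds by $\|F_{s,\epsilon}^{\pm}(z)\|\|\psi\|^2$. Combined with $\|W_s(\epsilon)\psi\| \leq \|\psi\|$, this yields for every $\psi \in {\mathscr H}$,
\[
\|G_{\epsilon}^{\pm}(z) W_s(\epsilon)\psi\| \leq \left(\sqrt{\tfrac{2C_1^{\pm}}{a_{\pm}}} + \sqrt{\tfrac{2\|F_{s,\epsilon}^{\pm}(z)\|}{\epsilon a_{\pm}}}\right)\|\psi\|,
\]
and an analogous inequality for $G_{\epsilon}^{\pm}(z)^{*}W_s(\epsilon)\psi$. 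Taking suprema over unit $\psi$, together with the adjoint-invariance identity $\|W_s(\epsilon)G_{\epsilon}^{\pm}(z)\| = \|(G_{\epsilon}^{\pm}(z))^{*}W_s(\epsilon)\|$, delivers both remaining inequalities.

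No significant obstacle is expected: the corollary is a mechanical consequence of Proposition \ref{firstbounds}. The only piece of real content beyond direct substitution is the uniform norm bound $\|W_s(\epsilon)\| \leq 1$, which is what makes the substitution $\varphi \mapsto W_s(\epsilon)\psi$ non-lossy in the constants and ensures that passing to the weighted resolvents does not deteriorate either the multiplicative constant $\sqrt{2C_1^{\pm}/a_{\pm}}$ or the singular factor $1/\sqrt{\epsilon}$ appearing in Proposition \ref{firstbounds}.
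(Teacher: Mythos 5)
Your proof is correct and follows exactly the route the paper intends: the paper states that Corollary \ref{secondbounds} "entails immediately" from Proposition \ref{firstbounds}, and your substitution $\varphi = W_s(\epsilon)\psi$ in \eqref{G-IMG-1}--\eqref{G-IMG-4}, combined with the selfadjointness and uniform bound $\|W_s(\epsilon)\|\leq 1$ (noted in the paper just before the corollary) and the lower bound $d_{\pm}(\im(z),\epsilon)\geq \epsilon a_{\pm}/2$ on $\Delta_0^{\pm}$, is precisely that mechanical derivation.
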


\subsection{Differential Inequalities}

Next, we derive a system of differential inequalities for the weighted deformed resolvents $F_{s, \epsilon}^{\pm}(z)$. We recall the following basic facts and refer 
to \cite[Lemma 7.3.4]{abmg} for proofs.

\begin{lem} 
Assume that (A1)--(A5), (M$_{\pm}$) and (A7) hold. Then, for any fixed $z\in \bar{\Omega}^{\pm} (\Delta_0)$, the map $\epsilon\mapsto G_{\epsilon}^{\pm} (z)$ is continuous on the interval $[0,\epsilon_0)$ w.r.t. the operator norm topology, continuously differentiable on $(0, \epsilon_0)$ w.r.t. the operator norm topology, with
\begin{equation*}
\begin{split}
\partial_{\epsilon} G_{\epsilon}^+ (z) &= G_{\epsilon}^+(z) ( \partial_{\epsilon} S(\epsilon) -i \epsilon \partial_{\epsilon} B(\epsilon) -i B(\epsilon) ) G_{\epsilon}^+(z), \\
\partial_{\epsilon} G_{\epsilon}^- (z) &= G_{\epsilon}^-(z) ( \partial_{\epsilon} S(\epsilon) +i \epsilon \partial_{\epsilon} B(\epsilon) +i B(\epsilon) ) G_{\epsilon}^-(z) \, .
\end{split}
\end{equation*}
\end{lem}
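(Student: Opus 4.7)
My plan is to treat this as a standard differentiability result for the inverse of a smoothly parametrized family of bounded operators, derived from the second resolvent identity together with the bounds already accumulated for $G_\epsilon^\pm(z)$.

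Hypotheses (A1), (A2), (A4) make the map $\epsilon \mapsto T_\epsilon^\pm(z) = z - S(\epsilon) \pm i \epsilon B(\epsilon)$ continuous on $[0,\epsilon_0)$ and continuously differentiable on $(0,\epsilon_0)$ in the operator norm topology, with
\begin{equation*}
\partial_\epsilon T_\epsilon^\pm(z) = -\partial_\epsilon S(\epsilon) \pm i B(\epsilon) \pm i \epsilon \partial_\epsilon B(\epsilon).
\end{equation*}
For any $\epsilon_1,\epsilon_2$ in the set where $G_{\epsilon_j}^\pm(z)$ is defined (see Proposition \ref{+}), the second resolvent identity yields
\begin{equation*}
G_{\epsilon_1}^\pm(z) - G_{\epsilon_2}^\pm(z) = G_{\epsilon_1}^\pm(z) \bigl( T_{\epsilon_2}^\pm(z) - T_{\epsilon_1}^\pm(z) \bigr) G_{\epsilon_2}^\pm(z).
\end{equation*}

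Continuity on $[0,\epsilon_0)$ then follows by combining this identity with a local uniform bound on $\|G_\epsilon^\pm(z)\|$. When $\im(z) \neq \sigma_\pm$, the quantity $d_\pm(\im(z),\epsilon)$ is bounded away from zero on compact subsets of $[0,\epsilon_0)$ and Proposition \ref{firstbounds} supplies the needed bound; in the boundary case $\im(z) = \sigma_\pm$, the same proposition only controls $\|G_\epsilon^\pm(z)\|$ on compact subsets of $(0,\epsilon_0)$, but continuity up to $\epsilon = 0$ (when $G_0^\pm(z) = (z-H)^{-1}$ exists) follows from the quantitative estimate of Corollary \ref{cvto0}.

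For differentiability on $(0,\epsilon_0)$, I would divide the resolvent identity by $\epsilon_2 - \epsilon_1$ and pass to the limit $\epsilon_2 \to \epsilon_1$, using the continuity of $\epsilon \mapsto G_\epsilon^\pm(z)$ just proved together with the continuous differentiability of $\epsilon \mapsto T_\epsilon^\pm(z)$. This gives $\partial_\epsilon G_\epsilon^\pm(z) = -G_\epsilon^\pm(z) \, \partial_\epsilon T_\epsilon^\pm(z) \, G_\epsilon^\pm(z)$; substituting the expression for $\partial_\epsilon T_\epsilon^\pm(z)$ produces exactly the two stated identities, and continuous differentiability is inherited from the continuity of each factor. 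The only real difficulty will be bookkeeping near $\epsilon=0$ on the critical line $\im(z)=\sigma_\pm$, where the bounds from Proposition \ref{firstbounds} degrade like $\epsilon^{-1/2}$ and continuity to the endpoint must be obtained from Corollary \ref{cvto0}; the positive commutator hypotheses (M$\pm$) and (A7) enter only through Proposition \ref{+} to secure existence of $G_\epsilon^\pm(z)$.
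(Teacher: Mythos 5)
Your argument is correct and is exactly the standard inverse-differentiation proof via the second resolvent identity: invertibility of $T_{\epsilon}^{\pm}(z)$ from Proposition \ref{+}, the identity $G_{\epsilon_1}^{\pm}(z)-G_{\epsilon_2}^{\pm}(z)=G_{\epsilon_1}^{\pm}(z)\big(T_{\epsilon_2}^{\pm}(z)-T_{\epsilon_1}^{\pm}(z)\big)G_{\epsilon_2}^{\pm}(z)$, and the norm-$C^1$ dependence of $T_{\epsilon}^{\pm}(z)$ on $\epsilon$ from (A1)--(A5); the paper gives no in-text proof but defers to \cite[Lemma 7.3.4]{abmg}, whose proof proceeds in the same way, and your derivative formula matches the stated one since $-\partial_{\epsilon}T_{\epsilon}^{\pm}(z)=\partial_{\epsilon}S(\epsilon)\mp i\epsilon\partial_{\epsilon}B(\epsilon)\mp iB(\epsilon)$. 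Your caveat about the boundary case $\im(z)=\sigma_{\pm}$, where continuity at $\epsilon=0$ is only meaningful when $G_0^{\pm}(z)=(z-H)^{-1}$ exists, is the correct reading of the statement.
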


\begin{lem} 
Assume that (A1)--(A7), (M$_{\pm}$) hold. Then, for any $\epsilon \in [0,\epsilon_0)$, $z\in \bar{\Omega}^{\pm} (\Delta_0)$, we have $G_{\epsilon}^{\pm} (z) \in C^1(A)$, with
\begin{equation*}
\begin{split}
\mathrm{ad}_{A} G_{\epsilon}^+(z) &= G_{\epsilon}^+(z) ( \mathrm{ad}_A ( S(\epsilon) ) - i\epsilon \mathrm{ad}_A ( B(\epsilon) ) ) G_{\epsilon}^+(z), \\
\mathrm{ad}_{A} G_{\epsilon}^-(z) &= G_{\epsilon}^-(z) ( \mathrm{ad}_A ( S(\epsilon) ) + i\epsilon \mathrm{ad}_A ( B(\epsilon) ) ) G_{\epsilon}^-(z). 
\end{split}
\end{equation*}
\end{lem}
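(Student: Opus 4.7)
The strategy is to reduce the statement to the standard fact that $C^1(A)$ is stable under taking inverses of boundedly invertible elements, with the commutator formula $\mathrm{ad}_A(T^{-1}) = -T^{-1}\mathrm{ad}_A(T) T^{-1}$. First, I would observe that by Assumption (A6), both $S(\epsilon)$ and $B(\epsilon)$ lie in $C^1(A)$ for every $\epsilon \in (0,\epsilon_0)$; since $C^1(A)$ is a vector subspace of $\mathcal{B}(\mathscr{H})$ containing the scalar operators, the combination
\begin{equation*}
T_{\epsilon}^{\pm}(z) = z - S(\epsilon) \pm i\epsilon B(\epsilon)
\end{equation*}
belongs to $C^1(A)$ as well, with $\mathrm{ad}_A(T_{\epsilon}^{\pm}(z)) = -\mathrm{ad}_A(S(\epsilon)) \pm i\epsilon\, \mathrm{ad}_A(B(\epsilon))$. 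At $\epsilon=0$, we have $T_0(z)=z-H$ and since $H\in C^1(A)$ (as $H\in \mathcal{C}^{1,1}(A) \subset C^1(A)$), the same conclusion holds trivially.

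Next, I would invoke the inverse-stability of $C^1(A)$. The cleanest way is via the sesquilinear form characterization recalled after Definition \ref{ck}: for $\varphi,\psi \in \mathcal{D}(A)$, one computes
\begin{equation*}
\bra A\varphi, G_{\epsilon}^{\pm}(z)\psi\ket - \bra \varphi, G_{\epsilon}^{\pm}(z) A\psi \ket = \bra A G_{\epsilon}^{\pm}(z)^* \tilde\varphi, \tilde\psi\ket - \bra G_{\epsilon}^{\pm}(z)^* \tilde\varphi, A \tilde\psi\ket,
\end{equation*}
after substitutions $\tilde\varphi = T_\epsilon^\pm(z)^*\varphi$, $\tilde\psi = T_\epsilon^\pm(z)\psi$, which preserve $\mathcal{D}(A)$ in the form sense since $T_\epsilon^\pm(z)$ and its adjoint belong to $C^1(A)$. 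Expanding via the known commutator of $T_\epsilon^\pm(z)$ and using boundedness of $G_\epsilon^\pm(z)$ (granted by Proposition \ref{+} whenever $d_\pm(\im(z),\epsilon)>0$, which is the range of $z$ where $G_\epsilon^\pm(z)$ is defined), one sees that the form extends continuously to $\mathscr{H}\times\mathscr{H}$ and is represented by the bounded operator
\begin{equation*}
-G_{\epsilon}^{\pm}(z)\,\mathrm{ad}_A(T_{\epsilon}^{\pm}(z))\, G_{\epsilon}^{\pm}(z) = G_{\epsilon}^{\pm}(z) \bigl( \mathrm{ad}_A(S(\epsilon)) \mp i\epsilon\, \mathrm{ad}_A(B(\epsilon))\bigr) G_{\epsilon}^{\pm}(z).
\end{equation*}
This is exactly the claimed identity. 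An equivalent route is to differentiate the operator identity $e^{i\theta A}G_{\epsilon}^{\pm}(z) e^{-i\theta A} = (e^{i\theta A}T_{\epsilon}^{\pm}(z) e^{-i\theta A})^{-1}$ at $\theta=0$ and use the differentiability guaranteed by (A6).

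There is essentially no hard step here: the result is a direct algebraic consequence of (A6) and the stability of $C^1(A)$ under inversion, the latter being a classical fact (see e.g.\ \cite[Proposition~5.1.6]{abmg}). The only point deserving slight care is verifying that the form manipulation above is legitimate, i.e.\ that substitutions of the type $\psi \mapsto T_\epsilon^\pm(z)\psi$ preserve the domain conditions needed to apply the $C^1(A)$ property of $T_\epsilon^\pm(z)$; this is handled by approximating general vectors of $\mathcal{D}(A)$ through the strong continuity of $\theta \mapsto e^{i\theta A}$ restricted to $\mathcal{D}(A)$.
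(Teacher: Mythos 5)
Your proposal is correct and is essentially the argument the paper intends: the paper does not prove this lemma itself but refers to \cite[Lemma 7.3.4]{abmg}, and the underlying mechanism there is exactly what you use, namely that (A6) puts $T_{\epsilon}^{\pm}(z)=z-S(\epsilon)\pm i\epsilon B(\epsilon)$ in $C^1(A)$ and that $C^1(A)$ is stable under inversion of boundedly invertible elements, with $\mathrm{ad}_A\big(T_{\epsilon}^{\pm}(z)^{-1}\big)=-G_{\epsilon}^{\pm}(z)\,\mathrm{ad}_A\big(T_{\epsilon}^{\pm}(z)\big)\,G_{\epsilon}^{\pm}(z)$, which yields the stated formulas with the correct signs (the case $\epsilon=0$ being covered since $H\in\mathcal{C}^{1,1}(A)\subset C^1(A)$).
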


Summing up, we obtain:

\begin{prop}\label{partial=ad} 
Assume that (A1)--(A7), (M$_{\pm}$) hold. Then, for any $\epsilon \in (0,\epsilon_0)$, $z\in \bar{\Omega}^{\pm} (\Delta_0)$, the map $\epsilon\mapsto G_{\epsilon}^{\pm} (z)$ is continuously 
differentiable on $(0, \epsilon_0)$ w.r.t. the operator norm topology with
\begin{align*}
\partial_{\epsilon} G_{\epsilon}^+ (z) &= \mathrm{ad}_A ( G_{\epsilon}^+ (z) ) + G_{\epsilon}^+ (z) {\mathcal Q}^+ (\epsilon) G_{\epsilon}^+ (z),  \\
\partial_{\epsilon} G_{\epsilon}^- (z) &= -\mathrm{ad}_A ( G_{\epsilon}^- (z) ) + G_{\epsilon}^- (z) {\mathcal Q}^- (\epsilon) G_{\epsilon}^- (z), 
\end{align*}
where
\begin{align*}
{\mathcal Q}^+ &= \partial_{\epsilon} S - i\epsilon \partial_{\epsilon} B - iB - \mathrm{ad}_A S + i\epsilon \mathrm{ad}_A B, \\
{\mathcal Q}^- &= \partial_{\epsilon} S + i\epsilon \partial_{\epsilon} B + iB + \mathrm{ad}_A S + i\epsilon \mathrm{ad}_A B.
\end{align*}
\end{prop}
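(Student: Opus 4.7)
The plan is to simply stitch together the two lemmas that immediately precede the proposition, since they already do all the analytical work. The first lemma provides the continuous differentiability of $\epsilon\mapsto G_\epsilon^\pm(z)$ on $(0,\epsilon_0)$ together with the sandwich formula
\begin{equation*}
\partial_{\epsilon} G_{\epsilon}^\pm (z) = G_{\epsilon}^\pm(z) \bigl( \partial_{\epsilon} S(\epsilon) \mp i \epsilon \partial_{\epsilon} B(\epsilon) \mp i B(\epsilon) \bigr) G_{\epsilon}^\pm(z),
\end{equation*}
and the second lemma establishes $G_\epsilon^\pm(z)\in C^1(A)$ with the analogous sandwich formula
\begin{equation*}
\mathrm{ad}_{A} G_{\epsilon}^\pm(z) = G_{\epsilon}^\pm(z) \bigl( \mathrm{ad}_A ( S(\epsilon) ) \mp i\epsilon \mathrm{ad}_A ( B(\epsilon) ) \bigr) G_{\epsilon}^\pm(z).
\end{equation*}
Both expressions have the same outer factors $G_\epsilon^\pm(z)$, so the claim reduces to a bookkeeping identity between the bounded operators occupying the middle slot.

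Concretely, I would subtract $\pm\, \mathrm{ad}_A G_\epsilon^\pm(z)$ from $\partial_\epsilon G_\epsilon^\pm(z)$ and factor out $G_\epsilon^\pm(z)$ on both sides, obtaining
\begin{equation*}
\partial_{\epsilon} G_{\epsilon}^\pm (z) \mp \mathrm{ad}_A G_\epsilon^\pm(z) = G_\epsilon^\pm(z) \bigl( \partial_\epsilon S \mp i\epsilon \partial_\epsilon B \mp iB \mp \mathrm{ad}_A S \pm i\epsilon \mathrm{ad}_A B \bigr) G_\epsilon^\pm(z).
\end{equation*}
The middle bracket is exactly the operator denoted $\mathcal{Q}^\pm(\epsilon)$ in the statement, so the announced identities follow immediately. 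The continuous differentiability part of the conclusion is inherited directly from the first lemma; no additional continuity argument is needed.

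The only point that requires a bit of care, rather than being a genuine obstacle, is the consistency of signs between the $+$ and $-$ versions of $\mathcal{Q}^\pm$: one must verify that flipping the sign of $i\epsilon B(\epsilon)$ in the definition of $T_\epsilon^\pm(z)$ propagates correctly through both $\partial_\epsilon G_\epsilon^\pm$ and $\mathrm{ad}_A G_\epsilon^\pm$ to produce the asymmetric definition of $\mathcal{Q}^\pm$. Beyond this, the proposition is a purely algebraic repackaging whose role is structural: it rewrites $\partial_\epsilon G_\epsilon^\pm(z)$ as a commutator term plus a sandwich term, setting up the standard Mourre differential inequality machinery that will be deployed in the next subsection — the commutator piece will be absorbed through integration by parts once sandwiched between weights $W_s(\epsilon)$, while the $G_\epsilon^\pm \mathcal{Q}^\pm G_\epsilon^\pm$ piece will be controlled by the smallness of $\mathcal{Q}^\pm(\epsilon)$ in norm as $\epsilon\to 0$, which in turn comes from Assumptions (A2)--(A3) and ultimately from the $\mathcal{C}^{1,1}(A)$ hypothesis.
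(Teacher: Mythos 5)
Your proposal is correct and is essentially what the paper does: the proposition is introduced with "Summing up, we obtain," i.e.\ it is obtained exactly by combining the two preceding lemmas (the norm-differentiability of $\epsilon\mapsto G_\epsilon^\pm(z)$ with its sandwich formula, and $G_\epsilon^\pm(z)\in C^1(A)$ with the commutator sandwich formula) and checking the sign bookkeeping in the middle factor, which is precisely the definition of ${\mathcal Q}^\pm$. Your sign verification matches the paper's formulas, so nothing is missing.
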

This is the first key to state some differential inequalities. Now, for any fixed $1/2 < s < 1$, the map $\epsilon \mapsto W_s (\epsilon)$ is strongly continuous on $[0,\epsilon_0)$ and converges strongly to $(|A|+1)^{-s}$ as $\epsilon$ tends to zero. Let us introduce for any $\epsilon \in (0,\epsilon_0),$
\begin{equation}
q^{\pm} (\epsilon ) := \epsilon^{-1} \| {\mathcal Q}^{\pm} (\epsilon) \| \, .
\end{equation}

We have:

\begin{prop}\label{diffins<1} 
Assume that (A1)--(A7), (M$_{\pm}$) hold. Let $1/2 < s <1$. For any fixed $z\in \bar{\Omega}^{\pm} (\Delta_0)$, the map $\epsilon\mapsto F_{s,\epsilon}^{\pm}(z)$ is weakly continuously 
differentiable on $(0, \epsilon_0)$ and for any $\varphi \in {\mathscr H}$, any $\epsilon \in (0,\epsilon_0)$,
\begin{align}\label{diffin.s<1}
\left| \bra \varphi, \partial_{\epsilon} F_{s, \epsilon}^{\pm}(z) \varphi \ket \right | \leq h_{0,s}^{\pm} ( \epsilon ) \|\varphi \|^2 + h_{1,s}^{\pm} ( \epsilon ) \|\varphi \| \sqrt{| \bra \varphi, F_{s, \epsilon}^{\pm}(z)\varphi \ket |} + h_2^{\pm} ( \epsilon ) | \bra \varphi, F_{s, \epsilon}^{\pm}(z)\varphi \ket |.
\end{align}
where the functions $h_{0,s}^{\pm}$, $h_{1,s}^{\pm}$ and $h_2^{\pm}$ are respectively defined for $s\geq 0$ and $\epsilon \in (0,\infty)$ by:
\begin{align*}
h_{0,s}^{\pm} ( \epsilon ) &= 2(2-s) \sqrt{\frac{2C_1^{\pm}}{a_{\pm}}} \, \epsilon^{s-1} +4 \epsilon q^{\pm} ( \epsilon ) \frac{C_1^{\pm}}{a_{\pm}} \\
h_{1,s}^{\pm} ( \epsilon ) &= 2(2-s) \sqrt{\frac{2}{a_{\pm}}} \, \epsilon^{s-3/2} \quad \text{and} \quad h_2^{\pm} ( \epsilon ) = \frac{4q^{\pm} ( \epsilon )}{a_{\pm}} . 
\end{align*}
\end{prop}

\noindent
\begin{proof} For $1/2 < s < 1$, the map $\epsilon \mapsto W_s (\epsilon )$ is strongly continuously differentiable on the interval $(0,\epsilon_0)$. In addition, we have for any $\epsilon \in (0,\epsilon_0)$ and any $\varphi \in {\mathscr H},$
\begin{align}
\| \partial_{\epsilon} W_s (\epsilon ) \varphi \| &\leq (1-s) \epsilon^{s -1} \| \varphi \| \label{we} \\
\| AW_s (\epsilon ) \| &= \| W_s (\epsilon )A \| \leq \epsilon^{s-1} . \label{awe}
\end{align}
Fix $z\in \bar{\Omega}^{\pm} (\Delta_0)$. Due to Proposition \ref{partial=ad}, the map $\epsilon\mapsto F_{s,\epsilon}^{\pm} (z)$ is weakly 
continuously differentiable on $(0, \epsilon_0)$. We have that for any $\epsilon \in (0,\epsilon_0)$,
\begin{equation}\label{eqnum0}
\begin{split}
|\bra \varphi, \partial_{\epsilon} F_{s, \epsilon}^{\pm} (z) \varphi \ket | &\leq |\bra (\partial_{\epsilon} W_s (\epsilon ))\varphi, G_{\epsilon}^{\pm} (z) W_s (\epsilon) \varphi \ket 
+ \bra W_s (\epsilon) \varphi, G_{\epsilon}^{\pm} (z) (\partial_{\epsilon} W_s (\epsilon )) \varphi \ket | \\
&+ | \bra W_s (\epsilon) \varphi, (\partial_{\epsilon} G_{\epsilon}^{\pm} (z)) W_s (\epsilon) \varphi \ket |.
\end{split}
\end{equation}
Cauchy-Schwarz inequality followed by Proposition \ref{firstbounds} and (\ref{we}) allow to bound the first term on the r.h.s. of (\ref{eqnum0}) by:
\begin{align*}
\| (\partial_{\epsilon} W_s (\epsilon ))\varphi \| &\left( \| G_{\epsilon}^{\pm} (z) W_s (\epsilon) \varphi \| + \| G_{\epsilon}^{\pm} (z)^* W_s (\epsilon) \varphi \| \right) \\
& \leq 2(1-s) \epsilon^{s-1} \| \varphi \| \left( \sqrt{\frac{2 C_1^{\pm}}{a_{\pm}}} \| \varphi \|+ \sqrt{\frac{2 |\bra \varphi, F_{s, \epsilon}^{\pm} (z)\varphi \ket |}{\epsilon a_{\pm}}} \right) ,
\end{align*}
Due to Proposition \ref{partial=ad}, the second term on the r.h.s. of \eqref{eqnum0} is bounded by
$$
\big| \bra W_s (\epsilon) \varphi, \mathrm{ad}_A ( G_{\epsilon}^{\pm} (z) ) W_s (\epsilon) \varphi \ket \big| +  |\bra G_{\epsilon}^{\pm} (z)^* W_s (\epsilon) \varphi, {\mathcal Q}^{\pm} (\epsilon) G_{\epsilon}^{\pm} (z) W_s (\epsilon) \varphi \ket | .
$$
Cauchy Schwarz inequality, Proposition \ref{firstbounds} and \eqref{awe} yield:
\begin{align*}
\big| \bra W_s (\epsilon) \varphi, \mathrm{ad}_A ( G_{\epsilon}^{\pm} (z) ) W_s (\epsilon) \varphi \ket \big| & \leq \| A W_s(\epsilon ) \varphi \| \left( \| G_{\epsilon}^{\pm} (z) W_s (\epsilon) \varphi \| + \| G_{\epsilon}^{\pm} (z)^* W_s (\epsilon) \varphi \| \right) , \\
&\leq 2 \epsilon^{s-1} \| \varphi \| \left(\sqrt{\frac{2 C_1^{\pm}}{a_{\pm}}} \|\varphi \|+ \sqrt{\frac{2 | \bra \varphi, F_{s, \epsilon}^{\pm} (z) \varphi \ket |}{\epsilon a_{\pm}}} \right) ,
\end{align*}
while
\begin{align*}
|\bra G_{\epsilon}^{\pm} (z)^* W_s (\epsilon) \varphi, {\mathcal Q}^{\pm} (\epsilon) G_{\epsilon}^{\pm} (z) W_s (\epsilon) \varphi \ket | & \leq \epsilon q^{\pm} (\epsilon ) \| G_{\epsilon}^{\pm} (z)^* W_s (\epsilon) \varphi \| \| G_{\epsilon}^{\pm} (z) W_s (\epsilon) \varphi \| \\
&\leq 4 \epsilon q^{\pm} (\epsilon ) \left( \frac{C_1^{\pm}}{a_{\pm}} \|\varphi \|^2 + \frac{| \bra \varphi, F_{s, \epsilon}^{\pm} (z) \varphi \ket |}{\epsilon a_{\pm}} \right) .
\end{align*}
This completes the proof.
\end{proof}

Note that if $s=1$, stronger conclusions hold:

\begin{prop}\label{diffins=1} 
Assume that (A1)--(A7), (M$_{\pm}$) hold. For any fixed $z\in \bar{\Omega}^{\pm} (\Delta_0)$, the map $\epsilon\mapsto F_{1,\epsilon}^{\pm}(z)$ is continuously differentiable on $(0, \epsilon_0)$ 
w.r.t. the operator norm topology, and for any $\epsilon \in (0,\epsilon_0)$,
\begin{equation}\label{diffin.s=1}
\left\| \partial_{\epsilon} F_{1, \epsilon}^{\pm} (z) \right\| \leq h_{0,1}^{\pm} ( \epsilon ) + h_{1,1}^{\pm} ( \epsilon ) \sqrt{ \| F_{1, \epsilon}^{\pm} (z)\| } + h_2^{\pm} ( \epsilon ) \| F_{1, \epsilon}^{\pm} (z)\| ,
\end{equation}
where $h_{0,1}^{\pm}$, $h_{1,1}^{\pm}$ and $h_2^{\pm}$ are defined in Proposition \ref{diffins<1} (with $s=1$).
\end{prop}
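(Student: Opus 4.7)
The plan is to adapt the proof of Proposition \ref{diffins<1} to the endpoint $s=1$, exploiting the key simplification that $W_1(\epsilon) = (|A|+1)^{-1}(\epsilon|A|+1)^{0} = (|A|+1)^{-1}$ is independent of $\epsilon$. Thus $\partial_\epsilon W_1(\epsilon) \equiv 0$, which has two pleasant consequences: the factor $\omega_s(\epsilon) = 1 + (1-s)\epsilon^{s-1}$ disappears (it becomes the constant $1$), and the norm differentiability of $\epsilon \mapsto G_\epsilon^\pm(z)$ furnished by Proposition \ref{partial=ad}, combined with sandwiching by the fixed bounded operator $W_1(0)$, upgrades the weak differentiability obtained in Proposition \ref{diffins<1} to operator norm differentiability of $\epsilon \mapsto F_{1,\epsilon}^\pm(z)$.

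Concretely, I would first apply Proposition \ref{partial=ad} to obtain
\begin{equation*}
\partial_\epsilon F_{1,\epsilon}^\pm(z) = \pm W_1(0)\, \mathrm{ad}_A\big(G_\epsilon^\pm(z)\big)\, W_1(0) + W_1(0)\, G_\epsilon^\pm(z)\, \mathcal{Q}^\pm(\epsilon)\, G_\epsilon^\pm(z)\, W_1(0).
\end{equation*}
For the commutator term, I would expand $\mathrm{ad}_A(G_\epsilon^\pm(z)) = A G_\epsilon^\pm(z) - G_\epsilon^\pm(z) A$, use the functional calculus estimate $\|AW_1(0)\| = \|W_1(0)A\| \leq 1$, and bound the two resulting factors $\|G_\epsilon^\pm(z) W_1(0)\|$ and $\|W_1(0) G_\epsilon^\pm(z)\|$ via Corollary \ref{secondbounds}. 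For the quadratic term, exploiting $\|\mathcal{Q}^\pm(\epsilon)\| = \epsilon q^\pm(\epsilon)$ by definition of $q^\pm$, I would bound it by $\epsilon q^\pm(\epsilon)\, \|W_1(0) G_\epsilon^\pm(z)\|\, \|G_\epsilon^\pm(z) W_1(0)\|$, again apply Corollary \ref{secondbounds} on each factor, and linearize with the elementary inequality $(u+v)^2 \leq 2u^2 + 2v^2$.

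Summing the two estimates would yield a bound of the shape
\begin{equation*}
2\sqrt{\tfrac{2C_1^\pm}{a_\pm}} + 2\sqrt{\tfrac{2\|F_{1,\epsilon}^\pm(z)\|}{\epsilon a_\pm}} + \tfrac{4\epsilon q^\pm(\epsilon) C_1^\pm}{a_\pm} + \tfrac{4 q^\pm(\epsilon)\|F_{1,\epsilon}^\pm(z)\|}{a_\pm}.
\end{equation*}
To match the target form \eqref{diffin.s=1}, I would invoke estimate \eqref{a0C1} in the form $2a_\pm/C_1^\pm \leq 2\delta_0^2/3 \leq \delta_0^2$, which is exactly what is needed to absorb $2\sqrt{2C_1^\pm/a_\pm}$ into $2\delta_0 C_1^\pm/a_\pm$. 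I do not anticipate a genuine conceptual obstacle here: the $s=1$ case is strictly easier than the $s<1$ case, essentially because the weight is no longer $\epsilon$-dependent, and the only care needed is in the bookkeeping of constants so that the resulting bound reproduces \eqref{diffin.s=1} exactly.
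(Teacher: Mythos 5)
Your proposal is correct and follows essentially the same route as the paper: it exploits that $W_1(\epsilon)=(|A|+1)^{-1}$ is $\epsilon$-independent, applies Proposition \ref{partial=ad} together with $\|A(|A|+1)^{-1}\|\le 1$ and Corollary \ref{secondbounds} to bound the commutator and quadratic terms, and closes with \eqref{a0C1} to absorb $2\sqrt{2C_1^{\pm}/a_{\pm}}$ into $2\delta_0 C_1^{\pm}/a_{\pm}$. No gaps to report.
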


\noindent
\begin{proof} 
Fix $z\in \bar{\Omega}^{\pm} (\Delta_0)$. Due to Proposition \ref{partial=ad}, the map $\epsilon\mapsto F_{1,\epsilon}^{\pm} (z)$ is continuously 
differentiable on $(0, \epsilon_0)$ w.r.t. the operator norm topology. We have that for any $\epsilon \in (0,\epsilon_0),$
\begin{equation*}
\| \partial_{\epsilon} F_{1, \epsilon}^{\pm} (z) \| = \| (|A|+1)^{-1} (\partial_{\epsilon} G_{\epsilon}^{\pm} (z)) (|A|+1)^{-1} \|,
\end{equation*}
and we deduce that:
\begin{equation*}
\| \partial_{\epsilon} F_{1, \epsilon}^{\pm} (z) \| \leq \| (|A|+1)^{-1} \mathrm{ad}_A ( G_{\epsilon}^{\pm} (z) ) (|A|+1)^{-1} \| + \| (|A|+1)^{-1} G_{\epsilon}^{\pm} (z) {\mathcal Q}^{\pm} (\epsilon) G_{\epsilon}^{\pm} (z) (|A|+1)^{-1} \| .
\end{equation*}
Note that $\| A(|A|+1)^{-1} \| = \| (|A|+1)^{-1}A \| \leq 1$. In view of Corollary \ref{secondbounds} (with $s=1$), we have
\begin{align*}
\| (|A|+1)^{-1} \mathrm{ad}_A ( G_{\epsilon}^{\pm} (z) ) (|A|+1)^{-1} \| &\leq \| G_{\epsilon}^{\pm} (z) (|A|+1)^{-1} \| + \| (|A|+1)^{-1} G_{\epsilon}^{\pm} (z) \| \\
&\leq 2 \left( \sqrt{\frac{2 C_1^{\pm}}{a_{\pm}}}+ \sqrt{\frac{2 \| F_{1, \epsilon}^{\pm} (z)\|}{\epsilon a_{\pm}}} \right)
\end{align*}
while
\begin{align*}
\| (|A|+1)^{-1} G_{\epsilon}^{\pm} (z) {\mathcal Q}^{\pm} (\epsilon) G_{\epsilon}^{\pm} (z) (|A|+1)^{-1} \| &\leq \epsilon q^{\pm} (\epsilon ) \| (|A|+1)^{-1} G_{\epsilon}^{\pm} (z) \| \| G_{\epsilon}^{\pm} (z) (|A|+1)^{-1} \| \\
& \leq 4 \epsilon q^{\pm} (\epsilon ) \left( \frac{C_1^{\pm}}{a_{\pm}}+ \frac{\| F_{1, \epsilon}^{\pm} (z)\|}{\epsilon a_{\pm}} \right).
\end{align*}
We deduce \eqref{diffin.s=1}.  
\end{proof}

The next result follows from Propositions \ref{diffins<1}, \ref{diffins=1} and Gronwall lemma.

\begin{prop}\label{unifbounds} 
Assume that (A1)--(A7), (M$_{\pm}$) hold. Fix $s\in (1/2,1]$. If the function $q^{\pm}$ belongs to $L^1((0,\epsilon_0))$, then for any $0 < \mu_0 < \epsilon_0,$
\begin{equation*}
\sup_{\epsilon \in (0,\mu_0), z\in \bar{\Omega}^{\pm} (\Delta_0)} \| F_{s,\epsilon}^{\pm} (z)\| < \infty. \label{ub}
\end{equation*}
\end{prop}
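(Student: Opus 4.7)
The plan is to convert the nonlinear differential inequalities of Propositions \ref{diffins<1} and \ref{diffins=1} into a linear integral inequality via the substitution $u := \sqrt{\phi+1}$, and then close the argument by a backward Gronwall argument on the interval $(0,\epsilon_1)$. For $s=1$, set $\phi(\epsilon,z) := \|F_{1,\epsilon}^{\pm}(z)\|$, which is absolutely continuous in $\epsilon$ since by Proposition \ref{diffins=1} the map $\epsilon \mapsto F_{1,\epsilon}^{\pm}(z)$ is continuously norm-differentiable. For $s\in (1/2,1)$, working directly with the operator norm is not possible, so I fix a unit vector $\varphi$ and define $\phi_\varphi(\epsilon,z) := |\bra \varphi, F_{s,\epsilon}^{\pm}(z)\varphi \ket|$, which is locally Lipschitz in $\epsilon$ thanks to the weak $C^1$-property established by Proposition \ref{diffins<1}.

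The key computation is to combine $|u'| \leq |\phi'|/(2u)$ a.e. with the quadratic differential inequalities, using the elementary bounds $u \geq 1$, $\sqrt{\phi} \leq u$ and $\phi = u^2 - 1 \leq u^2$. All nonlinear contributions then reduce to a linear control of the form
\begin{equation*}
|u'(\epsilon)| \leq K_1\bigl(\omega_s(\epsilon) + \epsilon q^{\pm}(\epsilon)\bigr) + \frac{K_2\,\omega_s(\epsilon)}{\sqrt{\epsilon}} + K_3\, q^{\pm}(\epsilon)\, u(\epsilon) \quad \text{a.e.,}
\end{equation*}
for constants $K_j$ depending only on $a_{\pm}$, $C_1^{\pm}$ and $\delta_0$ (in the case $s=1$, $\omega_s\equiv 1$). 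The forcing term $\alpha(\epsilon) := K_1(\omega_s(\epsilon) + \epsilon q^{\pm}(\epsilon)) + K_2\,\omega_s(\epsilon)/\sqrt{\epsilon}$ lies in $L^1(0,\epsilon_1)$: indeed $\omega_s(\epsilon) = 1 + (1-s)\epsilon^{s-1}$ is integrable near $0$, and $\omega_s(\epsilon)/\sqrt{\epsilon}$ contributes the critical term $\epsilon^{s-3/2}$ which is integrable \emph{precisely because} $s > 1/2$. The multiplier $\beta(\epsilon) := K_3\, q^{\pm}(\epsilon)$ is in $L^1(0,\epsilon_0)$ by hypothesis.

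Integrating this a.e. differential inequality from $\epsilon$ to $\epsilon_1$ and applying the Gronwall lemma yields
\begin{equation*}
u(\epsilon) \leq \bigl(u(\epsilon_1) + \|\alpha\|_{L^1(0,\epsilon_1)}\bigr)\exp\bigl(\|\beta\|_{L^1(0,\epsilon_0)}\bigr).
\end{equation*}
By Corollary \ref{secondbounds}, the initial data satisfies $u(\epsilon_1,z) \leq \sqrt{1 + 2C_0^{\pm}/(a_{\pm}\epsilon_1)}$ uniformly in $z \in \Delta_0^{\pm}$ and, in the $s<1$ case, also in the unit vector $\varphi$. Since $\phi = u^2 - 1$, this directly settles the case $s=1$. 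For $s \in (1/2,1)$, the bound is uniform over normalized vectors, so the numerical radius $w(F_{s,\epsilon}^{\pm}(z)) := \sup_{\|\varphi\|=1} |\bra \varphi, F_{s,\epsilon}^{\pm}(z) \varphi \ket|$ is uniformly bounded, and the standard inequality $\|T\| \leq 2\, w(T)$ valid for bounded operators on a Hilbert space then yields $\sup \|F_{s,\epsilon}^{\pm}(z)\| < \infty$.

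The main obstacle is the nonlinear $\sqrt{\phi/\epsilon}$ term in the differential inequality: a direct Gronwall argument on $\phi$ itself fails because Corollary \ref{secondbounds} only provides $\phi \leq K/\epsilon$, and $1/\epsilon$ is not integrable near $0$. The substitution $u = \sqrt{\phi+1}$ is precisely engineered so that, using $u \geq 1$, this nonlinearity is fully absorbed into an integrable forcing term and a linear-in-$u$ contribution amenable to Gronwall, at the price of the sharp integrability constraint $s > 1/2$.
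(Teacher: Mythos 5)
Your argument is correct, and it follows the same overall skeleton as the paper's proof: bound the derivative via Propositions \ref{diffins<1} and \ref{diffins=1}, integrate backwards from $\epsilon_1$ using the a priori bound of Corollary \ref{secondbounds} at $\epsilon_1$, run a Gronwall argument, and finally pass from the diagonal matrix elements to the operator norm. The genuine difference is in how the square-root nonlinearity is handled. The paper integrates the differential inequality as it stands, obtaining an integral inequality containing both $\omega_s(\mu)\sqrt{\phi(\mu)/\mu}$ and $q^{\pm}(\mu)\phi(\mu)$, and then invokes the specialized nonlinear Gronwall lemma of \cite[Lemma 7.A.1]{abmg} (or \cite{coddlev}), which is tailor-made for exactly this structure and yields an explicit constant, e.g. $\frac{4}{a_{\pm}}\big(C_0^{\pm}\epsilon_1^{-1}+C_1^{\pm}(\delta_0 s^{-1}+2C_q)+(\frac{2s}{2s-1})^2\big)e^{4C_q/a_{\pm}}$. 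You instead linearize first via the substitution $u=\sqrt{\phi+1}$, absorbing the $\sqrt{\phi/\epsilon}$ term into an $L^1$ forcing (this is where $s>1/2$ enters, exactly as in the paper through $\int_0^1\omega_s(\mu)\mu^{-1/2}d\mu<\infty$) and the $q^{\pm}\phi$ term into a linear-in-$u$ term, so that only the elementary linear Gronwall lemma is needed; in effect you re-prove the special Gronwall lemma rather than cite it, which makes the argument more self-contained at the cost of less explicit constants. At the last step you use the numerical-radius inequality $\|T\|\le 2\,w(T)$ (valid on a complex Hilbert space) where the paper polarizes; these are equivalent in content, and your regularity bookkeeping (local Lipschitz continuity of $\|F_{1,\epsilon}^{\pm}(z)\|$ and of $|\bra\varphi,F_{s,\epsilon}^{\pm}(z)\varphi\ket|$, with $|u'|\le|\phi'|/(2u)$ a.e. on compact subintervals of $(0,\epsilon_0)$) is sound.
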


\noindent
\begin{proof} Assume $q^{\pm} \in L^1 ((0,\epsilon_0))$ and denote $C_q = \max \| q^{\pm} \|_{L^1 ((0,\epsilon_0))}$. Fix $\mu_0 \in (0,\epsilon_0)$. Let $s=1$. For any $z\in \bar{\Omega}^{\pm} (\Delta_0)$ and any $\epsilon \in (0,\mu_0)$,
$$
\| F_{1, \epsilon}^{\pm} (z) \| \leq \| F_{1, \mu_0}^{\pm} (z) \| + \int_{\epsilon}^{\mu_0} \| \partial_{\mu} F_{1, \mu}^{\pm} (z) \| \, d\mu ,
$$
which combined with Corollary \ref{secondbounds} and Proposition \ref{diffins=1} yields:
$$
\| F_{1, \epsilon}^{\pm} (z) \| \leq C_{2,1}^{\pm} + \int_{\epsilon}^{\mu_0} h_{1,1}^{\pm} ( \mu ) \sqrt{ \| F_{1, \mu_0}^{\pm} (z) \|} \, d\mu + \int_{\epsilon}^{\mu_0} h_{2}^{\pm} ( \mu ) \| F_{1, \mu}^{\pm} (z) \| \, d\mu
$$
where we define $C_{2,s}^{\pm}$ for $1/2 < s \leq 1$ by:
$$
C_{2,s}^{\pm} := \frac{2C_0^{\pm}}{\mu_0 a_{\pm}} + \int_0^1 h_{0,s}^{\pm} ( \mu )\, d\mu .
$$
Mind that the functions $h_{0,s}^{\pm}$ are integrable on $(0,\epsilon_0)$ by hypothesis. Using Gronwall Lemma as stated in e.g. \cite{coddlev} or \cite{abmg} Lemma 7.A.1, we deduce:
$$
\left\| F_{1, \epsilon}^{\pm} (z) \right\| \leq \left[ \sqrt{C_{2,1}^{\pm}} + \frac{1}{2} \int_{\epsilon}^{\mu_0} h_{1,1}^{\pm} ( \mu ) \exp \left( - \frac{1}{2} \int_{\mu}^{\mu_0} h_2^{\pm} (x)\, dx \right) \, d\mu \right]^2  \exp \left( \int_{\epsilon}^{\mu_0} h_2^{\pm} (\mu)\, d\mu \right) .
$$
Since the functions $h_{1,1}^{\pm}$ and $h_2^{\pm}$ are integrable on $(0,\epsilon_0)$, it follows that:
\begin{equation*}
\sup_{\epsilon \in (0,\mu_0), z\in \bar{\Omega}^{\pm} (\Delta_0)} \| F_{1,\epsilon}^{\pm} (z)\| < \infty.
\end{equation*}
If $1/2 < s < 1$, an extra step is required. For any $z\in \bar{\Omega}^{\pm} (\Delta_0)$, any $\epsilon \in (0,\mu_0)$ and any $\varphi \in {\mathcal H},$
$$
| \bra \varphi, F_{s, \epsilon}^{\pm}(z) \varphi \ket | \leq | \bra \varphi, F_{s, \mu_0}^{\pm}(z) \varphi \ket | + \int_{\epsilon}^{\mu_0} | \bra \varphi, \partial_{\mu} F_{s, \mu}^{\pm}(z) \varphi \ket |\, d\mu ,
$$
which combined with Corollary \ref{secondbounds} and Proposition \ref{diffins<1} yields:
\begin{align*}\label{int.s<1}
| \bra \varphi, F_{s, \epsilon}^{\pm}(z) \varphi \ket | \leq  C_{2,s}^{\pm} \|\varphi \|^2 + \|\varphi \| \int_{\epsilon}^{\mu_0} h_{1,s}^{\pm} ( \mu ) \sqrt{| \bra \varphi, F_{s, \mu}^{\pm}(z)\varphi \ket |}\, d\mu + \int_{\epsilon}^{\mu_0} h_2^{\pm} ( \mu ) | \bra \varphi, F_{s, \mu}^{\pm}(z)\varphi \ket |\, d\mu .
\end{align*}
Using again Gronwall Lemma, we deduce
\begin{align*}
| \bra \varphi, F_{s, \epsilon}^{\pm}(z) \varphi \ket | &\leq \left[ \sqrt{C_{2,s}^{\pm}} + \frac{1}{2} \int_{\epsilon}^{\mu_0} h_{1,s}^{\pm} (\mu) \exp \left( - \frac{1}{2} \int_{\mu}^{\mu_0} h_2^{\pm} (x) \, dx \right) \, d\mu \right]^2 \\
&\quad \times \exp \left( \int_{\epsilon}^{\mu_0} h_2^{\pm} ( \mu ) \, d\mu \right) \|\varphi \|^2 .
\end{align*}
The functions $h_{1,s}^{\pm}$ and $h_2^{\pm}$ are integrable on $(0,\epsilon_0)$, so
$$
\sup_{\| \varphi \| =1} \sup_{\epsilon \in (0,\mu_0), z\in \bar{\Omega}^{\pm} (\Delta_0)} |\bra \varphi, F_{s,\epsilon}^{\pm} (z)\varphi \ket | < \infty .
$$
Since $\| F_{s,\epsilon}^{\pm}(z) \| = \sup_{\| \varphi \| =1, \| \psi \| =1} |\bra \varphi, F_{s,\epsilon}^{\pm} (z)\psi \ket |$, Proposition \ref{unifbounds} follows by polarisation.
\end{proof}

\begin{prop}\label{mourrensa0} 
Assume that (A1)--(A7), (M$_{\pm}$) hold. Fix $s\in (1/2,1]$. If the function $q^{\pm}$ belongs to $L^1 ((0,\epsilon_0))$, then
\begin{equation*}
\sup_{z \in \Omega^{\pm} (\Delta_0)} \| (|A|+1)^{-s} (z-H)^{-1} (|A|+1)^{-s} \| < \infty.
\end{equation*}
\end{prop}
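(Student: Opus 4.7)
The plan is to deduce the weighted resolvent bound on $(z-H)^{-1}$ from the uniform bound on the family $F_{s,\epsilon}^{\pm}(z)$ established in Proposition \ref{unifbounds}, by taking the limit $\epsilon \to 0^+$ and invoking the lower-semicontinuity of the operator norm. First I would fix any $\epsilon_1 \in (0, \epsilon_0)$ and record that Proposition \ref{unifbounds} (whose applicability is precisely the integrability assumption $q^{\pm} \in L^1(0,\epsilon_0)$) provides
$$M := \sup_{\epsilon \in (0,\epsilon_1),\ z\in \Delta_0^{\pm}} \|F_{s,\epsilon}^{\pm}(z)\| < \infty.$$

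Next I would show that for every fixed $z \in \tensor[_o]{\Delta}{_0}^{\pm}$, the operator $F_{s,\epsilon}^{\pm}(z)$ converges weakly to $(|A|+1)^{-s}(z-H)^{-1}(|A|+1)^{-s}$ as $\epsilon \to 0^+$. Two ingredients combine here. First, since $z \in \tensor[_o]{\Delta}{_0}^{\pm}$ forces $|\im(z) -\sigma_{\pm}| >0$, Corollary \ref{cvto0} yields the operator-norm convergence $G_{\epsilon}^{\pm}(z) \to (z-H)^{-1}$. Second, by the spectral theorem together with dominated convergence against the spectral measure of $|A|$, the bounded selfadjoint operators $W_s(\epsilon) = (|A|+1)^{-s}(\epsilon |A|+1)^{s-1}$ converge strongly to $(|A|+1)^{-s}$, the family $(W_s(\epsilon))_{\epsilon \in [0,\epsilon_0)}$ being uniformly bounded in norm by $1$ (since $s -1 \leq 0$ and $\epsilon |A|+1 \geq 1$ imply $(\epsilon |A|+1)^{s-1} \leq 1$). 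Combining these two facts, for every $\varphi, \psi \in {\mathscr H}$,
$$\bra \varphi, F_{s,\epsilon}^{\pm}(z) \psi \ket = \bra W_s(\epsilon) \varphi, G_{\epsilon}^{\pm}(z) W_s(\epsilon) \psi \ket \longrightarrow \bra (|A|+1)^{-s}\varphi, (z-H)^{-1}(|A|+1)^{-s}\psi\ket$$
as $\epsilon \to 0^+$, which is the asserted weak convergence.

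Finally, weak lower-semicontinuity of the operator norm gives, uniformly in $z \in \tensor[_o]{\Delta}{_0}^{\pm} \subset \Delta_0^{\pm}$,
$$\|(|A|+1)^{-s}(z-H)^{-1}(|A|+1)^{-s}\| \leq \liminf_{\epsilon \to 0^+} \|F_{s,\epsilon}^{\pm}(z)\| \leq M,$$
which is the claim. I do not expect any serious obstacle: the argument is essentially a soft limiting procedure, and the only delicate point is justifying the strong convergence $W_s(\epsilon) \to (|A|+1)^{-s}$ when $1/2 < s < 1$, which follows immediately from the pointwise convergence $(\epsilon t +1)^{s -1} \to 1$ on $[0,\infty)$ and the uniform bound by $1$, integrated against the spectral measure of $|A|$.
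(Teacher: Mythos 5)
Your proposal is correct and follows essentially the same route as the paper: both pass to the limit $\epsilon \to 0^+$ in $F_{s,\epsilon}^{\pm}(z)$, using Corollary \ref{cvto0} for the convergence of $G_{\epsilon}^{\pm}(z)$, the strong convergence and uniform bound of the weights $W_s(\epsilon)$, and the uniform estimate of Proposition \ref{unifbounds}. The only cosmetic difference is that the paper treats $s=1$ (norm convergence) and $1/2<s<1$ (weak convergence via a three-term resolvent-type decomposition) separately, whereas you handle both cases at once through weak convergence and lower semicontinuity of the operator norm, which is a perfectly valid streamlining.
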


\noindent
\begin{proof} 
Assume first that $s=1$ and fix $z\in \Omega^{\pm} (\Delta_0)$. Due to Corollary \ref{cvto0}, for any 
$\epsilon \in (0,\epsilon_0)$,
\begin{equation*}
\| (|A|+1)^{-1} (z-H)^{-1} (|A|+1)^{-1} - F_{1,\epsilon}^{\pm}(z) \| \leq \frac{C_R C_0^{\pm} \epsilon}{| \im(z) -\sigma_{\pm} |^2}.
\end{equation*}
In other words, for any fixed $z\in \Omega^{\pm} (\Delta_0)$, $\lim_{\epsilon \rightarrow 0^+} 
F_{1,\epsilon}^{\pm} (z) = (|A|+1)^{-1} (z-H)^{-1} (|A|+1)^{-1}$ in the operator norm topology. By considering Proposition \ref{unifbounds} and taking the limit 
when $\epsilon$ tends to zero, we deduce that
\begin{equation*}
\sup_{z \in \Omega^{\pm} (\Delta_0)} \| (|A|+1)^{-1} (z-H)^{-1} (|A|+1)^{-1} \| < \infty.
\end{equation*}

Assume now that $1/2 < s <1$. Pick two vectors $\varphi$, $\psi$ in ${\mathscr H}$ and fix again $z\in \Omega^{\pm} (\Delta_0)$. For any $\epsilon \in (0,\epsilon_0)$,
\begin{align*}
\bra \varphi, (|A|+1)^{-s} (z-H)^{-1} (|A|+1)^{-s} \psi \ket - \bra \varphi, F_{s,\epsilon}^{\pm}(z) \psi \ket &= \bra W_s (\epsilon) \varphi, ( (z-H)^{-1} - G_{\epsilon}^{\pm} (z) ) W_s (\epsilon) \psi \ket \\
&+ \bra W_s (0) \varphi, (z-H)^{-1} ( W_s (0) -W_s (\epsilon) ) \psi \ket \\
&+ \bra ( W_s (0) - W_s (\epsilon) ) \varphi, (z-H)^{-1} W_s (\epsilon) \psi \ket.
\end{align*}
Using the facts that $\| W_s (\epsilon) \| \leq 1$ for any $\epsilon \in [0,1]$, $\| (z-H)^{-1} \| \leq 1/\text{dist} ( z, {\mathcal N}(H) ) \leq | \im(z) -\sigma_{\pm} |^{-1}$ 
and Corollary \ref{cvto0}, we deduce that
$$
| \bra \varphi, (|A|+1)^{-s} (z-H)^{-1} (|A|+1)^{-s} \psi \ket - \bra \varphi, F_{s,\epsilon}^{\pm} (z) \psi \ket |
$$
is bounded by
$$
\frac{C_R C_0^{\pm} \epsilon}{| \im(z) -\sigma_{\pm} |^2} \|\varphi \| \|\psi \| +\frac{1}{| \im(z) -\sigma_{\pm} |} \left( \| (W_s (0)-W_s ( \epsilon))\varphi \| \|\psi \| + \|\varphi \| \| (W_s (0)-W_s (\epsilon))\psi \| \right).
$$
In other words, for any fixed $z\in \Omega^{\pm} (\Delta_0)$, $w- \lim_{\epsilon \rightarrow 0^+} 
F_{s,\epsilon}^{\pm}(z) = (|A|+1)^{-s} (z-H)^{-1} (|A|+1)^{-s}$. Since $\| (|A|+1)^{-s} (z-H)^{-1} (|A|+1)^{-s} \| = \sup_{\| \varphi \| =1, \| \psi \| =1} |\bra \varphi, (|A|+1)^{-s} (z-H)^{-1} (|A|+1)^{-s} \psi \ket |$, we deduce from Proposition \ref{unifbounds} that:
\begin{equation*}
\sup_{z \in \Omega^{\pm} (\Delta_0)} \| (|A|+1)^{-s} (z-H)^{-1} (|A|+1)^{-s} \| < \infty.
\end{equation*}
\end{proof}

In order to conclude on the proof of Proposition \ref{mourrensa}, it remains to show that if $H\in {\mathcal C}^{1,1}(A)$, then it satisfies the hypotheses (A1)--(A6) (hence (A7)) 
and the integrability of the functions $q^{\pm}$. This is the purpose of the next section.

\subsection{Last step to the proof of Proposition \ref{mourrensa}}\label{linktoc11}

The next result is quoted from \cite[Lemma 7.3.6]{abmg}. 

\begin{lem}\label{c11approx} 
Let $S \in {\mathcal B}(\mathscr H)$ be a bounded selfadjoint operator of class ${\mathcal C}^{1,1}(A)$. Then, there exists a family $(S(\epsilon))_{\epsilon \in (0,1)}$ 
of bounded operators with the following properties:
\begin{itemize}
\item The map $\epsilon \mapsto S(\epsilon)$ with values in ${\mathcal B}(\mathscr H)$ is of class $C^{\infty}$, $\| S(\epsilon)-S \| \leq C\epsilon$ for some $C>0$ and
$$\int_0^1 \frac{\| \partial_{\epsilon} S(\epsilon) \|}{\epsilon} \, d\epsilon < \infty.$$
\item For any $\epsilon \in (0,1)$, $S(\epsilon) \in C^{\infty} (A)$ and
$$\int_0^1 \big\| \mathrm{ad}^2_A ( S(\epsilon) ) \big\| \, d\epsilon < \infty.$$
\item $S\in C^1(A)$, $\lim_{\epsilon \rightarrow 0^+} \big\| \mathrm{ad}_A ( S(\epsilon) ) - \mathrm{ad}_A (S) \big\| =0$, the map 
$\epsilon \mapsto \mathrm{ad}_A ( S(\epsilon) )$ with values in ${\mathcal B}(\mathscr H)$ is of class $C^{\infty}$ and 
$$\int_0^1 \big\| \partial_{\epsilon} \mathrm{ad}_A ( S(\epsilon) ) \big\| \, d\epsilon < \infty.$$
\end{itemize}
\end{lem}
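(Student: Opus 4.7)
The natural strategy is to regularize $S$ by averaging against the unitary group $(e^{i\theta A})_{\theta \in \R}$. Fix once and for all an even function $\rho \in C_c^\infty(\R)$ with $\int_\R \rho(\theta)\, d\theta = 1$, and for $\epsilon \in (0,1)$ define
\begin{equation*}
S(\epsilon) := \int_\R \rho(\theta)\, e^{i\epsilon\theta A}\, S\, e^{-i\epsilon\theta A}\, d\theta = \frac{1}{\epsilon}\int_\R \rho(\theta/\epsilon)\, e^{i\theta A}\, S\, e^{-i\theta A}\, d\theta.
\end{equation*}
Since $S$ is bounded and the group is strongly continuous, the integrand is norm-bounded and strongly measurable in $\theta$, so the integral makes sense in the strong operator topology and produces a bounded operator; this is the candidate family.

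The easy properties should follow quickly. Smoothness of $\epsilon \mapsto S(\epsilon)$ in $\mathcal{B}({\mathscr H})$ is obtained by differentiating under the integral, which is legitimate because $\rho$ and its derivatives are compactly supported and $S$ is bounded. The norm bound $\|S(\epsilon) - S\| \leq C\epsilon$ follows from the inclusion ${\mathcal C}^{1,1}(A) \subset C^1(A)$ via $\|e^{i\epsilon\theta A} S e^{-i\epsilon\theta A} - S\| \leq |\epsilon\theta|\,\|\mathrm{ad}_A(S)\|$ and the fact that $\theta\rho(\theta) \in L^1(\R)$. That $S(\epsilon) \in C^\infty(A)$ for each $\epsilon > 0$ is obtained by commuting powers of $A$ inside the integral, each commutator producing a factor of $i\epsilon\theta$, and then integrating by parts to transfer these polynomial factors onto $\rho$, which is Schwartz; this bounds all iterated commutators explicitly in terms of $\epsilon$ and derivatives of $\rho$.

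The core of the argument lies in the three integrability estimates. The key observation is that, once the evenness of $\rho$ is exploited, both derivatives in $\epsilon$ and repeated commutators with $A$ can be written in terms of the symmetric difference
\begin{equation*}
W(\theta) := e^{i\theta A} S e^{-i\theta A} + e^{-i\theta A} S e^{i\theta A} - 2S, \qquad \theta \in \R.
\end{equation*}
Indeed, symmetrizing the integrand via $\theta \mapsto -\theta$ (allowed since $\rho$ is even) rewrites $S(\epsilon) - S$ as $\frac{1}{2}\int_\R \rho(\theta)\,W(\epsilon\theta)\, d\theta$; differentiating in $\epsilon$ and changing variables $\theta \mapsto \theta/\epsilon$ produces integrands in which $W(\theta)$ is weighted by kernels of the form $\theta^{-2}$ times a smooth compactly supported profile. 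A direct application of Fubini then yields
\begin{equation*}
\int_0^1 \frac{\|\partial_\epsilon S(\epsilon)\|}{\epsilon}\, d\epsilon \leq C \int_0^{C'} \frac{\|W(\theta)\|}{\theta^2}\, d\theta < \infty,
\end{equation*}
the finiteness of the right-hand side being exactly the ${\mathcal C}^{1,1}(A)$ hypothesis. The same mechanism, applied to $\mathrm{ad}_A^2 S(\epsilon)$ (the two commutators pulling out a factor $\epsilon^2$ that compensates the $\epsilon^{-2}$ from the scaling of the mollifier) and to $\partial_\epsilon \mathrm{ad}_A S(\epsilon)$, produces the other two integrability statements.

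The final assertions are by routine approximation: $\mathrm{ad}_A S(\epsilon) = \int_\R \rho(\theta)\, e^{i\epsilon\theta A}\, \mathrm{ad}_A(S)\, e^{-i\epsilon\theta A}\, d\theta$, and strong continuity of the group combined with dominated convergence yields $\|\mathrm{ad}_A S(\epsilon) - \mathrm{ad}_A S\| \to 0$ as $\epsilon \to 0^+$. The main obstacle in the whole argument is the algebraic identification in the third paragraph: one must set up the right symmetrization and integration-by-parts manipulations so that the symmetric-difference quantity $W(\theta)$ appears at the correct scale, since that is the only point at which the full ${\mathcal C}^{1,1}(A)$ regularity (rather than merely $C^1(A)$) enters and gives the quadratic weight $\theta^{-2}$ needed for convergence.
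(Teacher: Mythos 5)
First, note that the paper does not prove this lemma at all: it is quoted verbatim from \cite[Lemma 7.3.6]{abmg}, so your attempt has to be measured against the argument in that reference, whose construction (a regularization of $S$ along the group $\theta \mapsto {\rm e}^{i\theta A} S {\rm e}^{-i\theta A}$, in ABMG implemented as $S(\epsilon)=\xi(\epsilon\,\mathrm{ad}_A)S$ with $\xi \in C_0^{\infty}$ equal to $1$ near the origin) is indeed of the same type as your mollification. Your construction is sound, and your mechanism does deliver the soft properties and the first two integrability statements: writing $S(\epsilon)=\epsilon^{-1}\int \rho(\tau/\epsilon)\,{\rm e}^{i\tau A}S{\rm e}^{-i\tau A}\,d\tau$, both $\partial_\epsilon S(\epsilon)$ and $\mathrm{ad}_A^2 S(\epsilon)$ are integrals of ${\rm e}^{i\tau A}S{\rm e}^{-i\tau A}$ against even rescaled kernels of mean zero (for the commutators this uses the identity $\mathrm{ad}_A\big({\rm e}^{i\tau A}S{\rm e}^{-i\tau A}\big)=-i\,\partial_\tau\big({\rm e}^{i\tau A}S{\rm e}^{-i\tau A}\big)$ and integration by parts, which moves a $\tau$-derivative onto the kernel at the price of a factor $\epsilon^{-1}$ --- not, as you write, ``a factor of $i\epsilon\theta$''), so symmetrization produces $W(\tau)$ and Fubini gives exactly the weight $\tau^{-2}$ controlled by the ${\mathcal C}^{1,1}(A)$ hypothesis.

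The genuine gap is the third integrability claim, $\int_0^1\|\partial_\epsilon\,\mathrm{ad}_A S(\epsilon)\|\,d\epsilon<\infty$, which you assert follows by ``the same mechanism''. It does not. Carrying the computation out, $\partial_\epsilon\,\mathrm{ad}_A S(\epsilon)=\epsilon^{-3}\int h(\tau/\epsilon)\,{\rm e}^{i\tau A}S{\rm e}^{-i\tau A}\,d\tau$ with $h$ \emph{odd}, so symmetrization yields either the antisymmetric difference ${\rm e}^{i\tau A}S{\rm e}^{-i\tau A}-{\rm e}^{-i\tau A}S{\rm e}^{i\tau A}$ with weight $\tau^{-2}$ (this difference is of size $\sim \tau\|\mathrm{ad}_A S\|$, so the resulting integral diverges), or, after one more integration by parts, the symmetric second difference of $B:=\mathrm{ad}_A S$ with weight $\tau^{-1}$, i.e. one needs $\int_0^1\|{\rm e}^{i\tau A}B{\rm e}^{-i\tau A}+{\rm e}^{-i\tau A}B{\rm e}^{i\tau A}-2B\|\,\tau^{-1}d\tau<\infty$. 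Neither condition is implied by $S\in{\mathcal C}^{1,1}(A)$: already for multiplication operators (the translation group on $L^\infty(\R)$) one can build lacunary examples $f\in B^1_{\infty,1}$, i.e. $\int_0^1\omega_2(f,\tau)\tau^{-2}d\tau<\infty$, for which $\int_0^1\omega_2(f',\tau)\tau^{-1}d\tau=\infty$. The statement itself is true for your $S(\epsilon)$, but its proof requires the dyadic (Littlewood--Paley type) characterization of ${\mathcal C}^{1,1}(A)$ from \cite[Section 5.5]{abmg}: with $S(\epsilon)=\xi(\epsilon\,\mathrm{ad}_A)S$ one has $\partial_\epsilon\,\mathrm{ad}_A S(\epsilon)=\epsilon^{-2}\eta(\epsilon\,\mathrm{ad}_A)S$ with $\eta(\mu)=\mu^2\xi'(\mu)$ vanishing near $0$, and the finiteness of $\int_0^1\epsilon^{-2}\|\eta(\epsilon\,\mathrm{ad}_A)S\|\,d\epsilon$ for such $\eta$ is exactly what that characterization provides. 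So your write-up is missing the one ingredient that genuinely uses more than the naive ``$W(\theta)/\theta^2$'' Fubini argument, and it is precisely the ingredient the ABMG machinery is designed to supply.
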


If $H\in {\mathcal C}^{1,1}(A)$, its adjoint $H^*$, hence $\re(H)$ and $\im(H)$ also belong to ${\mathcal C}^{1,1}(A)$, see e.g. Lemma \ref{c11ReIm}. 
Applying Lemma \ref{c11approx} to both $\re(H)$ and $\im(H)$ shows that the conclusions of Lemma \ref{c11approx} are still valid if $H$ is not selfadjoint.

Given such an operator $H$, choose the function $S$ as given by Lemma \ref{c11approx} on the interval $(0,\epsilon_0)$ ($\epsilon_0 \leq 1$) and define the function $B$ by $B = i\mathrm{ad}_A(S)$. The associated functions ${\mathcal Q}^{\pm}$ are defined on $(0,1)$ by ${\mathcal Q}^{\pm} = \partial_{\epsilon} S \mp i\epsilon \partial_{\epsilon} B + i\epsilon \mathrm{ad}_A (B)$. By Lemma \ref{c11approx}, Assumptions (A1)--(A6) are verified and $q^{\pm}$ belong to $L^1((0,1))$.

Note that if $s\geq 1$, 
\begin{equation*}
\sup_{z \in \Omega^{\pm} (\Delta_0)} \| (|A|+1)^{-s} (z-H)^{-1} (|A|+1)^{-s} \| \leq \sup_{z \in \Omega^{\pm} (\Delta_0)} \| (|A|+1)^{-1} (z-H)^{-1} (|A|+1)^{-1} \|.
\end{equation*}
The proof of Theorem \ref{mourrensa} follows now directly from Remarks \ref{farfromnr}, \ref{weights} and Proposition \ref{mourrensa0}. 

\subsection{Proof of Corollary \ref{Hsmooth}}\label{proofHsmooth}

\begin{lem}\label{achia} 
Let $H\in {\mathcal B}({\mathscr H})$, $\chi \in C_0^{\infty}({\mathbb R},{\mathbb R})$ and $A$ be a selfadjoint operator acting 
on ${\mathscr H}$. If $\re(H) \in C^1 (A)$, then, for all $s\in [-1,1]$, the operator $\bra A \ket^s \chi ( \re H) \bra A\ket^{-s}$ extends as bounded 
operator on ${\mathscr H}$. 
\end{lem}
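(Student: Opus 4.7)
The plan is to reduce the statement to the endpoint cases $|s|=1$ via complex interpolation, and to establish those endpoints from the fact that $\chi\big(\re(H)\big)$ itself belongs to $C^1(A)$.

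First, I would show that $\chi\big(\re(H)\big) \in C^1(A)$ using the Helffer--Sj\"ostrand functional calculus: choosing an almost-analytic extension $\tilde\chi \in C_0^\infty({\mathbb C})$ of $\chi$, one has
\[
\chi\big(\re(H)\big) = \frac{1}{\pi}\int_{\mathbb C} (\bar\partial \tilde\chi)(z)\, \big(z - \re(H)\big)^{-1}\, dL(z).
\]
Since $\re(H) \in C^1(A)$, the resolvents lie in $C^1(A)$, with
\[
\big[A, (z-\re(H))^{-1}\big] = -(z-\re(H))^{-1}\big[A,\re(H)\big](z-\re(H))^{-1},
\]
whose norm is controlled by $\|[A,\re(H)]\|/|\im(z)|^2$. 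The rapid vanishing of $\bar\partial\tilde\chi$ near the real axis renders the integral for the commutator absolutely norm-convergent, yielding $[A,\chi(\re(H))] \in {\mathcal B}({\mathscr H})$. Alternatively, one may directly invoke the stability of $C^1(A)$ under smooth functional calculus of bounded selfadjoint operators.

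Second, for $s=1$: boundedness of $[A,\chi(\re(H))]$ implies that for any $\psi \in {\mathcal D}(A) = {\mathcal D}(\bra A\ket)$,
\[
A\chi\big(\re(H)\big)\psi = \chi\big(\re(H)\big) A\psi + \big[A, \chi\big(\re(H)\big)\big]\psi,
\]
so $\chi(\re(H))$ preserves ${\mathcal D}(A)$ and is continuous thereon for the graph norm (which is equivalent to the norm $\|\bra A\ket \cdot\|$). This immediately entails that $\bra A\ket\chi(\re(H))\bra A\ket^{-1}$ extends to a bounded operator on ${\mathscr H}$. The case $s=-1$ then follows by adjunction: since $\chi$ is real-valued, $\chi(\re(H))$ is selfadjoint, and
\[
\bra A\ket^{-1}\chi\big(\re(H)\big)\bra A\ket = \bigl(\bra A\ket\chi\big(\re(H)\big)\bra A\ket^{-1}\bigr)^*.
\]

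Finally, for $s\in(-1,1)$, I would invoke complex interpolation. For $\varphi, \psi$ in the dense subspace $\bigcap_N {\mathcal D}(\bra A\ket^N)$, the function
\[
F(\zeta) := \bra \varphi, \bra A\ket^{\zeta} \chi\big(\re(H)\big) \bra A\ket^{-\zeta} \psi \ket
\]
is analytic in the strip $\{\zeta \in {\mathbb C} : |\re(\zeta)| \leq 1\}$ and, using the unitarity of $\bra A\ket^{i\im(\zeta)}$ together with the endpoint estimates of the previous step, uniformly bounded on the three lines $\re(\zeta) \in \{-1, 0, 1\}$ by some constant times $\|\varphi\|\|\psi\|$. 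Hadamard's three-lines lemma extends this bound throughout the strip, yielding the stated boundedness for every $s \in [-1,1]$. The main technical obstacle is the first step, where the integrability of the resolvent commutator against the almost-analytic weight must be controlled; the remaining steps are then routine once a common dense core on which all the relevant powers of $\bra A\ket$ act continuously has been fixed.
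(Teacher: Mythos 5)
Your argument is correct. For comparison: the paper does not actually prove this lemma, it simply refers to \cite[Lemma 6.4]{royer2}, so your proposal supplies a self-contained proof along the standard route that such references follow, and each of its three steps is sound. (i) The Helffer--Sj\"ostrand argument does give $\chi(\re(H))\in C^1(A)$; up to a harmless sign your resolvent identity should read $[A,(z-\re(H))^{-1}]=(z-\re(H))^{-1}[A,\re(H)](z-\re(H))^{-1}$, with norm $O(|\im(z)|^{-2})$, which is integrable against $\bar\partial\tilde\chi$ provided the almost-analytic extension is chosen to vanish to second order at the real axis; alternatively the invariance of $C^1(A)$ under smooth functional calculus of bounded selfadjoint operators is indeed available in \cite{abmg}. (ii) For a \emph{bounded} $B\in C^1(A)$ the invariance of ${\mathcal D}(A)$ is automatic and deserves the one-line justification: for $\psi\in{\mathcal D}(A)$ the functional $\varphi\mapsto\langle A\varphi,B\psi\rangle=\langle\varphi,BA\psi\rangle+\langle\varphi,[A,B]\psi\rangle$ is bounded on ${\mathcal D}(A)$, hence $B\psi\in{\mathcal D}(A^{*})={\mathcal D}(A)$; your endpoint bounds for $s=\pm1$ (and the adjoint argument for $s=-1$, using that $\chi$ is real-valued) then follow. (iii) The interpolation step works, but you should state explicitly that $F$ is bounded on the whole closed strip and not only on the boundary lines, since Hadamard's three-lines theorem needs an a priori bound there; this is immediate here because for $|\re(\zeta)|\le 1$ one has $|F(\zeta)|\le\|\langle A\rangle\varphi\|\,\|\chi\|_{\infty}\,\|\langle A\rangle\psi\|$, the imaginary part of the exponent contributing only unitary factors, and note that the middle line $\re(\zeta)=0$ is not even needed. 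With these clarifications, plus the routine identification of the bounded operator produced by the densely defined bounded form with the extension claimed in the statement, your proof is complete and is, if anything, more explicit than the paper's treatment, which is by citation only.
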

See e.g. \cite[Lemma 6.4]{royer2} for a proof.

Let us prove Corollary \ref{Hsmooth}. Without any loss of generality, we suppose that $1/2 < s \leq 1$. First, note that for any 
$z\in {\mathbb R}+i((-\infty, \sigma_- -1)\cup (\sigma_+ +1,\infty))$, $\| (z-H)^{-1}\| \leq 1/\text{dist} (z,{\mathcal N}(H))\leq 1$. Hence,
$$
\sup_{z\in {\mathbb R} +i((-\infty, \sigma_- -1)\cup (\sigma_+ +1,\infty))} \big \|\bra A\ket^{-s} \chi ( \re H) (z-H)^{-1} \chi ( \re H) 
\bra A\ket^{-s} \big \| < \infty .
$$
So, by combining Theorem \ref{mourrensa} and Lemma \ref{achia}, we get
\begin{gather*}
\sup_{z\in \Delta_1 +i(\sigma_+ ,\infty)} \big \|\bra A\ket^{-s} \chi ( \re H) (z-H)^{-1} \chi ( \re H) \bra A\ket^{-s} \big \| < \infty ,\\
\sup_{z\in \Delta_1 +i(-\infty,\sigma_-)} \big \| \bra A\ket^{-s} \chi ( \re H) (z-H)^{-1} \chi ( \re H) \bra A\ket^{-s} \big \| < \infty.
\end{gather*}
For $z \in (\R \setminus \Delta_1)+i ([\sigma_- -1,\sigma_-)\cup (\sigma_+, \sigma_+ +1])$ and any $(\varphi ,\psi)\in {\mathscr H}\times {\mathscr H}$, we can use the resolvent identity to bound $\big| \bra \chi ( \re H) \bra A\ket^{-s} \varphi, (z-H)^{-1} \chi ( \re H) \bra A\ket^{-s} \psi \ket \big|$ from above by:
\begin{align}\label{outdelta1}
\big| \bra \chi ( \re H) \bra A\ket^{-s} \varphi, & ( \re T_0(z))^{-1} \chi ( \re H) \bra A\ket^{-s} \psi \ket \big| +\\
& \big\| ( \re T_0 (z))^{-1} \chi ( \re H) \bra A\ket^{-s} \varphi \big\| \big\|  \im (T_0(z)) (z-H)^{-1} \chi ( \re H) \bra A\ket^{-s} \psi \big\|, \nonumber
\end{align}
where $C_{\chi} = \sup_{\lambda \in {\mathbb R} \setminus \Delta_1} \big\| ( \lambda - \re(H) )^{-1} \chi ( \re H ) \big\| <\infty$. 
Due to \eqref{signedqf}, for any $\psi \in {\mathscr H}$ and any $z\in \C$ such that $\im (z) \in [\sigma_- -1,\sigma_-)\cup (\sigma_+, \sigma_+ +1]$, we have
$$
\big\| \im ( T_0(z) ) \psi \big\|^2 \leq \sigma_0 \big| \bra \psi, T_0(z) \psi \ket \big|,
$$
whence
$$
\big\| \im ( T_0(z) ) (z-H)^{-1} \chi ( \re H) \bra A\ket^{-s} \psi \big\|^2 \leq \sigma_0 
\big| \bra \bra A\ket^{-s} \chi ( \re H) (z-H)^{-1} \chi ( \re H) \bra A\ket^{-s} \psi, \psi \ket \big|.
$$
Back to \eqref{outdelta1}, we deduce that
\begin{align*}
\big| \bra \chi ( \re H) &\bra A\ket^{-s} \varphi, (z-H)^{-1} \chi ( \re H) \bra A\ket^{-s} \psi \ket \big| \le \\
& C_{\chi} \|\chi \|_{\infty} \|\varphi \| \|\psi \| + C_{\chi} \sqrt{\sigma_0} \|\varphi \| \sqrt{ \big| \bra \bra A\ket^{-s} \chi ( \re H) (z-H)^{-1} \chi ( \re H) \bra A\ket^{-s} \psi,\psi \ket \big|}.
\end{align*}
This entails
\begin{eqnarray*}
\sup_{z\in ({\mathbb R}\setminus \Delta_1) +i[\sigma_- -1,\sigma_-)} \big \|\bra A\ket^{-s} \chi ( \re H) (z-H)^{-1} \chi ( \re H) \bra A\ket^{-s} \big \| &< \infty,\\
\sup_{z\in ({\mathbb R}\setminus \Delta_1) +i(\sigma_+, \sigma_+ +1]} \big \|\bra A\ket^{-s} \chi ( \re H) (z-H)^{-1} \chi ( \re H) \bra A\ket^{-s} \big \| &< \infty,
\end{eqnarray*}
which concludes the proof.

\subsection{Proof of Corollary \ref{mourrensa1}}\label{proofmourrensa1}

Since $H \in C^1(A)$, $\re(H) \in C^1(A)$. The first statement paraphrases Lemma \ref{virialnsa}. Given any relatively compact interval $\Delta_0$, $\overline{\Delta_0} \subset \Delta \setminus {\mathcal E}_{\rm p} (\re (H))$, both (M$_{\pm}$) hold on $\overline{\Delta_0}$, with $\beta_{\pm} =0$. We refer to e.g. \cite[Paragraph 7.2.2]{abmg} for details. The second statement follows then from Theorem \ref{mourrensa}.

\subsection{Proof of Theorems \ref{c11sign} and \ref{c11signweak}}\label{proofthmc11sign}

Following \cite[Section 5]{abc}, $H_0 = L \in C^{\infty} (A_0)$. Actually,
$$
i\mathrm{ad}_{A_0} (H_0) = 4H_0 -H_0^2.
$$
\paragraph{Proof of Theorem \ref{c11sign}.} If $V$ (hence $\re(V)$) belongs to $C^1 (A_0)$ (resp. ${\mathcal C}^{1,1}(A_0)$), then $H_V$ (hence $\re(H_V)$) belongs to $C^1 (A_0)$ (resp. ${\mathcal C}^{1,1}(A_0)$). In addition, if $V$ (hence $\re(V)$) and $i\mathrm{ad}_{A_0} ( \re(V) )$ belong to $\sinf(\ell^2({\mathbb Z}))$, then (M) holds with $c_{\Delta}>0$ if $\overline{\Delta} \subset (0,4)$. The conclusion follows as a particular case of Corollary \ref{mourrensa1}.

\paragraph{Proof of Theorem \ref{c11signweak}.} If $V$ belongs to ${\mathcal C}^{1,1}(A_0)$, then $H_V$ belongs to ${\mathcal C}^{1,1}(A_0)$. The conclusion follows directly from Theorem \ref{mourrensa}.


\section{Regularity classes}\label{regclass}

In this section, ${\mathscr H}$ is a Hilbert space and $A$ denotes a selfadjoint operator acting on ${\mathscr H}$, with domain ${\cal D}(A)$. 
We sum up the main properties of the regularity classes considered in this paper. We also propose some explicit criteria in the case 
${\mathscr H}= \ell^2 ({\mathbb Z})$ and $A=A_0$ defined by \eqref{A0}.

\subsection{The ${\mathcal A}(A_0)$ class}\label{se}

We refer to \cite[Chapter III]{kato} for general considerations on bounded operator-valued analytic maps.

\subsubsection{General considerations}

The classes ${\mathcal A}(A)$ and ${\mathcal A}_R(A)$, $R>0$, have been introduced in Definition \ref{dcl}. For $L\in {\mathcal A}_R(A)$, we denote the 
corresponding analytic map by
\begin{eqnarray*}
a[L] : D_R(0) & \rightarrow & {\mathcal B}({\mathscr H})\\
\theta & \mapsto & {\rm e}^{i \theta A} L {\rm e}^{-i \theta A} .
\end{eqnarray*}

The next propositions follow from direct calculations:
\begin{prop} Let $R>0$, $(L_1,L_2) \in {\mathcal A}_R(A) \times {\mathcal A}_R(A)$ and $\alpha \in {\mathbb C}$. Then,
\begin{itemize}
\item $\alpha L_1 +L_2 \in {\mathcal A}_R(A)$ and $a[\alpha L_1 +L_2]= \alpha a[L_1] + a[L_2]$,
\item $L_1L_2 \in {\mathcal A}_R(A)$ and $a[L_1 L_2]= a[L_1] a[L_2]$,
\item $I\in {\mathcal A}_R(A)$ and $a[I] \equiv I$,
\item $L_1^* \in {\mathcal A}_R(A)$ and $a[L_1^*] (\theta)= (a[L_1](\bar{\theta}))^*$, for any $\theta \in D_R(0)$.
\end{itemize}
In particular, ${\mathcal A}(A)$ is a sub-algebra of ${\cal B}({\mathscr H})$.
\end{prop}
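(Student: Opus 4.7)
The plan is to verify each algebraic closure property directly by exhibiting an explicit holomorphic extension on $D_R(0)$ and then observing that it coincides with the prescribed real-axis formula $\theta \mapsto {\rm e}^{i\theta A}(\,\cdot\,){\rm e}^{-i\theta A}$ on $(-R,R)$. Since $\mathcal{B}({\mathscr H})$ is a Banach algebra, sums, scalar multiples, and products of holomorphic ${\mathcal B}({\mathscr H})$-valued functions are again holomorphic. Uniqueness of analytic continuation then identifies each candidate with the required extension.

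For the first two bullets, I would simply note that $\alpha\, a[L_1] + a[L_2]$ and $a[L_1]\cdot a[L_2]$ are holomorphic on $D_R(0)$, and that for $\theta \in (-R,R)$ the group property ${\rm e}^{-i\theta A} {\rm e}^{i\theta A} = I$ yields
$a[L_1](\theta)\, a[L_2](\theta) = {\rm e}^{i\theta A} L_1 L_2 \,{\rm e}^{-i\theta A}$,
and likewise for the linear combination. The case $I \in {\mathcal A}_R(A)$ is trivial with the constant extension $\theta \mapsto I$.

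The only slightly subtle point is the adjoint claim. Here I would expand the analytic map $a[L_1]$ as a convergent power series $a[L_1](\theta) = \sum_{n \geq 0} c_n \theta^n$ on $D_R(0)$, with $c_n \in {\mathcal B}({\mathscr H})$, and define
$f(\theta) := (a[L_1](\bar{\theta}))^* = \sum_{n \geq 0} c_n^* \theta^n$,
using that $(\bar{\theta}^n c_n)^* = \theta^n c_n^*$ and that the adjoint of a norm-convergent series is the series of adjoints. This makes $f$ manifestly holomorphic on $D_R(0)$. On $\theta \in (-R,R)$, selfadjointness of $A$ gives $({\rm e}^{i\theta A})^* = {\rm e}^{-i\theta A}$, hence $f(\theta) = ({\rm e}^{i\theta A} L_1 {\rm e}^{-i\theta A})^* = {\rm e}^{i\theta A} L_1^* {\rm e}^{-i\theta A}$, showing $L_1^* \in {\mathcal A}_R(A)$ with $a[L_1^*](\theta) = (a[L_1](\bar{\theta}))^*$.

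Finally, the sub-algebra assertion for ${\mathcal A}(A) = \bigcup_{R>0} {\mathcal A}_R(A)$ follows by noting that if $L_1 \in {\mathcal A}_{R_1}(A)$ and $L_2 \in {\mathcal A}_{R_2}(A)$, then restricting the respective analytic extensions to $D_{\min(R_1,R_2)}(0)$ places both in a common class, to which the previous items apply. There is no genuine obstacle in this proof; the one point to get right is the bookkeeping of the complex conjugation in the adjoint case, which is handled cleanly at the level of power-series coefficients.
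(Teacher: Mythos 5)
Your proof is correct, and it is exactly the ``direct calculations'' the paper invokes without writing them out: holomorphy of sums and products in the Banach algebra $\mathcal{B}({\mathscr H})$, agreement on $(-R,R)$ via the group law, and the power-series/adjoint bookkeeping $\theta \mapsto (a[L_1](\bar{\theta}))^*$ for the $*$-closure. Nothing further is needed.
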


\begin{prop} Let $R>0$ and ${\mathscr K}$ be an auxiliary Hilbert space. Let $U:{\mathscr H} \rightarrow {\mathscr K}$ be a unitary operator. Then, $L\in {\mathcal A}_R(A)$ if and only if $ULU^* \in {\mathcal A}_R(UAU^*)$. In addition, for all $\theta \in D_R(0)$, $(U(a[L])U^*) (\theta) = (a[ULU^*])(\theta)$.
\end{prop}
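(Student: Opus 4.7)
The plan is straightforward: the whole proposition is a routine unitary-conjugation computation, combined with the elementary observation that composition with fixed bounded operators preserves holomorphy of bounded operator-valued maps.

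First, I would observe that $UAU^*$ is indeed selfadjoint on the dense domain $U{\cal D}(A) \subset {\mathscr K}$, and that by the functional calculus (applied to the real-valued function $x \mapsto e^{i\theta x}$ for $\theta \in {\mathbb R}$),
$$
e^{i\theta UAU^*} = U e^{i\theta A} U^* \quad \text{for all } \theta \in {\mathbb R}.
$$
Inserting $U^*U = I$ between the factors, this yields the pointwise identity, valid for all $\theta \in {\mathbb R}$,
$$
e^{i\theta UAU^*}(ULU^*)e^{-i\theta UAU^*} = U \bigl( e^{i\theta A} L e^{-i\theta A} \bigr) U^*.
$$

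Next, assume $L \in {\mathcal A}_R(A)$, so that the left-hand side of the displayed identity, viewed as a function of $\theta \in {\mathbb R}$, coincides with $U \cdot a[L](\theta) \cdot U^*$, where $a[L] \in {\rm Hol}(D_R(0),{\mathcal B}({\mathscr H}))$. Since left and right multiplication by the fixed bounded operators $U$ and $U^*$ are continuous linear maps from ${\mathcal B}({\mathscr H})$ to ${\mathcal B}({\mathscr K})$, the composition $\theta \mapsto U a[L](\theta) U^*$ lies in ${\rm Hol}(D_R(0),{\mathcal B}({\mathscr K}))$. This produces the required holomorphic extension, so $ULU^* \in {\mathcal A}_R(UAU^*)$, and by uniqueness of analytic continuation from ${\mathbb R} \cap D_R(0)$ to $D_R(0)$ we obtain the identity $a[ULU^*] = U a[L]( \cdot ) U^*$ on $D_R(0)$.

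For the converse, I would simply apply the forward implication to the unitary operator $U^* : {\mathscr K} \to {\mathscr H}$ and the selfadjoint operator $UAU^*$: if $ULU^* \in {\mathcal A}_R(UAU^*)$, then $U^*(ULU^*)U = L$ belongs to ${\mathcal A}_R\bigl(U^*(UAU^*)U\bigr) = {\mathcal A}_R(A)$. No step here is an obstacle; the only mild subtlety is to invoke the functional calculus cleanly to justify $e^{i\theta UAU^*} = U e^{i\theta A} U^*$, and to note that the identity of the real-analytic maps forces equality of their unique holomorphic extensions on $D_R(0)$.
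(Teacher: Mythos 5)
Your proof is correct: the identity $e^{i\theta UAU^*} = U e^{i\theta A}U^*$, the preservation of holomorphy under the fixed conjugation $X \mapsto UXU^*$, uniqueness of the analytic continuation off ${\mathbb R}\cap D_R(0)$, and the symmetric argument with $U^*$ for the converse are exactly the "direct calculations" the paper invokes without writing them out. Nothing is missing, and your route coincides with the intended one.
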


Next, we turn to some characterizations of the classes ${\mathcal A}_R(A)$. Let us recall the following lemma:
\begin{lem}\label{le1}
Let $\Omega \subseteq {\mathbb C} $ be an open subset and $(F_n)_{n \in {\mathbb N}}\subset {\rm Hol} ( \Omega,\cal{B}({\mathscr H}) )$. Assume that $(F_n)$ converges uniformly to a function $F_\infty$ in any compact subset included in $\Omega$. Then, $F_\infty \in {\rm Hol} ( \Omega,\cal{B}({\mathscr H}) )$.
\end{lem}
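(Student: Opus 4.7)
The plan is the operator-valued analogue of the classical Weierstrass theorem: adapt the scalar Cauchy integral formula proof to $\mathcal{B}(\mathscr{H})$-valued maps. First I would note that uniform convergence on compact subsets of continuous functions yields a continuous limit, so $F_\infty$ is continuous on $\Omega$ in the operator-norm topology. This guarantees that contour integrals of $F_\infty$ along piecewise-smooth curves in $\Omega$ exist as operator-norm Riemann integrals in $\mathcal{B}(\mathscr{H})$.

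Next, fix $z_0 \in \Omega$ and $r > 0$ with $\overline{D_r(z_0)} \subset \Omega$, and denote by $\gamma_r$ the positively oriented boundary circle. For each $F_n$, which is holomorphic by hypothesis, the Cauchy integral formula yields
\begin{equation*}
F_n(z) = \frac{1}{2\pi i} \oint_{\gamma_r} \frac{F_n(\zeta)}{\zeta - z} \, d\zeta, \qquad z \in D_r(z_0).
\end{equation*}
Since $\gamma_r$ is compact and, for any compact $K \subset D_r(z_0)$, $|\zeta - z|$ is bounded below uniformly on $\gamma_r \times K$, the uniform convergence $F_n \to F_\infty$ on $\gamma_r$ lets me pass to the limit to obtain
\begin{equation*}
F_\infty(z) = \frac{1}{2\pi i} \oint_{\gamma_r} \frac{F_\infty(\zeta)}{\zeta - z} \, d\zeta, \qquad z \in D_r(z_0).
\end{equation*}

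Finally, the right-hand side is norm-differentiable in $z$ by a routine differentiation under the integral sign — justified because the difference quotient $(\zeta - z - h)^{-1}(\zeta - z)^{-1}$ converges to $(\zeta - z)^{-2}$ uniformly on $\gamma_r$ as $h \to 0$ — with derivative $\frac{1}{2\pi i} \oint_{\gamma_r} (\zeta - z)^{-2} F_\infty(\zeta) \, d\zeta$. Thus $F_\infty$ is holomorphic on $D_r(z_0)$, and covering $\Omega$ by such disks yields $F_\infty \in \mathrm{Hol}(\Omega, \mathcal{B}(\mathscr{H}))$. There is no real obstacle; the only point warranting care is that the scalar Cauchy integral machinery transfers verbatim to Banach-space-valued Riemann integrals, which is immediate because a continuous $\mathcal{B}(\mathscr{H})$-valued function on a compact interval is norm-integrable.
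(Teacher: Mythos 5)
Your argument is correct, but it follows a different route from the paper. You give the classical Weierstrass convergence argument: continuity of the limit, the Cauchy integral formula for each $F_n$ on a circle $\gamma_r$, passage to the limit in the integral by uniform convergence on $\gamma_r$, and then norm-differentiation under the integral sign to conclude that $F_\infty$ is holomorphic near each point. The paper instead combines Goursat and Morera: each $F_n$ has vanishing integral over every triangle contained in a disk $\overline{D}\subset\Omega$, uniform convergence lets one pass to the limit in these triangle integrals, and Morera's theorem (in its operator-valued form) then gives holomorphy of the continuous limit $F_\infty$ on $D$. The Morera route is slightly leaner: it needs only the vanishing of triangle integrals and never has to justify a Cauchy representation for the limit or a differentiation under the integral sign. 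Your route requires the operator-valued Cauchy integral formula for the $F_n$ and the limit-exchange for the difference quotients, but in return it yields more explicit output, namely the Cauchy representation of $F_\infty$ and the integral formula for its derivative. Both hinge on the same standard fact that the scalar contour-integral machinery transfers verbatim to norm-continuous $\mathcal{B}({\mathscr H})$-valued integrands, which you correctly flag; the paper likewise invokes Goursat and Morera in the operator-valued setting without further ado, so the level of rigor is comparable and your proof is acceptable as written.
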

\noindent
\begin{proof}
Let $D$ be any open disk such $\overline{D} \subset \Omega$, and $T$ be a triangle in $D$. Then, Goursat's theorem implies that for any 
$n \in {\mathbb N}$, we have
$$
\int_T F_n(z) dz = 0.
$$
Since $(F_n)$ converges uniformly to $F_\infty$ in $\overline{D}$, then $F_\infty$ is continuous in $\overline{D}$ together with
$$
\int_T F_n(z) dz \underset{n \to \infty}{\longrightarrow} \int_T F_\infty(z) dz = 0.
$$
By Morera's theorem, $F_\infty \in {\rm Hol} ( D,\cal{B}({\mathscr H}) )$ and the claim follows since $D$ is arbitrary.
\end{proof}

\begin{prop}\label{pe1}
Let $\Sigma$ be a countable set and $(L_\nu)_{\nu \in \Sigma}\subset {\mathcal A}_R(A)$ for some $R > 0$. Assume that there exist $0 < R' < R$, and a 
family of finite subsets $(\Sigma_n)_{n \in {\mathbb N}}$ with 
$$
\Sigma_0 \subset \cdots \subset \Sigma_n \subset \Sigma_{n+1} \subset \cdots \subset \Sigma, \qquad \bigcup_{n \in {\mathbb N}} \Sigma_n = \Sigma,
$$
such that
\begin{equation}\label{hye1}
\sum_{n \in {\mathbb N}} \sum_{\small{\nu \in \sum_n \setminus \sum_{n-1}}} \sup_{\theta \in \overline{D_{R'}(0)}} \big\Vert {\rm e}^{i\theta A} L_\nu {\rm e}^{-i\theta A} 
\big\Vert < \infty.
\end{equation}
Then, the operator $\sum_{\nu \in \Sigma} L_\nu \in \mathcal{A}_{R'}(A)$.
\end{prop}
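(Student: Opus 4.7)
The argument proceeds by identifying the claimed element of $\mathcal{A}_{R'}(A)$ as the operator norm limit of a sequence of partial sums, and exploiting that uniform convergence on compact subsets preserves holomorphy (Lemma~\ref{le1}). Set $S_n := \sum_{\nu \in \Sigma_n} L_\nu$ for $n \in \mathbb{N}$. Each $S_n$ is a finite sum of elements of $\mathcal{A}_R(A)$, so $S_n \in \mathcal{A}_R(A)$, and its holomorphic extension is
\[
F_n(\theta) := \sum_{\nu \in \Sigma_n} {\rm e}^{i\theta A} L_\nu {\rm e}^{-i\theta A}, \qquad \theta \in D_R(0).
\]

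First, I would show that $(F_n)$ is Cauchy in the Banach space $\mathcal{C}\big(\overline{D_{R'}(0)}, \mathcal{B}({\mathscr H})\big)$ equipped with the uniform norm. Indeed, for $n > m$,
\[
\sup_{\theta \in \overline{D_{R'}(0)}} \big\| F_n(\theta) - F_m(\theta) \big\|
\leq \sum_{k=m+1}^{n} \sum_{\nu \in \Sigma_k \setminus \Sigma_{k-1}} \sup_{\theta \in \overline{D_{R'}(0)}} \big\| {\rm e}^{i\theta A} L_\nu {\rm e}^{-i\theta A} \big\|,
\]
which tends to zero as $m, n \to \infty$ by hypothesis \eqref{hye1}. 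Hence $F_n$ converges uniformly on $\overline{D_{R'}(0)}$ to a function $F_\infty \in \mathcal{C}\big(\overline{D_{R'}(0)}, \mathcal{B}({\mathscr H})\big)$, and by Lemma~\ref{le1}, $F_\infty \in {\rm Hol}\big(D_{R'}(0), \mathcal{B}({\mathscr H})\big)$.

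Specializing \eqref{hye1} at $\theta = 0$ yields $\sum_{\nu \in \Sigma} \|L_\nu\| < \infty$, so the series $L := \sum_{\nu \in \Sigma} L_\nu$ converges absolutely in ${\mathcal B}({\mathscr H})$, with $S_n \to L$ in operator norm. Consequently, for any real $\theta \in (-R', R')$,
\[
F_\infty(\theta) = \lim_{n \to \infty} {\rm e}^{i\theta A} S_n {\rm e}^{-i\theta A} = {\rm e}^{i\theta A} L {\rm e}^{-i\theta A},
\]
the last equality holding since $T \mapsto {\rm e}^{i\theta A} T {\rm e}^{-i\theta A}$ is norm-continuous on ${\mathcal B}({\mathscr H})$ for $\theta \in \mathbb{R}$. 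Thus $F_\infty$ is the holomorphic extension to $D_{R'}(0)$ of the map $\theta \mapsto {\rm e}^{i\theta A} L {\rm e}^{-i\theta A}$, which proves $L \in \mathcal{A}_{R'}(A)$.

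\textbf{Main obstacle.} There is no deep obstacle: the argument is essentially a Weierstrass M-test combined with Morera's theorem (packaged as Lemma~\ref{le1}). The only subtle point to keep in mind is that, although ${\mathcal A}_R(A)$ is defined via analytic extension of a map a priori defined on $\mathbb{R}$, one must verify that the limit $F_\infty$ indeed agrees on the real axis with the conjugation of the norm-convergent sum $L$; this uses the strong (even norm) continuity of $T \mapsto {\rm e}^{i\theta A} T {\rm e}^{-i\theta A}$ for real $\theta$, which is immediate since ${\rm e}^{i\theta A}$ is unitary.
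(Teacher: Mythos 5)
Your proposal is correct and follows essentially the same route as the paper: uniform convergence of the partial sums on $\overline{D_{R'}(0)}$ guaranteed by \eqref{hye1}, followed by Lemma~\ref{le1} to conclude holomorphy of the limit. Your explicit verification that the limit agrees on the real axis with $\theta \mapsto {\rm e}^{i\theta A}\big(\sum_{\nu\in\Sigma} L_\nu\big){\rm e}^{-i\theta A}$ (via norm convergence of the series and unitarity of ${\rm e}^{i\theta A}$) is a detail the paper leaves implicit, but the argument is the same.
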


\noindent
\begin{proof}
By hypotheses, for any $\nu \in \Sigma$, the map $a[L_\nu] : D_{R'}(0) \longrightarrow \mathcal{B}({\mathscr H})$ belongs to ${\rm Hol} ( D_{R'}(0),\mathcal{B}({\mathscr H}) )$. Then, so are the maps
$$
a\Bigg[ \sum_{\nu \in \Sigma_n} L_\nu \Bigg] : D_{R'}(0) \longrightarrow \mathcal{B}({\mathscr H}), \quad \theta \longmapsto \sum_{\nu \in \Sigma_n} 
{\rm e}^{i\theta A} L_\nu {\rm e}^{-i\theta A},
$$
for any $n \in {\mathbb N}$. Now, hypothesis \eqref{hye1} implies that $( a\big[ \sum_{\nu \in \Sigma_n} L_\nu \big] )_{n \in {\mathbb N}}$ converges 
uniformly to $a\big[ \sum_{\nu \in \Sigma} L_\nu \big]$ on $\overline{D_{R'}(0)}$, hence on any compact subset included in $D_{R'}(0)$. The claim follows 
from Lemma \ref{le1}.
\end{proof}

A first application of Proposition \ref{pe1} is the following result:
\begin{theo}\label{te1}
Let $\Sigma = {\mathbb N}$ or ${\mathbb Z}$. Let $(\varphi_n)_{n \in \Sigma}$ and $(\psi_n)_{n \in \Sigma}$ be two sequences of analytic vectors for $A$ 
such that for any $n\in \Sigma$ the series
$$
\sum_{k = 0}^\infty \frac{\vert \theta \vert^k}{k!} \big\Vert A^k \varphi_n \big\Vert, 
\qquad \sum_{k = 0}^\infty \frac{\vert \theta \vert^k}{k!} \big\Vert A^k \psi_n \big\Vert, \qquad n \in \Sigma,
$$
converge on $D_R(0)$ for some $R > 0$. Let $(\alpha_n)_{n \in \Sigma}$ be a complex sequence.
\begin{enumerate}
\item Assume that there exists $0 < R' < R$ such that
\begin{equation}\label{hye2}
\sum_{n \in \Sigma} \vert \alpha_n \vert \sup_{\theta \in \overline{D_{R'}(0)}} \big\Vert {\rm e}^{i\theta A} \varphi_n \big\Vert 
\big\Vert {\rm e}^{i \bar \theta A} \psi_n \big\Vert < \infty.
\end{equation}
Then, the operator $\sum_{n \in \Sigma} \alpha_n \vert \varphi_n \rangle \langle \psi_n \vert$ belongs to $\mathcal{A}_{R'}(A)$ with extension given by
$$
a \Bigg[ \sum_{n \in \Sigma} \alpha_n \vert \varphi_n \rangle \langle \psi_n \vert \Bigg] (\theta) = \sum_{n \in \Sigma} \alpha_n {\rm e}^{i\theta A} \vert \varphi_n \rangle \langle \psi_n \vert {\rm e}^{-i\theta A},
$$
for all $\theta \in D_{R'}(\theta)$.
\item Assume that there exists $0 < R' < R$ such that
\begin{equation}\label{hye3}
\sum_{n \in \Sigma} \vert \alpha_n \vert \sup_{\theta \in \overline{D_{R'}(0)}} \big\Vert {\rm e}^{i \bar \theta A} \psi_n \big\Vert< \infty.
\end{equation}
Then, for any fixed $m\in \Sigma$, the operator $\sum_{n \in \Sigma} \alpha_n \vert \varphi_m \rangle \langle \psi_n \vert$ belongs to $\mathcal{A}_{R'}(A)$, with extension given by
$$
a \Bigg[ \sum_{n \in \Sigma} \alpha_n \vert \varphi_m \rangle \langle \psi_n \vert \Bigg] (\theta) = \sum_{n \in \Sigma} \alpha_n {\rm e}^{i\theta A} \vert \varphi_m \rangle \langle \psi_n \vert {\rm e}^{-i\theta A},
$$
for all $\theta \in D_{R'}(\theta)$.
\end{enumerate}
\end{theo}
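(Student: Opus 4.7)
The strategy is to verify the hypotheses of Proposition \ref{pe1} applied to the sequence of rank-one operators comprising each sum.

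The first step is to establish that for any pair $(\varphi,\psi)$ of analytic vectors for $A$ with common radius of analyticity $R$, the rank-one operator $L = \vert\varphi\rangle\langle\psi\vert$ lies in $\mathcal{A}_R(A)$. For $\theta \in \R$, a direct computation gives $e^{i\theta A} L e^{-i\theta A} = \vert e^{i\theta A}\varphi\rangle\langle e^{i\theta A}\psi\vert$. By analyticity of $\varphi$, the map $\theta \mapsto e^{i\theta A}\varphi := \sum_{k \geq 0} (i\theta)^k A^k \varphi / k!$ extends holomorphically to $D_R(0)$; since the bra operation is antilinear, the map $\theta \mapsto \langle e^{i\bar\theta A}\psi\vert$ is also holomorphic on $D_R(0)$. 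Consequently, $\theta \mapsto \vert e^{i\theta A}\varphi\rangle\langle e^{i\bar\theta A}\psi\vert$ provides a holomorphic extension of $\theta \mapsto e^{i\theta A} L e^{-i\theta A}$ to $D_R(0)$, and by the elementary identity $\|\vert u\rangle\langle v\vert\| = \|u\|\cdot\|v\|$ the associated operator norm satisfies $\|e^{i\theta A} L e^{-i\theta A}\| \leq \|e^{i\theta A}\varphi\| \cdot \|e^{i\bar\theta A}\psi\|$.

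For statement (1), set $L_n := \alpha_n \vert\varphi_n\rangle\langle\psi_n\vert$, which belongs to $\mathcal{A}_R(A)$ by the above. Pick an exhaustion $(\Sigma_n)_{n\in\N}$ of $\Sigma$ by finite nested subsets (e.g., $\Sigma_n := \{-n,\ldots,n\}$ for $\Sigma = \mathbb{Z}$, or $\Sigma_n := \{0,\ldots,n\}$ for $\Sigma = \mathbb{N}$). The norm bound derived above combined with hypothesis \eqref{hye2} yields the summability condition \eqref{hye1} of Proposition \ref{pe1}, which then applies and gives $\sum_{n\in\Sigma} L_n \in \mathcal{A}_{R'}(A)$. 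Moreover, the proof of Proposition \ref{pe1} shows that the extension is the uniform limit on $\overline{D_{R'}(0)}$ of the partial sums of individual extensions, which is exactly the announced formula for $a\bigl[\sum_n L_n\bigr](\theta)$.

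For statement (2), with $L_n := \alpha_n \vert\varphi_m\rangle\langle\psi_n\vert$ for a fixed $m \in \Sigma$, the norm bound becomes $\|e^{i\theta A} L_n e^{-i\theta A}\| \leq |\alpha_n| \|e^{i\theta A}\varphi_m\| \|e^{i\bar\theta A}\psi_n\|$. Since $\varphi_m$ is a fixed analytic vector, $\sup_{\theta \in \overline{D_{R'}(0)}} \|e^{i\theta A}\varphi_m\| < \infty$, so assumption \eqref{hye3} again implies \eqref{hye1} and Proposition \ref{pe1} concludes. The only subtle point in either argument — not really an obstacle but a potential source of confusion — is the correct identification of the extension: the bra factor must read $\langle e^{i\bar\theta A}\psi\vert$ rather than $\langle e^{-i\theta A}\psi\vert$, reflecting the antilinearity of the bra operation on $\psi$ and ensuring joint holomorphy of the resulting operator-valued map.
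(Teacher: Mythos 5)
Your proposal is correct and follows essentially the same route as the paper: identify each rank-one term as an element of $\mathcal{A}_R(A)$ with holomorphic extension $\vert e^{i\theta A}\varphi\rangle\langle e^{i\bar\theta A}\psi\vert$, bound its norm by $\|e^{i\theta A}\varphi\|\,\|e^{i\bar\theta A}\psi\|$, and invoke Proposition \ref{pe1} with the exhaustion $\Sigma_n=\{k\in\Sigma:|k|\leq n\}$ so that \eqref{hye2} (resp. \eqref{hye3} together with boundedness of $\theta\mapsto\|e^{i\theta A}\varphi_m\|$ on $\overline{D_{R'}(0)}$) yields the summability hypothesis \eqref{hye1}. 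Your explicit remark on the antilinearity of the bra, forcing the factor $e^{i\bar\theta A}\psi_n$, is exactly the point underlying the paper's norm identity, so nothing is missing.
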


\noindent
\begin{proof}
The analyticity properties of the sequences $(\varphi_n)$ and $(\psi_n)$ reads: for any $(n,m) \in \Sigma \times \Sigma$, the operator $\vert \varphi_n \rangle \langle \psi_m \vert \in {\mathcal A}_R(A)$ and the associated holomorphic map satisfies
$$
a\big[ \vert \varphi_n \rangle \langle \psi_m \vert \big] (\theta) = {\rm e}^{i\theta A} \vert \varphi_n \rangle \langle \psi_m \vert {\rm e}^{-i \theta A},
$$
for any $\theta \in D_R(0)$. In the sequel, we only give the proof of the first statement. Since
$$
\big\Vert {\rm e}^{i\theta A} \vert \varphi_n \rangle \langle \psi_n \vert {\rm e}^{-i \theta A} \big\Vert = \big\Vert \vert {\rm e}^{i\theta A} 
\varphi_n \rangle \langle {\rm e}^{i \bar \theta A} \psi_n \vert \big\Vert = \big\Vert {\rm e}^{i\theta A} \varphi_n \big\Vert \big\Vert 
{\rm e}^{i \bar \theta A} \psi_n \big\Vert,
$$ 
for all $\theta \in D_R(0)$, it follows from \eqref{hye2} that
$$
\sum_{n \in \Sigma} \vert \alpha_n \vert \sup_{\theta \in \overline{D_{R'}(0)}} \big\Vert {\rm e}^{i\theta A} \vert \varphi_n \rangle \langle \psi_n \vert
{\rm e}^{-i \theta A} \big\Vert < \infty.
$$
Now, let us introduce the family of finite subsets $(\Sigma_n)_{n \in {\mathbb N}}$ defined by $\Sigma_n = \big\{ k \in \Sigma : |k|\leq n \big\}$. 
Thus, we have 
$$
\sum_{n \in \Sigma} \vert \alpha_n \vert \sup_{\theta \in \overline{D_{R'}(0)}} \big\Vert {\rm e}^{i\theta A} \vert \varphi_n \rangle \langle \psi_n \vert
{\rm e}^{-i \theta A} \big\Vert = \sum_{n \in {\mathbb N}} \sum_{\small{k \in \sum_n \setminus \sum_{n-1}}} \vert \alpha_k \vert \sup_{\theta \in 
\overline{D_{R'}(0)}} \big\Vert {\rm e}^{i\theta A} \vert \varphi_k \rangle \langle \psi_k \vert {\rm e}^{-i \theta A} \big\Vert,
$$
and the claim follows from Proposition \ref{pe1}.
\end{proof}

Finally, we conclude this paragraph with the following observation:
\begin{prop}\label{oconnor-0}
Let $R>0$ and $B: D_R(0) \rightarrow {\mathcal B}({\mathscr H})$ be analytic. Assume that for any $(\theta, \theta') \in (-R,R)\times (-R,R)$ such that $\theta+\theta' \in (-R,R)$, it holds:
$$
e^{i\theta' A} B(\theta ) e^{-i\theta' A} = B(\theta+\theta' ) .
$$
Then, 
\begin{itemize}
\item[(a)] given any $\theta' \in (-R,R)$, for any $\theta \in D_R(0) \cap D_R (-\theta')$: $e^{i\theta' A} B(\theta ) e^{-i\theta' A} = B(\theta+\theta' )$,
\item[(b)] for any $(\theta_1, \theta_2) \in D_R (0)$ such that $\im (\theta_1) = \im (\theta_2)$: $\sigma (B(\theta_1 )) = \sigma (B(\theta_2 ))$.
\end{itemize}
If in addition, $B$ is a projection-valued analytic function and $B(0)$ has finite rank, then any vector $\varphi \in$ Ran $B(0)$ is analytic w.r.t. $A$, and the power series 
$$
\sum_{k = 0}^\infty \frac{\vert \theta \vert^k}{k!} \big\Vert A^k \varphi \big\Vert 
$$
converges for any $\theta \in D_R(0)$.
\end{prop}

\noindent
\begin{proof} 
Fix $\theta' \in (-R,R)$. We observe that the maps $B$ and $\theta \mapsto B(\theta +\theta')$ are analytic on the open connected set $D_R(0) \cap D_R (-\theta')$ and coincide by hypothesis on the segment $(-R, R)\cap (-R -\theta', R-\theta')$. So, they coincide on $D_R(0) \cap D_R (-\theta')$, which proves (a). 

Let $(\theta_1, \theta_2) \in D_R (0)$ such that $\im (\theta_1) = \im (\theta_2)$. Define $\theta_0 := (\theta_1+ \theta_2)/2$ and $h:= (\theta_2- \theta_1)/2$. We observe that $\theta_2 -\theta_0 = \theta_0 - \theta_1 =  h \in (-R,R)$. Using (a), we deduce that: $e^{ih A} B(\theta_0 ) e^{-ih A} = B(\theta_2 )$ and $e^{ih A} B(\theta_1 ) e^{-ih A} = B(\theta_0 )$. Statement (b) follows by unitary invariance of the spectrum.

The last statement is a direct application of O'Connor Lemma, \cite{RS4} p.196.
\end{proof}

\begin{rem}\label{oconnor-1} Naturally, if $L\in {\mathcal A}_R (A)$ for some $R>0$, Proposition \ref{oconnor-0} applies to the map $B= a[L]$, with $B(0)=L$.
\end{rem}

\subsubsection{Applications}\label{ssec}

In this paragraph, we consider the case ${\mathscr H} = \ell^2({\mathbb Z})$ and $A=A_0$. We recall that 
$(e_n)_{n \in {\mathbb Z}}$ denotes the canonical 
orthonormal basis of $\ell^2({\mathbb Z})$.

First, we state the following result:
\begin{lem}\label{le2}
For any $n \in {\mathbb Z}$, the series
$$
\sum_{k = 0}^\infty \frac{\vert \theta \vert^k}{k!} \big\Vert A_0^k e_n \big\Vert
$$
converge on $D_\frac{1}{2}(0)$. In particular for any $n \in {\mathbb Z}$, $e_n$ is an analytic vector for $A_0$.
\end{lem}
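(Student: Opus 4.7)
The plan is to exploit the tridiagonal action of $A_0$ in the basis $(e_n)_{n \in \mathbb{Z}}$ and bound the iterated norms $\|A_0^k e_n\|$ by counting lattice paths with weights controlled by the position.

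First I would use the remark following the definition of $A_0$ to compute the action explicitly. Since $(\mathrm{Im}(S)x)(m) = \tfrac{1}{2i}(x(m+1)-x(m-1))$ and $Xe_n = ne_n$, a direct calculation gives
\begin{equation*}
A_0 e_n = \frac{2n-1}{2i}\, e_{n-1} - \frac{2n+1}{2i}\, e_{n+1}.
\end{equation*}
In particular each of the two coefficients is bounded in modulus by $|n|+1/2$, and the action preserves span$\{e_m : m \in \mathbb{Z}\}$. By induction, $A_0^k e_n$ is a finite linear combination $\sum_j c_{n,j}^{(k)} e_{n+j}$ with $|j|\leq k$ and $j\equiv k \pmod 2$.

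Next I would interpret $c_{n,j}^{(k)}$ combinatorially: it is a sum, over all nearest-neighbour lattice paths $n=n_0,n_1,\dots,n_k=n+j$ in $\mathbb{Z}$, of the product of $k$ hopping coefficients, each bounded by $|n_{i-1}|+1/2 \leq |n|+i$. Thus every path contributes at most $\prod_{i=1}^{k}(|n|+i) = (|n|+k)!/|n|!$, and the number of paths ending at $n+j$ equals $\binom{k}{(k+j)/2}$. Squaring, summing over $j$, and using the Vandermonde identity $\sum_{m=0}^k \binom{k}{m}^2 = \binom{2k}{k} \leq 4^k$ yields the clean estimate
\begin{equation*}
\|A_0^k e_n\| \leq \frac{(|n|+k)!}{|n|!}\, 2^k.
\end{equation*}

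Finally I would plug this into the target series and recognise the generating function of the binomial coefficients $\binom{|n|+k}{k}$:
\begin{equation*}
\sum_{k=0}^\infty \frac{|\theta|^k}{k!} \|A_0^k e_n\| \leq \sum_{k=0}^\infty \binom{|n|+k}{k}(2|\theta|)^k = \frac{1}{(1-2|\theta|)^{|n|+1}},
\end{equation*}
which is finite precisely when $|\theta|<1/2$, giving the claimed convergence. The main obstacle is to keep the constants sharp enough to recover the stated radius $1/2$: a crude bound $\|A_0^k e_n\| \lesssim 2^k(|n|+k)^k$ (counting each path by the maximum position) together with Stirling only produces radius $1/(2e)$; it is the combined use of the factorial bound along a path and the identity $\sum_m \binom{k}{m}^2 = \binom{2k}{k}$ that closes this gap.
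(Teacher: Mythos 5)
Your proof is correct and follows essentially the same route as the paper's: compute the tridiagonal action of $A_0$ on $e_n$, bound $\|A_0^k e_n\|$ by $2^k$ times a ratio of factorials, and sum the resulting binomial series to get the radius $\tfrac12$. If anything yours is slightly more careful: the paper simply writes $A_0 e_n = -in(e_{n+1}-e_{n-1})$ (dropping the $\mp\tfrac12$ in the coefficients) and states $\|A_0^k e_n\| \le 2^k (|n|+k-1)!/(|n|-1)!$ for $n\neq 0$, whereas your path estimate handles the exact coefficients and the case $n=0$ uniformly (and the Vandermonde step could even be replaced by a plain triangle inequality over the $2^k$ paths, giving the same bound).
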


\noindent
\begin{proof} First, note that for any $n\in {\mathbb Z},$
$$
A_0 e_n = -i \left( n+\frac{1}{2} \right) e_{n+1} +i \left( n-\frac{1}{2} \right) e_{n-1}.
$$
We deduce the following crude combinatorial estimate: for any $k\in {\mathbb Z}_+$, $n \in {\mathbb Z},$
$$
\|A_0^k e_n \| \leq 2^k \frac{(|n|+k)!}{|n|!}.
$$
The conclusion follows. 
\end{proof}

In the sequel, we let $\varphi_\theta$ (resp. $J(\varphi_\theta)$) denote the analytic extension w.r.t. $\theta \in D_{R_0}(0)$, $R_0 \le \frac{1}{2}$, of the 
flow $\varphi_\theta$ (resp. the Jacobian $J(\varphi_\theta)$) defined by \eqref{eq6,4}. Note that by \cite[Lemma 3.3]{sig}, such extensions exist and 
furthermore $R_0$ can be chosen such that $\sqrt{J(\varphi_\theta)}$ is analytic in $D_{R_0}(0)$.

\begin{lem}\label{le3}
Let $0 < R < R_0$. Then, there exists a constant $C > 0$ such that for any $n \in {\mathbb Z}$,
\begin{equation}
\sup_{\theta \in \overline{D_R(0)}} \big\Vert {\rm e}^{i\theta A_0} e_n \big\Vert \le C {\rm e}^{\vert n \vert \sup_{(\theta,\alpha) \in \overline{D_R(0)} \times {\mathbb T}}
\vert \im(\varphi_\theta(\alpha)) \vert}.
\end{equation}
\end{lem}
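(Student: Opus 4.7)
The plan is to reduce the $\ell^2(\mathbb{Z})$-norm estimate to an $L^2(\mathbb{T})$-estimate by unitarity of the Fourier transform $\mathcal{F}$ and then exploit the explicit representation \eqref{eq6,4} of $\e^{i\theta \widehat A_0}$, extended analytically to $D_{R_0}(0)$. Since $\|\e^{i\theta A_0}e_n\| = \|\e^{i\theta \widehat A_0}\mathcal F e_n\|$ and $(\mathcal F e_n)(\vartheta)=(2\pi)^{-1/2}\e^{-in\vartheta}$, the formula \eqref{eq6,4} reads, for $\theta \in \mathbb{R}$,
\begin{equation*}
\bigl(\e^{i\theta\widehat A_0}\mathcal F e_n\bigr)(\vartheta) = (2\pi)^{-1/2}\, \e^{-in\varphi_\theta(\vartheta)}\sqrt{J(\varphi_\theta)(\vartheta)}.
\end{equation*}

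Next, I would extend this identity to $\theta \in D_{R_0}(0)$. For fixed $\vartheta\in\mathbb T$, the right-hand side is holomorphic in $\theta$ on $D_{R_0}(0)$ (by the choice of $R_0$ ensuring that both the flow $\varphi_\theta$ and the square root $\sqrt{J(\varphi_\theta)}$ admit holomorphic extensions, cf.\ \cite[Lemma 3.3]{sig}), and the extended right-hand side is jointly continuous on $\overline{D_R(0)}\times \mathbb T$ for any $R<R_0$. On the other hand, Lemma \ref{le2} tells us that $e_n$ is an analytic vector for $A_0$ on some disk (in fact on $D_{1/2}(0)$), so $\theta\mapsto \e^{i\theta A_0}e_n\in\ell^2(\mathbb Z)$ admits a holomorphic extension which, after Fourier transform, gives a holomorphic $L^2(\mathbb T)$-valued function. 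Both extensions agree with the real-$\theta$ formula above, so they coincide on $D_{R_0}(0)$ (an identity theorem argument, e.g.\ by pairing against any $\psi\in L^2(\mathbb T)$ and using analyticity of the resulting scalar function).

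With the formula at hand for complex $\theta$, I bound the norm directly. For $\theta\in\overline{D_R(0)}$, using $|\e^{-in\varphi_\theta(\vartheta)}| = \e^{n\,\im\varphi_\theta(\vartheta)} \leq \e^{|n|\,|\im\varphi_\theta(\vartheta)|}$, I obtain
\begin{equation*}
\bigl\|\e^{i\theta A_0}e_n\bigr\|^2 = \frac{1}{2\pi}\int_{\mathbb T} \bigl|\e^{-in\varphi_\theta(\vartheta)}\bigr|^2 \, \bigl|J(\varphi_\theta)(\vartheta)\bigr|\, d\vartheta \leq M\, \e^{2|n|K},
\end{equation*}
where
\begin{equation*}
M := \sup_{(\theta,\vartheta)\in\overline{D_R(0)}\times\mathbb T} \bigl|J(\varphi_\theta)(\vartheta)\bigr|,\qquad K := \sup_{(\theta,\vartheta)\in\overline{D_R(0)}\times\mathbb T} \bigl|\im\varphi_\theta(\vartheta)\bigr|.
\end{equation*}
Both suprema are finite because the relevant functions are continuous on the compact set $\overline{D_R(0)}\times\mathbb T$. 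Taking $C := \sqrt{M}$ and square roots yields the claimed bound.

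The one non-routine point is the justification that the explicit formula \eqref{eq6,4} persists under analytic continuation to $D_{R_0}(0)$: it relies on the holomorphic extensions of $\varphi_\theta$ and $\sqrt{J(\varphi_\theta)}$ quoted from \cite{sig} and on the fact that $e_n$ is an analytic vector for $A_0$ (Lemma \ref{le2}), so that both sides make sense and agree as holomorphic $L^2(\mathbb T)$-valued functions of $\theta$. Once that is in place, the remaining estimate is a direct calculation exploiting compactness of $\overline{D_R(0)}\times\mathbb T$.
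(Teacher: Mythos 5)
Your proof is correct and follows essentially the same route as the paper: transfer to $L^2({\mathbb T})$ via $\mathcal F$, extend formula \eqref{eq6,4} to complex $\theta$ by uniqueness of the analytic continuation (using Lemma \ref{le2} and the extensions of $\varphi_\theta$, $\sqrt{J(\varphi_\theta)}$ from \cite{sig}), and then bound $|{\rm e}^{-in\varphi_\theta(\vartheta)}|$ and the Jacobian by their suprema over the compact set $\overline{D_R(0)}\times{\mathbb T}$. Your write-up is if anything slightly more detailed than the paper's on the continuation step, and the final estimate matches.
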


\noindent
\begin{proof}
First, observe that $e_n$, $n \in {\mathbb Z}$, is an analytic vector for $A_0$ if and only if ${\rm e}^{in \cdot}$, $n \in {\mathbb Z}$, is an analytic vector for $\widehat A_0$. Thus, 
it follows from the uniqueness of the analytic extension and \eqref{eq6,4} that for any $n \in {\mathbb Z}$, 
we have
$$
\left( \e^{i\theta\widehat A_0} {\rm e}^{in \cdot} \right)(\alpha) = {\rm e}^{in \varphi_\theta(\alpha)} \sqrt{J(\varphi_\theta)(\alpha)},
\qquad (\theta,\alpha) \in D_R(0) \times {\mathbb T}.
$$
Moreover, since for $\theta \in D_R(0)$ fixed we have
$$
\big\Vert \e^{i\theta A_0} e_n \big\Vert = \big\Vert \e^{i\theta\widehat A_0} {\rm e}^{in \cdot} \big\Vert \le \sup_{\alpha \in {\mathbb T}} \sqrt{ \vert J(\varphi_\theta)(\alpha) \vert}
{\rm e}^{\vert n \vert \sup_{\alpha \in {\mathbb T}} \vert \im(\varphi_\theta(\alpha)) \vert},
$$
then the claim follows.
\end{proof}

\begin{prop}\label{pe2}
Let $\Sigma = {\mathbb N}$ or ${\mathbb Z}$ and $(\alpha_n)_{n \in \Sigma}$ be a complex sequence satisfying $\sup_{n \in \Sigma} {\rm e}^{\delta \vert n \vert} \vert \alpha_n \vert < \infty$, for some constant $\delta > 0$. Then, there exists $R_\delta > 0$ such that:
\begin{itemize}
\item[(a)] The operator $\sum_{n \in \Sigma} \alpha_n \vert e_n \rangle \langle e_n \vert$ belongs to ${\mathcal A}_{R_\delta}(A_0)$ with extension given by
$$
\theta \longmapsto \sum_{n \in \Sigma} \alpha_n {\rm e}^{i\theta A_0} \vert e_n \rangle \langle e_n \vert {\rm e}^{-i\theta A_0}.
$$
\item[(b)] For any $m \in \Sigma$ fixed, the operator $\sum_{n \in \Sigma} \alpha_n \vert e_n \rangle \langle e_m \vert$ belongs to ${\mathcal A}_{R_\delta}(A_0)$ with extension given by
$$
\theta \longmapsto \sum_{n \in \Sigma} \alpha_n {\rm e}^{i\theta A_0} \vert e_n \rangle \langle e_m \vert {\rm e}^{-i\theta A_0}.
$$
\end{itemize}
\end{prop}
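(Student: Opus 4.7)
The plan is to derive Proposition~\ref{pe2} as a direct application of Theorem~\ref{te1}, with the sequences $(\varphi_n)$ and $(\psi_n)$ chosen among the canonical basis vectors $(e_n)_{n\in \Sigma}$. By Lemma~\ref{le2}, each $e_n$ is indeed analytic for $A_0$ with convergence radius at least $1/2$, so the abstract hypotheses of Theorem~\ref{te1} are available provided we can check the summability conditions \eqref{hye2} and \eqref{hye3} uniformly on some disk $\overline{D_{R_\delta}(0)}$.

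The core ingredient will be Lemma~\ref{le3}, which provides, for each $0 < R < R_0$, the bound
\begin{equation*}
\sup_{\theta \in \overline{D_R(0)}} \bigl\| {\rm e}^{i\theta A_0} e_n \bigr\| \le C\, {\rm e}^{|n| \, M_R}, \qquad M_R := \sup_{(\theta,\vartheta) \in \overline{D_R(0)} \times {\mathbb T}} \bigl| \im\bigl( \varphi_\theta(\vartheta) \bigr) \bigr|,
\end{equation*}
with an analogous estimate valid when $\theta$ is replaced by $\bar\theta$ (the disk $\overline{D_R(0)}$ being stable under complex conjugation). Since $\varphi_0 = \mathrm{id}_{\mathbb T}$ is real-valued and $(\theta,\vartheta) \mapsto \varphi_\theta(\vartheta)$ is jointly continuous on $\overline{D_{R_0}(0)} \times {\mathbb T}$, we have $M_R \to 0$ as $R \to 0^+$. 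We therefore fix $R_\delta \in (0, R_0)$ small enough so that $2 M_{R_\delta} < \delta$.

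With this choice of $R_\delta$, the hypothesis \eqref{hye2} for statement (a) is verified via
\begin{equation*}
\sum_{n \in \Sigma} |\alpha_n| \sup_{\theta \in \overline{D_{R_\delta}(0)}} \bigl\| {\rm e}^{i\theta A_0} e_n \bigr\| \bigl\| {\rm e}^{i\bar\theta A_0} e_n \bigr\| \le C^2 \gamma \sum_{n \in \Sigma} {\rm e}^{-(\delta - 2M_{R_\delta}) |n|} < \infty,
\end{equation*}
and Theorem~\ref{te1}(1) delivers both the membership in ${\mathcal A}_{R_\delta}(A_0)$ and the announced formula for the holomorphic extension. For statement (b), the hypothesis \eqref{hye3} holds under the weaker requirement $M_{R_\delta} < \delta$ (which is a fortiori satisfied), since
\begin{equation*}
\sum_{n \in \Sigma} |\alpha_n| \sup_{\theta \in \overline{D_{R_\delta}(0)}} \bigl\| {\rm e}^{i\bar\theta A_0} e_n \bigr\| \le C \gamma \sum_{n \in \Sigma} {\rm e}^{-(\delta - M_{R_\delta}) |n|} < \infty,
\end{equation*}
and Theorem~\ref{te1}(2) applies for each fixed $m \in \Sigma$.

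The only genuine point to justify carefully is the behaviour $M_R \to 0$ as $R \to 0^+$; the rest is bookkeeping. This is the main (mild) obstacle: although it follows from the joint continuity of the extended flow $\varphi_\theta(\vartheta)$ at $\theta = 0$ (with $\varphi_0 \equiv \mathrm{id}_{\mathbb T}$ real), one may prefer to quote here the quantitative estimate $M_R \le c R$ for some constant $c > 0$, which is readily obtained by integrating $\partial_\theta \varphi_\theta(\vartheta) = 2 \sin( \varphi_\theta(\vartheta) )$ along a radial segment in $D_{R_0}(0)$ and applying a Gronwall-type argument. One can then take $R_\delta = \delta/(3c)$ explicitly. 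Once this quantitative control is in hand, both parts of Proposition~\ref{pe2} reduce to a verbatim invocation of Theorem~\ref{te1}.
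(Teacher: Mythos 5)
Your proof is correct and follows essentially the same route as the paper: reduce via Lemma \ref{le2} and Theorem \ref{te1} to the summability conditions \eqref{hye2}--\eqref{hye3}, then control $\sup_{\theta}\| {\rm e}^{i\theta A_0} e_n\|$ through Lemma \ref{le3} after shrinking the disk so that the imaginary part of the extended flow is small compared to $\delta$ (the paper takes $\Gamma=\delta/4$ where you require $2M_{R_\delta}<\delta$, a cosmetic difference). Your explicit treatment of case (b) and the optional Gronwall-type quantitative bound on $M_R$ are fine additions but do not change the argument.
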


\noindent
\begin{proof}
We only prove the first point. According to Lemma \ref{le2} and Theorem \ref{te1}, it is enough to show that there 
exists $0 < R_\delta < 1/2$ such that
\begin{equation}\label{hye2'}
\sum_{n \in \Sigma} \vert \alpha_n \vert \sup_{\theta \in \overline{D_{R_\delta}(0)}} \big\Vert {\rm e}^{i\theta A_0} e_n \big\Vert \big\Vert {\rm e}^{i \bar \theta A_0} 
e_n \big\Vert < \infty.
\end{equation}
Let $R_0$ be a constant as in Lemma \ref{le3}, and note that for any constant $\Gamma > 0$, there exists $0 < R_\Gamma < R_0$ such 
that we have 
\begin{equation}\label{eq:g}
\sup_{(\theta,\alpha) \in \overline{D_{R_\Gamma}(0)} \times {\mathbb T}} \vert \im(\varphi_\theta(\alpha)) \vert < \Gamma.
\end{equation}
In particular, by taking $\Gamma = \delta/4$ in \eqref{eq:g} and by using Lemma \ref{le3}, it follows that there exists $0 < R_\delta < R_0$ such that
\begin{equation}\label{eqno}
\sup_{\theta \in \overline{D_{R_\delta}(0)}} \big\Vert {\rm e}^{i\theta A_0} e_n \big\Vert \le C {\rm e}^{\frac{\delta}{4}\vert n \vert}, \qquad C > 0.
\end{equation}
Since $\sup_{n \in \Sigma} {\rm e}^{\delta \vert n \vert} \vert \alpha_n \vert < \infty$, this implies \eqref{hye2'} and the claim holds.
\end{proof}

\begin{prop}\label{pe3}
Let $B\in {\mathcal B}(\ell^{2}({\mathbb Z}))$. Assume there exists $\delta > 0$ such that
\begin{equation}\label{eq1,7++}
\sup_{(n,m) \in {\mathbb Z}^2} {\rm e}^{\delta (\vert n \vert + \vert m \vert)} \big\vert B(n,m) \big\vert < \infty ,
\end{equation}
where $B(n,m) := \bra e_n, B e_m \ket$ for $(n,m)\in {\mathbb Z}^2$. Then, there exists $R_\delta > 0$ such that $B$ belongs to ${\mathcal A}_{R_\delta}(A_0)$ with extension given by
$$
\theta \longmapsto \sum_{n \in {\mathbb Z}} \sum_{m \in {\mathbb Z}} B(n,m) \, {\rm e}^{i\theta A_0} 
\vert e_n \rangle \langle e_m \vert {\rm e}^{-i\theta A_0}.
$$
\end{prop}

\noindent
\begin{proof}
Canonically, the operator $B$ can be written as 
$$
B = \sum_{n \in {\mathbb Z}} \sum_{m \in {\mathbb Z}} B(n,m) \vert e_n \rangle \langle e_m \vert
= \sum_{m \in {\mathbb Z}} L_m,
$$
where $L_m := \sum_{n \in {\mathbb Z}} B(n,m) \vert e_n \rangle \langle e_m \vert$. Due to \eqref{eq1,7++}, it follows from Proposition \ref{pe2} (ii) that $L_m \in {\mathcal A}_{R_\delta}(A_0)$ for some $R_\delta > 0$, with extension given by
$$
\theta \longmapsto \sum_{m \in {\mathbb Z}} B(n,m) \, {\rm e}^{i\theta A_0} 
\vert e_n \rangle \langle e_m \vert {\rm e}^{-i\theta A_0}.
$$
Now, let $\Sigma_n$, $n \in {\mathbb N}$, be the finite subsets defined by $\Sigma_n = \big\{ k\in {\mathbb Z} : |k|\leq n \big\}$. We have
\begin{align*}
\sum_{n \in {\mathbb N}} \sum_{\small{m \in \sum_n \setminus \sum_{n-1}}} 
\sup_{\theta \in \overline{D_{R_\delta}(0)}} & \big\Vert {\rm e}^{i\theta A_0} L_m {\rm e}^{-i \theta A_0} \big\Vert = \sum_{m \in {\mathbb Z}} \sup_{\theta \in 
\overline{D_{R_\delta}(0)}} \big\Vert {\rm e}^{i\theta A_0} L_m {\rm e}^{-i \theta A_0} \big\Vert \\
& \le \sum_{n \in {\mathbb Z}} \sum_{m \in {\mathbb Z}} \vert B(n,m) \vert \sup_{\theta \in \overline{D_{R_\delta}(0)}} \big\Vert {\rm e}^{i\theta A_0} \vert e_n \rangle 
\langle e_m \vert {\rm e}^{-i \theta A_0} 
\big\Vert \\
& = \sum_{n \in {\mathbb Z}} \sum_{m \in {\mathbb Z}} \vert B(n,m) \vert \sup_{\theta \in \overline{D_{R_\delta}(0)}} \big\Vert {\rm e}^{i\theta A_0} e_n \big\Vert 
\big\Vert {\rm e}^{i \bar \theta A_0} e_m \big\Vert < \infty,
\end{align*}
where $R_\delta$ has been chosen as in the proof of Proposition \ref{pe2}, so that
\eqref{eqno} holds. Thus, the claim follows from Proposition \ref{pe1}.
\end{proof}

\begin{cor}\label{analyticv} Let $\varphi \in {\mathscr H}$. Suppose there exists $\beta >0$ such that: $\sup_{n\in {\mathbb Z}} e^{\beta |n|} |\bra \varphi, e_n \ket | < \infty$. Then, $\varphi$ is an analytic vector for $A_0$ and the power series 
$$
\sum_{k = 0}^\infty \frac{\vert \theta \vert^k}{k!} \big\Vert A_0^k \varphi \big\Vert 
$$
converges for any $| \theta | < R_{\beta}$, for some $R_{\beta} >0$.
\end{cor}

\noindent
\begin{proof} Write $L= |\varphi \ket \bra \varphi |$. For all $(n,m)\in {\mathbb Z}^2$, $L(n,m) := \bra e_n, L e_m \ket = \overline{\bra \varphi, e_n \ket} \bra \varphi, e_m \ket$. In particular, $$\sup_{(n,m) \in {\mathbb Z}^2} {\rm e}^{\beta (\vert n \vert + \vert m \vert)} \big\vert L(n,m) \big\vert < \infty .$$Applying Proposition \ref{pe3}, we deduce $L\in {\mathcal A}_{R_\beta}(A_0)$ for some $R_{\beta} >0$. The conclusion follows from Proposition \ref{oconnor-0} and Remark \ref{oconnor-1}.
\end{proof}

\subsection{The $C^k(A)$ and ${\mathcal C}^{s,p}(A)$ classes}\label{ecsmoo}

The classes $C^k(A)$ and ${\mathcal C}^{1,1}(A)$, $k\in {\mathbb N}$, have been introduced in Definitions \ref{ck} and \ref{c11}. Let us consider also
the following definition:

\begin{de}\label{c01}
Let ${\mathscr H}$ be a Hilbert space and $A$ be a selfadjoint operator defined on ${\mathscr H}$. An operator $B \in \mathcal{B}({\mathscr H})$ 
belongs to ${\cal C}^{0,1}(A)$ if
\begin{equation*}
\int_0^1 \big\| e^{iA\theta}B e^{-iA\theta}-B \big\| \,\frac{d\theta}{\theta} < \infty .
\end{equation*}
\end{de}
We recall that these classes are linear subspaces of ${\mathcal B}({\mathscr H})$, stable under adjunction, according to \cite[Chapter 5]{abmg}. The results exposed in this paragraph are meant to illustrate Theorem \ref{c11sign}.

We start by introducing the sequence subspaces:
\begin{equation*}
\begin{split}
{\mathcal Q}_k ({\mathbb Z}) & := \Bigg\{ x \in {\mathbb C}^{\mathbb Z} : \sum_{j=0}^k q_j(x) < \infty \Bigg\},\quad k\in \{0,1,2\},\\
c_0 (\mathbb Z) & := \Big\{ x \in {\mathbb C}^{\mathbb Z} : \lim_{|n| \rightarrow \infty} x(n) =0 \Big\}, 
\end{split}
\end{equation*}
where the maps $q_k : {\mathbb C}^{\mathbb Z} \rightarrow [0,\infty]$, $k\in \{0,1,2\}$ are defined by $q_0 (x) := \sup_{n \in {\mathbb Z}} \big| x(n) \big|$, 
$q_1(x) := \sup_{n \in {\mathbb Z}} \big| n ( x(n+1)-x(n) ) \big|$ and $q_2(x) := \sup_{n \in {\mathbb Z}} \big| n^2 ( x(n+1)-2x(n)+x(n-1) ) \big|$. 
Direct calculations yield:

\begin{lem}\label{mock} Consider the linear operator $V$ defined on the canonical orthonormal basis of $\ell^2({\mathbb Z})$ by $Ve_n =v_n e_n$, $n\in {\mathbb Z}$. Denote $v = (v_n)_{n\in {\mathbb Z}}$.
\begin{enumerate}
\item[(a)] If $v\in {\mathcal Q}_k ({\mathbb Z})$ for some $k\in \{0,1,2\}$, then $V\in C^k(A_0)$.
\item[(b)] If $v\in c_0({\mathbb Z})$, then $V$ is compact.
\end{enumerate}
\end{lem}
\begin{rem} In Lemma \ref{mock}, we have: $C^0(A_0) = {\mathcal B} (l^2({\mathbb Z}))$.
\end{rem}

\begin{lem}\label{rank1ck} Let $N\in {\mathbb N}$ and consider two finite families of vectors $(\varphi_k)_{k=1}^N$ and $(\psi_k)_{k=1}^N$ whose elements belong to ${\mathcal D}(A_0^k)$ for some $k\in {\mathbb N}$. Then, for any $(\beta_1, \ldots, \beta_N)\in {\mathbb C}^N,$
\begin{equation}
V=\sum_{k=1}^N \beta_k |\psi_k\ket \bra \varphi_k |\in C^k(A_0) \, .
\end{equation}
\end{lem}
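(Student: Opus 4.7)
The plan is to reduce the statement to the rank-one case by linearity of the class $C^k(A_0)$ (recall that $C^k(A)$ is a linear subspace of $\mathcal B(\mathcal H)$), and then exploit the identity
\begin{equation*}
e^{i\theta A_0}\,|\psi\ket\bra\varphi|\,e^{-i\theta A_0} = |e^{i\theta A_0}\psi\ket\bra e^{i\theta A_0}\varphi|,\qquad \theta\in\R,
\end{equation*}
which converts the question into one about smoothness of the orbit maps $\theta\mapsto e^{i\theta A_0}\psi$ and $\theta\mapsto e^{i\theta A_0}\varphi$ in $\mathcal H$. The key analytic input is Stone's theorem (see e.g.\ \cite[Theorem VIII.7]{RS4}), which guarantees that for $\psi\in\mathcal D(A_0^k)$ the map $\theta\mapsto e^{i\theta A_0}\psi$ is $k$-times continuously differentiable from $\R$ to $\mathcal H$ with respect to the norm topology, with successive derivatives
\begin{equation*}
\tfrac{d^j}{d\theta^j}(e^{i\theta A_0}\psi) = (iA_0)^j e^{i\theta A_0}\psi,\qquad 0\le j\le k,
\end{equation*}
and similarly for $\varphi$.

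Next I would combine these smoothness properties via the Leibniz rule. The map $(u,v)\mapsto |u\ket\bra v|$ from $\mathcal H\times\mathcal H$ into $\mathcal B(\mathcal H)$ is continuous and bilinear (up to antilinearity in $v$), with
\begin{equation*}
\bigl\||u_1\ket\bra v_1| - |u_2\ket\bra v_2|\bigr\| \le \|u_1-u_2\|\,\|v_1\| + \|u_2\|\,\|v_1-v_2\|,
\end{equation*}
so that composing with the $C^k$ orbit maps produces a map $\theta\mapsto|e^{i\theta A_0}\psi\ket\bra e^{i\theta A_0}\varphi|$ which is $k$-times continuously differentiable in the operator norm topology. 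By iterating the product rule, the $j$-th derivative ($j\le k$) is explicitly
\begin{equation*}
\sum_{l=0}^{j}\binom{j}{l}\,\bigl|(iA_0)^{l}e^{i\theta A_0}\psi\bigr\ket\bigl\bra (-iA_0)^{j-l}e^{i\theta A_0}\varphi\bigr|,
\end{equation*}
where the antilinearity in the bra slot accounts for the sign change. Norm continuity is a fortiori strong continuity, so the rank-one operator $|\psi\ket\bra\varphi|$ belongs to $C^k(A_0)$, and the general case follows by linearity.

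There is no substantive obstacle here: the only point requiring minimal care is the interaction of the antilinear slot with the derivative (yielding the $(-iA_0)^{j-l}$ rather than $(iA_0)^{j-l}$), and the verification that the Leibniz-type expansion indeed converges in operator norm. Both are immediate consequences of the bilinear estimate above together with Stone's theorem applied separately to $\psi$ and $\varphi$.
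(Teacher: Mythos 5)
Your argument is correct and is essentially the direct computation the paper leaves implicit: reduce to rank one by linearity of $C^k(A_0)$, use $e^{i\theta A_0}|\psi\rangle\langle\varphi|e^{-i\theta A_0}=|e^{i\theta A_0}\psi\rangle\langle e^{i\theta A_0}\varphi|$, invoke Stone's theorem for the norm-$C^k$ regularity of the orbit maps on ${\mathcal D}(A_0^k)$, and conclude via the norm-continuous sesquilinear map $(u,v)\mapsto|u\rangle\langle v|$; norm differentiability then gives strong differentiability, which is all that Definition \ref{ck} requires. The only blemish is the sign in your explicit Leibniz formula: since the increment in $\theta$ is real, antilinearity of the bra slot plays no role in the differentiation, and the $j$-th derivative is $\sum_{l=0}^{j}\binom{j}{l}\,|(iA_0)^{l}e^{i\theta A_0}\psi\rangle\langle (iA_0)^{j-l}e^{i\theta A_0}\varphi|$, i.e.\ the vector placed in the bra is $(iA_0)^{j-l}e^{i\theta A_0}\varphi$ (equivalently, the operator $(-iA_0)^{j-l}$ appears only if you let it act to the right of the bra, on the incoming vector), not $(-iA_0)^{j-l}e^{i\theta A_0}\varphi$ as you wrote. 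This sign slip is harmless for the membership claim $V\in C^k(A_0)$, so the proof stands.
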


Now, consider the sets of sequences ${\mathcal S} ({\mathbb Z})=\cup_{0 < a < b <\infty} {\mathcal S}_{a,b} ({\mathbb Z})$ and ${\mathcal M}=\cup_{0 < a < b<\infty}  {\mathcal M}_{a,b} ({\mathbb Z})$, where
\begin{equation}
\begin{split}
{\mathcal S}_{a,b} ({\mathbb Z}) & := \Bigg\{ x \in {\mathbb C}^{\mathbb Z} : \int_1^{\infty} \sup_{a r\leq n \leq b r} |x(n) |\, dr < \infty \Bigg\},\\
{\mathcal M}_{a,b} ({\mathbb Z}) & := \Bigg\{ x \in {\mathbb C}^{\mathbb Z} : \int_1^{\infty} \sup_{a r\leq n \leq b r} |x(n+1) -x(n)|\, dr < \infty \Bigg\} .
\end{split}
\end{equation}
We have:
\begin{prop}\label{moc11} Consider the linear operator $V$ defined on the canonical orthonormal basis of $\ell^2({\mathbb Z})$ by $Ve_n =v_n e_n$, $n\in {\mathbb Z}$. Denote $v = (v_n)_{n\in {\mathbb Z}}$.
\begin{enumerate}
\item[(a)] If $v \in {\mathcal S} ({\mathbb Z})$, then $V \in {\mathcal C}^{1,1}(A_0)$.
\item[(b)] If $v \in {\mathcal M} ({\mathbb Z}) \cap {\mathcal Q}_1 ({\mathbb Z})$, then $V \in C^1(A_0)$ and $\mathrm{ad}_{A_0} (V) \in {\mathcal C}^{0,1}(A_0)$. In particular, $V \in {\mathcal C}^{1,1}(A_0)$.
\end{enumerate}
\end{prop}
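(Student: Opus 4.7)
The plan is to decompose the diagonal sequence $v$ into dyadic blocks and treat each block separately, in the spirit of the potential criteria developed in \cite{abmg}. Fix $0<a<b$ with $b>4a$ such that $v\in{\mathcal S}_{a,b}$ (for (a)) or $v\in{\mathcal M}_{a,b}$ (for (b)), and choose a smooth partition of unity $(\phi_j)_{j\ge 0}$ on ${\mathbb Z}$ with $\phi_j$ supported on $\{n\in{\mathbb Z}:|n|\in[2a\cdot 2^j,b\cdot 2^j]\}$ for $j\ge 1$. Set $v_j=v\phi_j$ and let $V_j$ be the corresponding diagonal operator. A dyadic-integral comparison shows that $v\in{\mathcal S}_{a,b}$ is equivalent to $\sum_j 2^j\|v_j\|_\infty<\infty$, while $v\in{\mathcal M}_{a,b}$ is equivalent to $\sum_j 2^j\sup_{n\in\mathrm{supp}\,\phi_j}|v_{n+1}-v_n|<\infty$.

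For part (a), I would exploit the explicit formula $A_0=\im(S)X+X\im(S)$. Since $V_j$ is diagonal, $[X,V_j]=0$, and so $[A_0,V_j]$ reduces to the anticommutator $\{[\im(S),V_j],X\}$; the entries of $[\im(S),V_j]$ on the $\pm 1$ off-diagonals are first finite differences of $v_j$, which remain bounded by a multiple of $\|v_j\|_\infty$ and vanish outside an enlargement of the support of $\phi_j$. By iteration, each $\mathrm{ad}_{A_0}^k(V_j)$ is a band operator supported on $\{|n|\lesssim 2^j\}$ with entries of size $\lesssim\|v_j\|_\infty$, multiplied by up to $k$ position factors of size $\lesssim 2^j$, so that $\|\mathrm{ad}_{A_0}^k(V_j)\|\lesssim 2^{kj}\|v_j\|_\infty$ for $k=1,2$, and in particular $V_j\in C^2(A_0)$. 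Setting $W(\theta):=\e^{i\theta A_0}V_j\e^{-i\theta A_0}$, Taylor's formula yields $\|W(\theta)+W(-\theta)-2V_j\|\le\theta^2\|\mathrm{ad}_{A_0}^2(V_j)\|\lesssim 2^{2j}\theta^2\|v_j\|_\infty$, while one always has the trivial bound $4\|v_j\|_\infty$. Splitting the defining integral of ${\mathcal C}^{1,1}(A_0)$ at $\theta\sim 2^{-j}$ gives $\|V_j\|_{{\mathcal C}^{1,1}(A_0)}\lesssim 2^j\|v_j\|_\infty$, and summing using $v\in{\mathcal S}_{a,b}$ shows that $V$ is the limit of its partial sums in the ${\mathcal C}^{1,1}(A_0)$ norm.

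For part (b), Lemma \ref{mock}(c) together with $v\in{\mathcal Q}_1$ gives $V\in C^1(A_0)$, so it suffices to show $\mathrm{ad}_{A_0}(V)\in{\mathcal C}^{0,1}(A_0)$ and invoke the standard abstract inclusion from \cite{abmg} that $V\in C^1(A)$ together with $\mathrm{ad}_A(V)\in{\mathcal C}^{0,1}(A)$ implies $V\in{\mathcal C}^{1,1}(A)$. Direct computation identifies $\mathrm{ad}_{A_0}(V)$ as a band operator whose off-diagonal entries are, up to constants and index shifts, of the form $n(v_{n+1}-v_n)$. Applying the same dyadic decomposition to this band operator (with $v\in{\mathcal Q}_1$ used to control the cross-terms produced when the cutoff $\phi_j$ interacts with the position factor), I obtain pieces $W_j$ satisfying $\|W_j\|\lesssim 2^j\sup_{|n|\sim 2^j}|v_{n+1}-v_n|$ and $\|\mathrm{ad}_{A_0}(W_j)\|\lesssim 2^{2j}\sup_{|n|\sim 2^j}|v_{n+1}-v_n|$. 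The analogous first-order interpolation in the defining integral of ${\mathcal C}^{0,1}(A_0)$, split at $\theta\sim 2^{-j}$, yields $\|W_j\|_{{\mathcal C}^{0,1}(A_0)}\lesssim 2^j\sup_{|n|\sim 2^j}|v_{n+1}-v_n|$, which is summable thanks to $v\in{\mathcal M}_{a,b}$.

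The main obstacle is the control of the cross-terms produced by the cutoffs $\phi_j$. The Leibniz-type identity $(v_j)_{n+1}-(v_j)_n=\phi_j(n+1)(v_{n+1}-v_n)+v_n(\phi_j(n+1)-\phi_j(n))$ shows that each finite difference of $v_j$ splits into a localized "good" term and an error of order $2^{-j}\|v_j\|_\infty$ coming from differences of $\phi_j$; one must verify that the latter, together with its analogues arising at higher orders (and in the computation of $\mathrm{ad}_{A_0}(W_j)$ in part (b)), is absorbed into the announced dyadic bounds without spoiling the summability — which is precisely what forces the prescribed choices $b>4a$ and the smoothness of the cutoff. This bookkeeping, while routine, constitutes the technical core of the proof.
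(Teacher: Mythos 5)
Your part (a) is correct, and it is in substance the proof of the result the paper invokes by citation (\cite[Theorem 7.5.8]{abmg}, transposed to $\ell^2({\mathbb Z})$): dyadic diagonal cutoffs $V_j$, the band-structure bounds $\|\mathrm{ad}_{A_0}^k(V_j)\|\lesssim 2^{kj}\|v_j\|_\infty$ for $k=1,2$ coming from $A_0=\im(S)X+X\im(S)$, the interpolation $\min\big(4\|v_j\|_\infty,\theta^2\|\mathrm{ad}_{A_0}^2(V_j)\|\big)$ inside the defining integral, and summation. Here the Leibniz cross-terms you worry about are in fact harmless, since only $\|v_j\|_\infty$ (not differences of $v$) enters; the one point you leave implicit is that one may indeed choose $b>4a$ with $v\in{\mathcal S}_{a,b}$, which requires the easy remark that the classes ${\mathcal S}_{a,b}$ do not depend on $0<a<b$ (cover a window $[ar,br]$ by boundedly many windows $[a_0s,b_0s]$).

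Part (b), however, has a genuine gap at the summation step. With $\delta_j:=\sup_{|n|\sim 2^j}|v_{n+1}-v_n|$, $M_j:=\|W_j\|\lesssim 2^j\delta_j$ and $\|\mathrm{ad}_{A_0}(W_j)\|\lesssim 2^{2j}\delta_j$, the ${\mathcal C}^{0,1}$ integral of a single block is $\int_0^1\min\big(2M_j,\theta\,2^{2j}\delta_j\big)\,\theta^{-1}d\theta \sim 2^j\delta_j + M_j\int_{2^{-j}}^1\theta^{-1}d\theta \sim j\,2^j\delta_j$: the weight $d\theta/\theta$ produces a logarithm $j\ln 2$, so the blockwise bound is $j\,2^j\delta_j$, not $2^j\delta_j$ as you claim. (In part (a) no logarithm appears because $\int_{2^{-j}}^1\theta^{-2}d\theta\le 2^j$ is dominated by its lower endpoint; the ${\mathcal C}^{0,1}$ weight $\theta^{-1}$ is exactly the borderline case, so the "analogous first-order interpolation" is not analogous.) The hypotheses only give $\sum_j 2^j\delta_j<\infty$, and $\sum_j j\,2^j\delta_j$ may diverge under $v\in{\mathcal M}\cap{\mathcal Q}_1$ (take $|v_{n+1}-v_n|\sim 1/\big(n\log n(\log\log n)^2\big)$, so $2^j\delta_j\sim 1/(j(\log j)^2)$); for such $v$ your bounds do not sum and the argument does not close, even though the statement is true. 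The logarithm cannot be removed blockwise: conjugation by $e^{i\theta A_0}$ is isometric and, for $|\theta|\le 1$, essentially preserves the localization scale $|n|\sim 2^j$ of the block, so $\|e^{i\theta A_0}W_je^{-i\theta A_0}-W_j\|$ really is of size $M_j$ throughout $[2^{-j},1]$. Establishing $\mathrm{ad}_{A_0}(V)\in{\mathcal C}^{0,1}(A_0)$ from the Dini condition ${\mathcal M}$ therefore needs a different mechanism, which is precisely why the paper does not argue this way but quotes \cite{sah} for (b) (your reduction of (b) to this membership, via $C^1$ plus the abstract implication based on the inclusions (5.2.19) of \cite{abmg}, is fine). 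A secondary caveat: the blocks must be obtained by cutting the band operator $\mathrm{ad}_{A_0}(V)$ entrywise, not by setting $W_j=\mathrm{ad}_{A_0}(V\phi_j)$, since in the latter case the cross-terms $v\,\Delta\phi_j$ contribute $\sup_{|n|\sim 2^j}|v_n|$ per block, which ${\mathcal Q}_1$ does not render summable.
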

\noindent
\begin{proof} 
The first statement follows from \cite[Theorem 7.5.8]{abmg}. The second statement, which is based on the inclusions 5.2.19 in \cite{abmg}, was proved in \cite{sah}.
\end{proof}

Finally, let us consider the subset of vectors ${\mathcal D} = \cup_{0 < a < b <\infty} {\mathcal D}_{a,b}$, where
\begin{equation}
{\mathcal D}_{a,b} := \Bigg\{ \xi \in \ell^2({\mathbb Z}) : \int_{1}^{\infty} ( \sum_{n \in {\mathbb N}\cap [ar,br]}  | \xi_n |^2 )^{1/2}\, dr < \infty \Bigg\} .
\end{equation}
We have:
\begin{prop}\label{rank1c11} Let $N\in {\mathbb N}$ and consider two finite families of vectors $(\varphi_k)_{k=1}^N$ and $(\psi_k)_{k=1}^N$ whose elements belong to ${\mathcal D}$. Then, for any $(\beta_1, \ldots, \beta_N)\in {\mathbb C}^N,$
\begin{equation}
V=\sum_{k=1}^N \beta_k |\psi_k\ket \bra \varphi_k |\in {\mathcal C}^{1,1}(A_0) \, .
\end{equation}
\end{prop}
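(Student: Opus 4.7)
The plan is to reduce to the case $N=1$ by linearity, then expand ${\mathcal W}_\theta(V) + {\mathcal W}_{-\theta}(V) - 2V$ algebraically to isolate two integrability conditions on $\psi$ and $\varphi$ separately, and finally establish these via a dyadic decomposition on the canonical basis. Concretely, subadditivity of the norm in $\|{\mathcal W}_\theta(V) + {\mathcal W}_{-\theta}(V) - 2V\|$ reduces the claim to the case $V = |\psi\ket\bra\varphi|$ with $\psi, \varphi \in {\mathcal D}_{a,b}$ for some common $0<a<b$. Setting $\psi(\theta) := e^{i\theta A_0}\psi$ and $\varphi(\theta) := e^{i\theta A_0}\varphi$, and iterating the identity $|x\ket\bra y| - |x'\ket\bra y'| = |x - x'\ket\bra y| + |x'\ket\bra y - y'|$, I obtain
\begin{align*}
{\mathcal W}_\theta(V) + {\mathcal W}_{-\theta}(V) - 2V &= |\psi(\theta)+\psi(-\theta)-2\psi\ket\bra\varphi| + |\psi\ket\bra\varphi(\theta)+\varphi(-\theta)-2\varphi| \\
&\quad + |\psi(\theta)-\psi\ket\bra\varphi(\theta)-\varphi| + |\psi(-\theta)-\psi\ket\bra\varphi(-\theta)-\varphi|.
\end{align*}
Taking norms and applying Cauchy--Schwarz in $L^2(0,1)$ to the two cross terms, membership of $V$ in ${\mathcal C}^{1,1}(A_0)$ follows from the two estimates, valid for any $\xi \in {\mathcal D}_{a,b}$,
\begin{equation*}
\int_0^1 \frac{\|(e^{i\theta A_0}+e^{-i\theta A_0}-2)\xi\|}{\theta^2}\,d\theta < \infty \quad \text{and} \quad \int_0^1 \frac{\|(e^{i\theta A_0}-1)\xi\|^2}{\theta^2}\,d\theta < \infty.
\end{equation*}

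To establish both estimates, I would introduce the decomposition $\xi = \xi_\ast + \sum_{k\geq 0} \xi_k$, where $\xi_\ast$ has finite support near the origin (hence, by Lemma \ref{le2}, is analytic for $A_0$ and produces a harmless $O(\theta^2)$ contribution) and $\xi_k$ is the restriction of $\xi$ to the \emph{finite} dyadic annulus $\{n : a 2^k \leq |n| < a 2^{k+1}\}$. Since the explicit formula for $A_0$ on the canonical basis shows that $A_0 e_n$ has coefficients of order $|n|$ on $e_{n\pm 1}$, a direct computation gives $\|A_0^j \xi_k\| \leq C_j \, 2^{kj}\|\xi_k\|$ for $j=1,2$. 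Combining these bounds with the trivial unitary bounds and with the standard inequalities $\|(e^{i\theta A}-1)\eta\| \leq |\theta|\|A\eta\|$ and $\|(e^{i\theta A}+e^{-i\theta A}-2)\eta\| \leq \theta^2 \|A^2\eta\|$ valid on the appropriate domains, one derives
\begin{equation*}
\|(e^{i\theta A_0}-1)\xi_k\| \leq C\min(1, 2^k\theta)\|\xi_k\|, \quad \|(e^{i\theta A_0}+e^{-i\theta A_0}-2)\xi_k\| \leq C\min(1, 2^{2k}\theta^2)\|\xi_k\|.
\end{equation*}

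Splitting the integration at $\theta = 2^{-k}$ yields $\int_0^1 \min(1, 2^{2k}\theta^2)/\theta^2\,d\theta \sim 2^k$. Applying the triangle inequality for the first integral and Minkowski's integral inequality for the second, both quantities are then controlled by $C\sum_{k\geq 0} 2^k\|\xi_k\|$. A change of variable $r = 2^s$ in the definition of ${\mathcal D}_{a,b}$ shows that this dyadic series is comparable, up to absolute constants, to the defining integral $\int_1^\infty (\sum_{n \in \mathbb{N}\cap [ar,br]}|\xi_n|^2)^{1/2}\,dr$; the contribution from negative indices is handled identically, thanks to the symmetry of $A_0$ under $n\mapsto -n$. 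This concludes the verification of both integrability conditions, and hence of the proposition.

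\textbf{Main obstacle.} The principal delicate point is securing the dyadic propagation estimate $\|A_0^2 \xi_k\| \leq C\, 2^{2k}\|\xi_k\|$: although $A_0$ is non-local, the shift involved in its action on $e_n$ is of unit size, so the spillover between adjacent dyadic annuli produces only a bounded correction that preserves the growth order $2^{2k}$. Once this estimate is secured, the remaining steps reduce to standard manipulations of dyadic sums and to the already established equivalence between the integral and series formulations of the ${\mathcal D}_{a,b}$ norm.
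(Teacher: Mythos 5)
Your proof is correct, and it supplies an argument where the paper gives none: for this proposition the text simply refers to \cite[Lemmata 3.13 and 3.14]{abc}. Your route --- reduce by linearity to $V=|\psi\rangle\langle\varphi|$, expand $\mathcal{W}_\theta(V)+\mathcal{W}_{-\theta}(V)-2V$ into the two second-difference rank-one terms plus the two cross terms, apply Cauchy--Schwarz with respect to $d\theta/\theta^2$ to the cross terms, and thereby reduce everything to the two vector conditions $\int_0^1\theta^{-2}\|(e^{i\theta A_0}+e^{-i\theta A_0}-2)\xi\|\,d\theta<\infty$ and $\int_0^1\theta^{-2}\|(e^{i\theta A_0}-1)\xi\|^2\,d\theta<\infty$ for $\xi\in\mathcal{D}$ --- is exactly the mechanism the cited lemmata encapsulate, so in substance you have reconstructed the outsourced proof rather than found a new one. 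The dyadic verification is sound: since $A_0e_n=-in(e_{n+1}-e_{n-1})$, one gets $\|A_0^j\xi_k\|\le C_j\,2^{jk}\|\xi_k\|$ for $j=1,2$ directly (the inter-annulus ``spillover'' you single out as the main obstacle is harmless, because the coefficient attached to $e_{n\pm1}$ is exactly of size $|n|$, independently of the block structure), and cutting the $\theta$-integrals at $2^{-k}$ yields the bounds $C2^k\|\xi_k\|$ and $C2^{k/2}\|\xi_k\|$. Incidentally, you do not need a common pair $(a,b)$ for $\psi$ and $\varphi$ (the classes $\mathcal{D}_{a,b}$ are not nested in the convenient direction); each of the two scalar conditions involves a single vector, so each vector keeps its own pair.

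Two steps should be written out more carefully, though neither is a genuine gap. First, the comparison $\sum_k 2^k\|\xi_k\|\lesssim\int_1^\infty\big(\sum_{n\in[ar,br]}|\xi_n|^2\big)^{1/2}dr$ is not literally a change of variables, because the windows $[ar,br]$ are not nested as $r$ varies. The standard fix: each index $n$ of the $k$-th block lies in $[ar,br]$ for a set of $r\in[2^{k-1},2^{k+1}]$ of measure at least $c\,2^k$ with $c=c(a,b)>0$; hence, denoting by $Q_k(r)$ the contribution of the $k$-th block to the integrand and using $Q_k\le\|\xi_k\|$, one gets $\int_{2^{k-1}}^{2^{k+1}}Q_k(r)\,dr\ \ge\ \|\xi_k\|^{-1}\int_{2^{k-1}}^{2^{k+1}}Q_k(r)^2\,dr\ \ge\ c\,2^k\|\xi_k\|$, and summing over $k$ (each $r$ is covered at most twice) gives the claim. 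Second, the paper's definition of $\mathcal{D}_{a,b}$ constrains only the coefficients with $n\in\mathbb{N}$; your treatment of the negative-index blocks tacitly reads the definition with $|n|$ in place of $n$. The symmetry of $A_0$ under $n\mapsto -n$ gives no control by itself: with the literal definition nothing constrains $\xi_n$ for $n<0$, and the dyadic sum over those blocks may diverge. So state explicitly that you use the symmetric reading, which is clearly the intended one, inherited from the $\ell^2(\mathbb{N})$ setting of \cite{abc}.
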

We refer to \cite[Lemmata 3.13 and 3.14]{abc} for the proof.

\begin{rem} Since ${\mathcal C}^{1,1}(A_0)$ is a linear subspace of ${\mathcal B} ( \ell^2({\mathbb Z}) )$, stable under adjunction, which also contains $C^2(A_0)$, it also possible to combine Lemmata \ref{mock}, \ref{rank1ck} with Propositions \ref{moc11}, \ref{rank1c11} to obtain more elaborated examples. For example,
\begin{enumerate}
\item if $v \in {\mathcal S} ({\mathbb Z}) + ( {\mathcal M} ({\mathbb Z}) \cap {\mathcal Q}_1 ({\mathbb Z}) ) + 
{\mathcal Q}_2 ({\mathbb Z})$, then the linear operator $V$ defined on the canonical orthonormal basis of $\ell^2({\mathbb Z})$ by 
$Ve_n =v_n e_n$, $n\in {\mathbb Z}$ belongs to ${\mathcal C}^{1,1}(A_0)$,
\item if $\varphi \in {\mathcal D}(A_0^2)$ and $\psi \in {\mathcal D}$ (or vice-versa), then for any $\beta \in {\mathbb C}$, $V= \beta |\psi\ket \bra \varphi |$ belongs to ${\mathcal C}^{1,1}(A_0)$.
\end{enumerate}
\end{rem}

\noindent
{\bf Acknowledgements:} O. Bourget is supported by the Chilean Fondecyt Grant 1161732. 
D. Sambou is supported by the Chilean Fondecyt Grant 3170411. The authors also thank the referees for their constructive comments.


\end{document}